 \theoremstyle{plain}
\newtheorem{theorem}{Theorem}[section]
\newtheorem{lemma}[theorem]{Lemma}
\theoremstyle{definition}
\newtheorem{definition}[theorem]{Definition}
\newtheorem{corollary}[theorem]{Corollary}
\newtheorem{proposition}[theorem]{Proposition}
\newtheorem{problem}[theorem]{Problem}
 \newcommand{\bj}{\bar{j}}
 \newcommand{\bl}{\bar{l}}
 \newcommand{\Ric}{\mbox{Ric}}
\theoremstyle{remark}
\newtheorem{remark}[theorem]{Remark}
\numberwithin{equation}{section}
\begin{document}

%\title[Conformal deformation for Chern-Ricci forms]{On the conformal deformation of a Hermitian metric by its Chern-Ricci forms: the critical case} 
%\title[Conformal deformation for Chern-Ricci forms]
\title[Fully nonlinear   equations and prescribed curvature problems]
%[Hermitian metrics with prescribed curvature functions]
% {Fully nonlinear equations for conformal deformation of  Hermitian metrics by Chern-Ricci curvatures} 
%{On the conformal deformation of  Hermitian metrics by Chern-Ricci curvatures}
  %{Conformal deformation of   Hermitian metrics via Chern-Ricci curvatures} 
 %{Prescribed Chern-Ricci curvature problems on the conformal classes of Hermitian metrics}
  {Fully nonlinear elliptic equations for some prescribed curvature problems on Hermitian manifolds}
   %{Fully nonlinear elliptic equations for prescribed curvature problems on Hermitian manifolds}
  %{Fully nonlinear equations for prescribed curvature problems on conformal classes of Hermitian metrics}
 % {Prescribed Chern-Ricci curvature problems on the conformal classes of Hermitian metrics}
 %{Hermitian metrics with prescribed Chern-Ricci curvature functions} 
 % {Fully nonlinear elliptic equations and prescribed Chern-Ricci curvature functions  on Hermitian manifolds}
  % {Fully nonlinear elliptic equations for Hermitian metrics of prescribed Chern-Ricci curvature functions}
  %{Fully nonlinear elliptic equations for Hermitian metrics of prescribed Chern-Ricci curvature functions}
 %{Prescribed problems on the conformal classes of Hermitian metrics}
  %{Conformal Hermitian metrics with prescribed Chern-Ricci curvature functions}
  %{Fully nonlinear elliptic equations with applications to conformal deformation of Hermitian metrics}
 % {On the fully nonlinear elliptic equations and applications to conformal deformation of Chern-Ricci forms}
 % {Fully nonlinear elliptic equations for prescribed problems on  conformal classes of Hermitian metrics}
  %{Fully nonlinear elliptic equations with applications to conformal deformation of Hermitian metrics}
 % {On the fully nonlinear equations and applications to conformal deformation of Chern-Ricci forms}

%    Only \author and \address are required; other information is
%    optional.  Remove any unused author tags.

%    author one information
% \author[short version for running head]{Rirong Yuan}
\author{Rirong Yuan }
\address{School of Mathematics, South China University of Technology, Guangzhou 510641, China}
%\curraddr{}
\email{yuanrr@scut.edu.cn}
%\thanks{Research supported in part by the National Natural Science Foundation of China (Grant No. 11801587).}
 %\thanks{The author is supported by the National Natural Science Foundation of China under Grant No. 11801587.}
 %\thanks{Research partially supported by the National Natural Science Foundation of China (Grant No. 11801587).}

% \thanks{The author is supported by  Guangdong Basic and Applied Basic Research Foundation (Grant No. 2023A1515012121) and Guangzhou Science and Technology Program  (Grant No.  202201010451)}

%    author two information
%\author{}
%\address{} \curraddr{}
%\email{}
%\thanks{}
 
%    \subjclass is required.
 % \subjclass[2010]{35J15, 53C21, 53A30, 58J05, 45J60.}
%\subjclass[2020]{35J15,  53A30}
 %\keywords{Fully nonlinear elliptic equation;  Hermitian manifolds;  Chern-Ricci curvatures; Chern-Yamabe problem; A priori estimates;	Existence of solutions}    

 %\date{ }
 %\date{\today}

\dedicatory{}

%    Abstract is required.
\begin{abstract}
We study fully nonlinear elliptic equations on Hermitian manifolds through blow-up argument and partial uniform ellipticity.  We apply our results to draw geometric conclusions on finding conformal Hermitian metrics with prescribed Chern-Ricci curvature functions. 
By some obstruction from geometric function theory, our assumptions are almost sharp.
  %preserving Chern-Ricci curvature functions. 
 %in the conformal class of Hermitian  metrics. 
 %By some obstruction from geometric function theory, an assumption imposed here is crucial.
%Our assumptions imposed there are almost sharp as shown by some obstruction from geometric function theory.
% By some obstruction from geometric function theory, our assumptions are almost sharp.
% As applications, we solve some complex analogue  of fully nonlinear Loewner-Nirenberg problem on manifolds with boundary and fully nonlinear Yamabe problem on complete noncompact manifolds.
%The constant $\varrho_\Gamma$ introduced in this paper is a key ingredient.
 %Our strategy is based on Morse theory,   %quantitative boundary estimate 
 %blow-up argument and partial uniform ellipticity.
%In particular,  we prove existence of prescribed %complete metrics on complete noncompact Hermitian manifolds under an asymptotic condition at  infinity, which is in effect a sufficient and necessary condition.
%We apply our results to draw geometric conclusions on
\end{abstract}

\maketitle

  %\tableofcontents

  %\setcounter{section}{-1}

  \section{Introduction}
  
   %The paper is a sequel to \cite{yuan-regular-DP,yuan-PUE1}.
   
  Let $(M,\omega)$ be a Hermitian manifold of complex dimension $n\geq 2$ with K\"ahler form 
  %possibly with boundary $\partial M$, $\bar M=M\cup\partial M$, 
  $\omega=\sqrt{-1}g_{i\bar j}dz_i\wedge d\bar z_j$.
  % $g_{i\bar j}=g(\frac{\partial}{\partial z_i},\frac{\partial}{\partial \bar z_j})$
  %$g$ the background Riemannian metric and 
  %$J$  the complex structure. 
%We also denote $\partial M$ the boundary of  $M$, 
%$\bar M:=M\cup\partial M.$
 % In local     complex coordinates   $(z_{1}, \ldots,z_{n})$, 
 Under the Chern connection $\nabla$
  the curvature 
  of $\omega$ is locally 
  given by 
  \begin{equation}
  	\label{gqy-pre215}
  	%R_{i\bar j k\bar l} = - \frac{\partial^2 g_{k\bar l}}{\partial z_i\partial\bar z_j}	+ g^{p\bar q} \frac{\partial g_{k\bar q}}{\partial z_i}  \frac{\partial g_{p\bar l}}{\partial \bar z_j}. 
  	R_{i\bar j k\bar l}=-\partial_{\bar j}\partial_i g_{k\bar l}
  	+g^{p\bar q}\partial_i g_{k\bar q}\partial_{\bar j}g_{p\bar l}.
  	 \nonumber
  \end{equation}
  The Ricci curvature on  K\"ahler manifolds has been well studied
  %there are
  in huge  
  %is a huge amount of literature
   %remarkable
  literature, among which
  %on Calabi conjectures and K\"ahler-Einstein metric, 
 % such as 
  \cite{CDS,Cheng1980Yau,Tian1990,Tian2015,Tian-Yau1}, to name just a few,
  %of finding canonical K\"ahler metrics
  %(see e.g. \cite{CDS1,CDS2,CDS3,Tian1990,Tian2015}) 
  starting at least from the  milestone
   work  %\cite{Aubin78,Yau78} 
   of Aubin \cite{Aubin78} and Yau 
   \cite{Yau78} %\cite{Yau77,Yau78} 
   on Calabi's conjectures and K\"ahler-Einstein metric. 
  %see  e.g. \cite{Bando-Kobayashi,CDS,Cheng1980Yau,Tian1990,Tian2015,Tian-Yau1,Tian-Yau2} and references therein. 
% See also \cite{Tian2000}.
 % on Calabi's conjecture  and K\"ahler-Einstein metric.
%We also refer to  \cite{CDS1,CDS2,CDS3,Tian1990,Tian2015} for more progress on K\"ahler-Einstein metric  over Fano manifolds. 
%More remarkable progress has been made by \cite{CDS1,CDS2,CDS3,Tian1990,Tian2015} 
%by Chen-Donaldson-Sun \cite{CDS1,CDS2,CDS3} and Tian \cite{Tian1990,Tian2015} 
%on  % the Yau-Tian-Donaldson  conjecture. %in K\"ahler geometry.
 %  the K\"ahler-Einstein metric over Fano manifolds. 
  Unlike the K\"ahler metric case, there are different Ricci curvatures for non-K\"ahler metric 
  %under the Chern connection:
 % \[ R^{(1)}_{i\bj} = g^{k\bl} R_{i\bj k\bl}, \;  R^{(2)}_{i\bj} = g^{k\bl} R_{k\bl i\bj}, \;   R^{(3)}_{i\bj} = g^{k\bl} R_{i\bl k\bj}, \;  R^{(4)}_{i\bj} = g^{k\bl} R_{k\bj i\bl},\]
  \[ R^{(1)}_{i\bar j} = g^{k\bar l} R_{i\bar j k\bar l}, \; 
  R^{(2)}_{i\bar j} = g^{k\bl} R_{k\bar l i\bar j}, \; 
  R^{(3)}_{i\bar j} = g^{k\bar l} R_{i\bar l k\bar j}, \; 
  R^{(4)}_{i\bar j} = g^{k\bar l} R_{k\bar j i\bar l},\]
  where  $\{g^{i\bar j}\} = \{g_{i\bar j}\}^{-1}$.
  Following \cite{LY2017} we call, for $k = 1,2, 3, 4$, 
  \[ \begin{aligned}
  	\Ric_\omega^{(k)}  = \sqrt{-1} R^{(k)}_{i\bar j} dz_i \wedge d\bar z_{j}
  	%\Ric^{(2)}  = \sqrt{-1} R^{(2)}_{i\bj} dz_i \wedge dz_{\bj}, \\
  	%\Ric^{(3)}  = \sqrt{-1} R^{(3)}_{i\bj} dz_i \wedge dz_{\bj}, \\
  	%\Ric^{(4)}  = \sqrt{-1} R^{(4)}_{i\bj} dz_i \wedge dz_{\bj} 
  \end{aligned} \]
  the %{\em first} to {\em fourth} 
  $k$-th \textit{Chern-Ricci form}. 
The Chern-scalar curvature is given  by ${R}_{{\omega}}=\mathrm{tr}(\omega^{-1}\Ric^{(1)})$.
%${R}_{{\omega}} =g^{i\bar j} R_{i\bar j}^{(1)}$.
   %There has been    interesting work in recent years    on the first and second Chern-Ricci curvatures, since the close connection to complex geometric structure.
  The first and second Chern-Ricci curvatures 
  %as well as 
  %Chern-scalar curvature 
  are of particular  importance and deeply connected to the complex geometric structure. 
 The Calabi-Yau theorem  was extended by Tosatti-Weinkove \cite{TW10} to non-K\"ahler case for  first Chern-Ricci form.
 %Tosatti and Weinkove[61,62]generalized Calabi–Yau theorem to the non-Kähler Hermitian case for first Chern–Ricci form,
  % to  Hermitian case.
    %see also    \cite{Gill2011} for a   parabolic proof.
  %and later   by    Gill \cite{Gill2011} using the Chern-Ricci flow.
  %while the parabolic proof was subsequently given by Gill \cite{Gill2011}. 
  The Hermitian curvature flow related to second Chern-Ricci form was proposed  %and studied 
  by Streets-Tian \cite{Steets2011Tian} 
  %and subsequent papers % which is 
 as an important analogue of Ricci flow for Hermitian geometry.
 %and also  may provide a framework for addressing problems on the existence of integrable complex structures.
  % in order to solve some problems in non -K\"ahler geometry.
  %which is potentially used to study generalized Hartshorne conjecture since the work \cite{Usti2018}. 
  %\cite{Steets2011Tian,Steets2013Tian}. 
  %and subsequent papers \cite{Steets2011Tian,Steets2013Tian},
  %Streets-Tian \cite{Steets2010Tian,Steets2011Tian,Steets2013Tian}  studied curvature flows related to the second Chern-Ricci form.  
  %In \cite{TW10} Tosatti-Weinkove extended the Calabi-Yau theorem to the non-K\"ahler Hermitian case for first Chern-Ricci form, extending earlier results of Cherrier  \cite{Cherrier}  when $n =2$. 
  %Subsequently, 
 %In \cite{Gill2011}   Gill used  the Chern-Ricci flow  to give a heat equation proof of existence theorem of Tosatti-Weinkove.
  %The Chern-Ricci flow related to first Chern-Ricci form was investigated by Gill \cite{Gill2011} who gave a parabolic proof of existence theorem of Tosatti-Weinkove \cite{TW10}. 
  The third and fourth Chern-Ricci forms were considered by Liu-Yang \cite{LY2017}  who studied relations and geometric properties of Ricci curvatures with respect to different 
   (Levi-Civita, Chern and Bismut) 
  connections.
In \cite{GQY2018}   Guan-Qiu-Yuan 
%considered a class of fully nonlinear equations  to study 
%the relation among those Chern-Ricci curvatures through 
 studied the conformal deformation of the
  \textit{mixed Chern-Ricci form},
the geometric quantity as a combination of Chern-Ricci forms
  \[\Ric_\omega^{\langle\alpha, \beta, \gamma\rangle} %\Rc^{(\alpha, \beta, \gamma)} 
  := \alpha \Ric_\omega^{(1)} + \beta \Ric_\omega^{(2)} + \gamma (\Ric_\omega^{(3)} + \Ric_\omega^{(4)}).\]

 %The paper is a sequel to \cite{yuan-regular-DP,yuan-PUE1}.
  %The subject of this paper is to investigate
% This paper is devoted to investigating Chern-Ricci forms through fully nonlinear %prescribed curvature 
 % equations.
   %This paper is devoted to finding conformal Hermitian  metrics with prescribed Chern-Ricci curvature functions.
   This paper is devoted to  looking for 
  conformal %Hermitian 
  metrics with further conditions on Chern-Ricci curvatures. 
  % inside
  %in a given conformal class.
   %studying conformal deformation of Hermitian  metrics by Chern-Ricci forms.
   The special case of deforming to constant
   Chern-scalar curvature  is %known as
%In the spirit of   Yamabe problem, 
% a nature question %in Hermitian geometry
 %is to seek  complete conformal metric of constant   Chern-scalar curvature.   When $M$ is 
%  closed, this  is %closely related to 
 %the problem of finding conformal metric with constant first Chern-scalar curvature  is %closely related to
  referred  to as   
Chern-Yamabe problem  %that was  
proposed by    %Angella-Calamai-Spotti 
\cite{Angella-Calamai-Spotti2007}. 
More general prescribed Chern-scalar curvature problem   was  
further studied  in   \cite{Fusi2022,Ho2021,Yu2023}.
 % \cite{Calamai2020Zou,Fusi2022,Ho2021,Lejmi2018Maalaoui,Yu2023}.
%See  also \cite{Lejmi2020Upmeier} for related problem in almost-complex setting. 
% We also refer  to \cite{Wu2023Zhang} for the problem  prescribing nonpositive and nonzero Chern-scalar curvature  on suitable noncompact manifolds, in which the obtained metric is possibly not complete. 
 %Our   results first assure us the existence of
 %claims that
 % any Hermitian manifold with boundary 
 %admits a 
 %complete conformal metric of negative constant  	first Chern-scalar curvature.
 %We state our first result.
 %
%On  manifolds with boundary, as a Hermitian analogue of \cite{Loewner1974Nirenberg,Aviles1988McOwen},
 %Aviles-McOwen \cite{Aviles1988McOwen} and  Loewner-Nirenberg	\cite{Loewner1974Nirenberg}, 
 %we prove
 %\begin{theorem}	\label{thm1-complex}	Let $(\bar M,\omega)$ be a compact Hermitian manifold %of complex dimension $n\geq2$ 
 %	with smooth boundary $\partial M$, let $M$ be the interior	of $\bar M$.  The interior $M$ admits a  complete conformal metric, $\tilde{\omega}=e^{u}\omega$, satisfying  ${R}_{\tilde{\omega}}^{(1)}\equiv-1.$  \end{theorem} 
%
  %shows that any Hermitian manifold with boundary admits a complete conformal metric of negative constant  	first Chern-scalar curvature. This is	a complement to   	\cite{Loewner1974Nirenberg,Aviles1988McOwen} and  % Chern-Yamabe problem \cite{Angella-Calamai-Spotti2007}.
 %Our first result concerns noncompact version of Chern-Yamabe problem.
 
In this paper, as a special case of our results, we
% first study 
obtain some conclusion regarding to the Chern-Yamabe %type
 problem 
%and construct such metrics on  certain 
for complete
 noncompact %Hermitian
 manifolds. %which can be viewed as a complex analogue of \cite{Aviles1988McOwen2}. 
%On  certain complete noncompact Hermitian manifold,we may construct complete conformal metrics with constant negative first Chern-scalar curvature, 
%which can be viewed as
%As a complex analogue of  \cite{Aviles1988McOwen2}, we  prove
% Our result can be stated  as follows.

\begin{theorem}
	\label{thm1-complete}  
Let $(M,\omega)$ be a complete noncompact Hermitian manifold of  nonpositive  Chern-scalar curvature.  
	In addition, we assume
	%$R_\omega$ is bounded above by a negative constant off  a compact subset $K_0$, i.e.
	$R_\omega \leq -\delta$ in $M\setminus K_0$
for some compact subset $K_0$ and 
positive constant $\delta$.	Then  there exists  a unique maximal smooth 
	complete  metric $\tilde{\omega}=e^{u}\omega$ with
	${R}_{\tilde{\omega}}  \equiv -1.$  
	%with	$\lambda(-{\omega}^{-1}	\Ric_{\tilde{\omega}}^{(1)})\in\Gamma$.
\end{theorem}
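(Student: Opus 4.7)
The plan is to recast the condition $R_{\tilde\omega}\equiv-1$ as a semilinear scalar PDE and then build the maximal solution by a Perron-type exhaustion. Under $\tilde\omega=e^u\omega$ one computes $\Ric^{(1)}_{\tilde\omega}=\Ric^{(1)}_\omega-n\sqrt{-1}\partial\bar\partial u$, and tracing gives
\[
R_{\tilde\omega}=e^{-u}\bigl(R_\omega - n\Delta_\omega u\bigr),\quad \Delta_\omega u:=g^{i\bar j}\partial_i\partial_{\bar j}u.
\]
The prescribed-curvature condition is therefore equivalent to
\[
n\Delta_\omega u = R_\omega + e^u \quad \text{on } M,
\]
which I will call $(\ast)$. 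The problem reduces to producing the largest smooth solution of $(\ast)$ and checking that the induced conformal metric is complete.

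I would fix a smooth exhaustion of $M$ by relatively compact domains $K_0 \subset \Omega_1 \subset \Omega_2 \subset \cdots$ with $\overline\Omega_k \subset \Omega_{k+1}$ and smooth $\partial\Omega_k$, and solve the Dirichlet problem for $(\ast)$ on $\Omega_k$ with $u_k|_{\partial\Omega_k}=k$ for each sufficiently large $k$. Since $R_\omega\leq 0$, any sufficiently negative constant $-A$ is a subsolution on all of $M$, while the constant $k$ is a supersolution on $\Omega_k$ as soon as $e^k\geq\sup_{\Omega_k}(-R_\omega)$. Monotone iteration between these two barriers yields a smooth solution $u_k$ with $-A\leq u_k\leq k$, and the comparison principle (exploiting strict monotonicity of $u\mapsto e^u$) shows that the family $\{u_k\}$ is pointwise nondecreasing in $k$.

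The main obstacle is the interior $C^0$ upper bound on $u_k$ that is uniform in $k$, since $u_k$ is forced to $+\infty$ on $\partial\Omega_k$. This is exactly where the hypothesis $R_\omega\leq-\delta$ on $M\setminus K_0$ becomes essential, and I would exploit it through a Keller-Osserman-type local barrier: for any coordinate ball $B=B_r(p)$ with $\overline B\subset M\setminus K_0$, the auxiliary Dirichlet problem
\[
n\Delta_\omega v = -\delta + e^v \text{ in } B, \qquad v=+\infty \text{ on }\partial B,
\]
admits a finite interior solution because the exponential nonlinearity overwhelms the uniformly elliptic operator $\Delta_\omega$ on $B$. The comparison principle then yields $u_k\leq v$ on $B$ for every $k$, producing locally uniform upper bounds on $M\setminus K_0$; these propagate into $K_0$ via the linear maximum principle applied to $(\ast)$ with $e^{u_k}$ treated as a bounded zero-order term. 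Together with $u_k\geq-A$, standard interior Schauder theory upgrades this to locally uniform $C^\infty$ bounds.

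A diagonal subsequence and the monotonicity of $\{u_k\}$ then yield a smooth limit $u=\lim u_k$ solving $(\ast)$ on $M$. For maximality, any other smooth solution $v$ of $(\ast)$ is bounded on each $\overline\Omega_k$, so eventually $v\leq k$ on $\partial\Omega_k$, whence $v\leq u_k$ on $\Omega_k$ by comparison and $v\leq u$ on $M$; uniqueness of the maximal solution is then immediate. Completeness of $\tilde\omega=e^u\omega$ is essentially free from the lower bound $u\geq-A$, which forces $\tilde\omega\geq e^{-A}\omega$ and lets completeness of $\omega$ pass to $\tilde\omega$. The only real technical subtlety is that $\Delta_\omega$ is the Chern Laplacian on a non-K\"ahler background rather than the Laplace-Beltrami operator; however, it is still uniformly elliptic on compact sets, so the classical Schauder and Keller-Osserman arguments transfer directly.
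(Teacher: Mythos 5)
Your reduction to $n\Delta u=R_\omega+e^u$ is correct, and the Keller--Osserman barrier on balls $B\subset M\setminus K_0$ is a viable and somewhat more elementary substitute for the paper's route to locally uniform upper bounds, which runs through Aviles--McOwen's singular Yamabe solution (Lemma~\ref{lemma1-Aviles-McOwen}) combined with Lemma~\ref{Prop-key2} and Proposition~\ref{lemma-key3}. However, two steps in your exhaustion argument do not go through as written. First, no constant lower barrier exists in general: a constant $-A$ is a subsolution of $n\Delta u = R_\omega + e^u$ only if $0\ge R_\omega+e^{-A}$, i.e.\ $e^{-A}\le -R_\omega$, which forces $R_\omega<0$. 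The hypothesis is merely $R_\omega\le 0$, so wherever $R_\omega$ vanishes inside $K_0$ there is no constant subsolution, and the uniform lower bound on $u_k$ --- which you need both for precompactness of $\{u_k\}$ and for the completeness of $e^u\omega$ --- is not established. The paper handles this by building a bounded \emph{non-constant} admissible subsolution from a Morse function and a cutoff (Lemma~\ref{lemma1-asymcondition}).

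Second, the claimed monotonicity $u_k\le u_{k+1}$ and the maximality argument are both unjustified when the boundary datum $k$ and the domain $\Omega_k$ grow simultaneously: there is no comparison of $u_{k+1}$ with $u_k$ on $\partial\Omega_k$. In fact your own Keller--Osserman barrier pins $u_{k+1}$ down to a $k$-independent value near $\partial\Omega_k$, so for $k$ large one expects $u_{k+1}<k=u_k$ there, i.e.\ the sequence eventually \emph{decreases}, not increases. Likewise the step ``$v$ is eventually $\le k$ on $\partial\Omega_k$'' presupposes an a priori bound on the competing solution $v$ that you have not proved. The paper avoids both difficulties with a two-stage limit: on each fixed exhausting compact $M_k$ it first produces the solution with $+\infty$ boundary data as an \emph{increasing} limit of finite boundary data on that \emph{fixed} domain (Theorem~\ref{theorem1-complete-2}), and then passes to the exhaustion limit, which is automatically \emph{decreasing} and automatically dominates any competing smooth solution because $u^{(k)}|_{\partial M_k}=+\infty$ (Theorem~\ref{theorem1-complete-4}). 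Replacing your finite boundary data $k$ by $+\infty$ boundary data on each $\Omega_k$, and your constant lower barrier by a construction along the lines of Lemma~\ref{lemma1-asymcondition}, would repair the argument.
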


%On the complete noncompact Hermitian manifold $(M,\omega)$ of nonpositive 	first Chern-scalar curvature, Theorem   \ref{theorem3-complete-general} assures us the existence of complete conformal metric with negative constant 	first Chern-scalar curvature, provided $R_\omega^{(1)}$ is bounded above by a negative constant off  a compact subset. This %can be viewed as is  a complex analogue of  \cite{Aviles1988McOwen2}.

   This is a complex analogue of   \cite{Aviles1988McOwen2}.  
  %Note that the obtained metric is complete, to be compared with that  \cite{Wu2023Zhang}.
 %In fact we consider more general problems of finding conformal Hermitian metrics with prescribed Chern-Ricci curvature functions. 
In fact, we consider more general problems.
% for noncompact manifolds.
  %(fully nonlinear Chern-Yamabe problem)
  % for noncompact manifolds. 
%In fact we will consider more general problems.

%The first subject of this paper is to study the problem as follows:
% The object of this paper is to treat Problems  \ref{Q0} and \ref{Q1}.  For these purposes,
%In an attempt to explore   the partial uniform ellipticity of general fully nonlinear equations, 
%One natural problem arises as follows:
\begin{problem}
	\label{Q0}
	% The  standard method does not adopt to 
	%	The deformation of  {\em first Chern-Ricci form} in the conformal class of \textit{compact} Hermitian metrics with prescribed boundary metric.
	In the conformal class of Hermitian metrics,
	%with prescribed boundary metric, 
does there exist  a {\em compact} or {\em complete} metric with prescribed first Chern-Ricci curvature function.
	
\end{problem}

% Inspired by increasing interesting problem from non-K\"ahler geometry and PDEs, we are led to study so called
%On the other hand, one may investigate the relation among those Chern-Ricci curvatures through the geometric quantity as a combination of Chern-Ricci forms \[\Ric_\omega^{\langle\alpha, \beta, \gamma\rangle} %\Rc^{(\alpha, \beta, \gamma)} := \alpha \Ric_\omega^{(1)} + \beta \Ric_\omega^{(2)} + \gamma (\Ric_\omega^{(3)} + \Ric_\omega^{(4)}),\]  say  \textit{mixed Chern-Ricci form}, following \cite{GQY2018}.  
%%%%%%%%%%%%%%%%%%
%From the perspective of geometric PDEs, it is necessary to investigate the following questions:
%There arise naturally some related questions:
% on \textit{critical  mixed Chern-Ricci forms}:
% From the standpoint of geometric PDEs, 
%It is natural to consider the problem:
% Some natural problems to ask are:
\begin{problem}
	\label{Q1}
	% The  standard method does not adopt to 
	%The conformal deformation of Hermitian metrics  
	In the conformal class of %\textit{compact}
	Hermitian metrics,
	%with prescribed boundary metric, 
	can we find a {\em compact} or {\em complete} metric
	%so that the resulting metric 
	so that it has prescribed mixed Chern-Ricci curvature function.
	%In the conformal class of \textit{compact} metrics with prescribed boundary metric.
	
	%	in the conformal class of compact or complete  metrics %$\tilde{\omega}=e^u\omega$ 
	%	with $\lambda(- {\omega}^{-1}\Ric^{\langle\alpha, \beta, \gamma\rangle}_{\tilde{\omega}})\in\Gamma.$
	
\end{problem}

%Throughout this paper,  
As suggested by %Caffarelli-Nirenberg-Spruck 
\cite{CNS3},  
we assume that the curvature function 
 $f$ is 
%assumed to 
a \textit{smooth},   \textit{symmetric}, \textit{concave} function defined in $\Gamma\subset\mathbb{R}^n$, where $\Gamma$ is an \textit{open},  \textit{symmetric}, \textit{convex} cone with  vertex at  origin, $\partial \Gamma\neq \emptyset$,  and
%containing the positive cone, i.e.
$\Gamma_n:=\left\{(\lambda_1,\cdots,\lambda_n)\in \mathbb{R}^n: \forall\lambda_i>0\right\}\subseteq\Gamma.$ 
%  and the boundary $\partial \Gamma\neq \emptyset$. %In addition,  
%\begin{equation}\label{concave}	\begin{aligned}
%		f \text { is concave in } \Gamma,	
%	\end{aligned}\end{equation}
Following \cite{CNS3},  $\Gamma$ is of {\em type 1}
 if $(0,\cdots,0,1)\in\partial\Gamma$; %$(0,\cdots,0,1)\in\partial\Gamma$; 
otherwise it is of {\em type 2}.

 In   \cite{Wu2023Zhang}, Wu-Zhang studied 
  prescribed Chern-scalar curvature problem for suitable noncompact manifolds
%of finding conformal   metric with prescribing  nonzero and nonpositive  Chern-scalar curvature 
and obtained  conformal metric, possibly not complete, prescribing  
nonpositive and nonzero  Chern-scalar curvature.  
In \cite{GQY2018}, 
Guan-Qiu-Yuan  considered  Problem \ref{Q1}  in special case and obtained    metric              with prescribing boundary metric.  
Definitely not too surprisingly, things become  more subtle and the problems are rarely known  in the case 
when the resulting metric is complete. 
%In this paper  we may construct complete conformal metric under appropriate assumption, which cannot be dropped in general as shown by some obstruction 
%In addition, we will give some obstruction 
%from geometric function theory.
%to show that such an assumption cannot be dropped in general.
%It is  of particular interest 
%interesting 
%to determine which condition
%circumstance
%assures us existence of complete metrics.

 In this paper we %attack %tackle
 consider  the problems above %via 
 through  
fully nonlinear equations %of the form 
 \begin{equation} \label{equ1-hehe}	f(\lambda(\omega^{-1}(\chi+\sqrt{-1}\partial\overline{\partial}u)))=%\psi  e^{\varsigma u},
 	\psi e^{\Lambda_0 u},
 \end{equation}
 where  $\Lambda_0>0$ is a constant,  
 %$\psi$  is a positive smooth function, 
 $\chi$ is a smooth real $(1,1)$-form, and
  $\lambda(\omega^{-1}(\chi+\sqrt{-1}\partial\overline{\partial}u))=(
 \lambda_1,\cdots,\lambda_n)$ denotes the $n$-tuple of % list of
  eigenvalues of $\chi+\sqrt{-1}\partial\overline{\partial}u$ with respect to $ {\omega}$.
 %as a vector. 
 %On closed Hermitian manifolds, the a prior $C^2$-estimates for \eqref{equ1-hehe} (with $\Lambda_0=0$) have been obtained by Sz\'ekelyhidi \cite{Gabor} in very general context, extending Hou-Ma-Wu and Dinew-Ko{\l}odziej's \cite{HouMaWu2010,Dinew2017Kolo} results  concerning complex $k$-Hessian equations.
 %Before stating the results, 
%We shall assume in addition that
In addition, $f$ 
%satisfies some basic conditions: %are also necessary
is supposed to satisfy the following basic assumptions:
\begin{equation}
	\label{homogeneous-1-buchong2}
	\begin{aligned}
		f >0 \mbox{ in }  \Gamma, \,\, f  =0 \mbox{ on } \partial\Gamma,
	\end{aligned}
\end{equation}
\begin{equation}
	\label{homogeneous-1}
	\begin{aligned}
		f(t\lambda)=t^\varsigma f(\lambda), \mbox{ } \forall \lambda\in\Gamma, \,  t>0,
		\mbox{  for some constant } 0<\varsigma \leq1.
	\end{aligned}
\end{equation}   
%In order to study \eqref{equ1-hehe}, 
When $\Gamma$ is of type 1,
 we additionally assume  that \begin{equation}
 	\label{unbounded-1}
 	\begin{aligned}
 		\lim_{t\rightarrow+\infty}  f(\lambda_1,\cdots,\lambda_{n-1},\lambda_n+t)=+\infty, \,\, \forall \lambda=(\lambda_1,\cdots,\lambda_n)\in\Gamma.
 	\end{aligned}
 \end{equation}  

 We emphasize that  throughout this paper   
 %as shown by Lemma \ref{lemma1-unbound-yield-elliptic},
 %in this paper 
 the  following condition is  not required
 %In addition, together with the concavity of $f$,   it also implies that
  %(Lemma \ref{lemma1-unbound-yield-elliptic})   
 \begin{equation}
 	\label{elliptic}
 	\begin{aligned}
 		f_i(\lambda)=f_{\lambda_i}(\lambda):
 		 =\frac{\partial f}{\partial\lambda_i}(\lambda)>0 \mbox{ in } \Gamma, \mbox{ } \forall 1\leq i\leq n.
 	\end{aligned}
 \end{equation}   
 See Lemmas \ref{lemma1-unbound-yield-elliptic}, \ref{lemma5.11}  and  %\ref{lemma1-unbound-type2}.
 \ref{lemma23}.
 %Consequently, we do not 
% it is not required to 
% assume \eqref{elliptic} throughout this paper. 
This is in contrast with  huge literature on second order
 fully nonlinear equations of elliptic and parabolic type.
 
 As is well known, 
 %\eqref{elliptic},
conditions  %\eqref{homogeneous-1-buchong2}, \eqref{homogeneous-1} and   \eqref{unbounded-1}  
 \eqref{homogeneous-1-buchong2}-\eqref{unbounded-1} 
 allow the important case: 
 $f=  \sigma_k^{1/k}, \mbox{ } \Gamma=\Gamma_k, $ 
 %\begin{equation}		\begin{aligned}  	f=  \sigma_k^{1/k}, 		\mbox{ } \Gamma=\Gamma_k, 		\nonumber  \end{aligned}  	\end{equation}    
 where 
 $\sigma_k$ is the $k$-th elementary symmetric function,  
 $\Gamma_k$ is  the $k$-th G{\aa}rding cone. In particular, when $k=n$ the equation \eqref{equ1-hehe} is the complex Monge-Amp\`ere equation,
  which is closely related the K\"ahler-Einstein metrics on closed K\"ahler manifolds with negative first Chern class; 
 see  \cite{Aubin78,Yau78}.
 %To study  equation \eqref{equ1-hehe}, %When considering Problem \ref{Q0},
  %Moreover, such conditions are also fulfilled by $f(\lambda)= \prod_{i=1}^n (\lambda_1+\cdots+\widehat{\lambda_i}+\cdots+\lambda_n)^{1/n}$ 
   % with %$\Gamma  =\mathcal{{P}}_n$,   
   %  $\Gamma= \{\lambda:  \lambda_1+\cdots+\widehat{\lambda_i}+\cdots+\lambda_n>0, \forall 1\leq i\leq n\}$, where $\widehat{*}$   indicates deletion.
 %Below we collect more  notions:
%$$\Gamma_\infty:=\{(\lambda_{1}, \cdots, \lambda_{n-1}): (\lambda_{1}, \cdots, \lambda_{n-1},\lambda_n) \in \Gamma\}.$$
%Denote the boundary of $M$ by $\partial M$. 
%$\bar M:= M\cup\partial M$. (Notice $\bar M=M$  if $\partial M=\emptyset$).
%\begin{itemize}
	% 	\item  Let $\partial M$ denote the boundary of $M$, $\bar M:= M\cup\partial M$.

	%	\item $\vec{\bf 1}:=(1,\cdots, 1)\in \mathbb{R}^n.$
%	 $[\chi]:=\{\chi+\sqrt{-1}\partial\overline{\partial}u: u\in C^2(\bar M)\}.$
	
	%	\item %	Let $J$ be the complex structure.
	%Let $\kappa_{1}, \cdots, \kappa_{n-1}$ be the eigenvalues of Levi form %$L_{\partial M}$
%	of  boundary	$\partial M$ with respect to $\omega^{\prime}=\left.\omega\right|_{T_{\partial M} \cap J T_{\partial M}}$,  where $J$ is the complex structure.

%\end{itemize}

%Below we summarize useful notions.
\begin{definition}
	\label{def1-admissible}
	%	[admissible, quasi-admissible and maximal metric]

	For the equation \eqref{equ1-hehe}, 
	%following \cite{CNS3} 
	we say that $u$ is an \textit{admissible}  function if   \begin{equation}\begin{aligned}\lambda(\omega^{-1}(\chi+\sqrt{-1}\partial\overline{\partial}u))\in\Gamma \mbox{ in  }\bar M \, (= M\cup\partial M).\nonumber
	\end{aligned}    \end{equation}  
		Here  $M$ stands for the interior of $\bar M$, $\partial M$ denotes the boundary of $M$. 
		%(Notice $\bar M=M$  if $\partial M=\emptyset$).
	Similarly, %as in \cite{yuan-PUE1} 
	we call $u$ a \textit{pseudo-admissible} function if
	\begin{equation}\begin{aligned}
			\lambda(\omega^{-1}(\chi+\sqrt{-1}\partial\overline{\partial}u)) \in \bar{\Gamma}:=\Gamma\cup\partial\Gamma \mbox{ in } 
		 M. \nonumber \end{aligned}    \end{equation} 
	%	In addition, we call $u$ a \textit{quasi-admissible} function  if  \begin{equation}\label{assump2-metric}\begin{aligned}		\lambda(\omega^{-1}(\chi+\sqrt{-1}\partial\overline{\partial}u)) \in \bar{\Gamma} \mbox{ in } \bar M, \mbox{ and }	\lambda(\omega^{-1}(\chi+\sqrt{-1}\partial\overline{\partial}u))\in\Gamma \mbox{ at some } z\in\bar  M. \nonumber \end{aligned}    \end{equation}
	Meanwhile, %we say that 
	$u$ is the \textit{maximal} %(respectively, \textit{minimal}) 
	%admissible
	 solution to
	%the equation 
	\eqref{equ1-hehe},  
	if
	%it is greater than or equal to every  admissible solution to the equation
	$u\geq w$ 
	%(respectively, $u\leq w$) 
 %	in $M$ 
	for any admissible solution $w$. %satisfying  
%of	\eqref{equ1-hehe}. 		
	Similarly, we have  analogous
	 notions of admissible, pseudo-admissible and maximal conformal metrics, respectively.
	
	%Accordingly, for the equations \eqref{equ-deform-1Ric} and \eqref{equ-deform-Ric+}, we have notions of \textit{admissible, pseudo-admissible} and \textit{maximal} metrics.
\end{definition}

\begin{definition}
	\label{def1-admissible-boundary}

	We say that  $\partial M$ is $\Gamma_\infty$-\textit{admissible} for  \eqref{equ1-hehe} if $(\kappa_1,\cdots,\kappa_{n-1})\in \Gamma_\infty,$ where   
	$\kappa_{1}, \cdots, \kappa_{n-1}$ are the eigenvalues of Levi form  $L_{\partial M}$
	of  boundary 	 with respect to $\omega^{\prime}=\left.\omega\right|_{T_{\partial M} \cap J T_{\partial M}}$, and $J$ is the complex structure. 
	Henceforth $$\Gamma_\infty:=\{\lambda'=(\lambda_{1}, \cdots, \lambda_{n-1}):   (\lambda_{1}, \cdots, \lambda_{n-1},R) \in \Gamma \mbox{ for some } R>0\}.$$

\end{definition}

In order to solve   \eqref{equ1-hehe}, the    first %
%primary
%difficulty 
challenge %we face 
is to derive gradient estimate.
% To the best knowledge of the author,  
The %straightforward 
direct
proof of gradient estimate was settled  for fairly restrictive cases in literature
\cite{Blocki09gradient,Guan2010Li,Guan2015Sun,Hanani1996,yuan2018CJM,yuan2021cvpde,ZhangXW},
but cases beyond this are mostly open.  
  Blow-up argument %suggests
  offers another  %effective 
  approach to prove gradient estimate as shown by \cite{Chen} for complex Monge-Amp\`ere equation,  by \cite{Dinew2017Kolo}  for complex $k$-Hessian equations with the aid of
  second   estimate in
  \cite{HouMaWu2010}; see also \cite{Gabor} for more general equation satisfying 
  \begin{equation}	
  	\label{addistruc}	
  	\begin{aligned}
%  		\lim_{t\rightarrow+\infty} f(t\lambda)>f(\mu), \,\, \forall \lambda,\mbox{ }  \mu\in\Gamma. 
\mbox{For any $\sigma<\sup_\Gamma f$ and $\lambda\in\Gamma,$ we have }
 	\lim_{t\rightarrow+\infty} f(t\lambda)>\sigma.
 	%\,\, \forall \lambda\in\Gamma, \, \forall \sigma<\sup_\Gamma f.
	%\lim_{t\rightarrow+\infty} f(t\lambda)=\sup_\Gamma f, \,  \forall \lambda\in\Gamma.
  	\end{aligned}
  \end{equation}

 In  this paper,  
we %apply  blow-up argument 
 employ such a contradiction method  to 
set up gradient estimate. % with such a contradiction method. %via blow-up argument,
% through deriving
To this end, we derive the quantitative boundary estimate 
  %of the form %\eqref{quantitative-BE},  
\begin{equation}
	\label{quantitative-BE}
	\begin{aligned} 	\sup_{\partial M} |\partial\overline{\partial} u|\leq C(1+\sup_M|\partial u|^2)  	\end{aligned} \end{equation}
for Dirichlet problem,
%utilizing some strategies %developed in 
%combining the  strategies   from \cite{Dinew2017Kolo,Gabor} 
%on closed manifolds and 
%from  prequels 
%following
adapting
%utilizing 
 some idea from prequels
\cite{yuan2017,yuan-regular-DP} (see also the %follow-up   paper 
 subsequent paper  %\cite{yuan2020PAMQ,yuan-n-1MA}
 \cite{yuan2020PAMQ}). 
%We also refer to \cite{Guan1993Boundary,Guan1998The} for subsolution technique for boundary estimates.
%As usual  the %subsolution technique %used in
%local barrier technique in
As usual  the %subsolution technique %used in
%the subsolution method
local barrier technique in \cite{Guan1993Boundary,Guan1998The} (further %developed %
refined
by \cite{Guan12a}) is a key ingredient.
%Such a blow-up argument was  previously used  in  %earlier work 
%\cite{Chen,Dinew2017Kolo}; see also \cite{Gabor}.
 %and further by \cite{Gabor}.
There are more related work  \cite{Boucksom2012,Phong-Sturm2010,Collins2019Picard} 
on Dirichlet problem for complex Monge-Amp\`ere equation 
and complex $k$-Hessian equations, %relying 
in which their method relies specifically on the structure of the
%Monge-Amp\`ere operator and $k$-Hessian 
operators, 
which cannot be adopted to treat 
general equations.  
%
%Furthermore, we may use certain Morse functions to construct  admissible  forms in the $\Gamma_\infty$-admissible class $[\chi]$.
%We may use certain Morse functions to construct  admissible  forms in the $\Gamma_\infty$-admissible class $[\chi]$.
 %We will use such Morse functions to construct admissible functions, assuming the existence of a $\Gamma_\infty$-admissible form in $[\chi]$.
%Based on  Morse theory and  quantitative boundary estimate,
%Together with %the quantitative boundary estimate
 %\eqref{quantitative-BE},
%a result in Morse theory,
%In conclusion, we obtain  the following existence even without  assuming the existence of admissible function.
% through Morse theory and deriving quantitative boundary estimate.
 %This is contrast with  influential work \cite{Guan1993Boundary,Guan1998The} in which they make use of subsolution assumption to relax  the geometric restriction to boundary. 
 
We obtain existence result, assuming admissible function  instead of subsolution.
 \begin{theorem}
 	\label{thm1-dirichlet}
 	Suppose $(f,\Gamma)$ satisfies \eqref{homogeneous-1-buchong2} and \eqref{unbounded-1}.
 	Let $(\bar M,\omega)$ be a compact Hermitian manifold with smooth $\Gamma_\infty$-admissible boundary. 
 	In addition,
 we assume that 
 %there is a $C^2$-smooth admissible function on 
 $\bar M$ carries a $C^2$-smooth admissible function.
 %carries a $\Gamma_\infty$-admissible form and 
  Then for any  % smooth function $\varphi$ on $\partial M$
 $\varphi\in C^\infty(\partial M)$  and $0<\psi\in C^\infty(\bar M)$, there is a unique admissible solution to \eqref{equ1-hehe} with $u=\varphi$ on $\partial M$.
 
 	\end{theorem}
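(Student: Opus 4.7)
The plan is to attack Theorem \ref{thm1-dirichlet} by the classical continuity method, deforming $(\varphi,\psi)$ to data for which the given admissible function $\underline u$ itself solves the corresponding equation. Openness at each $t$ will follow from the implicit function theorem: the linearization $F^{i\bar j}\partial_i\partial_{\bar j} - \Lambda_0\,\psi e^{\Lambda_0 u}$ is elliptic in the admissible directions, and thanks to $\Lambda_0>0$ and $\psi>0$ it has trivial kernel by the maximum principle. Everything therefore reduces to a priori $C^{2,\alpha}(\bar M)$ bounds for admissible solutions. The $C^0$ estimate is immediate: at an interior maximum of $u$ one has $\chi+\sqrt{-1}\partial\bar\partial u\leq\chi$, so $\psi e^{\Lambda_0 u}$ is controlled by $f$ evaluated on an admissible cutoff of $\lambda(\omega^{-1}\chi)$, giving $\sup_M u\leq C$; the lower bound is obtained by the comparison principle against $\underline u$ (shifted to match $\varphi$ on $\partial M$), again using the strict monotonicity in $u$.

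For the boundary estimates I would exploit the $\Gamma_\infty$-admissibility of $\partial M$ together with $\underline u$ to build tangential and mixed tangential--normal barriers in the spirit of \cite{Guan1993Boundary,Guan1998The,Guan12a}, yielding $|\partial u|\leq C$ along $\partial M$. The double-normal component of $\partial\bar\partial u$ at $\partial M$ is then supplied by the quantitative boundary estimate \eqref{quantitative-BE}, whose proof, as the author indicates, adapts the techniques of \cite{yuan2017,yuan-regular-DP}. For the interior gradient estimate I would run the blow-up / contradiction method of Chen, Dinew--Ko{\l}odziej and Sz\'ekelyhidi: if $|\partial u_k|\to\infty$ along a sequence of admissible solutions, rescale near the blow-up point to extract an entire admissible limit on $\mathbb C^n$ (or a half-space) satisfying a limiting equation. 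Hypothesis \eqref{unbounded-1} is precisely what keeps the rescaled equation sufficiently nondegenerate to force a Liouville-type rigidity -- the limit must be constant, contradicting $|\nabla u_k|\to\infty$. Combined with the boundary gradient bound this yields a global $C^1$-estimate, and the second-order estimate then follows from a Hou--Ma--Wu-type maximum principle that reduces $\sup_M|\partial\bar\partial u|$ to gradient quantities and the boundary value \eqref{quantitative-BE}.

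With $C^2$ bounds in hand the operator is uniformly elliptic on the admissible cone along the family, so Evans--Krylov (using concavity of $f$) gives $C^{2,\alpha}$ estimates and Schauder bootstrap upgrades these to $C^\infty$. Uniqueness follows from the comparison principle because $\Lambda_0>0$ makes $u\mapsto\psi e^{\Lambda_0 u}$ strictly monotone. The main obstacle will be the global gradient estimate: in the absence of \eqref{elliptic} the classical test-function/maximum-principle approach is unavailable, and even within the blow-up framework one must verify both that the admissible cone condition survives rescaling in the limit and that the blow-up sequence cannot concentrate on $\partial M$ -- the latter controlled by $\underline u$ together with the $\Gamma_\infty$-admissibility of the boundary. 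The structural role of \eqref{unbounded-1} in recovering a Liouville-type rigidity in this partially elliptic setting is the subtlest point of the argument.
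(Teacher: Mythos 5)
Your proposal follows essentially the same route as the paper: upper $C^0$ bound by a comparison argument (you use an interior-maximum estimate, the paper compares against the harmonic $\hat u$ of \eqref{supersolution1}, but both work), local barriers built from the boundary distance function and the $\Gamma_\infty$-admissibility for the lower bound and boundary gradient, the quantitative boundary estimate \eqref{quantitative-BE}, a Hou--Ma--Wu/Sz\'ekelyhidi-type interior second-order bound reducing $\sup_M|\partial\bar\partial u|$ to $1+\sup_M|\partial u|^2$, a blow-up plus Liouville argument for the gradient, and Evans--Krylov with Schauder to close. One small correction: condition \eqref{elliptic} is not actually absent here---Lemma~\ref{lemma1-unbound-yield-elliptic} shows that the assumed \eqref{unbounded-1} already forces $f_i>0$ on $\Gamma$---so the blow-up method is required not because of degenerate ellipticity but because the torsion and curvature terms of a general Hermitian $\omega$ obstruct a direct test-function gradient estimate for generic $(f,\Gamma)$.
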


%Notice that we impose neither subsolution assumption nor  the existence of admissible function. 
%Notice that  in the above theorem we do not impose the existence of admissible function. 
%This is contrast with  influential work \cite{Guan1993Boundary,Guan1998The} in which they make use of subsolution assumption to relax  the geometric restriction to boundary.

 %One of the most interesting cases is
 The case $\Gamma_\infty=\mathbb{R}^{n-1}$ is of interest   
 %(i.e.,  $\Gamma$ is of type 2),
  %$(0,\cdots,0,1)\in\Gamma$),
  %in which the boundary
  since the  boundary
 %$\partial M$
  is  automatically %automatic
  $\Gamma_\infty$-admissible, without any geometric condition.
 %(i.e., $\Gamma$ is of type 2, and equivalently $\kappa_\Gamma=n-1$).
 %(i.e., $\kappa_\Gamma=n-1$).
% (i.e., $\Gamma$ is of type 2).
%since each $\chi$ is $\mathbb{R}^{n-1}$-admissible.
%As a result, 
Significantly, in this case we   employ certain Morse function  to construct  admissible functions and then solve 
 Dirichlet problem %for such equations 
%of uniformly elliptic type 
without extra assumption on  $\partial M$,
%and existence of admissible function, 
beyond $\partial M\in C^\infty$. %\eqref{homogeneous-1-buchong2}, $\sup_\Gamma f=+\infty$ and $\partial M\in C^\infty$. 
This is a fully nonlinear analogue of existence theorem for Poisson's equation and Liouville's equation. 
%and more general linear uniformly elliptic equations of second order. 
%See Subsection \ref{subsec1-DP-revisited} for more details.
%\begin{equation}	\label{assump-type2}	\Gamma_\infty=\mathbb{R}^{n-1}.\end{equation} 
% In fact according to  Lemmas \ref{lemma5.11} and \ref{lemma23},  when $\Gamma_\infty=\mathbb{R}^{n-1}$ 
%Moreover, we do not impose subsolution assumption. 
%See Subsection \ref{subsec1-DP-revisited} for more details.
See Theorem \ref{thm2-existence-bdy-UE}. 
In addition,  
with the aid of some results on 
partial uniform ellipticity
and  singular Yamabe problem,  %Lemma \ref{lemma1-Aviles-McOwen}
%together with some of results on singular Yamabe problem  \cite{Loewner1974Nirenberg,Aviles1988McOwen}, 
%in the following 
we can derive interior estimates   %(up to second derivatives)  
and 
solve the Dirichlet problem 
%with infinite boundary value condition; 
with infinite boundary data; 
 see  Section \ref{Dirichlet-problem-3} for more results. 

    \begin{theorem}
  	\label{theorem3-complete}
  		Let $(\bar M,\omega)$ be a compact Hermitian manifold with smooth boundary.   In addition to %\eqref{uniform-elliptic-2}
  	 \eqref{homogeneous-1-buchong2} 	 and $\sup_\Gamma f=+\infty$, we assume    $\Gamma_\infty=\mathbb{R}^{n-1}$.
  		% In addition to \eqref{homogeneous-1-buchong2}, assume $\Gamma_\infty=\mathbb{R}^{n-1}$, $\sup_\Gamma f=+\infty$.
  		  Then for any $0<\psi\in C^\infty(\bar M)$, 
     the equation \eqref{equ1-hehe} possesses  an    admissible solution $u\in C^\infty(M)$ with  
  		  %$u\big|_{\partial M}=+\infty$. 
  		   $\underset{z\to\partial M}\lim u(z)=+\infty$.
  		   %$\lim_{z\to\partial M} u(z)=+\infty$.
  	Moreover,   $u$ is minimal in the sense that
  	 $u\leq w$  
  	 for any  admissible 
  	 solution  $w$ 
  	 with infinity boundary data.
  	 %with infinite boundary value condition.
  	\end{theorem}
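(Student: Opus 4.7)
The plan is to solve a sequence of Dirichlet problems with increasing finite boundary data $N\to+\infty$ and pass to a limit, using an upper barrier to control the sequence locally uniformly from above and interior estimates to extract a smooth limit. For each integer $N$, Theorem~\ref{thm2-existence-bdy-UE} -- which in the regime $\Gamma_\infty=\mathbb{R}^{n-1}$ requires no admissible subsolution and no geometric condition on $\partial M$ beyond smoothness -- will supply an admissible solution $u_N\in C^\infty(\bar M)$ of \eqref{equ1-hehe} with $u_N\equiv N$ on $\partial M$. Since $\psi e^{\Lambda_0 u}$ is strictly increasing in $u$, the comparison principle for concave admissible solutions (available through the partial uniform ellipticity results cited above) gives $u_N\leq u_{N+1}$ on $\bar M$, so $\{u_N\}$ is monotone.

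\emph{Upper barrier.} To bound $\{u_N\}$ locally uniformly from above in $M$, I would construct a smooth supersolution $\Psi$ on a one-sided neighborhood of $\partial M$ that blows up at $\partial M$. Taking $\rho$ to be a smooth defining function of $\partial M$ with $\rho<0$ in $M$, I would try $\Psi=-A\log(-\rho)+B$. The assumption $\Gamma_\infty=\mathbb{R}^{n-1}$ ensures that the large positive transverse eigenvalue of $\sqrt{-1}\partial\bpartial\Psi$, coming from the $\sqrt{-1}\partial\rho\wedge\bpartial\rho/\rho^2$ term, drags the full eigenvalue vector of $\omega^{-1}(\chi+\sqrt{-1}\partial\bpartial\Psi)$ into $\Gamma$ once $A$ is large; meanwhile $\sup_\Gamma f=+\infty$ allows one to calibrate $A$ and $B$ so that $f(\lambda)\geq \psi e^{\Lambda_0\Psi}$ throughout this neighborhood. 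Comparison then yields $u_N\leq \Psi$ on the region where $\Psi\geq N$, and for every compact $K\subset\subset M$ this region contains a fixed neighborhood of $K$ independently of $N$.

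\emph{Interior estimates, limit, and minimality.} With the local uniform bound $u_1\leq u_N\leq \Psi$ on every compact $K\subset\subset M$, I would apply the interior version of the blow-up gradient estimate developed in this paper together with the Hou--Ma--Wu type interior second-order estimate to obtain $|\nabla u_N|+|\partial\bpartial u_N|\leq C_K$, invoking the partial uniform ellipticity framework to bypass the absence of \eqref{elliptic}. Evans--Krylov and Schauder theory then upgrade this to uniform $C^k_{\mathrm{loc}}(M)$ bounds, so the monotone limit $u:=\lim_N u_N\in C^\infty(M)$ solves \eqref{equ1-hehe} in $M$, and the inequality $u\geq u_N$ near $\partial M$ forces $u(z)\to+\infty$ as $z\to \partial M$. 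For minimality, given any admissible solution $w$ with infinite boundary data, the open set $\{w<N\}$ is compactly contained in $M$ with $w=N=u_N$ on its topological boundary; comparison on this set gives $u_N\leq w$ there, while outside $u_N\leq N\leq w$, and letting $N\to\infty$ yields $u\leq w$.

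\emph{Main obstacle.} The technical heart is the barrier construction in the second step: one needs a supersolution that is simultaneously admissible (this exploits $\Gamma_\infty=\mathbb{R}^{n-1}$) and dominates an exponentially large right-hand side (this exploits $\sup_\Gamma f=+\infty$), with no pointwise ellipticity or homogeneity of $f$ assumed. Calibrating $A,B$ against the geometry of $\Gamma$ near $\partial\Gamma$ and against the local behavior of $\chi$ near $\partial M$ is what ties the two structural hypotheses of the theorem directly to the existence of an infinite-boundary-value solution, and is the only place where the standard monotone scheme meets genuine resistance.
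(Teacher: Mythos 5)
Your monotone scheme, the use of Theorem~\ref{thm2-existence-bdy-UE} for the finite-boundary-data problems, the invocation of the interior estimates to pass to a smooth limit, and the minimality argument on $\{w<N\}$ all match the paper's strategy. The genuine gap is in the upper barrier. You propose $\Psi=-A\log(-\rho)+B$ and claim that $\sup_\Gamma f=+\infty$ lets you calibrate $A,B$ so that $f(\lambda(\chi+\sqrt{-1}\partial\bpartial\Psi))\geq\psi e^{\Lambda_0\Psi}$ near $\partial M$. This cannot be done under the hypotheses of Theorem~\ref{theorem3-complete}, which do \emph{not} assume homogeneity \eqref{homogeneous-1}, and hence impose no lower bound on the growth rate of $f$ along rays. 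Concretely, take $\Gamma=\Gamma_1$ and $f(\lambda)=\log\!\bigl(1+\tfrac{1}{n}\sum\lambda_i\bigr)$: this $f$ is symmetric, concave, vanishes on $\partial\Gamma_1$, has $\sup f=+\infty$, and $\Gamma_{1,\infty}=\mathbb{R}^{n-1}$, so it satisfies every hypothesis of the theorem. Near $\partial M$, the eigenvalues of $\omega^{-1}(\chi+\sqrt{-1}\partial\bpartial\Psi)$ are $O(A/\sigma)$ in the tangential directions and $O(A/\sigma^2)$ transversally, where $\sigma=-\rho$, so $f(\lambda(\Psi))=O(\log(1/\sigma))$, while $\psi e^{\Lambda_0\Psi}=\psi e^{\Lambda_0 B}\sigma^{-A\Lambda_0}$ grows polynomially in $1/\sigma$. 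No choice of $A,B$ makes the supersolution inequality hold on any one-sided neighborhood of $\partial M$, and enlarging $A$ makes the mismatch worse. Concavity in fact forces this: $f$ can grow at most linearly along rays, while $e^{\Lambda_0\Psi}$ is exponential in $\Psi$, so any profile $\Psi(\sigma)$ that blows up at $\sigma=0$ pushes the right-hand side beyond what $f$ can reach.

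The paper avoids constructing a direct $f$-supersolution. Instead it applies concavity at the single point $\vec{\bf 1}$ (Lemma~\ref{lemma-add5}) to obtain the Laplacian inequality
\[
\Delta u_{(k)} \geq A_f\bigl(\psi[u_{(k)}]-f(\vec{\bf 1})\bigr)+n-\mathrm{tr}(\omega^{-1}\chi),
\]
which is linear in $\psi[u_{(k)}]$ and requires no growth rate for $f$. It then compares $\Lambda_0 u_{(k)}$ against the solution $\tilde u$ of the Aviles--McOwen singular Yamabe problem on $(M,g)$ (Lemma~\ref{lemma1-Aviles-McOwen}), which satisfies $\Delta\tilde u\leq e^{\tilde u}+A$ (Lemma~\ref{Prop-key2}) and blows up at $\partial M$. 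At an interior maximum of $\Lambda_0 u_{(k)}-\tilde u$ these two inequalities combine (Proposition~\ref{lemma-key3}) to yield a bound $u_{(k)}\leq(\tilde u+C)/\Lambda_0$ independent of $k$. This "trace comparison against a scalar-curvature barrier" is what replaces your explicit log barrier and is precisely what lets the theorem do without homogeneity. If you want to keep an explicit $-A\log\sigma$ barrier, you would have to add an assumption like \eqref{homogeneous-1} so that $f$ has polynomial growth along rays; under the stated hypotheses you need the Aviles--McOwen route (or an equivalent Laplacian-level comparison).
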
 
 
%From the perspective of geometric function theory,
Below we give some obstruction 
% from geometric function theory, 
to indicate that  
%the assumption $\Gamma_\infty=\mathbb{R}^{n-1}$ %(i.e., $\Gamma$ is of type 2 and equivalently $\kappa_\Gamma=n-1$)
in Theorem \ref{theorem3-complete}  the assumption $\Gamma_\infty=\mathbb{R}^{n-1}$   cannot be dropped in general. 
To do this, we %employ 
define
 the integer   
 for $\Gamma$
\begin{equation}\label{def1-kappa-gamma}
	\begin{aligned}	 {\kappa}_{\Gamma}:=\max \left\{k: ({\overbrace{0,\cdots,0}^{k-\mathrm{entries}}},	{\overbrace{1,\cdots, 1}^{(n-k)-\mathrm{entries}}})\in \Gamma \right\}  \end{aligned}\end{equation} 
  %for the  cone $\Gamma$,
  in an attempt 
 % in order
to connect  admissible function to  notion of $q$-plurisubharmonic function 
%(in the sense of   \cite{Hunt1978Murray}) 
%and $q$-pseudoconvexity 
 in  several complex variables.   
 %complex analysis.
The constant  $\kappa_\Gamma$ 
was  %initially 
  %first
  introduced 
 %innovated
 by  %\cite{yuan-PUE1}
  \cite{yuan2020conformal,yuan-PUE1}
to measure how close  the operator  
 $f$ 
can come to uniform ellipticity 
  %to  measure  partial uniform ellipticity of 
  %generic  
  %nonlinear operator $f$ in $\Gamma$;
  % generated by 
  % $(f,\Gamma)$; %\eqref{equ1-hehe};
  (Lemma \ref{yuan-k+1}). %below. 
 %We observe that it is also  closely connected with the notions of $p$-plurisubharmonic functions and $p$-pseudoconvexity from several complex variables.
% see e.g. Remark \ref{remark1-obstruction} for {\em obstructions}. %from geometric function theory.

%\textbf{As in the following, ${\kappa}_{\Gamma}$  provides a new perspective of the close connection of  
%the admissible solution and partial uniform ellipticity of
%(admissible solution and partial uniform ellipticity) fully nonlinear equations of elliptic and parabolic type to the notions of $p$-plurisubharmonic functions and Levi $p$-pseudoconvexity in several complex variables.

\begin{remark} [Obstruction]
	\label{remark1-obstruction}
	
	%Let $\Omega\subset\mathbb{C}^n$ be a smooth bounded domain.
	%\begin{enumerate}	
	%\item 		{\em $\Omega$ is a smooth bounded domain but not pseudoconvex}:   $\Omega$ does not admit any   plurisubharmonic exhaustion function; see e.g. Demailly  \cite{Demailly-CADG-book}. Thus 
	%	\begin{equation}			f(\lambda(\sqrt{-1}\partial\overline{\partial}u))=\psi e^{u}  
			%\,\, \sqrt{-1}\partial\overline{\partial}u>0
	%		\mbox{ in }\Omega, \,\, u=+\infty \mbox{ on } \partial \Omega \nonumber		\end{equation} 		(with $\Gamma=\Gamma_n$)		is unsolvable in the class of smooth plurisubharmonic functions.
		%Specifically,  a    theorem of Mok-Yau \cite{Mok-Yau1983} says the  solvability of \begin{equation}	\det (u_{i\bar j})= e^{u},\, \sqrt{-1}\partial\overline{\partial}u>0 	\mbox{ in }\Omega, \,\, u=+\infty \mbox{ on } \partial \Omega  \nonumber \end{equation} implies that $\Omega$ is pseudoconcave.
		% is unsolvable in the class of smooth plurisubharmonic functions. 
		
		%Note that in the case $n=2$, $\Gamma$ is of type 1 if and only if $\Gamma$ is the positive cone. So at least $n=2$, the type 2 assumption on $\Gamma$ in Theorem \ref{theorem3-complete} cannot be removed.
		
		%\item If $\Gamma=\Gamma_n$, then the following problem is unsolvable in the plurisubharmonic functions
		%		\begin{equation}  f(\lambda(\sqrt{-1}\partial\overline{\partial}u))= e^{u}   		\mbox{ in }\Omega, \,\, u=+\infty \mbox{ on } \partial \Omega. \nonumber 	\end{equation}
	%	\item 
		Let  $\Omega_0, \Omega_1, \cdots, \Omega_m$ be smooth bounded domains in $\mathbb{C}^n$ with $\bar \Omega_i\subset\subset\Omega_0$ and $\bar \Omega_i$ being pairwise disjoint, for all $1\leq i\leq m$. 
		% Suppose in addition that $\Omega_1$ is a strongly pseudoconvex domain.
		Pick $$(\Omega,\omega)=(\Omega_0  \setminus(\cup_{i=1}^m \bar \Omega_i), 
		\sqrt{-1} \sum  dz_i\wedge d\bar z_i).$$		
Assume % by contradiction 
that
		%the following problem	\begin{equation}  f(\lambda(\sqrt{-1}\partial\overline{\partial}u))= \psi e^{u}   		\mbox{ in }\Omega, \,\, u=+\infty \mbox{ on } \partial \Omega \nonumber 	\end{equation} 
  the following  problem
		   (with  $\Gamma$ being of type 1, that is
		     $0\leq\kappa_\Gamma\leq n-2$) 
	 %(with  corresponding cone $\Gamma$ being type 1)  %(i.e. $\Gamma_\infty\neq\mathbb{R}^{n-1}$))
		\begin{equation}  f(\lambda(\sqrt{-1}\partial\overline{\partial}u))= \psi e^{\Lambda_0 u}	\mbox{ in }\Omega, \,\, u=+\infty \mbox{ on } \partial\Omega  \nonumber 	\end{equation}
		admits a $C^2$-smooth  admissible 
		solution.
		%in which the corresponding cone $\Gamma$ is of type 1. 
	 Then $u$
  is  a  $\kappa_\Gamma$-plurisubharmonic exhaustion function for $\Omega$.   %($\sqrt{-1}\partial\overline{\partial}u$ has at least $(n-\kappa_\Gamma)$ positive eigenvalues). 
This yields that 
$\Omega$ is  Levi 
$\kappa_\Gamma$-pseudoconvex 
  %(the Levi form of boundary has at most $\kappa_\Gamma$ negative eigenvalues) 
by   %some of results due to
  %some results of
 Eastwood-Suria \cite{Eastwood1980Suria} and Suria \cite{Suria-q-convex} 
 (see Theorem \ref{thm1-Eastwood-Suria} below), 
  %see also Ohsawa-Pawlaschyk's book  \cite{Ohsawa2022Pawlaschyk}.  
 which %is a contradiction.  
 contradicts to the shape of $\Omega$.
%(Note $0\leq\kappa_\Gamma\leq n-2$ for type 1 cone).
% As a contrast,   any  smooth bounded domain  %in $\mathbb{C}^n$  
  % admits an $(n-1)$-plurisubharmonic exhaustion function %(
 %  is  $(n-1)$-{\em complete}  %\cite{Suria-q-convex};
 %(cf. %\cite[Page 15]{Ohsawa2022Pawlaschyk},
%(cf. \cite{Ohsawa2022Pawlaschyk}; Theorem \ref{theorem3-complete} can be viewed as its refinement).

		%(Note that the boundary of a smooth bounded domain in $\mathbb{C}^n$ has a strictly  convex point. Thus $\Omega$ admits a boundary point where the Levi form is negative).
		
%	\end{enumerate}
	%From these obstructions, we know that in Theorem \ref{theorem3-complete} the assumption $\Gamma_\infty=\mathbb{R}^{n-1}$ cannot dropped in general.
\end{remark}

  \begin{remark}

 %As we can see,  ${\kappa}_{\Gamma}$ serves as a bridge  between  two terms.  One is the notions of admissible function and partial uniform ellipticity in fully nonlinear PDEs. And the other is the notions of $q$-plurisubharmonic function, Levi  $q$-pseudoconvexity and $q$-completeness  in  several complex variables.
 
 %The role of ${\kappa}_{\Gamma}$ becomes apparent as it serves as a connection between two concepts. On one hand, it links the ideas of admissible function and partial uniform ellipticity  within the realm of fully nonlinear PDEs. On the other hand, it establishes a link with the concepts of $q$-plurisubharmonic function, Levi $q$-pseudoconvexity, and $q$-completeness in several complex variables.
 
  The role of ${\kappa}_{\Gamma}$ becomes apparent as it  serves as a bridge  between  two concepts. On one hand, it links the ideas of admissible function and partial uniform ellipticity  
  within the realm of 
   fully nonlinear PDEs. On the other hand, it establishes a connection with the concepts of $q$-plurisubharmonic function, Levi $q$-pseudoconvexity, and $q$-completeness in several complex variables.
   
   % This is a rather surprising result at first sight because it is not so obvious how to relate Ricci curvature to function-theoretic information.
 
 %${\kappa}_{\Gamma}$ functions as a connecting link between two concepts. On one hand, it connects the ideas of admissible function and partial uniform ellipticity within the realm of fully nonlinear partial differential equations. On the other hand, it establishes a bridge between the concepts of $q$-plurisubharmonic function, Levi $q$-pseudoconvexity, and $q$-completeness in the context of several complex variables.
     
  \end{remark}
 
Building on Theorem \ref{theorem3-complete}  
we prove   
 that 
all of geometric and analytic obstructions to 
%the
solvability of  \eqref{equ1-hehe}  
 on  complete noncompact  %Hermitian 
 manifolds  are embodied in  %the 
 asymptotic condition at infinity:
%There is  a  \textit{pseudo-admissible} function  
$M$ carries  a  \textit{pseudo-admissible} function  
%such that
%fulfilling
% with
 satisfying 
\begin{equation}
	\label{asymptotic-assumption1}
	\begin{aligned}
		f(\lambda(\omega^{-1}(\chi+\sqrt{-1}\partial\overline{\partial}
		\underline{v}))) \geq  \psi  e^{\Lambda_0 \underline{v}}
		%e^{\varsigma \underline{v}} 
		\,  \mbox{ in } M\setminus K_0, \,\, \underline{v}\in C^2(M),
	\end{aligned}
\end{equation}
where $K_0$ is a compact subset of $M$.  
 This is  a sufficient and necessary condition.

\begin{theorem}
	\label{theorem4-complete-noncompact}
	Let $(M,\omega)$ be a complete noncompact 
	%connected 
	Hermitian manifold. 
	Suppose, in addition to %\eqref{uniform-elliptic-2} 
 \eqref{homogeneous-1-buchong2} and
  $\sup_\Gamma f=+\infty $,   that  	$\Gamma_\infty=\mathbb{R}^{n-1}$. 
	Given a prescribed function $0<\psi\in C^\infty(M)$ satisfying 
	%the asymptotic condition  
	 \eqref{asymptotic-assumption1}, there is a unique maximal smooth admissible solution $u$ to \eqref{equ1-hehe}  with $u\geq \underline{v}-C_0$ in $M$ for some constant $C_0$.
	
\end{theorem}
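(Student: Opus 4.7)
The plan is to construct the maximal solution as the monotonically decreasing limit of solutions to infinity-boundary Dirichlet problems on a smooth exhaustion of $M$, using the pseudo-admissible function $\underline{v}$ to keep the limit finite from below. Necessity of \eqref{asymptotic-assumption1} is automatic: any admissible solution $u$ itself verifies it with $\underline{v}:=u$ and $K_0=\emptyset$, so only existence, maximality and uniqueness require proof.

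For existence, fix a smooth exhaustion $M_1\subset\subset M_2\subset\subset\cdots$ of $M$ by relatively compact open subsets with smooth boundary, with $K_0\subset\subset M_1$. Since $\Gamma_\infty=\mathbb{R}^{n-1}$ and $\sup_\Gamma f=+\infty$, Theorem \ref{theorem3-complete} applies on each $\bar M_k$ and produces a minimal admissible solution $u_k\in C^\infty(M_k)$ of \eqref{equ1-hehe} blowing up on $\partial M_k$. For $k'>k$, the restriction $u_{k'}|_{M_k}$ is admissible with finite boundary data, while $u_k=+\infty$ on $\partial M_k$, and the comparison principle (applied along the admissible competitor $u_k$, at which \eqref{elliptic} is automatic, and using that $\psi e^{\Lambda_0 u}$ is increasing in $u$) gives $u_{k'}\leq u_k$ on $M_k$. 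So $\{u_k\}$ is pointwise decreasing on every fixed $M_k$.

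For the uniform lower bound I replace $\underline{v}$ by $\underline{v}-C_0$ for a large constant $C_0>0$; this does not change $\sqrt{-1}\partial\overline{\partial}\underline{v}$ while it decreases the right-hand side $\psi e^{\Lambda_0\underline{v}}$, so $\underline{v}-C_0$ remains a subsolution on $M\setminus K_0$. Choose a smoothly bounded $K_0'$ with $K_0\subset\subset K_0'\subset\subset M_1$. The local interior estimates applied to $u_k$ on $K_0'$ (with upper bound $u_k\leq u_1$ from monotonicity and the gradient estimate from the blow-up argument) produce a uniform pointwise bound on $u_k|_{K_0'}$ independent of $k$; taking $C_0$ large enough then ensures $\underline{v}-C_0\leq u_k$ on $\partial K_0'$. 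Applying the comparison principle on $M_k\setminus K_0'$, where $u_k$ dominates $\underline{v}-C_0$ both on $\partial K_0'$ and on $\partial M_k$, yields $u_k\geq\underline{v}-C_0$ throughout $M_k\setminus K_0'$, and hence on $M_k$ after possibly enlarging $C_0$. With uniform two-sided bounds on compact subsets of $M$, the interior machinery gives $C^{k,\alpha}_{\mathrm{loc}}$ bounds independent of $k$: the gradient estimate via the blow-up contradiction advertised in the introduction (valid because $\sup_\Gamma f=+\infty$ implies \eqref{addistruc}), second-order estimates via the partial uniform ellipticity furnished by $\Gamma_\infty=\mathbb{R}^{n-1}$, and then Evans--Krylov plus Schauder. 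Setting $u:=\lim_{k\to\infty} u_k$, the convergence is in $C^2_{\mathrm{loc}}$, so $u\in C^\infty(M)$ is an admissible solution of \eqref{equ1-hehe} with $u\geq\underline{v}-C_0$.

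Maximality follows by the same comparison: if $w$ is any admissible solution, then on each $M_k$ it has finite boundary values on $\partial M_k$, so $w\leq u_k$, and letting $k\to\infty$ gives $w\leq u$; uniqueness of the maximal solution is then immediate. The step I expect to be the main obstacle is the uniform lower bound on the inner region $K_0'$: the hypothesis \eqref{asymptotic-assumption1} only controls $\underline{v}$ as a subsolution outside $K_0$, and because $M$ is noncompact no global a priori bound is free, so the required control on $u_k|_{K_0'}$ has to be bootstrapped from local interior estimates applied to the $u_k$ themselves. A secondary subtlety is that \eqref{elliptic} is not a standing hypothesis, which forces comparison arguments to be executed along admissible competitors where ellipticity is granted a posteriori; this is, however, consistent with the framework already developed in Theorem \ref{theorem3-complete}.
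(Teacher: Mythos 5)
Your overall architecture (exhaustion by compact pieces $M_k$, solving infinity-boundary-data Dirichlet problems via Theorem \ref{theorem3-complete}, deriving monotone decrease by comparison, passing to the local limit with interior estimates plus Evans--Krylov and Schauder, and obtaining maximality by comparison) matches the paper's Theorem \ref{theorem1-complete-4}. But the step you flagged as the main obstacle --- the uniform lower bound on $u_k$ over the inner region $K_0'$ --- is where your argument actually fails, and the way you propose to fill it is circular. You claim that ``local interior estimates applied to $u_k$ on $K_0'$ \dots produce a uniform pointwise bound on $u_k|_{K_0'}$ independent of $k$.'' But Proposition \ref{thm0-inter} and the blow-up gradient argument yield derivative bounds \emph{in terms of} $|u|_{C^0}$, i.e.\ a two-sided zeroth-order bound is an input to those estimates, not an output; nothing in the interior machinery rules out $u_k\to-\infty$ on $K_0'$. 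An upper bound $u_k\leq u_1$ alone does not provide the needed lower bound, and for these Hessian-type equations there is no Harnack inequality to convert one into the other.

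The paper closes this gap differently, by upgrading the hypothesis before launching the exhaustion argument. Lemma \ref{lemma1-asymcondition} uses the Morse-function construction (Lemma \ref{lemma-diff-topologuy} together with \eqref{formular1-compu}) to glue the behaviour of $\underline{v}$ near infinity (where the pseudo-admissible metric and \eqref{asymptotic-assumption1} already give $\lambda(\chi)\in\Gamma$) to an explicit admissible piece $e^{N(v-1)}$ over a compact collar $M_2\setminus M_1$ around $K_0$, producing a globally admissible function $\underline{u}\in C^2(M)$ with $f(\lambda(\chi+\sqrt{-1}\partial\overline\partial\,\underline{u}))\geq\Lambda_1\psi e^{\Lambda_0\underline{u}}$ on all of $M$ and $\underline{u}\geq\underline{v}-C_1$. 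After a constant shift this is a genuine global admissible subsolution, so $u^{(k)}\geq\underline{u}$ follows directly from the maximum principle on each $M_k$ --- no interior estimates, no separate inner-region argument, and the interior estimates are then applied only after the two-sided $C^0$ bound is in hand. In short, the piece you are missing is precisely Lemma \ref{lemma1-asymcondition}; once you import it, the rest of your outline (which is the same as Theorem \ref{theorem1-complete-4}) goes through.
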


  %\medskip
 The paper is organized as follows. 
%The paper is arranged as follows.
In Section \ref{Dirichlet-problem-4} we first draw some  geometric conclusions related to Problems \ref{Q0} and \ref{Q1}.  
% As applications, we  deduce existence of complete metrics satisfying \eqref{equ-deform-Ric+}.
  In Section \ref{preliminaries1} we  summarize some useful results.
Based on partial uniform ellipticity,  in Section %\ref{sec3-structure-operator},
\ref{sec2-PUE-application} we prove that
  \eqref{equ-deform-Ric+} can be reduced to a   fully nonlinear  equation of elliptic or  uniformly elliptic type.  Furthermore, we construct various type 2 cones, which allows us to study Problem   \ref{Q1} and more general equations with Laplacian terms. 
    In Section \ref{construction} we construct admissible functions using certain Morse functions.
  In Section  \ref{Dirichlet-problem} we solve the Dirichlet problem.
  %imposing either admissible restriction to boundary or  subsolution assumption. 
  %In Section \ref{construction} 
  %Furthermore, we construct admissible functions and subsolutions on the product \eqref{product-1}.
  %In Section \ref{Dirichlet-problem-4} we prove existences on  compact conformal Hermitian metrics with prescribed curvature functions. 
  %As a geometric conclusion,  we solve 
  %the prescribed curvature equation
  % \eqref{equ-deform-Ric+}  in the conformal class of  Hermitian metrics with prescribed boundary metric. 
  In Section \ref{Dirichlet-problem-3}  we solve the Dirichlet problem 
  %for \eqref{main-equ2} 
  with infinite boundary value condition.  Moreover, we verify the completeness of the obtained metric.
    Under an appropriate asymptotic condition  at infinity, in Section \ref{sec1-noncompactcomplete} we prove the existence %and uniqueness
     of maximal solution to equations %to  \eqref{main-equ2} 
      on complete noncompact   manifolds. 
%As geometric conclusions, 
The proofs of a priori estimates 
% (quantitative boundary estimate and interior estimates) 
are left to Sections \ref{Sec-Estimates1} and \ref{Sec-Estimates2}. %In Section \ref{sec1-Gamma-k} we made some remarks for the case $\Gamma=\Gamma_k$.
 In appendices \ref{appendix2} and \ref{appendix1}  %\ref{proof-keylemma1}, 
 we   give the proofs of   Lemmas %\ref{lemma3.4},
  \ref{yuan-k+1}, \ref{lemma5.11}, \ref{lemma1-unbound-type2},  \ref{lemma23} and   \ref{yuan's-quantitative-lemma}, 
  %as well as  \ref{keylemma1-yuan3}. %
  respectively.

  %\vspace{1mm}
   The author  is  %pleased to thank  
   indebted to  Professor Yi Liu for %generously 
  answering questions related to the proof of Lemma \ref{lemma-diff-topologuy}. He
 also would like to thank Ze Zhou for helpful discussion on homogeneity lemma.
 % in Morse theory 	(see \cite{Milnor-1997}).  
 %The author also wishes to express his gratitude to Professors Bo Guan, Chunhui Qiu and Xi Zhang for constant support.

%%%%%%%%%%%%%%%%%%%%%%%%%%%%%%%%%%%%%%%%%%%%%%%%%%%%%%%%%%%%

 \medskip

% \subsection{Compact conformal metrics with prescribed curvature functions}
\section{Geometric conclusions on Problems \ref{Q0} and \ref{Q1}}
\label{Dirichlet-problem-4}

%%%%%%%%%%%%%%%%%%%%%%%%%%%%%
%%%%%%%%%%%%%%%%%%%%%%%%%%%% Geometric setting 
%%%%%%%%%%%%%%%%%%%%%%%%%%%

%%%%%%%%%%%%%%%%%%%%%%%%%%%%%%%%%%%%%%%%%%%%%%%%%%%%%%%%%%%%%%%%%%%%

%\subsection{Geometric conclusions}
%\subsection{Geometric setting}
\subsection{Geometric conclusions related to Problem  \ref{Q0}}
%	To investigate Problem  \ref{Q0}, we consider the fully nonlinear equation
We draw some results on Problem  \ref{Q0} by solving the equation
%finding conformal metrics with
\begin{equation}
	\label{equ-deform-1Ric}
	\begin{aligned}
		f(\lambda(- \tilde{\omega}^{-1}
		\Ric_{\tilde{\omega}}^{(1)}))=\psi,  \,\, \lambda(-\tilde{\omega}^{-1}\Ric^{(1)}_{\tilde{\omega}})\in\Gamma,
		\,\,\tilde{\omega}=e^u\omega.
	\end{aligned}
\end{equation} 
%  in the class of conformal Hermitian metrics $\tilde{\omega}=e^u\omega$ satisfying $$\lambda(-\tilde{\omega}^{-1}\Ric^{(1)}_{\tilde{\omega}})\in\Gamma,$$
%where $\lambda(-\tilde{\omega}^{-1}\Ric^{(1)}_{\tilde{\omega}})$ denotes the eigenvalues of $-\Ric^{(1)}_{\tilde{\omega}}$ with respect to $\tilde{\omega}$.

The results are stated as follows.
\begin{theorem}
	\label{thm1-1Ric}
	Suppose $(f,\Gamma)$ satisfies %\eqref{elliptic}, 
	\eqref{homogeneous-1-buchong2}, \eqref{homogeneous-1}  and \eqref{unbounded-1}.  
	%\eqref{homogeneous-1-buchong2}-\eqref{unbounded-2}.  
	Assume that $(\bar M, \omega)$ is a compact Hermitian manifold with smooth $\Gamma_\infty$-admissible boundary %and with $[-Ric^{(1)}_\omega]\in\Gamma$.
	%and suppose a $\Gamma_\infty$-admissible form in $[-Ric^{(1)}_\omega]$.	  
	and suppose a $C^2$ conformal   metric $\underline{\omega}$ satisfying $\lambda(-\omega^{-1}Ric^{(1)}_{\underline{\omega}})\in\Gamma$. 
	Then for any smooth metric $h$ on $\partial M$ which is conformal to the restriction of $\omega$ to $\partial M$ and $0<\psi\in C^\infty(\bar M)$, there exists a unique smooth metric $\tilde{\omega}=e^u\omega$ 
	satisfying \eqref{equ-deform-1Ric}  %$\lambda(-\omega^{-1}Ric^{(1)}_{\tilde{\omega}})\in\Gamma$
	and $\tilde{\omega}\big|_{\partial M}=h$.

\end{theorem}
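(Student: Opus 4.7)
The plan is to reduce Theorem \ref{thm1-1Ric} to Theorem \ref{thm1-dirichlet} via the standard conformal transformation of the first Chern--Ricci form. Since $\Ric^{(1)}=-\sqrt{-1}\partial\bar\partial\log\det g$ and $\det\tilde g=e^{nu}\det g$ under $\tilde\omega=e^u\omega$, one obtains
\begin{equation*}
\Ric^{(1)}_{\tilde\omega}=\Ric^{(1)}_\omega - n\sqrt{-1}\partial\bar\partial u,
\end{equation*}
and hence $-\tilde\omega^{-1}\Ric^{(1)}_{\tilde\omega}=e^{-u}\omega^{-1}(-\Ric^{(1)}_\omega+n\sqrt{-1}\partial\bar\partial u)$. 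Combining this with the homogeneity \eqref{homogeneous-1} transforms equation \eqref{equ-deform-1Ric} into
\begin{equation*}
f\bigl(\lambda(\omega^{-1}(-\Ric^{(1)}_\omega+n\sqrt{-1}\partial\bar\partial u))\bigr)=\psi\,e^{\varsigma u}.
\end{equation*}
Setting $v:=nu$, $\chi:=-\Ric^{(1)}_\omega$, and $\Lambda_0:=\varsigma/n>0$, this is exactly the equation \eqref{equ1-hehe} for $v$.

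Next I would translate the remaining data. Writing the given conformal metric as $\underline\omega=e^{\underline u}\omega$ with $\underline u\in C^2(\bar M)$ and setting $\underline v:=n\underline u$, the same computation shows that $\lambda(-\omega^{-1}\Ric^{(1)}_{\underline\omega})\in\Gamma$ is identical to $\lambda(\omega^{-1}(\chi+\sqrt{-1}\partial\bar\partial\underline v))\in\Gamma$, so $\underline v$ is a $C^2$-smooth admissible function for \eqref{equ1-hehe}. Since $h$ is conformal to $\omega|_{\partial M}$, one can write $h=e^{\varphi}\omega|_{\partial M}$ for some $\varphi\in C^\infty(\partial M)$, and then $\tilde\omega|_{\partial M}=h$ is exactly $v|_{\partial M}=n\varphi\in C^\infty(\partial M)$. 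Together with the hypotheses \eqref{homogeneous-1-buchong2} and \eqref{unbounded-1} on $(f,\Gamma)$, and the assumption that $\partial M$ is $\Gamma_\infty$-admissible, all the hypotheses of Theorem \ref{thm1-dirichlet} are satisfied.

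Applying Theorem \ref{thm1-dirichlet} then produces a unique admissible solution $v\in C^\infty(\bar M)$ to \eqref{equ1-hehe} with $v=n\varphi$ on $\partial M$. Setting $u:=v/n$ and $\tilde\omega:=e^u\omega$ yields the desired smooth conformal metric, and uniqueness is inherited directly from Theorem \ref{thm1-dirichlet}. Since all the genuine analytic difficulty -- the quantitative boundary estimate \eqref{quantitative-BE}, the blow-up gradient estimate, and the second-order estimate -- has been absorbed into Theorem \ref{thm1-dirichlet}, there is no real new obstacle; the only point that needs to be checked with care is the bookkeeping showing that the $e^{-u}$ factor arising from $\tilde\omega^{-1}$ interacts with the homogeneity exponent $\varsigma$ so as to produce the exponential $e^{\Lambda_0 v}$ on the right-hand side with $\Lambda_0=\varsigma/n>0$ as required in \eqref{equ1-hehe}.
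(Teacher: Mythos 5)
Your proof is correct and follows essentially the same approach as the paper: use the conformal transformation formula $\Ric^{(1)}_{\tilde\omega}=\Ric^{(1)}_\omega-n\sqrt{-1}\partial\bar\partial u$ together with the homogeneity \eqref{homogeneous-1} to reduce \eqref{equ-deform-1Ric} to \eqref{equ1-hehe}, verify the translated hypotheses, and invoke Theorem \ref{thm1-dirichlet}. The only cosmetic difference is in the bookkeeping: the paper keeps $u$ as the unknown, writing the reduced equation as $f(\lambda(\omega^{-1}(\sqrt{-1}\partial\bar\partial u - n^{-1}\Ric^{(1)}_\omega)))=\psi e^{\varsigma(u-\log n)}$ with $\chi=-n^{-1}\Ric^{(1)}_\omega$ and $\Lambda_0=\varsigma$, whereas you rescale the unknown to $v=nu$ so that $\chi=-\Ric^{(1)}_\omega$ and $\Lambda_0=\varsigma/n$; both are equivalent instances of \eqref{equ1-hehe}.
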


Furthermore, we can construct complete metrics when $\Gamma$ is of type 2. 
(The obstruction in Remark \ref{remark1-obstruction} indicates that %the type 2  
such an assumption is generally necessary).
%(By the obstruction in Remark \ref{remark1-obstruction}  the type 2  assumption is necessary).
%As a special case, we obtain Theorem \ref{thm1-complex-surface}.
\begin{theorem}
	\label{theorem4-complete-general} 
	Let $(\bar M, \omega)$ be a compact Hermitian manifold with smooth boundary.
	In addition to  
	\eqref{homogeneous-1-buchong2} and \eqref{homogeneous-1}, we assume $\Gamma_\infty=\mathbb{R}^{n-1}$.
	For any $0<\psi\in C^\infty(\bar M)$,  
	the interior 
	$M$ admits a smooth %minimal
	complete metric $\tilde{\omega}=e^u\omega$  
	satisfying \eqref{equ-deform-1Ric}.
	%and $\lambda(-\omega^{-1}Ric^{(1)}_{\tilde{\omega}})\in\Gamma$.
	
\end{theorem}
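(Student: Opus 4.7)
The plan is to convert \eqref{equ-deform-1Ric} into the model form \eqref{equ1-hehe}, apply Theorem \ref{theorem3-complete} on the interior $M$, and finally verify completeness of the resulting conformal metric.

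Under $\tilde\omega = e^u\omega$ the first Chern-Ricci form satisfies $\Ric^{(1)}_{\tilde\omega} = \Ric^{(1)}_\omega - n\sqrt{-1}\partial\bar\partial u$ and $\tilde\omega^{-1} = e^{-u}\omega^{-1}$, so
\[
-\tilde\omega^{-1}\Ric^{(1)}_{\tilde\omega} = e^{-u}\omega^{-1}\bigl(n\sqrt{-1}\partial\bar\partial u - \Ric^{(1)}_\omega\bigr).
\]
Using the positive homogeneity \eqref{homogeneous-1} and introducing $v := nu$, $\chi := -\Ric^{(1)}_\omega$, $\Lambda_0 := \varsigma/n > 0$, equation \eqref{equ-deform-1Ric} becomes exactly \eqref{equ1-hehe}. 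Next I would verify the hypotheses of Theorem \ref{theorem3-complete}: the condition $\Gamma_\infty = \mathbb{R}^{n-1}$ is already assumed, and $\sup_\Gamma f = +\infty$ follows from $f(t\lambda) = t^\varsigma f(\lambda) \to +\infty$ as $t\to+\infty$, since $\varsigma > 0$ and $f>0$ in $\Gamma$ by \eqref{homogeneous-1-buchong2}. Theorem \ref{theorem3-complete} then produces a smooth admissible solution $v \in C^\infty(M)$ to the reduced equation with $v(z) \to +\infty$ as $z\to\partial M$, and setting $u := v/n$ furnishes a smooth admissible conformal metric $\tilde\omega = e^u\omega$ on $M$ satisfying \eqref{equ-deform-1Ric}.

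The remaining step, and the main obstacle, is to confirm that $(M,\tilde\omega)$ is a complete Hermitian manifold, i.e. that every $C^1$ curve $\gamma$ in $M$ accumulating on $\partial M$ satisfies $\int_\gamma e^{u/2}\,|d\gamma|_\omega = +\infty$. This requires a quantitative lower bound for $u$ near $\partial M$, which is built into the barrier construction that produces the infinite-boundary solution in Theorem \ref{theorem3-complete}. Heuristically, substituting the ansatz $v = -\alpha \log d(\cdot,\partial M) + O(1)$ into the reduced equation makes the leading orders balance at $\alpha = 2\varsigma/\Lambda_0 = 2n$, so one expects
\[
v(z) \geq 2n\log\bigl(1/d(z,\partial M)\bigr) - C
\]
on a tubular neighborhood of $\partial M$, equivalently $e^{u/2} = e^{v/(2n)} \geq c\,d(\cdot,\partial M)^{-1}$. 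Since $\int_0^\varepsilon s^{-1}\,ds = +\infty$, every path into $\partial M$ then has infinite $\tilde\omega$-length and completeness follows. The essential bookkeeping task is thus to isolate this quantitative boundary blow-up from the local barriers in the proof of Theorem \ref{theorem3-complete}; once that is in place the conclusion is immediate.
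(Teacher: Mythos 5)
Your reduction of \eqref{equ-deform-1Ric} to the model form \eqref{equ1-hehe} is correct, as is the invocation of Theorem \ref{theorem3-complete} for existence of the infinite-boundary-value solution; the scaling heuristic $\alpha = 2\varsigma/\Lambda_0$ is also the right one. However, you explicitly defer the completeness of $e^u\omega$ as an ``essential bookkeeping task,'' and this is precisely the nontrivial half of the theorem, so the argument as written has a genuine gap. The paper closes it with a dedicated result, Theorem \ref{theorem2-complete-2}: under \eqref{homogeneous-1} and $\psi(z,t)=\psi(z)e^{\varsigma t}$, the minimal solution from Theorem \ref{theorem3-complete} satisfies $u\geq -2\log\sigma - C_0$ near $\partial M$ (comparing the approximants $u_{(k)}$, with boundary data $2\log k$, to the barrier $w=2\log\frac{\delta^2}{\delta^2+k\sigma}$ via Lemma \ref{lemma3.4} and $(0,\dots,0,1)\in\Gamma$), so $e^{u/2}\gtrsim\sigma^{-1}$ and $e^u\omega$ is complete. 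The paper's proof of Theorem \ref{theorem4-complete-general} is literally ``Combining Theorems \ref{theorem3-complete} and \ref{theorem2-complete-2}.''

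There is also a normalization mismatch in your setup that matters. You take $v:=nu$, $\chi:=-\Ric^{(1)}_\omega$, $\Lambda_0:=\varsigma/n$, so the exponential rate in your reduced equation is $\varsigma/n\neq\varsigma$; this falls outside the hypotheses of Theorem \ref{theorem2-complete-2}, which requires $\psi(z,t)=\psi(z)e^{\varsigma t}$. To use the paper's lemma as stated you would need to rerun its barrier argument with $w=2n\log\frac{\delta^2}{\delta^2+k\sigma}$ to get $v\geq -2n\log\sigma-C$ and hence $u=v/n\geq -2\log\sigma-C/n$; otherwise the naive application only yields $u\geq -\frac{2}{n}\log\sigma - C$, and $\int_0^\varepsilon s^{-1/n}\,ds<\infty$ does \emph{not} force completeness. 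The paper avoids this entirely by absorbing the factor $n$ into $\chi$ and $\psi$ rather than into the unknown: the reduced equation is $f(\lambda(\omega^{-1}(\sqrt{-1}\partial\bar\partial u - n^{-1}\Ric^{(1)}_\omega)))=\psi e^{\varsigma(u-\log n)}$, so the conformal exponent $u$ itself is the solution variable and $\Lambda_0=\varsigma$, which lets Theorem \ref{theorem2-complete-2} be applied verbatim.
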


When %the background space
$(M,\omega)$ is complete and noncompact,  
%by Theorem \ref{theorem4-complete-noncompact}
we solve \eqref{equ-deform-1Ric} under the asymptotic condition at infinity: 
There is a compact set $K_0$ and a positive constant $\Lambda$ such that
\begin{equation}
	\label{asymptotic-condition2}
	\begin{aligned} 
		f(\lambda(- {\omega}^{-1}
		\Ric_{{\omega}}^{(1)})) \geq \Lambda\psi 
		\mbox{ in } M\setminus K_0.
	\end{aligned}
\end{equation}   
%(Note that \eqref{asymptotic-condition2} is %tautologically
% a necessary condition for the solvability).

\begin{theorem}
	\label{theorem3-complete-general} 
	Suppose, in addition to \eqref{homogeneous-1-buchong2}, \eqref{homogeneous-1} and $\Gamma_\infty=\mathbb{R}^{n-1}$, that $(M,\omega)$ is a complete noncompact Hermitian manifold with pseudo-admissible metric subject to 
	\eqref{asymptotic-condition2} for some $0<\psi\in C^\infty(M)$.
	Then %the equation 
	\eqref{equ-deform-1Ric}   is uniquely solvable  in the conformal class of maximal smooth 
	complete  metrics.
	%with	$\lambda(-{\omega}^{-1}	\Ric_{\tilde{\omega}}^{(1)})\in\Gamma$.
\end{theorem}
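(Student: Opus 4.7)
The plan is to reduce Theorem \ref{theorem3-complete-general} to Theorem \ref{theorem4-complete-noncompact} by recasting the prescribed Chern-Ricci equation as an instance of \eqref{equ1-hehe}. Writing $\tilde{\omega}=e^u\omega$ yields $\log\det\tilde g=nu+\log\det g$, hence
\[
-\Ric^{(1)}_{\tilde{\omega}}=-\Ric^{(1)}_\omega+n\sqrt{-1}\partial\overline{\partial}u, \qquad \tilde{\omega}^{-1}=e^{-u}\omega^{-1}.
\]
Combining these with the homogeneity $f(t\lambda)=t^{\varsigma}f(\lambda)$ from \eqref{homogeneous-1} transforms \eqref{equ-deform-1Ric} into
\[
f\bigl(\lambda(\omega^{-1}(-\Ric^{(1)}_\omega+n\sqrt{-1}\partial\overline{\partial}u))\bigr)=\psi\, e^{\varsigma u}.
\]
Setting $v:=nu$, $\chi:=-\Ric^{(1)}_\omega$, and $\Lambda_0:=\varsigma/n>0$, this is exactly \eqref{equ1-hehe}.

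Next I would verify the remaining hypotheses of Theorem \ref{theorem4-complete-noncompact}. The condition $\sup_\Gamma f=+\infty$ is automatic from \eqref{homogeneous-1-buchong2} together with \eqref{homogeneous-1}, since $f(t\lambda)=t^{\varsigma}f(\lambda)\to+\infty$ along any ray in $\Gamma$. For the asymptotic subsolution \eqref{asymptotic-assumption1} I would simply take the constant function $\underline{v}\equiv -C$: the pseudo-admissibility of the background metric $\omega$ reads $\lambda(\omega^{-1}\chi)=\lambda(-\omega^{-1}\Ric^{(1)}_\omega)\in\bar{\Gamma}$, while \eqref{asymptotic-condition2} gives $f(\lambda(\omega^{-1}\chi))\geq\Lambda\psi$ on $M\setminus K_0$. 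Choosing $C$ large enough that $\Lambda\geq e^{-\Lambda_0 C}$ then yields
\[
f\bigl(\lambda(\omega^{-1}(\chi+\sqrt{-1}\partial\overline{\partial}\underline{v}))\bigr)\geq \psi\, e^{\Lambda_0\underline{v}} \quad \text{on } M\setminus K_0,
\]
as required.

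Theorem \ref{theorem4-complete-noncompact} then supplies a unique maximal smooth admissible solution $v$ of the transformed equation with $v\geq\underline{v}-C_0=-(C+C_0)$ throughout $M$. Setting $u:=v/n$ recovers a smooth admissible solution of \eqref{equ-deform-1Ric} with $u\geq -(C+C_0)/n$, so $e^u$ is bounded below by a positive constant; since $\omega$ is already complete, this uniform lower bound forces $\tilde{\omega}=e^u\omega$ to be complete. Uniqueness inside the class of maximal smooth complete conformal solutions is inherited from the uniqueness of the maximal $v$ in Theorem \ref{theorem4-complete-noncompact}, transported through the reversible linear scaling $u=v/n$ which preserves the maximality ordering.

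The bulk of the analytic work is already absorbed into Theorem \ref{theorem4-complete-noncompact}, so the residual task here is mostly bookkeeping: correctly matching $\chi$, the exponent $\Lambda_0$, and the constant $C$ so that the geometric data $(-\Ric^{(1)}_\omega,\psi)$ fit the hypotheses of Theorem \ref{theorem4-complete-noncompact}. The one point I would check carefully is that the geometric notion of \emph{maximal conformal metric} introduced after Definition \ref{def1-admissible} corresponds precisely, under the identification $u\leftrightarrow v=nu$, to the PDE notion of maximal admissible solution, so that uniqueness in the geometric class coincides with the uniqueness delivered by Theorem \ref{theorem4-complete-noncompact}.
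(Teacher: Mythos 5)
Your proposal is correct and takes essentially the same route as the paper: rewrite \eqref{equ-deform-1Ric} via the conformal transformation formula and the homogeneity \eqref{homogeneous-1} as an instance of \eqref{equ1-hehe}, and then invoke Theorem \ref{theorem4-complete-noncompact}. The only cosmetic difference is the normalization — the paper keeps $u$ as the unknown and absorbs the factor $n$ into the data, writing $\chi=-n^{-1}\Ric^{(1)}_\omega$, $\Lambda_0=\varsigma$, $\psi\mapsto\psi n^{-\varsigma}$, whereas you substitute $v=nu$ with $\chi=-\Ric^{(1)}_\omega$ and $\Lambda_0=\varsigma/n$; both are valid. You also make explicit two bookkeeping steps the paper leaves implicit: taking the constant $\underline v\equiv-C$ (pseudo-admissible because $\omega$ is, and satisfying \eqref{asymptotic-assumption1} for $C$ large by \eqref{asymptotic-condition2}), and deducing completeness of $e^u\omega$ from the uniform lower bound $u\geq\underline v-C_0$ supplied by Theorem \ref{theorem4-complete-noncompact} together with the completeness of $\omega$. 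Both points are sound.
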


%\begin{remark}	In the case    $(f,\Gamma)=(\sigma_1,\Gamma_1)$ and $\psi\equiv1$,   
%	we obtain Theorem % \ref{thm1-complex}  and 
%	\ref{thm1-complete}.  \end{remark}

\begin{proof}
	[Proof of Theorems  \ref{thm1-1Ric}, \ref{theorem4-complete-general} and \ref{theorem3-complete-general}]
	When $f$ satisfies \eqref{homogeneous-1}, %the equation
	 \eqref{equ-deform-1Ric} is reduced to 
	\begin{equation}
		\label{equ-deform-1Ric-2}
		\begin{aligned}
			f(\lambda({\omega}^{-1}(\sqrt{-1} \partial \overline{\partial}u		- {n^{-1}}\Ric^{(1)}_{\omega} )))= \psi   e^{\varsigma (u-\log n)}. \nonumber
		\end{aligned}
	\end{equation} 
	%which has the form  \eqref{equ1-hehe}.  
	Consequently, by Theorems \ref{thm1-dirichlet} and \ref{theorem4-complete-noncompact} we obtain	Theorems  \ref{thm1-1Ric} and \ref{theorem3-complete-general}, respectively.
Combining  Theorems \ref{theorem3-complete} and  \ref{theorem2-complete-2}, we get Theorem \ref{theorem4-complete-general}.
%	From Theorems   \ref{thm2-existence-bdy}, \ref{theorem2-complete-2}, \ref{theorem4-complete-noncompact}, we get	Theorems  \ref{thm1-1Ric}, \ref{theorem4-complete-general}, \ref{theorem3-complete-general}, respectively.  
\end{proof}

%Theorem \ref{theorem4-complete-noncompact} is a consequence of Theorem \ref{theorem1-complete-4}.

\subsection{Geometric conclusions related to Problem  \ref{Q1}}

Below we focus on Problem \ref{Q1} by finding the metric  $\tilde{\omega}=e^{u}\omega$ with 
%	The Problem \ref{Q1} %and \ref{Q2}
%require one
%amounts to solving the prescribed curvature equation  
\begin{equation}
	\label{equ-deform-Ric+}
	\begin{aligned}
		f(\lambda(- \tilde{\omega}^{-1}
		\Ric_{\tilde{\omega}}^{\langle\alpha, \beta, \gamma\rangle}))=\psi,	%\,\,\tilde{\omega}=e^u\omega, 
		\,\, \beta>0, \,\, \lambda(- {\omega}^{-1}\Ric^{\langle\alpha, \beta, \gamma\rangle}_{\tilde{\omega}})\in\Gamma.	 
		%\mbox{ } 0<\psi\in C^\infty(M)
	\end{aligned}
\end{equation}   
%  provided \eqref{homogeneous-1} holds.
%On the other hand, 
%
Under the assumption \eqref{homogeneous-1},
the equation \eqref{equ-deform-Ric+}  
reads as follows:
\begin{equation}
	\label{equ-deform-Ric+2}
	\begin{aligned}
		f(\lambda({\omega}^{-1}(\Delta u   \omega 
		+\beta^{-1}(n\alpha+2\gamma)\sqrt{-1} \partial \overline{\partial}u
		- {\beta}^{-1}\Ric^{\langle\alpha, \beta, \gamma\rangle}_{\omega})))= 
		\psi e^{\varsigma (u-\log\beta)}, 
		%\mbox{ } w=u-\log\beta, 
		%\,\,\beta>0,
		% \,\, \varrho=-\beta^{-1}(n\alpha+2\gamma).
	\end{aligned}
\end{equation} 
where $\Delta u
= \mathrm{tr}({\omega}^{-1}\sqrt{-1}\partial\overline{\partial}u)
%=g^{i\bar j}u_{i\bar j}
$,
according to the formula 
%under  conformal change $\tilde{\omega}=e^{u}\omega$ 
 (see e.g. \cite{GQY2018})  
\begin{equation} \label{conformal-formula1}	\begin{aligned} 	-\Ric^{\langle\alpha, \beta, \gamma\rangle}_{\tilde{\omega}}  	=  \beta \Delta u  \omega +   (n \alpha + 2 \gamma) \sqrt{-1} \partial \overline{\partial}u	-\Ric^{\langle\alpha, \beta, \gamma\rangle}_{\omega}. \nonumber	\end{aligned} \end{equation}  
%which  falls into  the equation of the form \eqref{equ1-hehe}. 
%Both \eqref{equ-deform-1Ric-2} and \eqref{equ-deform-Ric+2} fall into  the equation  \eqref{equ1-hehe}. 

Given an admissible subsolution,
the Dirichlet problem  
%for  \eqref{equ-deform-Ric+2}
was  solved   by  
\cite{GQY2018}
%in the conformal class of \textit{compact} metrics with prescribed boundary metric
when
\begin{equation} 	\label{GQY-condition1} 	\begin{aligned}  	\beta+n\alpha+2\gamma>0, \,\, \beta>0, \,\, n\alpha+2\gamma\neq 0,  \end{aligned} 	\end{equation}
%Notice that the case considered in \cite{GQY2018} does not cover the critical mixed Chern-Ricci form, and the obtained conformal Hermitian metrics are only compact but not complete.
%satisfying \eqref{homogeneous-1-buchong2}-\eqref{homogeneous-1} below. 
% One can easily check that 
%under certain additional structural assumptions on $f$,
% A key %technique ingredient
under which  the equation  %\eqref{equ-deform-Ric+} 
becomes uniformly elliptic. 
% Here $\Delta$ stands for the Laplacian operator with   $\Delta u=\mathrm{tr}({\omega}^{-1}\sqrt{-1}\partial\overline{\partial}u)$.
Nevertheless,  
such uniform ellipticity  %probably 
possibly breaks down in the case   
\begin{equation}
	\label{critical-1}
	\begin{aligned}
		\beta+n\alpha+2\gamma=0, \,\,  \beta>0.
	\end{aligned}
\end{equation} 
% among which the $(n-1)$ Monge-Amp\`ere  equation is a canonical example
This includes among others the $(n-1)$ Monge-Amp\`ere  equation 
\begin{equation}
	\label{MA1-ricci}
	\begin{aligned}
		( \Delta u \omega 
		- \sqrt{-1} \partial \overline{\partial}u
		- {\beta}^{-1}\Ric^{\langle\alpha, \beta, \gamma\rangle}_{\omega} )^n=e^{nu+\phi}  \omega^n,
	\end{aligned}
\end{equation}
which is exactly 
of $(n-1)$-uniform ellipticity in the sense of Definition \ref{Def1-PUE}.
%That is  %one of the % primary 
%the major challenge   we face,
% especially when the resulting metric is complete.
This poses a challenge,
especially when the resulting metric is required to be 
complete. 

Our strategy is based on partial uniform ellipticity. 
For $\Gamma$, as in \cite{yuan-PUE2-note} we define 
\begin{equation}	\label{def1-varrho-Gamma}	\begin{aligned}	(1,\cdots,1,1-\varrho_\Gamma)\in\partial \Gamma.  \end{aligned}	\end{equation}  
%\begin{remark} %[\cite{yuan-PUE2-note}]
It is easy to see  $1\leq \varrho_\Gamma\leq n.$  In addition, 
%\begin{itemize}
 %
	%\begin{equation} 	\begin{aligned} 	\Gamma=\Gamma_1, \mbox{ }	f_1(\lambda)=\cdots =f_n(\lambda), \,\, \forall \lambda %=(\lambda_1,\cdots,\lambda_n)
	%	\in\Gamma.  \nonumber	\end{aligned}		\end{equation}
	%\item  
	 %For $\Gamma=\Gamma_k$,	
	  $\varrho_{\Gamma_k}=\frac{n}{k}. $ In particular, 
	$\varrho_\Gamma=1 \Leftrightarrow \Gamma=\Gamma_n$, and 
	 $\varrho_{\Gamma}=n \Leftrightarrow  \Gamma=\Gamma_1$.
	%$\varrho_\Gamma=1$ if and only if $\Gamma=\Gamma_n$,  
	%$\varrho_{\Gamma}=n$ if and only if $\Gamma=\Gamma_1$.
%	\item 	$\varrho_\Gamma\leq 1+\kappa_\Gamma$; see \cite{yuan-PUE2-note}  for  the details  and  rigidity result %with equality holds.
%	when equality holds.
%\end{itemize}
%	Without other declare, throughout this paper our main results hold for $\Gamma=\Gamma_1$.

%\end{remark}
In Proposition \ref{proposition-n-varrho} we prove that
\eqref{equ-deform-Ric+2} is uniformly elliptic under the assumption 
%(which is sharp)  
%\begin{equation}\label{assumption-4}\begin{aligned} 	\varrho<  \varrho_{\Gamma} \mbox{ and } \varrho\neq 0. \end{aligned}\end{equation}
\begin{equation}
\label{assumption1-parameters}
\begin{aligned} 
	\varrho_{\Gamma} 	\beta + {n\alpha+2\gamma} >0,	\mbox{ } \beta>0, 
	\mbox{ }
	n\alpha+2\gamma\neq 0. 
\end{aligned}
\end{equation} 
This condition is in effect sharp.  
As a result, we obtain 

\begin{theorem}
\label{theorem1-complete-general}

Let $(\bar M,\omega)$ be a compact %connected
Hermitian manifold with smooth boundary.  		Suppose %\eqref{concave}-
\eqref{homogeneous-1-buchong2} and \eqref{homogeneous-1} hold.
For any %$\omega$ with mixed Chern-Ricci form
$(\alpha,\beta,\gamma)$  obeying \eqref{assumption1-parameters} and 
$0<\psi\in C^\infty(\bar M)$,
% and  %\eqref{admissible-metric1}, 
%$\lambda(- \omega^{-1}\Ric^{\langle\alpha, \beta, \gamma\rangle}_{{\omega}})\in\Gamma$,
 the interior 
$M$ admits a   smooth %minimal 
complete conformal metric $\tilde{\omega}$ satisfying \eqref{equ-deform-Ric+}.
%and %\eqref{admissible-metric1}.
%$\lambda(-{\omega}^{-1}\Ric^{\langle\alpha, \beta, \gamma\rangle}_{\tilde{\omega}})\in\Gamma$.

\end{theorem}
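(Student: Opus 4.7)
My plan is to recast \eqref{equ-deform-Ric+2} as an instance of \eqref{equ1-hehe} for a transformed operator and cone, then apply the complete-metric existence theory developed in Section \ref{Dirichlet-problem-3}. Set $c:=\beta^{-1}(n\alpha+2\gamma)$, which is nonzero by \eqref{assumption1-parameters}, and introduce the linear map $T:\mathbb{R}^n\to\mathbb{R}^n$ defined by $(T\mu)_i:=c\mu_i+\sum_{j=1}^n\mu_j$. Since $T=cI+\mathbf{1}\mathbf{1}^\top$ has eigenvalues $c$ (multiplicity $n-1$) and $c+n$, and since $c>-\varrho_\Gamma\geq -n$ by \eqref{assumption1-parameters} together with $\varrho_\Gamma\leq n$, the map $T$ is a linear isomorphism commuting with coordinate permutations. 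Define $\tilde\Gamma:=T^{-1}(\Gamma)$, $\tilde f(\mu):=f(T\mu)$, and $\tilde\chi:=-\beta^{-1}\Ric^{\langle\alpha,\beta,\gamma\rangle}_\omega$. Then $(\tilde f,\tilde\Gamma)$ is again smooth, symmetric, concave, with $\Gamma_n\subseteq\tilde\Gamma$, and inherits \eqref{homogeneous-1-buchong2} along with \eqref{homogeneous-1} (with the same exponent $\varsigma$) from $(f,\Gamma)$. The equation \eqref{equ-deform-Ric+2} then becomes
\begin{equation}
\tilde f(\lambda(\omega^{-1}(\tilde\chi+\sqrt{-1}\partial\overline{\partial}u)))=\psi\,e^{\varsigma(u-\log\beta)},\nonumber
\end{equation}
which is of the form \eqref{equ1-hehe}.

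To invoke Theorem \ref{theorem3-complete} I must verify $\sup_{\tilde\Gamma}\tilde f=+\infty$ and $\tilde\Gamma_\infty=\mathbb{R}^{n-1}$. The first is a direct consequence of \eqref{homogeneous-1}. For the second, fix $\mu'\in\mathbb{R}^{n-1}$ and observe
\begin{equation}
\tfrac{1}{R}\,T(\mu',R)\longrightarrow(1,\ldots,1,1+c)\quad\text{as }R\to +\infty.\nonumber
\end{equation}
By \eqref{def1-varrho-Gamma} and the convexity of $\Gamma$, the ray $\{(1,\ldots,1,s):s>1-\varrho_\Gamma\}$ is contained in $\Gamma$; the assumption \eqref{assumption1-parameters} rearranges to $\varrho_\Gamma+c>0$, i.e.\ $1+c>1-\varrho_\Gamma$, so the limit vector lies in $\Gamma$. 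The openness of $\Gamma$ then gives $T(\mu',R)\in\Gamma$ for all sufficiently large $R$, hence $\mu'\in\tilde\Gamma_\infty$.

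With these hypotheses verified, Theorem \ref{theorem3-complete} produces an admissible $u\in C^\infty(M)$ solving the reduced equation with $u(z)\to+\infty$ as $z\to\partial M$; reversing the substitution yields a smooth conformal metric $\tilde\omega=e^u\omega$ on $M$ satisfying \eqref{equ-deform-Ric+}. The principal remaining issue, and the main obstacle I foresee, is the completeness of $\tilde\omega$. This is handled exactly as in the proof of Theorem \ref{theorem4-complete-general}: one couples the infinite-boundary-data solution with the quantitative boundary blow-up machinery of Section \ref{Dirichlet-problem-3} to obtain an estimate of the form $u(z)\geq -C\log d_\omega(z,\partial M)-C$ near $\partial M$ with $C\geq 2$, where $d_\omega$ is the $\omega$-distance to the boundary. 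Such a lower barrier forces the $\tilde\omega$-length $\int e^{u/2}|\dot\gamma|_\omega\,dt$ to diverge along any curve approaching $\partial M$. The barrier itself is constructed from logarithmic-distance functions, which are admissible for $\tilde\Gamma$ precisely because $\tilde\Gamma_\infty=\mathbb{R}^{n-1}$, and compared with $u$ through the uniform ellipticity of the reduced equation, a consequence of Proposition \ref{proposition-n-varrho} under assumption \eqref{assumption1-parameters}.
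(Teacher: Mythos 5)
Your argument follows the paper's route exactly: conjugate \eqref{equ-deform-Ric+2} by a linear map $T$ (your $T$ is, up to a scalar factor, the inverse of the paper's map \eqref{map1}), verify that \eqref{assumption1-parameters} forces $\tilde\Gamma_\infty=\mathbb{R}^{n-1}$, and then invoke Theorems \ref{theorem3-complete} and \ref{theorem2-complete-2} for existence with infinite boundary data and for completeness. One computational slip: if $\tilde f(\mu)=f(T\mu)$, then $\tilde f\big(\lambda(\omega^{-1}(\tilde\chi'+\sqrt{-1}\partial\overline{\partial}u))\big)=f\big(\lambda(\omega^{-1}(c\tilde\chi'+\mathrm{tr}_\omega(\tilde\chi')\omega + c\sqrt{-1}\partial\overline{\partial}u + \Delta u\,\omega))\big)$, so matching \eqref{equ-deform-Ric+2} requires $c\tilde\chi'+\mathrm{tr}_\omega(\tilde\chi')\,\omega=-\beta^{-1}\Ric^{\langle\alpha,\beta,\gamma\rangle}_\omega$ rather than $\tilde\chi'=-\beta^{-1}\Ric^{\langle\alpha,\beta,\gamma\rangle}_\omega$ as you wrote; since $c\neq 0$ and $c+n>0$ this determines a unique smooth $(1,1)$-form $\tilde\chi'$, and the rest of the argument is unaffected.
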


Notice in %the case \eqref{assumption1-parameters}, 
the above theorem that
%  which   allows  \eqref{critical-1} when $\Gamma\neq\Gamma_n$, 
we don't impose subsolution assumption and  the resulting metric is complete, thereby answering some problems left open by %earlier work
\cite{GQY2018}. 
%This is new even for \eqref{GQY-condition1}.
%and also allows  \eqref{critical-1} when $\Gamma\neq\Gamma_n$. 
%which improves the results in \cite{GQY2018}. 
%
% \begin{remark}	This is new even for	\eqref{GQY-condition1}, which was considered in \cite{GQY2018}. \end{remark}
In addition, we can treat the problem on the complete noncompact manifold with a pseudo-admissible metric satisfying an asymptotic condition.
%When %the background space
%$(M,\omega)$ is complete noncompact,  we derive
%and then obtain an analogue of Theorem \ref{theorem3-complete-general}.
%by Theorem \ref{theorem4-complete-noncompact}
%we solve  \eqref{equ-deform-Ric+} under the asymptotic condition at infinity: 
%There is a compact set $K_0$ and a positive constant $\Lambda$ such that

\begin{theorem}
\label{theorem2-complete-general}
Assume %\eqref{concave}-
\eqref{homogeneous-1-buchong2}, \eqref{homogeneous-1}, \eqref{assumption1-parameters} hold. 
Suppose that $(M,\omega)$ is a complete noncompact Hermitian manifold with pseudo-admissible metric subject to 
%\eqref{asymptotic-condition1} 
\begin{equation}
	\label{asymptotic-condition1}
	\begin{aligned} 
		f(\lambda(- {\omega}^{-1}
		\Ric_{{\omega}}^{\langle\alpha, \beta, \gamma\rangle})) \geq \Lambda\psi 
		\mbox{ in } M\setminus K_0 
	\end{aligned}
\end{equation}  
for some $0<\psi\in C^\infty(M)$ and positive constant $\Lambda$.
Then  there is a unique  smooth maximal 	complete metric satisfying	\eqref{equ-deform-Ric+}. 
%the equation 	\eqref{equ-deform-Ric+}    is uniquely solvable  in the conformal class of smooth maximal 	complete metrics. 
%with  	$\lambda(-{\omega}^{-1}	\Ric_{\tilde{\omega}}^{\langle\alpha, \beta, \gamma\rangle})\in\Gamma$.
\end{theorem}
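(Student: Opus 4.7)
The plan is to reduce Theorem \ref{theorem2-complete-general} to Theorem \ref{theorem4-complete-noncompact} via the reformulation of \eqref{equ-deform-Ric+} developed in Section \ref{sec2-PUE-application}. Using the conformal identity for the mixed Chern-Ricci curvature and the homogeneity \eqref{homogeneous-1}, \eqref{equ-deform-Ric+} is equivalent to equation \eqref{equ-deform-Ric+2}, which carries a Laplacian term. Following the construction of type 2 cones promised in that section, one introduces a new operator $\tilde f$ and a new admissible cone $\tilde\Gamma$, obtained from $(f,\Gamma)$ by an affine change of variables that incorporates the trace together with the coefficient $c=\beta^{-1}(n\alpha+2\gamma)$, so that \eqref{equ-deform-Ric+2} takes the form \eqref{equ1-hehe} for $(\tilde f,\tilde\Gamma)$, background form $\tilde\chi$ built from $-\beta^{-1}\Ric_\omega^{\langle\alpha,\beta,\gamma\rangle}$, and exponent $\Lambda_0=\varsigma$.

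The next task is to verify that $(\tilde f,\tilde\Gamma)$ meets all the hypotheses of Theorem \ref{theorem4-complete-noncompact}. Smoothness, symmetry, concavity, positivity \eqref{homogeneous-1-buchong2}, and homogeneity \eqref{homogeneous-1} transfer from $(f,\Gamma)$ to $(\tilde f,\tilde\Gamma)$ by direct computation. The key structural input is Proposition \ref{proposition-n-varrho}: under the sharp inequality \eqref{assumption1-parameters}, the operator $\tilde f$ is uniformly elliptic and the cone $\tilde\Gamma$ is of type 2, i.e.\ $\tilde\Gamma_\infty=\mathbb{R}^{n-1}$. Uniform ellipticity combined with \eqref{homogeneous-1} immediately yields $\sup_{\tilde\Gamma}\tilde f=+\infty$ and the unboundedness property \eqref{unbounded-1}. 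Moreover, the pseudo-admissible metric given in the hypothesis translates, after the same change of variables, into a pseudo-admissible function $\underline v\in C^2(M)$ for the reformulated equation, and the asymptotic inequality \eqref{asymptotic-condition1} becomes, after absorbing the positive constant $\Lambda$ into $\underline v$ through homogeneity, precisely the asymptotic condition \eqref{asymptotic-assumption1}.

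With these preparations, Theorem \ref{theorem4-complete-noncompact} applied to $(\tilde f,\tilde\Gamma,\tilde\chi,\psi,\underline v)$ produces a unique maximal smooth admissible $u$ with $u\geq \underline v-C_0$; translating back gives the desired smooth metric $\tilde\omega=e^u\omega$ satisfying \eqref{equ-deform-Ric+}, maximal in the conformal class. Completeness of $\tilde\omega$ follows from the lower bound $u\geq \underline v-C_0$ together with the completeness argument from Section \ref{sec1-noncompactcomplete}, in the same way as in the proofs of Theorems \ref{theorem4-complete-general} and \ref{theorem3-complete-general}. The main obstacle I anticipate is the algebraic verification in the second paragraph, namely showing cleanly that \eqref{assumption1-parameters}, formulated in terms of the sharp invariant $\varrho_\Gamma$ of \eqref{def1-varrho-Gamma}, is exactly what forces simultaneously $\tilde\Gamma_\infty=\mathbb{R}^{n-1}$ and uniform ellipticity of $\tilde f$; once this equivalence is established, the remainder is a direct invocation of Theorem \ref{theorem4-complete-noncompact} and the completeness machinery already developed earlier in the paper.
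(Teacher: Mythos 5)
Your proposal is correct and follows essentially the same route as the paper: reduce \eqref{equ-deform-Ric+} via the homogeneity \eqref{homogeneous-1} to \eqref{equ-deform-Ric+2}, rewrite it in the form \eqref{equ1-hehe} using the transformation $(f,\Gamma)\mapsto(\tilde f,\tilde\Gamma)$ from Section \ref{sec2-PUE-application}, invoke Proposition \ref{proposition-n-varrho} to conclude uniform ellipticity (hence $\tilde\Gamma_\infty=\mathbb{R}^{n-1}$) under \eqref{assumption1-parameters}, and then apply Theorem \ref{theorem4-complete-noncompact}. Your spelling-out of how the pseudo-admissible metric and the inequality \eqref{asymptotic-condition1} translate into condition \eqref{asymptotic-assumption1}, and how the lower bound $u\geq\underline v-C_0$ yields completeness, is precisely the content the paper leaves implicit in its terse reference to Theorems \ref{theorem3-complete}, \ref{theorem2-complete-2} and \ref{theorem4-complete-noncompact}.
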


%%%%%%%%%%%%%%%%%%%%%%%%%%%%%%%%%%%%%%%
%%%%%%%%%%%%%%%%%%%%%%%%%%%%%%%%%%
%%%%%%%%%%%%%%%%%%%%%%%%%%%%%%%%%%
%%%%%%%%%%%%%%%%%%%%%%%%%%%%%%%%%%%%%%%

%%%%%%%%%%%%%%%%%%%%%%%%%%%%%%%%%%%%
%%%%%%%%%%%%%%%%%%%%%%%%%%%%%%%%%%
%%%%%%%%%%%%%%%%%%%%%%%%%%%%%%%%%%
%%%%%%%%%%%%%%%%%%%%%%%%%%%%%%%%%%

The obstruction presented in Remark \ref{remark1-obstruction} indicates that in general one could not expect that Theorem  \ref{theorem1-complete-general} %-\ref{theorem2-complete-general}
holds in the limiting case 
\begin{equation}
\label{assumption2-parameters}
\begin{aligned}
	\varrho_{\Gamma} \beta + {n\alpha+2\gamma} =0, %\nonumber
	\,\,
	\beta>0.
\end{aligned}
\end{equation}
%where   $\varrho_\Gamma$ is as  in \eqref{def1-varrho-Gamma}.
%However, we also show that in this case equation   \eqref{equ-deform-Ric+2} falls into an equation of the form \eqref{equ1-hehe}.
%in the limiting case
%We may investigate Problem \ref{Q1} for $\Ric^{\langle\alpha, \beta, \gamma\rangle}_\omega$ with
%for %$(\alpha,\beta,\gamma)$ satisfying %the following relation:
% \eqref{assumption2-parameters}.
% To achieve this,  
%according to Proposition \ref{key-lemma2}, Lemmas  \ref{lemma1-boundary-admissible}, \ref{lemma-cone-condition1},  
Fortunately, %by Theorem \ref{thm1-dirichlet} 
we can solve the Dirichlet problem. 
For our purpose, we assume	
% Assume
\begin{equation}	\label{unbounded-4} \begin{aligned} 	%	\lim_{t\to+\infty} f(\lambda_1+t,\cdots,\lambda_{n-1}+t,\lambda_n+t(1-\varrho_\Gamma))=+\infty, \,\, \forall
	\lim_{t\to+\infty}f(\lambda +t(1,\cdots,1,1-\varrho_\Gamma))=+\infty, \,\, \forall \lambda =(\lambda_1,\cdots,\lambda_n)
	\in\Gamma,	\end{aligned} 	\end{equation}
%\begin{equation}	\label{kappa-leq-n-1}	\begin{aligned}	{\Gamma} \neq \Gamma_1,	\end{aligned} \end{equation}  
\begin{equation}
\label{admissible-boundary4}
\begin{aligned}
	\sum_{i=1}^{n-1} \kappa_{i} \vec{\bf 1} - \varrho_\Gamma (\kappa_1,\cdots,\kappa_{n-1},0)+t(1,\cdots,1,1- \varrho_\Gamma) \in \Gamma \mbox{ in } \partial M, \mbox{ for } t\gg1,
\end{aligned}
\end{equation}
where and hereafter $\vec{\bf 1}:=(1,\cdots, 1)\in \mathbb{R}^n.$
%\begin{equation}	\label{def-vec1} 	\vec{\bf 1}:=(1,\cdots, 1)\in \mathbb{R}^n.\end{equation}
% \begin{equation}	\label{cone-condition2}	\begin{aligned}		(r_{1},\cdots,r_{n})+N(1,\cdots,1,1- \varrho_\Gamma)\in \Gamma \mbox{ for some } N>0, 	\end{aligned} \end{equation}
%where 	$\lambda(-\omega^{-1}\Ric^{\langle\alpha, \beta, \gamma\rangle}_\omega)=(r_1,\cdots,r_n)$ denotes the eigenvalues with $r_1\geq \cdots\geq r_n$.
%Unless specified otherwise, we use the convention that $r_1\geq \cdots\geq r_n$.
%
%	As shown in Lemma \ref{lemma1-boundary-admissible}, the boundary obeying \eqref{admissible-boundary4} is admissible for \eqref{equ-deform-Ric+}. On the other hand, we can construct admissible metric via Morse theory, under the assumption \eqref{cone-condition2}. 
%\begin{remark}
In particular, when $\Gamma=\Gamma_n$ 
%(iff $\varrho_\Gamma=1$) 
%it is %straightforward
%easy to verify  that
we  verify that
\begin{itemize}

\item 
\eqref{assumption2-parameters} reduces to \eqref{critical-1}, 
%while \eqref{admissible-boundary4} means that   	$\kappa_1+\cdots+\kappa_{n-1}>0.$  
and then \eqref{equ-deform-Ric+2} reads an $(n-1)$-type  equation. 
%corresponding to 	{critical} mixed Chern-Ricci form.

%\item \eqref{cone-condition2} says  $-\Ric^{\langle\alpha, \beta, \gamma\rangle}_\omega>0.$

\item  \eqref{unbounded-4}  
%coincides with 
%	\begin{equation} 	\label{unbounded-2} 	\begin{aligned} 		\lim_{t\rightarrow+\infty}  f(\lambda_1+t,\cdots,\lambda_{n-1}+t,\lambda_n)=+\infty, \mbox{ } \forall  (\lambda_1,\cdots,\lambda_n)\in\Gamma,  \nonumber 		\end{aligned} 	\end{equation}
%which 
allows  $f=({\sigma_n}/{\sigma_{k}})^{1/(n-k)}$ with $0\leq k\leq n-2.$

\item  	\eqref{admissible-boundary4} holds 
%true 
if and only if  $\kappa_1+\cdots+\kappa_{n-1}>0.$ 

\end{itemize} 
%\end{remark}

\begin{theorem}
\label{thm2-compact}

Suppose ${\Gamma} \neq \Gamma_1$ and   $(f,\Gamma)$ satisfies %\eqref{concave}
%	\eqref{elliptic},
\eqref{homogeneous-1-buchong2}, \eqref{homogeneous-1},
\eqref{unbounded-4}. %and  	\eqref{kappa-leq-n-1}. 
Let $(\bar M,\omega)$ be a compact Hermitian manifold with smooth   boundary satisfying  \eqref{admissible-boundary4}. 
Given $(\alpha,\beta,\gamma)$  obeying 
%fulfilling 
\eqref{assumption2-parameters}, assume %in addition that 
$\lambda(-\omega^{-1}\Ric^{\langle\alpha, \beta, \gamma\rangle}_\omega)\in\Gamma$ in $\bar M$. 
Then for any smooth metric $h$ on $\partial M$ 
which is conformal to the restriction of $\omega$ to $\partial M$, and  $0<\psi\in C^\infty(\bar M)$,  there is a unique smooth metric $\tilde{\omega}=e^u\omega$ satisfying \eqref{equ-deform-Ric+}  %$\lambda(-\omega^{-1}\Ric_{\tilde{\omega}}^{\langle\alpha, \beta, \gamma\rangle})\in\Gamma$
and $\tilde{\omega}\big|_{\partial M}=h$.
\end{theorem}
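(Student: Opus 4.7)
The plan is to rewrite \eqref{equ-deform-Ric+} in its reduced conformal form \eqref{equ-deform-Ric+2}. Under \eqref{assumption2-parameters} one has $\beta^{-1}(n\alpha+2\gamma)=-\varrho_\Gamma$, so the PDE to be solved becomes
\[
f\bigl(\lambda\bigl(\omega^{-1}\bigl(\Delta u\,\omega-\varrho_\Gamma\sqrt{-1}\partial\bpartial u-\beta^{-1}\Ric_\omega^{\langle\alpha,\beta,\gamma\rangle}\bigr)\bigr)\bigr)=\psi\,e^{\varsigma(u-\log\beta)},
\]
together with the boundary condition $u|_{\partial M}=\varphi$, where $\varphi\in C^\infty(\partial M)$ is determined by $e^{\varphi}\omega|_{\partial M}=h$. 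The principal symbol $B\mapsto(\tr B)I-\varrho_\Gamma B$ sends the eigenvalue vector $\mu$ to $\bigl(\sum_j\mu_j\bigr)\vec{\bf 1}-\varrho_\Gamma\mu$, and by the very definition \eqref{def1-varrho-Gamma} of $\varrho_\Gamma$ the resulting operator is uniformly elliptic in every direction \emph{except} the single ray $(1,\ldots,1,1-\varrho_\Gamma)\in\partial\Gamma$. The hypotheses \eqref{unbounded-4} and \eqref{admissible-boundary4} are tailored precisely to this lone degenerate direction.

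I would then invoke the continuity method, connecting a trivially solvable equation ($t=0$) to the target ($t=1$) through a one-parameter family preserving the structure above. The admissibility hypothesis $\lambda(-\omega^{-1}\Ric_\omega^{\langle\alpha,\beta,\gamma\rangle})\in\Gamma$ makes $u\equiv -C$ a subsolution and $u\equiv C$ a supersolution for $C\gg 1$, yielding the $C^0$ bound by the comparison principle. To realize the boundary data I would construct a $C^2$ admissible function $\underline u$ with $\underline u|_{\partial M}=\varphi$ by extending $\varphi$ smoothly and adding a term of the shape $-Ad+Bd^2$, where $d=\mathrm{dist}(\cdot,\partial M)$, in the spirit of Section \ref{construction}; condition \eqref{admissible-boundary4} is exactly what ensures that the symbol applied to the Levi form of $\partial M$, after adding a large multiple of the degenerate ray $(1,\ldots,1,1-\varrho_\Gamma)$, lies in $\Gamma$, so that $\underline u$ is admissible in a collar of $\partial M$. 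Interior gradient bounds follow from the blow-up argument of \cite{Gabor}, applicable because \eqref{unbounded-4} implies the structural condition \eqref{addistruc} along the degenerate ray; boundary gradient bounds come from the local barriers of \cite{Guan12a}. The boundary second-order bound is the quantitative estimate \eqref{quantitative-BE} already established in the paper, and the global Hessian bound is then obtained by a standard Hou--Ma--Wu type interior computation, which requires only concavity of $f$ and the prior $C^1$ bound. Uniqueness follows from the comparison principle using concavity of $f$ and strict monotonicity of $u\mapsto e^{\varsigma u}$.

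The main obstacle is the loss of uniform ellipticity in the critical regime \eqref{assumption2-parameters}, which rules out the direct approach of \cite{GQY2018} that relied crucially on \eqref{GQY-condition1}. The structural observation saving the program is that $(f,\Gamma)$ remains \emph{partially} uniformly elliptic, with a single degenerate direction completely described by $\varrho_\Gamma$; assumptions \eqref{unbounded-4} and \eqref{admissible-boundary4} then compensate this degeneracy in exactly the two places where uniform ellipticity is classically used---the interior blow-up for the gradient estimate, and the construction of admissible boundary barriers. Verifying that \eqref{quantitative-BE} genuinely survives in this partially elliptic setting, and that the barrier construction based on \eqref{admissible-boundary4} realizes arbitrary smooth conformal boundary data $h$, will be the technical crux.
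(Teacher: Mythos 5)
Your proposal is correct in overall strategy and identifies the right structural roles of \eqref{unbounded-4} and \eqref{admissible-boundary4}, but it describes the machinery from scratch rather than through the reduction the paper actually performs. The paper's proof of Theorem \ref{thm2-compact} is a two-line deduction: rewrite \eqref{equ-deform-Ric+} as \eqref{equ-deform-Ric+2}, which is an equation of the form \eqref{equation-n-varrho} with $\varrho=\varrho_\Gamma$; pass to the equivalent pair $(\tilde f,\tilde\Gamma)$ via the linear map \eqref{map1}--\eqref{def-f} of Section \ref{sec2-PUE-application}; observe that \eqref{unbounded-4} gives the unbounded condition \eqref{unbounded-1} for $\tilde f$ (Lemma \ref{lemma2-unbound-condition}) and hence ellipticity (Proposition \ref{key-lemma2}), while \eqref{admissible-boundary4} is precisely $\tilde\Gamma_\infty$-admissibility of $\partial M$ (Lemma \ref{lemma1-boundary-admissible}); and invoke Theorem \ref{thm1-dirichlet}, whose admissible-function hypothesis is met by $u\equiv 0$ because $\lambda(-\omega^{-1}\Ric^{\langle\alpha,\beta,\gamma\rangle}_\omega)\in\Gamma$. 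What the reduction buys is that the continuity method, the a priori estimates (Propositions \ref{bdy-quantive-estimate-1} and \ref{global-quantive-estimate-1}), and the Liouville-type gradient bound of Sz\'ekelyhidi have already been proved in Sections \ref{Dirichlet-problem} and \ref{Sec-Estimates1} for the normalized equation \eqref{main-equ2} and need not be re-run.

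Two of your sketched ingredients do not match what the paper does and should be adjusted. First, the constant functions $u\equiv\pm C$ are not sub/supersolutions for the Dirichlet problem since they cannot meet a nonconstant boundary datum $\varphi$; the paper instead uses the harmonic extension $\hat u$ of \eqref{supersolution1} as an upper barrier and the admissible function shifted by a constant (Lemma \ref{lemma-c0general}) as a lower barrier. Second, the boundary barrier of the form $-Ad+Bd^2$ is not the one used: the paper's barrier in Section \ref{construction1-local-subsolution} is the logarithmic $w=2\log\frac{\delta^2}{\delta^2+k\sigma}$, whose crucial feature is that the coefficient $\frac{k}{\delta^2+k\sigma}$ multiplying $\partial\sigma\wedge\bpartial\sigma$ blows up uniformly in $\sigma$ near $\partial M$ as $k\to\infty$; this feeds into the quantitative Lemma \ref{yuan's-quantitative-lemma} to produce admissibility. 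With $-Ad+Bd^2$ the normal-to-tangential ratio stays bounded, so the verification that \eqref{admissible-boundary4} yields an admissible barrier would require a separate argument. Finally, your remark that ``\eqref{unbounded-4} implies \eqref{addistruc} along the degenerate ray'' is imprecise: \eqref{addistruc} for $f$ already follows from \eqref{homogeneous-1-buchong2} and $\sup_\Gamma f=+\infty$ (which \eqref{homogeneous-1} gives) via Lemma \ref{lemma23}; the role of \eqref{unbounded-4} is to transfer the unbounded condition \eqref{unbounded-1} to $\tilde f$, which is what the boundary Hessian estimate requires.
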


Finally, we will complete the proof of Theorems \ref{theorem1-complete-general}, \ref{theorem2-complete-general} and  \ref{thm2-compact}.
\begin{proof} [Proof of  Theorems \ref{theorem1-complete-general}, \ref{theorem2-complete-general} and  \ref{thm2-compact}]
	We show in Section %\ref{sec3-structure-operator} 
	\ref{sec2-PUE-application}
	that %the equation  
	\eqref{equ-deform-Ric+2} falls into an equation of the form   \eqref{equ1-hehe}.
	%\begin{equation}	\label{equ1-hehe}	f(\lambda(\chi+\sqrt{-1}\partial\overline{\partial}u))=\psi  e^{u}. \end{equation}  
The equation is of uniform elliptictity under assumption \eqref{assumption1-parameters}  by Proposition %\ref{key-lemma1},
	\ref{proposition-n-varrho}. 
Therefore, Theorems \ref{theorem1-complete-general} and \ref{theorem2-complete-general} follows from  Theorems %\ref{theorem1-complete-2} and 
\ref{theorem3-complete},  \ref{theorem2-complete-2} and \ref{theorem4-complete-noncompact}.    
The equation is elliptic when \eqref{assumption2-parameters} holds according to Proposition \ref{key-lemma2} and Lemma  \ref{lemma2-unbound-condition}. Together with
Lemma \ref{lemma1-boundary-admissible}, we can confirm all the assumptioms in Theorem \ref{thm1-dirichlet}.
%$\partial M$ is admissible under 
%we can confirm all the assumptions %imposed in
%	from Theorem \ref{thm2-existence-bdy}. 
	Thus we obtain Theorem \ref{thm2-compact}.
	
	%\end{proof}
	
\end{proof}

\begin{remark}
	%As is well known, 
The Yamabe problem for complete noncompact Riemannian manifolds is not always solvable due to the counterexample of Jin \cite{Jin1988}.
	%So,  it is reasonable to 
	We reasonably believe that  
	%one could not expect that 
	the asymptotic assumptions  
	% \eqref{asymptotic-condition2} and \eqref{asymptotic-condition1}
	at infinity   in Theorems  \ref{thm1-complete}, 
	\ref{theorem3-complete-general},
	 \ref{theorem2-complete-general} 
	 and \ref{theorem4-complete-noncompact}
	can not be further dropped  in general.  
	(Also note  that such asymptotic conditions are  %tautologically 
 necessary for the solvability of the equations, 	respectively).
\end{remark}

%%%%%%%%%%%%%%%%%%%%%%%%%%%%%%%%%%%%%%%%%%%%%%%%%%%%%%%%%

 %\medskip   
  
   \section{Preliminaries}
  \label{preliminaries1}

	Throughout this paper, 
%unless otherwise specified,
 $\sigma(z)$ denotes the distance from $z$ to $\partial M$, and $f$ satisfies the natural condition 
% $\underset{\partial\Gamma}\sup f<\underset{\Gamma}\sup f,$
 $\sup_{\partial\Gamma} f< \sup_{\Gamma} f,$
where 
 %$\underset{\partial\Gamma}\sup f=\underset{\lambda_0\in\partial\Gamma}{\sup}\underset{\lambda\to\lambda_0}{\limsup}f(\lambda).$ 
 $ \sup_{\partial\Gamma} f= \sup_{\lambda_0\in\partial\Gamma}  {\limsup}_{\lambda\to\lambda_0}f(\lambda).$ 
 In  computation we use derivatives with respect to the Chern connection $\nabla$ of $\omega$,
%In local coordinates $z=(z_1,\cdots, z_{n})$,
and write
$\partial_{i}=\frac{\partial}{\partial z_{i}}$, 
$\overline{\partial}_{i}=\frac{\partial}{\partial \bar z_{i}}$,
$\nabla_{i}=\nabla_{\frac{\partial}{\partial z_{i}}}$,
$\nabla_{\bar i}=\nabla_{\frac{\partial}{\partial \bar z_{i}}}$.
For   a smooth function $v$,
\begin{equation} \label{formula-1}\begin{aligned}
		v_i:=
		\partial_i v,  
		\mbox{  } v_{\bar i}:=\,&
		\partial_{\bar i} v,
		\,
		v_{i\bar j}:=  
		%\nabla_{\bar j}\nabla_{i} v=
		\partial_i\overline{\partial}_j v, 
		\,
		v_{ij}:=
		%	\nabla_{j}\nabla_{i} v =
		\partial_j \partial_i v -\Gamma^k_{ji}v_k,  \\
		v_{i\bar j k} :=\,&\partial_k v_{i\bar j} -\Gamma_{ki}^l v_{l\bar j},  \cdots, \mbox{etc}, %\nonumber
\end{aligned}\end{equation} 
where $\Gamma_{ij}^k$ are the Christoffel symbols
defined  by 
$\nabla_{\frac{\partial}{\partial z_i}} \frac{\partial}{\partial z_j}=\Gamma_{ij}^k \frac{\partial}{\partial z_k}.$ 
%Denote $\Delta u
%= \mathrm{tr}({\omega}^{-1}\sqrt{-1}\partial\overline{\partial}u), \,\, =g^{i\bar j}u_{i\bar j}$.

 %As above, we take $\vec{\bf1}=(1,\cdots,1)\in\mathbb{R}^n.$
For simplicity, we denote   
\begin{equation}
	\begin{aligned}  
		\psi[u]  =  \psi(z,u), \nonumber
		%\mathfrak{g}[u]=\chi+\sqrt{-1}\partial\overline{\partial} u \mbox{ for } u, 
		\,\,
		\lambda(\Omega)=\lambda(\omega^{-1}\Omega) \mbox{ for real $(1,1)$-form } \Omega.  \nonumber
	\end{aligned}
\end{equation}      
\begin{align*}
	\partial\Gamma^\sigma=\{\lambda\in\Gamma: f(\lambda)=\sigma\},\, \Gamma^\sigma=\{\lambda\in\Gamma: f(\lambda)>\sigma\}.
	\end{align*}

  \subsection{Some result on Morse function}
  
 The following lemma asserts that any compact manifold with boundary carries some function without any critical points.    %See also  \cite{yuan-PUE1} for  detail.
 %(We thank Professor Yi Liu for %generously 
% answering questions related to the proof of Lemma \ref{lemma-diff-topologuy}).
  % (This lemma is believed to be true, but the author is unable to find an explicit reference in the literature. Below we give a detailed proof of the result).  
  % (This lemma is believed to be true, but the author is unable to find an explicit reference in the literature. Below we give a detailed proof).  
    %with the help of of Professor Yi Liu and Dr. Ze Zhou.  
 %The construction  is more or less standard in differential topology (but the author is unable to find an explicit reference in the literature). 
 %We will give a detailed proof of the result. 

  \begin{lemma}
  	\label{lemma-diff-topologuy}
  	Let %$(M,g)$
  	$\bar M$
   be a compact connected %Riemannian 
  	manifold of dimension $n\geq 2$ with smooth boundary. Then there is a smooth function $v$ without any critical points. %that is $d v\neq 0$.
  \end{lemma}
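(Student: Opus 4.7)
\medskip

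\noindent\textbf{Proof proposal.} The plan is to use standard Morse theory on manifolds with boundary, eliminating interior critical points by pushing them out through $\partial M$.

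First I would construct an auxiliary Morse function $f:\bar M\to\mathbb{R}$ with $\partial M$ as a regular level set. Concretely, choose a smooth collar embedding $\phi:\partial M\times[0,2)\hookrightarrow\bar M$, set $f_0(\phi(p,t))=t$ on the image of $\partial M\times[0,1]$, and extend $f_0$ to a smooth function on $\bar M$ by partition of unity so that $f_0\geq 1$ outside the inner collar. A small $C^\infty$-perturbation away from $\phi(\partial M\times[0,1])$ makes the extension Morse without destroying the boundary behavior; call the result $f$. Then $df\neq0$ on a neighborhood of $\partial M$, and $f$ has only finitely many non-degenerate interior critical points $p_1,\ldots,p_k$.

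Next I would eliminate these critical points one by one. Since $\bar M$ is connected with $\partial M\neq\emptyset$ and $n\geq 2$, by a general position argument I can pick smooth, pairwise disjoint embedded arcs $\gamma_i:[0,L]\to\bar M$ with $\gamma_i(0)=p_i$, $\gamma_i(L)\in\partial M$, and $\gamma_i$ avoiding all other $p_j$. Choose pairwise disjoint tubular neighborhoods $T_i$ of $\gamma_i$, each diffeomorphic to $B^{n-1}_\epsilon\times[0,L]$ with $\gamma_i\leftrightarrow\{0\}\times[0,L]$, and so small that $f=t$ on $T_i\cap\phi(\partial M\times[0,1])$. In coordinates $(y,s)$ on $T_i$ I would set
\[
\tilde f = f+\lambda_i\,\chi_i(y)\,g_i(s),
\]
where $\chi_i:B^{n-1}_\epsilon\to[0,1]$ is a bump with $\chi_i(0)=1$ and compact support, $g_i\in C^\infty([0,L])$ satisfies $g_i'(s)>0$ on $[0,L-\delta]$ and $g_i\equiv\mathrm{const}$ near $s=L$ (so $\tilde f=f$ near the ``boundary end'' of $T_i$), and $\lambda_i>0$ is chosen large. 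For such $\lambda_i$ the $s$-component of $\nabla\tilde f$, which is bounded below by $\lambda_i\chi_i(y)g_i'(s)$ minus a bounded term from $\partial_s f$, is strictly positive wherever $\chi_i(y)>0$, destroying the critical point at $p_i$ and creating no new ones; outside $\mathrm{supp}(\chi_i\cdot g_i)$ one has $\tilde f=f$. Iterating over $i=1,\ldots,k$ (using disjointness of the $T_i$) yields the desired $v\in C^\infty(\bar M)$ with $dv\neq 0$ everywhere.

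The main obstacle is precisely Step~2: arranging the local modification in each $T_i$ so that the added bump destroys $p_i$ without introducing fresh critical points elsewhere in $T_i$. This is handled by the quantitative choice of $\lambda_i$ together with carefully chosen cutoffs $\chi_i,g_i$; alternatively one may invoke the handle-cancellation formalism of smooth cobordism theory, noting that a compact connected manifold with non-empty boundary admits a handle decomposition with no $n$-handles and that all lower-index handles can be absorbed into the collar of $\partial M$, giving a Morse function with no critical points at all.
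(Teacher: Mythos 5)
Your overall intuition---push each interior critical point along an arc to $\partial M$---is the right one, but both of your concrete mechanisms for doing so have real gaps. In the bump construction, the claimed lower bound on the $s$-component of $\nabla\tilde f$, namely $\lambda_i\chi_i(y)g_i'(s)$ minus a bounded error, is only useful where $\chi_i(y)$ is bounded away from $0$; in the fringe of the tube where $0<\chi_i(y)\ll 1$, the bump term does not dominate $\partial_s f$, no matter how large $\lambda_i$ is, and new critical points can appear there. Worse, near the boundary end $s=L$ you have built $f$ so that $f=\sigma$ decreases to $0$ along the inward collar, so $\partial_s f<0$ there, while you simultaneously required $g_i\equiv\mathrm{const}$ near $s=L$, so the bump contributes nothing; hence $\partial_s\tilde f$ changes sign along the tube and you have created a new critical locus. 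Your fallback via handle cancellation is also incorrect as stated: a compact connected manifold with nonempty boundary generally does \emph{not} have all handles absorbable into the collar of $\partial M$---that would force $\bar M\cong\partial M\times[0,1]$, which already fails for the disk $D^2$. (The disk does admit a function without critical points, e.g.\ $v=x$, but $v$ does not have $\partial M$ as a level set, which is the point: you are over-constraining your auxiliary Morse function.)

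The paper's proof avoids these difficulties entirely by working on the double $X$ of $M$. One takes any Morse function $w$ on the closed manifold $X$, and then invokes Milnor's homogeneity lemma to produce a diffeomorphism $h$ of $X$, isotopic to the identity, sending every critical point that lies in $\bar M$ to a non-critical point of $X\setminus\bar M$ while fixing the remaining critical points. Since composing with a diffeomorphism merely moves the critical set (rather than perturbing the function, which can create new critical points), $w\circ h^{-1}$ has no critical points in $\bar M$, and its restriction is the desired $v$. If you want to salvage your approach, replace ``add a bump along the tube'' by ``compose with a compactly supported isotopy along the tube,'' and enlarge $\bar M$ to the double (or to any open manifold containing $\bar M$) so that the isotopy can carry $p_i$ genuinely outside $\bar M$; this is essentially the paper's argument.
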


\begin{proof} 

	%This proof is standard in differential topology. 
	%The proof is inspired by personal communication with Professor Liu Yi.
	 The construction  is more or less standard in differential topology. 
	 %(but the author is unable to find an explicit reference in the literature). 
	%We will give a detailed proof of the result. 
	Let $X$ be the double of $M$. Let $w$ be a smooth Morse function on $X$ with the critical set $\{p_i\}_{i=1}^{m+k}$, among which $p_1,\cdots, p_m$ are all the critical points  being in $\bar M$. 
	% ($k\geq 0$). 
	Pick $q_1, \cdots, q_m\in X\setminus \bar M$ but not the critical point of $w$. By homogeneity lemma 
	(see \cite{Milnor-1997}), 
	one can find a diffeomorphism
	$h: X\to X$, which is smoothly isotopic to the identity, such that  
 $h(p_i)=q_i$ for $1\leq  i\leq m$, and moreover 	$h(p_i)=p_i$ for $m+1\leq  i\leq  m+k$.
	%  \begin{itemize}   	\item $h(p_i)=q_i$ for $1\leq  i\leq  m$,
	   	%%and moreover
%	 \item    $h(p_i)=p_i$ for $m+1\leq  i\leq  m+k$.  \end{itemize}
	Then $v=w\circ h^{-1}\big|_{\bar M}$ is the desired %Morse 
	function.
	
\end{proof}

  %\subsection{On the operators}
  
 % \subsection{$q$-pseudoconvexity and $q$-plurisubharmonic function in several complex variables} 
  %\subsection{Eastwood and Suria's criterion of $q$-pseudoconvexity} 
  \subsection{The criterion of $q$-pseudoconvexity} 
  
  %We summarize some result on $q$-pseudoconvexity.
  %We present some result on $q$-pseudoconvexity.
First,  we recall some related notion.

\begin{definition}
	An open set $\Omega\subset\mathbb{C}^n$ is called Levi $q$-pseudoconvex if 
	%for any $x\in\partial \Omega$ 
	%there is an open neighbourhood $U$ of $x$ and a $C^2$-smooth local defining function $\rho$ such that its
	at  any $x\in\partial \Omega$ 
	the Levi form $L_\rho$
%	at $x$ 
	has at most $q$-negative eigenvalues on the holomorphhic tangent space ${T_x,}_{\partial\Omega}\cap J{T_x,}_{\partial\Omega}.$
\end{definition}

  \begin{definition} 
	A $C^2$ function  $h: \Omega\to\mathbb{R}$ is called a  $q$-plurisubharmonic function if $\sqrt{-1}\partial\overline{\partial} h$ has at least $n-q$
	positive eigenvalues for all $z$ in $\Omega$.
	
\end{definition}
 
According to some  results of   Eastwood-Suria \cite{Eastwood1980Suria} and  Suria \cite{Suria-q-convex},  one has 
  \begin{theorem}
  [\cite{Eastwood1980Suria,Suria-q-convex}] 
  	\label{thm1-Eastwood-Suria}
  	 Let $\Omega\subset\mathbb{C}^n$ be a $C^2$-smoothly bounded domain.
  	  Then $\Omega$ is Levi-$q$ pseudoconvex if and only if it admits a $C^2$-smooth $q$-plurisubharmonic exhausion function.
  	% Then the following statement are equivalent:
  	% \begin{itemize} 	\item $\Omega$ is Levi-$q$ pseudoconvex; 	\item $\Omega $admits a $C^2$-smooth $q$-plurisubharmonic exhausion function. 	 \end{itemize}
  \end{theorem}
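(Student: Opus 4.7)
The plan is to prove the two implications separately, following the template of Grauert's characterization in the classical case $q=0$.

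For the direction \emph{exhaustion function implies Levi-$q$ pseudoconvex}: suppose, for contradiction, that at some $p\in\partial\Omega$ the Levi form $L_\rho|_{T_p^{1,0}\partial\Omega}$ has $k\geq q+1$ negative eigenvalues, where $\rho$ is a $C^2$ defining function near $p$. Working in local holomorphic coordinates I bring $\rho$ into a Morse-type normal form and exhibit a small complex $(q+1)$-dimensional disk $V$ through $p$ on which $\rho$ attains a strict interior maximum at $p$, so that $V\setminus\{p\}\subset\Omega$ and $V$ touches $\partial\Omega$ only at $p$. Restricting the given $q$-plurisubharmonic exhaustion $h$ to $V$: by eigenvalue interlacing applied to $\sqrt{-1}\partial\overline{\partial}h$ (which has at least $n-q$ positive eigenvalues on $\mathbb{C}^n$), the restriction to the $(q+1)$-dim space $V$ has at least one positive Hessian eigenvalue at each point. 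A Hartogs-disk / slicing argument then contradicts $h\to+\infty$ at $p$, essentially because a function with a positive Hessian direction on every slice cannot blow up at an isolated boundary-touching point of the disk.

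For the direction \emph{Levi-$q$ pseudoconvex implies exhaustion function}: let $\rho$ be a $C^2$ defining function with $\rho<0$ in $\Omega$ and $\nabla\rho\neq 0$ on $\partial\Omega$. Set $h=-\log(-\rho)$; a direct computation yields
\begin{equation*}
\sqrt{-1}\partial\overline{\partial}h \;=\; \frac{\sqrt{-1}\partial\overline{\partial}\rho}{-\rho} \;+\; \frac{\sqrt{-1}\,\partial\rho\wedge\overline{\partial}\rho}{\rho^{2}}.
\end{equation*}
On the holomorphic tangent space of the level hypersurfaces of $\rho$ the second term vanishes and the first is a positive multiple of the Levi form, which by hypothesis has at most $q$ negative eigenvalues; in the complementary (normal) direction the second term contributes a rank-one strictly positive summand. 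By interlacing, $\sqrt{-1}\partial\overline{\partial}h$ has at least $n-q$ positive eigenvalues in a tubular neighborhood $U$ of $\partial\Omega$, so $h|_{U}$ is $q$-plurisubharmonic. I would then extend $h$ to a global $q$-plurisubharmonic exhaustion of $\Omega$ by gluing it, on a collar of $\partial\Omega$, with a sufficiently large constant on a compact inner set using a Richberg-type regularized maximum.

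The main obstacle is the gluing step in the second direction. Unlike strict plurisubharmonicity ($q=0$), the condition of having at least $n-q$ positive Hessian eigenvalues is \emph{not} preserved under ordinary pointwise maxima, so one must employ the carefully designed regularized maxima of Richberg and Diederich--Fornaess and verify that the eigenvalue count is preserved across the transition region. The contradiction argument in the first direction likewise requires delicate geometric input to produce the tangent subvariety $V$ and to push through the slice-wise maximum principle; this is where the papers of Eastwood--Suria and Suria supply the technical substance.
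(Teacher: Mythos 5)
The paper does not prove this statement — it is quoted from the cited works of Eastwood--Suria and Suria — so there is no internal proof to compare against, and your sketch must stand or fall on its own. The overall Grauert-style plan (slicing contradiction in one direction; $-\log(-\rho)$ plus gluing in the other) is the right shape, and the interlacing count in the first direction is correct: a matrix with $\geq n-q$ positive eigenvalues restricted to a $(q+1)$-dimensional subspace retains at least one positive eigenvalue, and the one-positive-direction maximum principle (no interior local maximum) on a family of inward translates $V_t$ of your disk then forces $\max_{V_t}h=\max_{\partial V_t}h$, uniformly bounded, against $\max_{V_t}h\to+\infty$; that is the mechanism the phrase ``cannot blow up'' is gesturing at and should be made explicit.

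In the converse direction, however, there are two concrete gaps. First, the claim that $-\log(-\rho)$ is $q$-plurisubharmonic on a tubular neighborhood of $\partial\Omega$ does not follow from the displayed identity. Levi-$q$ pseudoconvexity controls the Levi form only on $\partial\Omega$ itself, and the condition ``at most $q$ negative eigenvalues'' is a closed (not open) condition, so it need not persist to nearby level sets by continuity; moreover on $\partial\Omega$ you obtain only $n-1-q$ nonnegative tangential eigenvalues, while the paper's definition of $q$-plurisubharmonic demands $n-q$ strictly positive ones. One must genuinely exploit the blow-up of $|\partial\rho|^2/\rho^2$ together with a quantitative spectral perturbation bound (or a more carefully chosen defining function); this is precisely why even the $q=0$ analogue, Oka's $-\log\delta$ theorem, is not a one-line computation. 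Second, gluing with a sufficiently large constant on a compact inner set is simply wrong: a constant has complex Hessian zero and is not $q$-plurisubharmonic for any $q<n$ in the paper's sense, so the glued function would fail to be $q$-plurisubharmonic exactly where the constant dominates. You need a strictly plurisubharmonic interior piece such as $\epsilon|z|^2-C$; then any convex combination of a form with $\geq n-q$ positive eigenvalues and a positive-definite form again has $\geq n-q$ positive eigenvalues (restrict to the $(n-q)$-dimensional positive subspace of the first summand), and with the usual Richberg estimate on the extra gradient term the eigenvalue count survives the regularized maximum. You correctly note that pointwise maxima do not preserve $q$-plurisubharmonicity for $q\geq 1$, but the specific remedy you propose does not repair it.
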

	
For more results on $q$-pseudoconvexity,  $q$-complete and $q$-plurisubharmonic function, please refer to the monograph  \cite{Ohsawa2022Pawlaschyk}.  
 %Also we refer to \cite{Cheng1980Yau,Mok-Yau1983} for some results relating 
%for how to relate
 %Ricci curvature to function-theoretic information.
Also, %for $f=\sigma_n^{1/n}$
we 
refer to \cite{Cheng1980Yau,Mok-Yau1983} for some results relating 
%for how to relate
Ricci curvature to function-theoretic information.

%\subsection{Lemmas for quantitative boundary estimate}
\subsection{A quantitative lemma}

% The following lemmas are key ingredients for proof of quantitative boundary estimate.
%The  following lemma, proposed in earlier work  \cite{yuan2017}\renewcommand{\thefootnote}{\fnsymbol{footnote}}\footnote{The results in \cite{yuan2017}  
%%followed by \cite{yuan2019PAMQ},
%%\cite{yuan2019,yuan2019PAMQ,yuan2021},
%%[arXiv: 2001.09238], [arXiv:2106.14837] and [Pure Appl. Math. Q. 16 (2020), 1585-1617; MR4221006], 
%were moved to \cite{yuan-regular-DP}. More precisely, the paper  \cite{yuan-regular-DP} is essentially extracted from %[arXiv:2203.03439],	\cite{yuan2017},  	and the first parts of   [arXiv:2001.09238] and   [arXiv:2106.14837].}, 
%followed by \cite{yuan2019PAMQ},
%	is a key ingredient in the proof of Proposition  	\ref{proposition-quar-yuan2}. 

The following lemma was proposed by %\cite{yuan-regular-DP}. 
%earlier work 
\cite{yuan2017}\renewcommand{\thefootnote}{\fnsymbol{footnote}}\footnote{The results in \cite{yuan2017}   
	were moved to \cite{yuan-regular-DP}. More precisely, the paper  \cite{yuan-regular-DP} is essentially extracted from %[arXiv:2203.03439],	
	\cite{yuan2017},  	along with the first parts of   [arXiv:2001.09238] and   [arXiv:2106.14837].}, 
which can be  %regarded %
 viewed 
 as a quantitative version of \cite[Lemma 1.2]{CNS3}.
 %See also the appendix of subsequent work \cite{yuan2020PAMQ} for the proof.
%See also the appendix of follow-up paper  \cite{yuan2020PAMQ} for the 
% Also in the appendix of %follow-up   paper 
% subsequent work \cite{yuan2020PAMQ},   the proof was  presented there.
 %See also the appendix of subsequent work \cite{yuan2020PAMQ} for the proof. 
  For completeness, we  present the proof in Appendix \ref{appendix1}.  
\begin{lemma}[\cite{yuan2017,yuan-regular-DP}]
	\label{yuan's-quantitative-lemma}
	Let $A$ be an $n\times n$ Hermitian matrix
	\begin{equation}\label{matrix3}\left(\begin{matrix}
			d_1&&  &&a_{1}\\ &d_2&& &a_2\\&&\ddots&&\vdots \\ && &  d_{n-1}& a_{n-1}\\
			\bar a_1&\bar a_2&\cdots& \bar a_{n-1}& \mathrm{{\bf a}} \nonumber
		\end{matrix}\right)\end{equation}
	with $d_1,\cdots, d_{n-1}, a_1,\cdots, a_{n-1}$ fixed, and with $\mathrm{{\bf a}}$ variable.
	Denote the eigenvalues of $A$ by $\lambda=(\lambda_1,\cdots, \lambda_n)$.
	%With the same notation in Lemma \ref{refinement3}.
	Let $\epsilon>0$ be a fixed constant.
	Suppose that  
	%the parameter $\mathrm{{\bf a}}$  	satisfies the quadratic	growth condition  
	\begin{equation}
		\begin{aligned}
			\label{guanjian1-yuan}
			\mathrm{{\bf a}}\geq \frac{2n-3}{\epsilon}\sum_{i=1}^{n-1}|a_i|^2 +(n-1)\sum_{i=1}^{n-1} |d_i|+ \frac{(n-2)\epsilon}{2n-3}. \nonumber
		\end{aligned}
	\end{equation}
	%where  $\epsilon$ is a positive constant.
	% \begin{equation}
	%a \geq \frac{2}{\epsilon}\sum_{i=1}^{n-1}|a_i|^2 +(n-1)\sum_{i=1}^{n-1} |d_i|+n \epsilon. \nonumber
	% \end{equation}
	Then the eigenvalues %(possibly with an order)
	%(with an appropriate order)
	(possibly with a proper permutation)
	behave like
	\begin{equation}
		\begin{aligned}
			d_{\alpha}-\epsilon 	\,& < 
			\lambda_{\alpha} < d_{\alpha}+\epsilon, \mbox{  } \forall 1\leq \alpha\leq n-1, \\ \nonumber
			%	\end{aligned}	\end{equation}
			%	\begin{equation}\begin{aligned}
			\mathrm{{\bf a}} 	\,& \leq \lambda_{n}
			< \mathrm{{\bf a}}+(n-1)\epsilon. \nonumber
		\end{aligned}
	\end{equation}
\end{lemma}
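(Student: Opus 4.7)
The plan is to analyze the characteristic polynomial of $A$ directly. Cofactor expansion along the last row (equivalently, the Schur complement formula with the block structure $A = \bigl(\begin{smallmatrix} D & v \\ v^{\ast} & \mathrm{{\bf a}}\end{smallmatrix}\bigr)$, where $D = \mathrm{diag}(d_1,\dots,d_{n-1})$ and $v = (a_1,\dots,a_{n-1})^{\top}$) yields
\[
p(\lambda) := \det(A - \lambda I) = (\mathrm{{\bf a}} - \lambda)\prod_{i=1}^{n-1}(d_i-\lambda) \;-\; \sum_{i=1}^{n-1}|a_i|^2\prod_{j\neq i}(d_j-\lambda).
\]
The extremal eigenvalue is pinned down cheaply: Rayleigh-testing against $e_n$ gives $\lambda_n \geq e_n^{\ast} A e_n = \mathrm{{\bf a}}$ at once, and the matching upper bound $\lambda_n < \mathrm{{\bf a}} + (n-1)\epsilon$ will follow a posteriori from the trace identity $\sum_\alpha \lambda_\alpha = \mathrm{{\bf a}} + \sum_i d_i$, once the remaining $n-1$ eigenvalues have been localized near the $d_\alpha$'s.

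The core of the argument is to localize one eigenvalue (after a suitable permutation of indices) in each interval $I_\alpha := (d_\alpha - \epsilon,\, d_\alpha + \epsilon)$ for $\alpha = 1,\dots,n-1$. I would evaluate $p$ at the two endpoints of $I_\alpha$ and exhibit a strict sign change, invoking the intermediate value theorem to produce an eigenvalue inside. At $\lambda = d_\alpha \pm \epsilon$ the leading factor $(\mathrm{{\bf a}}-\lambda)$ in the first term is $\approx \mathrm{{\bf a}}$, and the product $\prod_i(d_i-\lambda)$ carries a distinguished factor $(d_\alpha - \lambda) = \mp\epsilon$, so this term contributes on the order of $\mp\, \mathrm{{\bf a}}\,\epsilon\prod_{i\neq\alpha}(d_i-d_\alpha\mp\epsilon)$. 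Each correction term $|a_i|^2\prod_{j\neq i}(d_j-\lambda)$ is bounded crudely by $|a_i|^2\prod_{j\neq i}(|d_j|+|d_\alpha|+\epsilon)$, which in turn is controlled by an $(n-2)$-th power of $\sum_i|d_i|+\epsilon$.

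The hypothesis on $\mathrm{{\bf a}}$ is calibrated so that, at each endpoint, the dominant term strictly outweighs the error sum with opposite signs: the factor $(n-1)$ bounds the product $\prod_{i\neq\alpha}$ against a power of $\sum|d_i|$; the split $2n-3 = (n-1)+(n-2)$ separates the ``diagonal'' contribution $i=\alpha$ from the $n-2$ ``cross'' contributions in the correction sum; and the additive slack $\tfrac{(n-2)\epsilon}{2n-3}$ enforces strict inequality in the IVT application. The main obstacle I anticipate is the clustered case, in which two or more $d_i$'s coincide or lie mutually within $\epsilon$; there the naive factor $|d_i - d_\alpha \mp \epsilon|$ can become small and the crude product bound is wasteful. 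I would handle this by first proving the conclusion when the $d_i$'s are pairwise well separated, and then passing to the general case by a density argument: perturb $d_i \mapsto d_i + t_i$ with generic small $t_i$, apply the result to the perturbed problem, and let $t_i \to 0$, using continuity of eigenvalues under perturbation of the matrix entries. Once each $\lambda_\alpha \in I_\alpha$ is in hand, the upper bound on $\lambda_n$ is an immediate trace computation.
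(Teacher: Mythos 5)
Your opening moves are fine: the Schur-complement expansion of the characteristic polynomial and the Rayleigh-quotient bound $\lambda_n\geq e_n^*Ae_n=\mathrm{{\bf a}}$ are exactly right, and the trace identity would indeed finish the upper bound on $\lambda_n$ once the first $n-1$ eigenvalues are localized. But the core localization step via IVT at $\lambda=d_\alpha\pm\epsilon$ has a genuine gap that the proposed density/perturbation fix does not close. When some $d_k$ lies near $d_\alpha\pm\epsilon$, the factor $(d_k-\lambda)$ in the leading term $(\mathrm{{\bf a}}-\lambda)\prod_i(d_i-\lambda)$ is small, while the $i=k$ correction term $|a_k|^2\prod_{j\neq k}(d_j-\lambda)$ drops precisely that small factor. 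The ratio of the ``dominant'' term to this correction is therefore proportional to $|d_k-d_\alpha\mp\epsilon|$ and tends to $0$; no bound on $\mathrm{{\bf a}}$ of the assumed form can restore the sign at the endpoint. Crucially, this failure region is an \emph{open} set of $(d_1,\dots,d_{n-1})$, not a measure-zero one, so a generic small perturbation $d_i\mapsto d_i+t_i$ does not escape it and cannot supply the uniform separation $|d_i-d_j|>2\epsilon$ that the sign-change argument would require. Moreover, even where the endpoints are under control, overlapping intervals $I_\alpha$, $I_\beta$ can a priori be ``filled'' by the same eigenvalue, so IVT alone does not produce the claimed bijective matching.

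The paper's proof avoids this entirely. It first proves a weaker statement (Lemma \ref{refinement}): under the quadratic growth condition, every one of $\lambda_1,\dots,\lambda_{n-1}$ lies within $\epsilon$ of \emph{some} $d_i$, without matching indices. That step uses the rearranged eigenvalue equation $\lambda_\alpha-\mathrm{{\bf a}}=\sum_i|a_i|^2/(\lambda_\alpha-d_i)$ (valid when $\lambda_\alpha$ is $\epsilon$-far from all $d_i$) together with a trace contradiction, so clustering is irrelevant. The proper permutation is then recovered by a one-parameter deformation in $\mathrm{{\bf a}}$ alone: take shrunk intervals of radius $\epsilon/(2n-3)$, group them into connected components $J_1,\dots,J_m$, note that Lemma \ref{refinement} keeps all $\lambda_\alpha$ inside $\bigcup J_k$ and $\lambda_n$ outside for all admissible $\mathrm{{\bf a}}$, so the integer count of eigenvalues in each $J_k$ is a continuous, hence constant, function of $\mathrm{{\bf a}}$; evaluating the count as $\mathrm{{\bf a}}\to+\infty$ (where the eigenvalues split into one diverging eigenvalue and the remaining ones converging to the $d_\alpha$) pins down the multiplicity in each component and yields the stated index-matched bounds. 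This ``count in connected components and deform in $\mathrm{{\bf a}}$'' device is the missing ingredient in your write-up, and I do not see a way to replace it by IVT plus genericity within the hypotheses given.
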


\subsection{Useful lemmas regarding to $f$} 
\label{lemmas-for-f}

The concavity  of $f$ 
yields a useful inequality 
\begin{equation}
	\label{concavity2}
	%f(\lambda)-f(\mu) \geq 	\sum_{i=1}^n	f_i(\lambda)  (\lambda_i-\mu_i), \, \forall \lambda,\, \mu\in\Gamma.
	f(\mu) \leq f(\lambda)+	\sum_{i=1}^n	f_i(\lambda)  (\mu_i-\lambda_i), \, \forall \lambda,\, \mu\in\Gamma.
\end{equation}
 
The following lemma states that the unbound and concavity   imply  monotonicity.
% This was observed in a 
This was observed in 
new 
draft %version 
of \cite{yuan-regular-DP}.
%which is somewhat similar to that used in \cite{yuan-regular-DP}. %see also \cite{yuan-regular-DP}.

\begin{lemma}  %[\cite{yuan-regular-DP}]
	\label{lemma1-unbound-yield-elliptic}
	%In the presence of concave and %\eqref{unbounded-1}
	If $f$ satisfies %the unbounded condition 
	\eqref{unbounded-1} in $\Gamma$,
	%\begin{equation} 	\label{unbounded-3}	\begin{aligned}  \lim_{t\to+\infty}f(\lambda_1,\cdots,\lambda_{n-1},\lambda_n+t)=\sup_{\Gamma}f, 	\,\, \forall\lambda\in\Gamma,	\end{aligned}\end{equation}  
	then   \eqref{elliptic} holds.
	
\end{lemma}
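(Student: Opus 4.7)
The plan is to argue by contradiction, combining concavity of $f$ along the $e_n$-direction with the unboundedness condition \eqref{unbounded-1}. By the symmetry of $f$ and of $\Gamma$ (for any permutation $\sigma$, $f(\lambda)=f(\sigma\lambda)$ and $\sigma\Gamma=\Gamma$), it suffices to prove $f_n(\lambda_0)>0$ for every $\lambda_0\in\Gamma$; the same conclusion for other indices then follows by permuting the coordinates.

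Next I would verify that the one-parameter family $\lambda_0+te_n$ with $e_n=(0,\ldots,0,1)$ stays in $\Gamma$ for all $t\geq 0$. Since $\Gamma_n\subseteq\Gamma$, one has $e_n\in\overline{\Gamma}$; for any open convex cone $\Gamma$ the sum of an interior point and a closure point is again interior, so $\lambda_0+te_n\in\Gamma$ for every $t\geq 0$. Thus $g(t):=f(\lambda_0+te_n)$ is well defined on $[0,\infty)$, and the concavity of $f$ on $\Gamma$ forces $g$ to be concave in $t$, whence $g'$ is non-increasing on $[0,\infty)$.

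Now suppose toward a contradiction that $f_n(\lambda_0)\leq 0$ for some $\lambda_0\in\Gamma$. Then $g'(t)\leq g'(0)=f_n(\lambda_0)\leq 0$ for all $t\geq 0$, so $g$ is non-increasing and in particular bounded above by $g(0)=f(\lambda_0)<+\infty$. This directly contradicts \eqref{unbounded-1}, which asserts $g(t)=f(\lambda_0+te_n)\to+\infty$ as $t\to+\infty$. Therefore $f_n(\lambda_0)>0$, and by symmetry $f_i(\lambda_0)>0$ for every $i$ and every $\lambda_0\in\Gamma$, giving \eqref{elliptic}. The argument is short and essentially one-liner; the only point that requires a brief verification is that $\lambda_0+te_n$ remains in the open cone $\Gamma$, so there is no serious obstacle.
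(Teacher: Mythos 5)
Your argument is correct and is essentially the same as the paper's: both exploit concavity of $f$ along the ray $t\mapsto\lambda+t e_n$ together with the unboundedness condition \eqref{unbounded-1}. The paper gets it in one line directly from the tangent-line inequality \eqref{concavity2}, namely $f_n(\lambda)\geq\frac{f(\lambda+te_n)-f(\lambda)}{t}>0$ for $t\gg1$, while you phrase the same fact as a contradiction on the monotonicity of $g(t)=f(\lambda_0+te_n)$ and then invoke permutation symmetry instead of the ordering $f_1\geq\cdots\geq f_n$; both are fine.
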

\begin{proof}
	Suppose $\lambda_1\leq\cdots\leq\lambda_n$. Then $f_1(\lambda)\geq\cdots\geq f_n(\lambda)$.
	In view of the concavity and unbound of $f$, by setting $t\gg1$  we know 
	\begin{equation}
		\begin{aligned}
			f_n(\lambda)\geq\frac{f(\lambda_1,\cdots,\lambda_{n-1},\lambda_n+t)-f(\lambda)}{t}>0. \nonumber
		\end{aligned}
	\end{equation} 
\end{proof}

%Next we give characterizations for symmetric concave functions satisfying
%\begin{equation}		\label{addistruc}		\begin{aligned}		\lim_{t\rightarrow+\infty} f(t\lambda)>f(\mu), \,\, \forall \lambda,\mbox{ }  \mu\in\Gamma.	\end{aligned}\end{equation}
Next, we present another useful lemma. 

\begin{lemma}[\cite{yuan-regular-DP,yuan-PUE1}]
	\label{lemma3.4}
	%For $f$ satisfying 
	%	In the presence of %\eqref{addistruc},  
	%The following are mutually equivalent:
	%	\begin{enumerate} 
	%\item $f$ satisfies \eqref{addistruc}.
	%	\item  
	%\begin{equation}	\label{addistruc-1}\begin{aligned} 
	%	$	f(\lambda+\mu)>f(\lambda), \, \forall\lambda,\mbox{ }  \mu\in\Gamma.$
	%	\end{aligned}  	\end{equation}
	%	\item   %	\begin{equation} 	\label{addistruc-2} 	\begin{aligned} 
	%$\sum_{i=1}^n f_i(\lambda)\mu_i>0$, $\forall \lambda,\mbox{ }  \mu\in\Gamma.$
	%\forall \lambda, \mbox{ } \mu\in\Gamma.
	%\end{aligned}   \end{equation}
	
	%	\item $\sum_{i=1}^n f_i(\lambda)\lambda_i> 0$,  $\forall\lambda\in\Gamma$. 
	%\end{enumerate}
	If $f$ satisfies \eqref{addistruc}, then  	$\sum_{i=1}^n f_i(\lambda)\mu_i>0$, $\forall \lambda,\mbox{ }  \mu\in\Gamma.$
\end{lemma}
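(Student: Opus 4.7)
The plan is to combine the concavity of $f$ at $\lambda$ with the asymptotic growth condition \eqref{addistruc} along the scaled ray $\{t\mu:t>0\}\subset\Gamma$, and to extract the sign from the resulting linear inequality in $t$.

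Since $\Gamma$ is a convex cone containing $\mu$, the point $t\mu$ belongs to $\Gamma$ for every $t>0$. Applying the concavity inequality \eqref{concavity2} at $\lambda$ to the point $t\mu$ yields
\begin{equation*}
f(t\mu)\leq f(\lambda)+\sum_{i=1}^n f_i(\lambda)(t\mu_i-\lambda_i),
\end{equation*}
which I rearrange as
\begin{equation*}
t\sum_{i=1}^n f_i(\lambda)\mu_i\;\geq\;\bigl(f(t\mu)-f(\lambda)\bigr)+\sum_{i=1}^n f_i(\lambda)\lambda_i.
\end{equation*}
Before appealing to \eqref{addistruc}, I would first record the Euler-type lower bound $\sum_i f_i(\lambda)\lambda_i\geq 0$: specializing to $\mu=\lambda$ in the last display, a strictly negative value would force $f(t\lambda)\to -\infty$ as $t\to+\infty$, contradicting $f>0$ on $\Gamma$.

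To extract strict positivity, I would pick $\sigma$ with $f(\lambda)<\sigma<\sup_\Gamma f$; by \eqref{addistruc} there is $t_\sigma>0$ such that $f(t\mu)>\sigma$ for all $t\geq t_\sigma$. The previous inequality evaluated at $t=t_\sigma$ then gives
\begin{equation*}
t_\sigma\sum_{i=1}^n f_i(\lambda)\mu_i\;\geq\;\bigl(\sigma-f(\lambda)\bigr)+\sum_{i=1}^n f_i(\lambda)\lambda_i\;\geq\;\sigma-f(\lambda)\;>\;0,
\end{equation*}
so that $\sum_i f_i(\lambda)\mu_i>0$ upon dividing by $t_\sigma>0$.

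The one delicate step is ensuring that a $\sigma$ with $\sigma>f(\lambda)$ is available, which amounts to $f(\lambda)<\sup_\Gamma f$. In the principal setting of the paper the homogeneity \eqref{homogeneous-1} with $\varsigma>0$ forces $\sup_\Gamma f=+\infty$, so the strict inequality $f(\lambda)<\sup_\Gamma f$ is automatic and no interior maximum of $f$ can occur; this is the main case where the plan goes through cleanly. In a purely abstract setting where $\sup_\Gamma f$ might be finite, one has to exclude the degenerate possibility $f(\lambda)=\sup_\Gamma f$ separately, by combining the natural condition $\sup_{\partial\Gamma} f<\sup_\Gamma f$, the symmetry and concavity of $f$, and a direct analysis of the level set $\{f=\sup_\Gamma f\}$ inside $\Gamma$.
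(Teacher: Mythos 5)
Your argument is correct and follows essentially the same route as the paper: apply the concavity inequality \eqref{concavity2} at $\lambda$ to the scaled point $t\mu$, invoke \eqref{addistruc} to make $f(t\mu)-f(\lambda)$ positive for $t$ large, dispose of the Euler-type term $\sum_i f_i(\lambda)\lambda_i$ by specializing to $\mu=\lambda$, and divide by $t$. The paper phrases the same computation geometrically (taking $\sigma=f(\lambda)$, noting $t\mu\in\Gamma^\sigma$ for $t$ large, and using convexity of the superlevel set $\Gamma^\sigma$ to get $Df(\lambda)\cdot(t\mu-\lambda)>0$, then $\mu=\lambda$), which is exactly your concavity-inequality calculation in different clothing.

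There is one point where you are in fact more careful than the paper. The paper implicitly invokes \eqref{addistruc} at $\sigma=f(\lambda)$, which is only licensed when $f(\lambda)<\sup_\Gamma f$; if $f$ attained its supremum at an interior point the gradient there would vanish and the conclusion would be false, so this restriction is not cosmetic. Your explicit choice of $\sigma$ with $f(\lambda)<\sigma<\sup_\Gamma f$, and the remark that this is automatic under the paper's standing homogeneity assumption \eqref{homogeneous-1} (which gives $\sup_\Gamma f=+\infty$), makes visible a hypothesis the paper's proof silently uses. A small remark on your auxiliary step: to get $\sum_i f_i(\lambda)\lambda_i\geq 0$ you cite $f>0$ on $\Gamma$, i.e. \eqref{homogeneous-1-buchong2}, which goes beyond the hypothesis of this particular lemma; it would be cleaner to note that \eqref{addistruc} itself already forces $f(t\lambda)$ to stay bounded below as $t\to+\infty$ (the concave function $t\mapsto f(t\lambda)$ has a limit exceeding any $\sigma<\sup_\Gamma f$), which yields the same contradiction.
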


  		\begin{proof}	
%	[Proof of Lemma \ref{lemma3.4}]
	Let  $\sigma=f(\lambda)$ and 
	$Df =(f_1,\cdots,f_n)$. 
	%$Df(\lambda) =(f_1(\lambda),\cdots,f_n(\lambda))$. 
	%$(1)\Rightarrow (2)$: 
	By \eqref{addistruc}, $t\mu\in\Gamma^\sigma$  for $t$ large. 
	%By the convexity of $\partial\Gamma^\sigma$, 
	Since  $\Gamma^\sigma$ is convex, $Df(\lambda)\cdot (t\mu-\lambda)>0$. So $Df(\lambda)\cdot\lambda> 0$ (setting $\mu=\lambda$)
	and so $Df(\lambda)\cdot\mu>0$. 
	
	%$(2)\Rightarrow (1)$:  
	%The proof is based on %the concavity and 
	%Since $t\mu-\lambda\in\Gamma$ for $t\gg1$,   by $(2)$  we have $Df(\lambda)\cdot (t\mu-\lambda)>0$. So  $t\mu\in \Gamma^\sigma$ for $t\gg1$.
	%So $f(t\mu)>f(\lambda)$.   
	%By \eqref{concavity}, $\sum_{i=1}^n f_i(\lambda)(t\mu_i-\lambda_i)\geq f(t\mu)-f(\lambda)$

\end{proof}

%\subsection{Partial uniform ellipticity}

We introduce the following notion in order to explore the structure of  fully nonlinear equations of elliptic and parabolic type.
% In \cite{yuan-PUE1} the author explored  the structure of $f$.  %extending extensively an inequality of Lin-Trudinger \cite{Lin1994Trudinger}  for $f=\sigma_k^{1/k}$. 
%say \textit{partial uniform ellipticity}. 
\begin{definition}
	 [Partial uniform ellipticity]
	\label{Def1-PUE}
	We say that
	$f$ is of \textit{$\mathrm{m}$-uniform ellipticity} in $\Gamma$,  
	if  $f$ satisfies  	
 %the following conditions are fulfilled.
	\begin{enumerate}
		
		\item[$\mathrm{(i)}$]
		%\begin{equation}	\label{elliptic-weak-21}
		$f_i(\lambda)\geq 0,   \mbox{ } \forall 1\leq i\leq n, \mbox{  }
		\sum_{i=1}^n f_i(\lambda)>0, \mbox{   } \forall \lambda\in\Gamma. $
		%\end{equation}
		
		\item[$\mathrm{(ii)}$] There is a uniform %positive
		constant $\vartheta$ 
		%(does not depend specifically on $\lambda$), 
		such that for $\lambda\in \Gamma$ with %the order 
		%$\lambda_1\leq \cdots\leq \lambda_n$, %there holds
		$f_1(\lambda)\geq \cdots\geq f_n(\lambda)$,
		\begin{equation}
			\label{partial-uniform2}
			\begin{aligned}
				f_{i}(\lambda)\geq \vartheta\sum_{j=1}^n f_j(\lambda)>0, \mbox{ } \forall 1\leq i\leq \mathrm{m}. %\mbox{ if } \lambda_1\leq \cdots\leq \lambda_n.
			\end{aligned}
		\end{equation}
		  
	\end{enumerate}
	In particular, \textit{$n$-uniform ellipticity} is also called	\textit{fully uniform ellipticity}.
	
	%\vspace{1mm}
	Accordingly, we have an analogous notion of partial uniform ellipticity for a second order elliptic equation, 	if its linearized operator satisfies a similar condition.
\end{definition}

  In \cite{yuan2020conformal,yuan-PUE1}\renewcommand{\thefootnote}{\fnsymbol{footnote}}\footnote{The results in \cite{yuan2020conformal} were moved to  \cite{yuan-PUE1}.} 
 %In \cite{yuan-PUE1} 
 the author  determined the integer $\mathrm{m}$ from \eqref{partial-uniform2} for generic  symmetric concave
  functions,
  %obeying \eqref{addistruc},
 extending extensively an inequality of %Lin-Trudinger 
 \cite{Lin1994Trudinger}  for $f=\sigma_k^{1/k}$. 
%thereby extending extensively an inequality for $f=\sigma_k^{1/k}$ due to Lin-Trudinger \cite{Lin1994Trudinger}. 
% This extends an inequality for $f=\sigma_k^{1/k}$ due to Lin-Trudinger \cite{Lin1994Trudinger}.  
%The general case was settled by \cite{yuan-PUE1}.

%Let $\kappa_\Gamma$ be as in \eqref{def1-kappa-gamma}.
%As in \cite{yuan-PUE1} we further introduce  $\vartheta_\Gamma$ for $\Gamma$ as in the following:
%  \begin{equation}\begin{aligned}	 {\kappa}_{\Gamma}=\max \left\{k: ({\overbrace{0,\cdots,0}^{k}},{\overbrace{1,\cdots, 1}^{n-k}})\in \Gamma \right\}; \nonumber 	\end{aligned}\end{equation}\\
%\begin{equation}\label{def1-kappa-gamma}\begin{aligned}	 {\kappa}_{\Gamma}=\max \left\{k: ({\overbrace{0,\cdots,0}^{k-\mathrm{entries}}},	{\overbrace{1,\cdots, 1}^{(n-k)-\mathrm{entries}}})\in \Gamma \right\}, 	\end{aligned}\end{equation}
%when $\Gamma=\Gamma_n$, % (equivalently $\kappa_\Gamma=0$),  
% $	\vartheta_\Gamma=\frac{1}{n}$; otherwise 
%  \begin{equation} \label{theta-gamma}	\vartheta_\Gamma= 	\sup_{(-\alpha_1,\cdots,-\alpha_{\kappa_\Gamma}, \alpha_{\kappa_\Gamma+1},\cdots, \alpha_n)\in \Gamma;\mbox{ } \alpha_i>0}\frac{\alpha_1/n}{\sum_{i=\kappa_\Gamma+1}^n \alpha_i-\sum_{i=2}^{\kappa_\Gamma}\alpha_i}. \nonumber  \end{equation}  As in \cite{yuan-PUE-conformal}  we take   for $\Gamma$

\begin{lemma}
	 [\cite{yuan2020conformal,yuan-PUE1}] 
%	[\cite{yuan-PUE1}] 
	\label{yuan-k+1}
	Suppose \eqref{addistruc} holds. Then for any 
	$\lambda\in \Gamma$ with
	% the order
	$\lambda_1 \leq \cdots \leq\lambda_n$, we have
	\begin{enumerate}
		\item   $f_i(\lambda)\geq 0,   \mbox{ } \forall 1\leq i\leq n, $ 
		$\sum_{i=1}^n f_i(\lambda)>0.$
		\item  $f_{{i}}(\lambda) 
		\geq n    \vartheta_{\Gamma}f_1(\lambda) 
		\geq \vartheta_{\Gamma} \sum_{j=1}^{n}f_j(\lambda),  \mbox{ } \forall  1\leq i\leq 1+ 	\kappa_{\Gamma}.$
		%In particular, $f_{{i}}(\lambda) \geq \vartheta_{\Gamma} \sum_{j=1}^{n}f_j(\lambda),  \mbox{ } \forall  1\leq i\leq 1+ 	\kappa_{\Gamma}.$	
	\end{enumerate}
	%\begin{equation}	f_i(\lambda)\geq 0,   \mbox{ } \forall 1\leq i\leq n, 	\mbox{  }	\sum_{i=1}^n f_i(\lambda)>0, \mbox{   } \forall \lambda\in\Gamma, \nonumber  	\end{equation}
	%	\begin{equation}	\begin{aligned}	f_{{i}}(\lambda) \geq   \vartheta_{\Gamma} \sum_{j=1}^{n}f_j(\lambda),  \mbox{ } \forall  1\leq i\leq 1+ 	\kappa_{\Gamma}. \nonumber	\end{aligned}  	\end{equation}
	% where  $\vartheta_{\Gamma}$ and $\kappa_{\Gamma}$ are as above. 
	%Here $\kappa_\Gamma$ is given by \eqref{def1-kappa-gamma}.
	Here $\kappa_\Gamma$ is given in \eqref{def1-kappa-gamma}, and
	\begin{equation}
		\label{theta-gamma}
		\vartheta_\Gamma=
		\begin{cases}  
			1/n, \,& \Gamma=\Gamma_n,\\
			\underset{(-\alpha_1,\cdots,-\alpha_{\kappa_\Gamma}, \alpha_{\kappa_\Gamma+1},\cdots, \alpha_n)\in \Gamma,\mbox{ } \alpha_i>0}{\sup}\frac{\alpha_1/n}{\sum_{i=\kappa_\Gamma+1}^n \alpha_i-\sum_{i=2}^{\kappa_\Gamma}\alpha_i}, \,& \Gamma\neq\Gamma_n.  %\nonumber
		\end{cases}  
	\end{equation}
 Moreover, the assertion of $(\kappa_\Gamma+1)$-uniform ellipticity  %for $f$ in $\Gamma$
 is sharp.
	% and cannot be improved.
\end{lemma}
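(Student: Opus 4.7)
The plan is to extract both the monotonicity and the partial uniform ellipticity bound from the one basic consequence of concavity and \eqref{addistruc} recorded in Lemma~\ref{lemma3.4}: namely $\sum_i f_i(\lambda)\mu_i > 0$ for every $\mu \in \Gamma$ and every $\lambda \in \Gamma$. After fixing the ordering $\lambda_1 \leq \cdots \leq \lambda_n$, the standard symmetry-plus-concavity argument---restrict $f$ to the line $s \mapsto f(\lambda + s(e_i - e_j))$, observe that this restriction is concave and, by the symmetry of $f$, invariant under $s \mapsto (\lambda_j - \lambda_i) - s$, hence attains its maximum at $s = (\lambda_j - \lambda_i)/2 \geq 0$---forces $f_1(\lambda) \geq \cdots \geq f_n(\lambda)$. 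Applying Lemma~\ref{lemma3.4} first at $\mu = \mathbf{1} \in \Gamma_n \subset \Gamma$ yields $\sum_i f_i(\lambda) > 0$, and then at $\mu_t = \mathbf{1} + t\,e_n$ (which lies in $\Gamma$ for every $t > 0$ since $\Gamma$ is a convex cone containing the positive orthant, so $\Gamma + \Gamma_n \subset \Gamma$) gives $\sum_i f_i(\lambda) + t f_n(\lambda) > 0$; sending $t \to \infty$ shows $f_n(\lambda) \geq 0$, which together with the monotonicity proves (1).

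For (2), set $k = \kappa_\Gamma$. The definition \eqref{def1-kappa-gamma} combined with the openness of $\Gamma$ guarantees that, for a nonempty set of positive reals $\alpha_1, \ldots, \alpha_n$, the tilted vector
\[
\mu_\alpha := (-\alpha_1, \ldots, -\alpha_k, \alpha_{k+1}, \ldots, \alpha_n)
\]
lies in $\Gamma$; a sample choice is $\alpha_1, \ldots, \alpha_k$ small and $\alpha_{k+1} = \cdots = \alpha_n = 1$. Lemma~\ref{lemma3.4} applied at $\mu_\alpha$ yields
\[
\sum_{i=k+1}^n f_i(\lambda)\alpha_i \;>\; \sum_{i=1}^k f_i(\lambda)\alpha_i.
\]
Using the monotonicity $f_1 \geq \cdots \geq f_n \geq 0$ established in (1), the left side is bounded above by $f_{k+1}(\lambda)\sum_{i \geq k+1}\alpha_i$, while the right side is bounded below by $\alpha_1 f_1(\lambda) + f_{k+1}(\lambda)\sum_{i=2}^k \alpha_i$. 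Rearranging,
\[
f_{k+1}(\lambda)\Bigl(\sum_{i=k+1}^n \alpha_i - \sum_{i=2}^k \alpha_i\Bigr) \;>\; \alpha_1\, f_1(\lambda).
\]
The bracketed denominator is positive for the sample $\alpha$ above (it equals $(n-k) - (k-1)\epsilon$ for $\alpha_2 = \cdots = \alpha_k = \epsilon$ small), so the admissible set with positive denominator is nonempty. Taking the supremum of $\alpha_1$ divided by this denominator over all admissible $\alpha$ recovers exactly the constant $n\vartheta_\Gamma$ from \eqref{theta-gamma}, giving $f_{k+1}(\lambda) \geq n\vartheta_\Gamma f_1(\lambda)$; the bound extends to every $f_i$ with $1 \leq i \leq k+1$ by the monotonicity of the partials, and the final inequality $n\vartheta_\Gamma f_1 \geq \vartheta_\Gamma \sum_j f_j$ follows from $\sum_j f_j(\lambda) \leq n f_1(\lambda)$.

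The expected main obstacle is the sharpness statement: given $\Gamma$ with $\kappa_\Gamma \leq n-2$, one must produce admissible $(f, \lambda)$ for which the analogous estimate fails at index $\kappa_\Gamma + 2$, i.e., $f_{\kappa_\Gamma+2}(\lambda)/f_1(\lambda)$ can be forced arbitrarily small. A natural test bed is $f = \sigma_k^{1/k}$ on $\Gamma_k$---for which $\kappa_\Gamma = n-k$ and the explicit identity $\partial\sigma_k/\partial\lambda_i = \sigma_{k-1}(\lambda\mid i)$ makes individual partials tractable---evaluated along a family approaching a boundary direction lying outside $\bar\Gamma$, chosen so that precisely the $(\kappa_\Gamma+2)$-th largest partial degenerates relative to the first. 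Identifying such a family and transferring the resulting obstruction to a generic $\Gamma$ (using that $\Gamma$ must contain a copy of the critical direction $(\underbrace{0,\ldots,0}_{\kappa_\Gamma}, 1,\ldots,1)$ on its boundary geometry) is the delicate part of the argument; once this is in place, the inequalities in (1) and (2) are, as sketched, direct unwindings of concavity and Lemma~\ref{lemma3.4}.
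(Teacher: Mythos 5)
Your derivation of parts (1) and (2) is correct and closely tracks the paper's: both proceed from Lemma~\ref{lemma3.4} and the ordering $f_1\geq\cdots\geq f_n$ forced by concavity and symmetry. One small improvement in your argument is that you obtain the denominator $\sum_{i\geq\kappa_\Gamma+1}\alpha_i-\sum_{i=2}^{\kappa_\Gamma}\alpha_i$ in a single pass, by bounding the right-hand side below by $\alpha_1 f_1+f_{\kappa_\Gamma+1}\sum_{i=2}^{\kappa_\Gamma}\alpha_i$ and absorbing the second piece, whereas the paper first obtains the cruder denominator $\sum_{i\geq\kappa_\Gamma+1}\alpha_i$ and then states that \eqref{theta1} follows ``by iteration.'' Your version is cleaner there.

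The genuine gap is the sharpness assertion, which you only outline and do not prove. Your sketch---take $f=\sigma_k^{1/k}$ on $\Gamma_k$, exhibit $\lambda$ along which $f_{\kappa_\Gamma+2}/f_1\to 0$, then ``transfer to a generic $\Gamma$''---faces two obstacles you don't resolve. First, it would at best show that \emph{some} $f$ on a given cone fails to be $(\kappa_\Gamma+2)$-uniformly elliptic, leaving open whether a cleverer choice of $f$ on the same $\Gamma$ might do better; second, the transfer step from $\Gamma_k$ to an arbitrary $\Gamma$ is itself nontrivial and left entirely heuristic. The paper's route (Proposition~\ref{thm-k+1}) sidesteps both issues by running a direct contradiction: if $f$ were $(k+1)$-uniformly elliptic with constant $\vartheta$, then setting $\lambda_{\epsilon,R}=(\epsilon,\dots,\epsilon,R,\dots,R)$ (with $k$ entries $\epsilon$) and $R=a/\vartheta$, the concavity inequality \eqref{concavity2} forces $f(\lambda_{\epsilon,R})\geq f(a\vec{\bf 1})>\sup_{\partial\Gamma}f$ uniformly as $\epsilon\to 0^+$, which precludes $\lambda_{0,R}$ from lying on $\partial\Gamma$; hence $(0,\dots,0,1,\dots,1)\in\Gamma$ and $\kappa_\Gamma\geq k$. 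Taking $k=\kappa_\Gamma+1$ gives the contradiction. This is a universal statement about every admissible $f$ on $\Gamma$ (using only the standing condition $\sup_{\partial\Gamma}f<\sup_\Gamma f$), which is exactly what ``sharp'' asserts here and what your example-based plan would not deliver.
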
 

 \begin{remark}	If $\Gamma=\Gamma_1$ then by \eqref{theta-gamma} $f_1(\lambda)=f_2(\lambda)=\cdots=f_n(\lambda)$ in $\Gamma.$\end{remark}

%\begin{remark}
%	It is remarkable that when $\Gamma$ is of type 2 in the sense of \cite{CNS3}, the nonlinear operator $f$ is of uniform ellipticity.
%\end{remark}

%\begin{remark}[\cite{yuan-PUE1}]
%%	The author proved in \cite{yuan-PUE1} that
%In fact we can check  
%	\begin{equation}\label{kappa-gamma}\begin{aligned}	\kappa_{\Gamma}=\max\left\{k: (-\alpha_1,\cdots,-\alpha_k,\alpha_{k+1},\cdots, \alpha_n)\in \Gamma, \mbox{ where } \alpha_j>0, \mbox{ } \forall 1\leq j\leq n \right\}. \nonumber	\end{aligned}\end{equation}  \end{remark}

%Condition  \eqref{addistruc}  is  %automatically
%satisfied when $f$ obeys \eqref{homogeneous-1-buchong2} and  \eqref{homogeneous-1}, as shown in the following lemma.
%\begin{lemma}	\label{lemma2.3}	Assume  in addition that 	$\sup_\Gamma f=+\infty$ and	\begin{equation}		\label{addistruc-0}		\begin{aligned}	\lim_{t\rightarrow+\infty} f(t\lambda)>-\infty, \mbox{  } \forall\lambda\in\Gamma.	\end{aligned}	\end{equation}	Then $(f,\Gamma)$ satisfies \eqref{addistruc}.\end{lemma}

%\begin{remark} 	\label{remark1-kappa}
%Let $\kappa_\Gamma$ be as in \eqref{def1-kappa-gamma}.
%	$\kappa_\Gamma$ is an integer with $0\leq\kappa_\Gamma\leq n-1$.  	In particular, $\kappa_\Gamma=n-1$ if and only if   $\Gamma_\infty=\mathbb{R}^{n-1}$.
% i.e.,  $\Gamma$ is of type 2.
%\end{remark}

%\subsection{Properties of uniformly elliptic operators}

%As a remarkable result, 
It is notable that $f$ is of %fully  
uniform ellipticity when $\Gamma$ is of type 2  and vice versa.
% in the sense of \cite{CNS3}. 
%Moreover, the converse  holds true.
%There are useful properties of uniformly elliptic operators. The following lemma plays key roles in this paper.
\begin{lemma} 
	%[\cite{yuan-PUE1}]
	 [\cite{yuan2020conformal,yuan-PUE1}]
	%	[{\cite[Corollary 5.11]{yuan-PUE1}}]
	\label{lemma5.11}
	Suppose $f$ satisfies  %\eqref{concave} 
	\eqref{addistruc} in $\Gamma$.  
	Then the following %statements 
	are equivalent: %to each other. 
	\begin{enumerate}
		
			 		\item $\Gamma_\infty=\mathbb{R}^{n-1}$.
			 		
	 		\item $\kappa_\Gamma=n-1$. That is,  $\Gamma$ is of type 2. 
	 		%i.e., $(0,\cdots,0,1)\in\Gamma$.

		\item  %$(f,\Gamma)$ satisfies \eqref{uniform-elliptic-2}.
%		$f$ is of fully uniform ellipticity in $\Gamma$. That is,
	There exists a uniform constant $\theta$ such that
		\begin{equation}
			\label{uniform-elliptic-2}
			\begin{aligned}
				f_i(\lambda)\geq \theta \sum_{j=1}^n f_j(\lambda)>0      
				\mbox{ in } \Gamma,
				\,  \forall 1\leq i\leq n.
			\end{aligned}
		\end{equation}

	\end{enumerate}
\end{lemma}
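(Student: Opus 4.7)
My plan is to establish the three-way equivalence in pieces: the geometric equivalence $(1) \Leftrightarrow (2)$, the forward implication $(2) \Rightarrow (3)$ via the partial uniform ellipticity lemma already available, and the converse $(3) \Rightarrow (2)$, which is the technically delicate step.

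For $(1) \Leftrightarrow (2)$ only the convex cone structure is needed. To get $(1) \Rightarrow (2)$, specializing $\lambda' = 0 \in \Gamma_\infty$ produces some $(0,\ldots,0,R) \in \Gamma$, and the cone property then gives $(0,\ldots,0,1) \in \Gamma$, i.e., $\kappa_\Gamma = n-1$. For $(2) \Rightarrow (1)$, since $(0,\ldots,0,1)$ is an interior point of the open cone $\Gamma$, for any $\lambda' \in \mathbb{R}^{n-1}$ the perturbed point $(\lambda'/R, 1)$ eventually lies in $\Gamma$ as $R \to \infty$; rescaling by $R$ places $(\lambda', R)$ in $\Gamma$, so $\lambda' \in \Gamma_\infty$.

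For $(2) \Rightarrow (3)$ I would invoke Lemma \ref{yuan-k+1} directly (its hypothesis \eqref{addistruc} is assumed here): when $\kappa_\Gamma = n - 1$, the estimate $f_i(\lambda) \geq n \vartheta_\Gamma f_1(\lambda)$ holds for every $1 \leq i \leq n$, and combining it with the ordering-based bound $n f_1(\lambda) \geq \sum_j f_j(\lambda)$ (available since $f_1 \geq f_j$ when $\lambda_1 \leq \cdots \leq \lambda_n$) yields \eqref{uniform-elliptic-2} with $\theta = \vartheta_\Gamma$.

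The heart of the argument is $(3) \Rightarrow (2)$, which I plan to handle by a ray argument. Fix $M > (n-1)/\theta$ and set $\mu = (-1, \ldots, -1, M)$. Using $f_n \geq \theta \sum_j f_j$ together with $f_i \leq \sum_j f_j$ for $i < n$, the fully uniform ellipticity \eqref{uniform-elliptic-2} produces the uniform strict lower bound
\[
Df(\lambda) \cdot \mu = -\sum_{i=1}^{n-1} f_i(\lambda) + M f_n(\lambda) \geq (M\theta - (n-1)) \sum_{j=1}^n f_j(\lambda) > 0, \quad \forall\, \lambda \in \Gamma.
\]
By \eqref{addistruc} I can choose $\lambda_0 = s \vec{\bf 1}$ with $s$ large enough that $f(\lambda_0) > \sup_{\partial\Gamma} f$, and set $T_0 := \sup\{\tau \geq 0 : \lambda_0 + t\mu \in \Gamma \text{ for all } t \in [0, \tau]\}$. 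Along the ray $f$ is then strictly increasing; if $T_0$ were finite, $\lambda_0 + T_0 \mu \in \partial\Gamma$, and upper semi-continuity of the concave function $f$ would force the limiting value to be at most $\sup_{\partial\Gamma} f$, contradicting the monotonicity $f(\lambda_0 + t\mu) \geq f(\lambda_0) > \sup_{\partial\Gamma} f$. Hence $T_0 = \infty$; evaluating at $t = s$ gives $(0,\ldots,0,s(1+M)) \in \Gamma$, and the cone property forces $(0,\ldots,0,1) \in \Gamma$, i.e., $\kappa_\Gamma = n-1$. The main obstacle is precisely this last implication: converting the pointwise inequality $Df \cdot \mu > 0$ into global entrapment of the ray inside $\Gamma$ requires careful use of \eqref{addistruc} (to secure a starting point strictly above the boundary level of $f$) together with upper semi-continuity inherited from concavity and the natural hypothesis $\sup_{\partial\Gamma} f < \sup_\Gamma f$.
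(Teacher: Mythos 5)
Your proof is correct. The equivalences $(1)\Leftrightarrow(2)$ and $(2)\Rightarrow(3)$ match the paper's argument (openness of the cone, and Lemma~\ref{yuan-k+1}, respectively). The step $(3)\Rightarrow(2)$ takes a somewhat different technical route. The paper cites Proposition~\ref{thm-k+1}, whose proof makes a single application of the concavity inequality \eqref{concavity2} between $a\vec{\bf 1}$ and $\lambda_{\epsilon,R}=(\epsilon,\ldots,\epsilon,R)$, deduces $f(\lambda_{\epsilon,R})\geq f(a\vec{\bf 1})>\sup_{\partial\Gamma}f$ for the fixed radius $R=a/\vartheta$, and then sends $\epsilon\to 0^+$ to conclude $(0,\ldots,0,R)\in\Gamma$. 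You instead parametrize the ray $s\vec{\bf 1}+t\mu$ with $\mu=(-1,\ldots,-1,M)$ and integrate the pointwise inequality $Df\cdot\mu>0$ to forbid a finite escape time, closing the contradiction against $\sup_{\partial\Gamma}f$ in the same way. Both arguments rest on exactly the same two inputs — the uniform ellipticity and the condition $\sup_{\partial\Gamma}f<\sup_\Gamma f$ together with \eqref{addistruc} to select a high starting level $f(s\vec{\bf 1})$ — so they are equivalent in substance; yours replaces the one-shot concavity estimate with a monotonicity argument along a one-parameter family, which is marginally longer but self-contained (you do not need Proposition~\ref{thm-k+1} as a separate statement). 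A cosmetic remark: the trivial bound $\sum_{i<n}f_i\leq\sum_{j}f_j$ improves your threshold from $M>(n-1)/\theta$ to $M>1/\theta$.
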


%As a corollary, %of Lemma  \ref{lemma5.11}, 
When  %$\Gamma_\infty=\mathbb{R}^{n-1}$, 
 $f$ satisfies 	 \eqref{uniform-elliptic-2}, 
we verify the unbounded  condition. 	 %\eqref{unbounded-1}.
\begin{lemma}
	[\cite{yuan-PUE1}]
	\label{lemma1-unbound-type2}
	In the presence of    \eqref{addistruc},  
	 \eqref{uniform-elliptic-2}  %\eqref{concave} 
	and $\sup_\Gamma f=+\infty$, %we have %\eqref{unbounded-3}.
$f$ satisfies the unbounded condition	\eqref{unbounded-1}.
	%In particular, if in addition $\sup_\Gamma f=+\infty$,
	%\eqref{asymptotic-to-infty} holds,
	%	then  we have  \eqref{unbounded-1}.
\end{lemma}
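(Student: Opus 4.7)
The plan is to reduce the one-parameter limit along the coordinate direction $e_n = (0,\ldots,0,1)$ to the radial limit $t \mapsto f(t e_n)$, which is controlled directly by \eqref{addistruc}. Concavity together with Lemma \ref{lemma3.4} then bridges the two.

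First I would check that the relevant rays lie in $\Gamma$. By Lemma \ref{lemma5.11}, the uniform ellipticity assumption \eqref{uniform-elliptic-2} is equivalent to $\Gamma$ being of type $2$, i.e.\ $\kappa_\Gamma = n-1$ and $\Gamma_\infty = \mathbb{R}^{n-1}$; in particular $e_n \in \Gamma$. Since $\Gamma$ is an open convex cone, for any $\lambda \in \Gamma$ and $t \geq 0$ the positive combination $\lambda + t e_n$ stays in $\Gamma$. Next, applying \eqref{addistruc} to the single direction $\mu = e_n$ and using $\sup_\Gamma f = +\infty$ (so that the conclusion of \eqref{addistruc} can be exceeded for arbitrarily large $\sigma$), I obtain
\begin{equation*}
\lim_{t \to +\infty} f(t e_n) = +\infty.
\end{equation*}

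The final step is to compare $f(\lambda + te_n)$ with $f(te_n)$ via the concavity inequality \eqref{concavity2} applied at the base point $\nu = \lambda + te_n$ and test point $\mu = te_n$:
\begin{equation*}
f(te_n) \leq f(\lambda + te_n) + \sum_{i=1}^n f_i(\lambda + te_n)\bigl((te_n)_i - (\lambda + te_n)_i\bigr) = f(\lambda + te_n) - \sum_{i=1}^n f_i(\lambda + te_n)\,\lambda_i.
\end{equation*}
Rearranging,
\begin{equation*}
f(\lambda + te_n) \geq f(te_n) + \sum_{i=1}^n f_i(\lambda + te_n)\,\lambda_i.
\end{equation*}
Since both $\lambda + te_n$ and $\lambda$ lie in $\Gamma$, Lemma \ref{lemma3.4} guarantees $\sum_i f_i(\lambda + te_n)\,\lambda_i > 0$. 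Hence $f(\lambda + te_n) \geq f(te_n) \to +\infty$, establishing \eqref{unbounded-1}.

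There is no serious obstacle here; the proof is a two-line consequence of concavity once the type $2$ structure is invoked to confirm $e_n \in \Gamma$. The only subtlety worth flagging is that \eqref{addistruc} is needed specifically in the direction $e_n$ (rather than an arbitrary $\mu \in \Gamma$), so the reduction to type $2$ via Lemma \ref{lemma5.11} is essential — without uniform ellipticity one would not know a priori that $e_n$ itself belongs to $\Gamma$.
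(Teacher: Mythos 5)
Your proof is correct and follows essentially the same approach as the paper: both deduce $e_n=(0,\dots,0,1)\in\Gamma$ from Lemma~\ref{lemma5.11}, both use concavity at $\lambda^t=\lambda+te_n$ to compare $f(\lambda^t)$ with the value of $f$ along the $e_n$-ray, and both invoke \eqref{addistruc} with $\sup_\Gamma f=+\infty$ to send the latter to infinity. The only (cosmetic) divergence is in handling the correction term: the paper tests against $f(\tfrac{t}{2}e_n)$, leaving an extra $\tfrac{t}{2}f_n(\lambda^t)$ which via \eqref{uniform-elliptic-2} dominates the possibly signed $\sum_i f_i(\lambda^t)\lambda_i$ for large $t$, whereas you test directly against $f(te_n)$ and discard $\sum_i f_i(\lambda^t)\lambda_i$ outright by Lemma~\ref{lemma3.4}. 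Both are valid; yours is marginally leaner since it avoids the quantitative comparison and uses uniform ellipticity only through Lemma~\ref{lemma5.11}.
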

%\begin{proof}
%[Sketch of the proof]
%By Lemmas \ref{lemma3.4} and \ref{coro-4}, $\sum_{i=1}^nf_i(\lambda)\mu_i>0,$ $\forall \lambda,\mu\in\Gamma.$
%The proof is based on Lemmas \ref{lemma3.4}, \ref{lemma5.11} and the concavity of $f$. \end{proof}
%\begin{proof}
	%[Proof of Lemma \ref{lemma1-unbound-type2}]
%	Fix $\lambda \in\Gamma$, $R>0$. Let $\lambda^t=(\lambda_1,\cdots,\lambda_{n-1},\lambda_n+t)$.	From \eqref{concavity2} and \eqref{uniform-elliptic-2},  	\begin{equation}	\begin{aligned}		f(\lambda^t)-f(R\vec{\bf1}) \geq  
	%	\,&
%		 \sum_{i=1}^n f_i(\lambda^t)(\lambda_i-R)	 +t f_n(\lambda^t)  
	% \\ 
%	 \geq   %\,&
%	  \sum_{i=1}^n f_i(\lambda^t)(\lambda_i-R+t\theta)  %\mbox{ (using \eqref{uniform-elliptic-2})}
%	  >  0,
	 % \mbox{ for } t> \theta^{-1}(R+|\lambda|).
%	  \nonumber	 		\end{aligned}\end{equation} 
%provided  $t> \theta^{-1}(R+|\lambda|)$.  As we arranged  in introduction,  $\vec{\bf 1}=(1,\cdots,1)$.
%\end{proof}

%In order to give the obstruction to existence, we need the following lemma. %proved in \cite{yuan-PUE2-note}.
%\begin{lemma}[\cite{yuan-PUE2-note}]	\label{lemma1-type1-yuan}	Suppose $\Gamma$ is of type 1. Then for any	$\lambda\in\Gamma$ there holds $$\sum_{j\neq i}\lambda_j>0, \,\, \forall 1\leq i\leq n. $$ \end{lemma}

%%%%%%%%%%%%%%%%%%%%%%%%%%%%%%%%%
%%%%%%%%%%%%%%%%%%%%%%%%%%%%%%%%%%%

%Based on this lemma, 
Finally, we verify \eqref{addistruc} in certain case.
%more specific functions.
\begin{lemma}[\cite{yuan-PUE1}]
	\label{lemma23}
	Suppose  $\sup_\Gamma f=+\infty$ and \eqref{homogeneous-1-buchong2} holds. Then 
	%$(f,\Gamma)$
	$f$ satisfies \eqref{addistruc}.
	%In the presence of 	  \eqref{homogeneous-1-buchong2} and $\sup_\Gamma f=+\infty$, the function $f$ satisfies \eqref{addistruc}.
\end{lemma}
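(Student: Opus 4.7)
\medskip\noindent\textbf{Proof proposal.} The plan is to deduce \eqref{addistruc} directly from concavity of $f$, the positivity $f>0$ on $\Gamma$, and openness of the cone $\Gamma$, by means of a single midpoint decomposition, without appealing to homogeneity, symmetry, or any special structure of $\Gamma$ beyond its being an open convex cone.

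Given $\lambda\in\Gamma$ and $\sigma<\sup_\Gamma f=+\infty$, I would first pick $\mu\in\Gamma$ with $f(\mu)>2\sigma$. The key step is the identity
\[ t\lambda \,=\, \tfrac12\,\mu \,+\, \tfrac12\bigl(2t\lambda-\mu\bigr), \]
which exhibits $t\lambda$ as the midpoint of $\mu$ and $2t\lambda-\mu$. To invoke concavity, I need both endpoints in $\Gamma$; this is immediate for $\mu$, while for $2t\lambda-\mu$ I write $2t\lambda-\mu=2t\bigl(\lambda-\mu/(2t)\bigr)$. Since $\lambda-\mu/(2t)\to\lambda\in\Gamma$ as $t\to+\infty$, openness of $\Gamma$ puts $\lambda-\mu/(2t)\in\Gamma$ for all $t$ sufficiently large, and the cone property then promotes this to $2t\lambda-\mu\in\Gamma$. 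Concavity of $f$ on $\Gamma$ together with $f>0$ on $\Gamma$ then yields
\[ f(t\lambda) \,\geq\, \tfrac12 f(\mu) + \tfrac12 f(2t\lambda-\mu) \,\geq\, \tfrac12 f(\mu) \,>\, \sigma \]
for all $t$ sufficiently large. Since $g(t):=f(t\lambda)$ is concave in $t\geq 0$ with $g(0)=0$ and $g\geq 0$, standard convex analysis forces $g$ to be non-decreasing (otherwise $g$ would become negative), so $\lim_{t\to+\infty} f(t\lambda)$ exists in $(0,+\infty]$ and strictly exceeds $\sigma$. This is exactly \eqref{addistruc}.

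I do not anticipate a substantive obstacle: the argument collapses to a two-line concavity estimate once $2t\lambda-\mu$ is placed inside $\Gamma$, which is the sole technical point and is handled by openness combined with the cone structure. It is worth emphasizing that the proof uses neither homogeneity \eqref{homogeneous-1} nor symmetry of $f$ nor the inclusion $\Gamma_n\subset\Gamma$; only concavity and the boundary behavior \eqref{homogeneous-1-buchong2} are invoked, which is consistent with the generality stated in the lemma.
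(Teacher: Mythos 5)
Your proof is correct, and it takes a genuinely different route from the paper's. The paper first establishes $\sum_{i=1}^n f_i(\lambda)>0$ from concavity and $\sup_\Gamma f=+\infty$ (otherwise $f(\mu)\leq f(\lambda)$ for all $\mu\in\Gamma$), then uses the variable-weight decomposition $t\lambda=s\cdot\frac{R}{s}\vec{\bf 1}+(1-s)\cdot\frac{t\lambda-R\vec{\bf 1}}{1-s}$, lets $s\to1^-$ to get $f(t\lambda)\geq f(R\vec{\bf 1})$ for $t\gg1$, and finally relies (implicitly, via the first step) on $f(R\vec{\bf 1})\to+\infty$ as $R\to\infty$. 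You instead pick a single $\mu\in\Gamma$ with $f(\mu)>2\sigma$ and use the fixed midpoint decomposition $t\lambda=\tfrac12\mu+\tfrac12(2t\lambda-\mu)$, placing the second endpoint in $\Gamma$ by openness plus the cone property. What this buys: you never touch the derivatives $f_i$, you don't need $\Gamma_n\subseteq\Gamma$ (the paper needs $R\vec{\bf 1}\in\Gamma$), and you explicitly resolve the existence of $\lim_{t\to\infty}f(t\lambda)$ via the one-variable observation that a concave nonnegative function on $(0,\infty)$ is non-decreasing, a point the paper leaves tacit. What the paper's route buys is the intermediate fact $\sum_i f_i>0$, which it reuses elsewhere. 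One tiny stylistic remark: rather than choosing $\mu$ with $f(\mu)>2\sigma$ and concluding the non-decreasing limit exceeds $\sigma$, it is a hair cleaner to let $\sigma$ be arbitrary and conclude that the limit equals $+\infty$ — both your computation and the paper's actually prove this stronger statement.
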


For convenience,   in Appendix \ref{appendix2} we will give  the proofs of above lemmas.
%\begin{proof}	Similar to the above proof, we can deduce 	$\sum f_i(\lambda)\mu_i\geq0$. Thus $f_i(\lambda)\geq0$   $\forall i$  and
%Since $\sup_\Gamma f=+\infty$, we conclude that 
%	$\sum f_i(\lambda)>0$. 	Combining with the openness of $\Gamma$, 	$\sum f_i(\lambda)\mu_i>0$, as required. 
%This completes the proof by Lemma \ref{lemma3.4}. 
%\end{proof}

%\begin{proof}
%	By  \eqref{homogeneous-1-buchong2} and  the concavity, for  $\lambda,$ $\mu\in\Gamma$,
%$f(\lambda) > \sum_{i=1}^n f_i(\lambda)(\lambda_i-\mu_i),  \forall \lambda,\mu\in\Gamma.$
%$f(\lambda) > \sum_{i=1}^n f_i(\lambda)(\lambda_i-\mu_i), \, \forall \lambda,\mu\in\Gamma.$
% This implies 
%	$f(\lambda) > \sum_{i=1}^n f_i(\lambda)(\lambda_i-\mu_i).$ Thus	$\sum_{i=1}^n f_i(\lambda)\lambda_i\geq   0$ and $\sum_{i=1}^n f_i(\lambda)\mu_i\geq   0.$ Together with $\sup_\Gamma f=+\infty$, we get $\sum  f_i(\lambda)>0$ (otherwise, $f(\mu)\leq f(\lambda), \, \forall \mu\in\Gamma$). Note that $\mu-\delta\vec{\bf1}\in\Gamma$ for some $0<\delta\ll1$. Hence $\sum_{i=1}^n f_i(\lambda)\mu_i>0$, as required.
%	\end{proof}
  
 \medskip

%  \section{A consequence of partial uniform ellipticity}
  \section{On the structure of fully nonlinear equations}
  \label{sec2-PUE-application}

In this section we explore the structure of  fully nonlinear equations of the type
\begin{equation}
	\label{equation-n-varrho}
	\begin{aligned}
		f(\lambda(\chi+\Delta u \, \omega-\varrho\sqrt{-1}\partial\overline{\partial}u))=\psi[u], 
		 %\,\, \varrho\leq\varrho_\Gamma,  
		 \,\, \varrho\neq0,
	\end{aligned}
\end{equation}
%with $ \varrho\leq\varrho_\Gamma,  \,  \varrho\neq0,$
%When $\varrho=1$,   such equations are  called $(n-1)$-type fully nonlinear equations. 
 and  the relation to
 \begin{equation} \label{equ3-hehe}	\tilde{f}(\lambda( \tilde{\chi}+\sqrt{-1}\partial\overline{\partial}u))= \psi[u].
\end{equation}

%\begin{lemma}	[\cite{yuan-PUE2-note}]
	%Suppose $\kappa_\Gamma\leq n-2$ (i.e. $\Gamma$ is of type 1). Then 
%	$\varrho_\Gamma\leq 1+\kappa_\Gamma$.
	% with equality if and only if $\Gamma=\{\lambda\in\mathbb{R}^n: \lambda_{i_1}+\cdots+\lambda_{i_{k}}>0, \,  \forall 1\leq i_1<\cdots<i_{k}\leq n\}$.
%\end{lemma}

\subsection{The %equivalence
	relation between \eqref{equation-n-varrho} and  \eqref{equ3-hehe}}
\label{subsec1-equi}

We prove that \eqref{equation-n-varrho} can be transformed into \eqref{equ3-hehe} and vice versa.  
Let $\varrho_\Gamma$ be   
as  defined in  \eqref{def1-varrho-Gamma}.
%As in \eqref{def1-varrho-Gamma}, we denote  $\varrho_\Gamma$ for $\Gamma$. 

\subsubsection{Equation \eqref{equation-n-varrho} has the form \eqref{equ3-hehe}}

Fix $(f,\Gamma)$.
%We now  construct certain operators as follows:
 Given %a constant 
$\varrho$ with
 $\varrho\leq\varrho_\Gamma$, $\varrho\neq0$ 
($\varrho<n$ if $\Gamma=\Gamma_1$),   
 we  can construct %a family of pairs
$(\tilde{f},\tilde{\Gamma})$ as follows:
%Let $P': \Gamma \rightarrow P'(\Gamma)=:\tilde{\Gamma}$ be a map 
%\begin{equation} 	\label{map1} 	\begin{aligned} 	(\mu_1,\cdots,\mu_n)  \rightarrow (\lambda_1,\cdots,\lambda_n)=(\mu_1,\cdots,\mu_n) Q^{-1}, %\nonumber
%	\end{aligned} \end{equation}
%where $\mu_i=\sum_{j=1}^n \lambda_j -\varrho\lambda_i$, $Q=(q_{ij})$, $q_{ij}=1-\varrho\delta_{ij}$. %($Q$ is symmetric).
%Condition \eqref{assumption-4} gives 
%$\varrho<n$ and $\varrho\neq 0$. Thus
%The inverse matrix  $Q^{-1}$ is well defined since
%\begin{equation}	\label{det-not-0}	\begin{aligned}		\mathrm{det}Q=(-1)^{n-1}\varrho^{n-1} (n-\varrho)\neq 0.	 \nonumber	\end{aligned}\end{equation}
% i.e. $\varrho\neq 0$ and $\varrho\neq n$.
% since $\kappa_{\Gamma}\leq n-1$ and $\vartheta_{\Gamma}\leq \frac{1}{n}$).
%
 %\begin{equation}	 	\begin{aligned}  		\mu_i=\frac{1}{n-\varrho} 	\left(\sum_{j=1}^n \lambda_j -\varrho\lambda_i \right),  	\mbox{ i.e.  } 		\lambda_i=\frac{1}{\varrho}\left(\sum_{j=1}^n \mu_j-(n-\varrho)\mu_i\right),	\end{aligned}  \end{equation} 
%
\begin{equation}
	%	\label{type2-01}
	\label{map1}
	\begin{aligned}
		\tilde{\Gamma}  =
		\left\{(\lambda_1,\cdots,\lambda_n): 
		\lambda_i=\frac{1}{\varrho}\left(\sum_{j=1}^n \mu_j-(n-\varrho)\mu_i\right),
		\mbox{ } (\mu_1,\cdots,\mu_n)\in \Gamma 
		\right\}.
	\end{aligned}
\end{equation}  
%So $\tilde{\Gamma}$ is also an open symmetric convex cone in $\mathbb{R}^n$ with $\tilde{\Gamma}\subseteq\Gamma_1$, and there exists a symmetric concave function on $\tilde{\Gamma}$ 
%\begin{remark}
Note that for any $\lambda\in\tilde{\Gamma}$   there is a unique $\mu\in\Gamma$ such that
$\lambda_i=\frac{1}{\varrho}\left(\sum_{j=1}^n \mu_j-(n-\varrho)\mu_i\right).$
Define $\tilde{f}: \tilde{\Gamma}\to \mathbb{R}$ by
\begin{equation}\label{def-f}\begin{aligned}  \tilde{f}(\lambda)=f(\mu). %\nonumber
	\end{aligned}\end{equation}
This shows that   \eqref{equation-n-varrho} (with $\varrho\leq\varrho_\Gamma$) has the form \eqref{equ3-hehe}.
One can simply verify that $\tilde{f}$ is %well-defined and
 concave in $\tilde{\Gamma}$.
Furthermore, if $(f,\Gamma)$ satisfies \eqref{addistruc} then so does $(\tilde{f},\tilde{\Gamma})$.
%\end{remark}

 %Below we show that the converse holds.

\subsubsection{Equation  \eqref{equ3-hehe} has the form  \eqref{equation-n-varrho}}
%We show that each  cone $\tilde{\Gamma}$ not being $\Gamma_1$ has the form  \eqref{map1}.  

\begin{proposition}
	\label{proposition2-Gamma}
Suppose as before, $\tilde{\Gamma}$ is an open symmetric  convex cone with vertex at origin, and with $\partial\tilde{\Gamma}\neq\emptyset$,   $\tilde{\Gamma}\neq\Gamma_1$, $\Gamma_n\subseteq\tilde{\Gamma}$. Pick a constant $\varrho$ with
	$n-\varrho_{\tilde{\Gamma}}\leq \varrho <n$.
	Let $P$ be a linear map   from $\mathbb{R}^n$  to $\mathbb{R}^n$ which is defined as follows:
	\begin{equation}
		\label{form1-type2cone}
		\begin{aligned}
			P(\lambda)= \frac{1}{n-\varrho}(\sum_{j=1}^n \lambda_j \vec{\bf 1}-\varrho\lambda). \nonumber
		\end{aligned}
	\end{equation}
Take $\Gamma:=P(\tilde{\Gamma}). $
 %\begin{equation}	\label{form2-type2cone} 	\Gamma:=P(\tilde{\Gamma}).  \end{equation}
	Then $\Gamma$ is an open, symmetric,  convex cone with vertex at origin, %and 
	 $$ \Gamma_n\subseteq\Gamma\subset \Gamma_1, \,\, \varrho\leq\varrho_{\Gamma}.$$
\end{proposition}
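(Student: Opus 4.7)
The plan is to exploit the fact that $P$ is a linear isomorphism (since $\varrho\neq 0$ and $\varrho<n$) and reason via its inverse. First I would diagonalize $P$ in the splitting $\mathbb{R}^n=\mathbb{R}\vec{\bf 1}\oplus \vec{\bf 1}^\perp$: on $\vec{\bf 1}$, $P$ acts as the identity, while on $\vec{\bf 1}^\perp$, it acts as multiplication by $-\varrho/(n-\varrho)$. Both eigenvalues are nonzero under our hypothesis (the condition $\tilde{\Gamma}\neq\Gamma_1$ forces $\varrho_{\tilde{\Gamma}}<n$, hence $\varrho\geq n-\varrho_{\tilde{\Gamma}}>0$), so $P$ is invertible and its inverse $Q$ is precisely the map appearing in \eqref{map1}, $Q(\mu)_i=\tfrac{1}{\varrho}(\sum_j\mu_j-(n-\varrho)\mu_i)$; both $P$ and $Q$ preserve the trace. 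The structural properties ``open, symmetric, convex cone with vertex at origin'' then transfer mechanically from $\tilde{\Gamma}$ to $\Gamma=P(\tilde{\Gamma})$: openness because $P$ is a homeomorphism, convexity and the cone property from linearity, and symmetry because $P$ commutes with coordinate permutations (being built from the trace and the identity).

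The substantive step is $\Gamma_n\subseteq\Gamma\subsetneq\Gamma_1$. For the first inclusion, I would show the equivalent statement $Q(\Gamma_n)\subseteq\tilde{\Gamma}$. A direct calculation yields
\[
Q(e_i)=\tfrac{1}{\varrho}\bigl(\vec{\bf 1}-(n-\varrho)e_i\bigr),
\]
i.e.\ up to positive scaling a permutation of $(1,\ldots,1,1-(n-\varrho))$. The hypothesis $n-\varrho\leq \varrho_{\tilde{\Gamma}}$, together with \eqref{def1-varrho-Gamma} and the symmetry of $\tilde{\Gamma}$, places this point in $\overline{\tilde{\Gamma}}$. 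For an arbitrary $\mu\in\Gamma_n$, I would decompose $\mu=\varepsilon\vec{\bf 1}+(\mu-\varepsilon\vec{\bf 1})$ with $\varepsilon>0$ small enough that $\mu_i>\varepsilon$ for all $i$; applying $Q$ linearly and using $Q(\vec{\bf 1})=\vec{\bf 1}$,
\[
Q(\mu)=\varepsilon\vec{\bf 1}+\sum_i(\mu_i-\varepsilon)Q(e_i),
\]
which is the sum of $\varepsilon\vec{\bf 1}\in\Gamma_n\subseteq\tilde{\Gamma}$ and a nonnegative combination of points in $\overline{\tilde{\Gamma}}$. The standard fact that an interior point of a convex cone plus a closure point remains in the interior then gives $Q(\mu)\in\tilde{\Gamma}$.

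For $\Gamma\subsetneq\Gamma_1$, I would first show $\tilde{\Gamma}\subseteq\Gamma_1$ by symmetrization: averaging $\lambda\in\tilde{\Gamma}$ over all coordinate permutations gives $\bar\lambda\vec{\bf 1}\in\tilde{\Gamma}$ with $\bar\lambda=\tfrac{1}{n}\sum_i\lambda_i$, and if $\bar\lambda\leq 0$ then combining with $\vec{\bf 1}\in\Gamma_n\subseteq\tilde{\Gamma}$ by convexity would force $0\in\tilde{\Gamma}$, contradicting that the vertex is excluded from the open cone. Trace preservation of $P$ gives $\Gamma\subseteq\Gamma_1$, and strictness follows because $P(\Gamma_1)=\Gamma_1$: equality $\Gamma=\Gamma_1$ would force $\tilde{\Gamma}=\Gamma_1$, contrary to hypothesis. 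Finally, for $\varrho\leq \varrho_\Gamma$, one computes
\[
Q(1,\ldots,1,1-\varrho)=(n-\varrho)e_n\in\overline{\Gamma_n}\subseteq\overline{\tilde{\Gamma}},
\]
so $(1,\ldots,1,1-\varrho)\in\overline{\Gamma}$, and \eqref{def1-varrho-Gamma} yields $\varrho_\Gamma\geq\varrho$. The main delicacy is the interior-plus-closure argument above: it is needed precisely because $Q(e_i)$ can touch $\partial\tilde{\Gamma}$ in the limiting case $\varrho=n-\varrho_{\tilde{\Gamma}}$, so one cannot simply invoke convexity of the open cone and must use $\Gamma_n\subseteq\tilde{\Gamma}$ to supply an honest interior point.
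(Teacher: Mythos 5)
Your proof is correct and follows essentially the same route as the paper: identify where the extremal rays of the cones go under the linear map, and use the definition \eqref{def1-varrho-Gamma} of $\varrho_{\tilde\Gamma}$ and $\varrho_\Gamma$ to test membership. The paper works forward with $P$, transferring $(1,\ldots,1,1-(n-\varrho))\in\overline{\tilde\Gamma}$ to $(0,\ldots,0,1)\in\bar\Gamma$ and then asserting "this means $\Gamma_n\subseteq\Gamma$," whereas you work with $Q=P^{-1}$ and show $Q(\Gamma_n)\subseteq\tilde\Gamma$ directly; these are equivalent since $P$ is a homeomorphism. The genuine value you add is making explicit the interior-plus-closure decomposition $\mu=\varepsilon\vec{\bf 1}+\sum_i(\mu_i-\varepsilon)e_i$ needed to promote "extreme rays land in the closure" to "the open cone $\Gamma_n$ lands in the open cone," which the paper compresses into a single clause; you are right that this is the point where a naive appeal to convexity of the open cone would fail, precisely because $Q(e_i)$ can sit on $\partial\tilde\Gamma$ when $\varrho=n-\varrho_{\tilde\Gamma}$. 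You also supply the symmetrization argument for $\tilde\Gamma\subseteq\Gamma_1$ (which the paper uses without comment when it invokes trace preservation) and prove strictness of $\Gamma\subset\Gamma_1$ via $P(\Gamma_1)=\Gamma_1$, neither of which appears in the paper's proof. All computations check out.
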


\begin{proof}
	Since   $\tilde{\Gamma}\neq\Gamma_1$, 
	%we know
	 $0<n-\varrho_{\tilde{\Gamma}}\leq\varrho<n$ and  the linear map $P$ is well-defined and invertible. Moreover,  %one can check that
	  $\Gamma$ is an open, convex, symmetric cone with vertex at origin.
 
	Fix $\lambda\in\tilde{\Gamma}$, we take $\mu=P(\lambda)$.  
	By $\sum_{j=1}^n \mu_j= \sum_{j=1}^n\lambda_j$,  we know $\Gamma\subseteq\Gamma_1.$ 
%	With these at hand, it suffices to 
The remaining step is to prove $\Gamma_n\subseteq\Gamma.$
Since $n-\varrho_{\tilde{\Gamma}}\leq\varrho$, we know $(1,\cdots,1,1-n+\varrho)\in\overline{\tilde{\Gamma}},$ which implies $(0,\cdots,0,1)\in \bar   \Gamma$. This means $\Gamma_n\subseteq\Gamma.$ 
Since $(0,\cdots,0,1)\in\overline{\tilde{\Gamma}}$, we get $(1,\cdots,1,1-\varrho)\in\bar\Gamma$. Thus $\varrho\leq \varrho_\Gamma.$
	
\end{proof}

Consequently, $\tilde{\Gamma}$  has the form  \eqref{map1}. 
%In other words, there is an open, convex, symmetric cone $\Gamma$ with vertex at origin  and a constant $\varrho\leq\varrho_\Gamma$ such that
  In other words, for any %open symmetric convex 
  cone $\tilde{\Gamma}$ (with $\tilde{\Gamma}\neq\Gamma_1$) as in Proposition \ref{proposition2-Gamma},
 there
 is an open symmetric convex cone $\Gamma$ with vertex at origin 
 and a constant $\varrho$ of $0<n-\varrho_{\tilde{\Gamma}}\leq \varrho\leq\varrho_\Gamma$, such that
\begin{equation} 
	\begin{aligned}
		\tilde{\Gamma}  =
		\left\{(\lambda_1,\cdots,\lambda_n): 
		\lambda_i=\frac{1}{\varrho}\left(\sum_{j=1}^n \mu_j-(n-\varrho)\mu_i\right) 
		\mbox{ for } (\mu_1,\cdots,\mu_n)\in \Gamma 
		\right\}.  \nonumber
	\end{aligned}
\end{equation}  
%where $\Gamma=P(\tilde{\Gamma})$ is as in Proposition \ref{proposition2-Gamma}.
As a result,  
\eqref{equ3-hehe} can be rewritten   in the form  \eqref{equation-n-varrho} (with $0< n-\varrho_{\tilde{\Gamma}}\leq\varrho\leq\varrho_\Gamma$).

 %\begin{remark}	\eqref{equ3-hehe} can be reduced to an $(n-1)$-type equation 	%(with $\varrho=1$)
% 	$\Leftrightarrow$  $\varrho_{\tilde{\Gamma}}\geq n-1$. \end{remark}

\subsection{On the structure of $(\tilde{f},\tilde{\Gamma})$}

We can prove by Lemma \ref{lemma5.11} that
%the following key proposition.
\begin{proposition}
	\label{proposition-n-varrho}
	Suppose $(f,\Gamma)$ satisfies   \eqref{addistruc}.
	%We have the following assertions:
	Then
	\begin{itemize}
		\item If %$\varrho$ obeys \eqref{assumption-4}, 
		$\varrho<\varrho_\Gamma$,  $\varrho\neq0$,
		then \eqref{equation-n-varrho}
		is uniformly elliptic at admissible solution $u$ with
		$\lambda(\chi+\Delta u \omega-\varrho\sqrt{-1}\partial\overline{\partial}u)\in\Gamma.$
		\item If $\varrho=\varrho_\Gamma$ ($\Gamma\neq\Gamma_1$) and %$f$ satisfies
		\eqref{unbounded-4} holds, then \eqref{equation-n-varrho}
		is  elliptic at admissible  solutions.
	\end{itemize}
\end{proposition}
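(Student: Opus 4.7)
The plan is to compute the linearization of \eqref{equation-n-varrho} at an admissible solution and deduce ellipticity from a concavity-based upper bound on the largest partial derivative $f_1$. At any point, one can choose a unitary frame with respect to $\omega$ in which $\sqrt{-1}\partial\overline{\partial}u$ is diagonal; setting $\mu = \lambda(\chi + \Delta u\,\omega - \varrho\sqrt{-1}\partial\overline{\partial}u) \in \Gamma$, a direct computation shows that the linearized operator acts diagonally with coefficient
\[
  a_{jj}(\mu) := \sum_{k=1}^n f_k(\mu) - \varrho\, f_j(\mu)
\]
at the $(j,j)$-entry. Proving (uniform) ellipticity at admissible solutions thus amounts to producing a (uniform, strictly) positive lower bound on $a_{jj}(\mu)$.

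The crux is the concavity-based inequality
\[
  \sum_{k=1}^n f_k(\mu) \;\ge\; \varrho_\Gamma \max_{1\le j\le n} f_j(\mu), \qquad \mu\in\Gamma.
\]
To establish it, I would order $\mu_1\le\cdots\le\mu_n$ so that $f_1\ge\cdots\ge f_n$, and note that $\nu := (1-\varrho_\Gamma, 1, \ldots, 1) \in \partial\Gamma$ by the defining relation \eqref{def1-varrho-Gamma} and the symmetry of $\Gamma$. Choosing $\mu_\epsilon \in \Gamma$ with $\mu_\epsilon \to \nu$ and applying Lemma \ref{lemma3.4} gives $\sum_i f_i(\mu)(\mu_\epsilon)_i > 0$; passing to the limit yields $\sum_i f_i(\mu)\nu_i \ge 0$, which is precisely the asserted inequality.

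Granted this, the case $\varrho < \varrho_\Gamma$ with $\varrho \neq 0$ splits into two easy sub-cases. If $0 < \varrho < \varrho_\Gamma$, then
\[
  a_{jj}(\mu) \;\ge\; \sum_k f_k - \varrho f_1 \;\ge\; (1 - \varrho/\varrho_\Gamma)\sum_k f_k,
\]
establishing uniform ellipticity with explicit modulus $1 - \varrho/\varrho_\Gamma > 0$. If $\varrho < 0$, then $a_{jj}(\mu) = \sum_k f_k + |\varrho| f_j \ge \sum_k f_k$, again uniformly elliptic.

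The main obstacle is the boundary case $\varrho = \varrho_\Gamma$ (with $\Gamma \neq \Gamma_1$), in which the inequality above only yields the non-strict bound $a_{11}(\mu)\ge 0$. Here I would upgrade to strict positivity via \eqref{unbounded-4}: the symmetry of $f$ lets me rewrite this assumption as $g(t) := f(\mu + t\nu) \to +\infty$ as $t\to+\infty$, with $\nu$ as above. Concavity of $g$ together with its unboundedness forces $g'(t) > 0$ for all $t \ge 0$, for otherwise concavity would trap $g$ below a horizontal line on some tail $[t_0,\infty)$, contradicting unboundedness. Evaluating at $t = 0$ gives
\[
  0 \;<\; g'(0) \;=\; \sum_i f_i(\mu)\nu_i \;=\; \sum_{k=1}^n f_k(\mu) - \varrho_\Gamma f_1(\mu),
\]
so $a_{jj}(\mu) \ge a_{11}(\mu) > 0$ for every $j$, proving ellipticity at each admissible $\mu$.
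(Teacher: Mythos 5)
Your proof is correct in substance and takes a genuinely different route from the paper. The paper factors the argument through the transformation $(f,\Gamma)\mapsto(\tilde f,\tilde\Gamma)$ of Section~\ref{subsec1-equi}: it first rewrites \eqref{equation-n-varrho} as a standard equation of the form \eqref{equ3-hehe}, then classifies $\tilde\Gamma$ as a type~2 cone precisely when $\varrho<\varrho_\Gamma$ (Lemma~\ref{proposition1-newvwesion}), and finally invokes the cone-classification machinery (Lemma~\ref{lemma5.11} for uniform ellipticity; Lemma~\ref{lemma2-unbound-condition} and Lemma~\ref{lemma1-unbound-yield-elliptic} for the boundary case $\varrho=\varrho_\Gamma$). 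You instead compute the coefficient matrix $A^{i\bar j}=(\sum_kF^{k\bar k})g^{i\bar j}-\varrho F^{i\bar j}$ of the linearization directly, and prove the needed lower bound $\sum_kf_k(\mu)\geq\varrho_\Gamma\max_jf_j(\mu)$ from Lemma~\ref{lemma3.4} applied to the boundary direction $\nu=(1-\varrho_\Gamma,1,\ldots,1)\in\partial\Gamma$, with a clean concavity argument upgrading the bound to strict in the critical case via \eqref{unbounded-4}. Your argument is self-contained and more transparent about where the constant $1-\varrho/\varrho_\Gamma$ comes from; the paper's approach buys a uniform framework tying the proposition to the $\Gamma_\infty$-admissibility and type~1/type~2 dichotomy used throughout the rest of the paper.

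One small imprecision: to make $F^{i\bar j}$ diagonal and read off $a_{jj}=\sum_kf_k-\varrho f_j$ as the eigenvalues of $A$, you should choose the unitary frame diagonalizing the full matrix $\mathfrak{g}=\chi+\Delta u\,\omega-\varrho\sqrt{-1}\partial\overline{\partial}u$, not $\sqrt{-1}\partial\overline{\partial}u$ alone (these coincide only when $\chi$ commutes with $\sqrt{-1}\partial\overline{\partial}u$). This does not affect any of the subsequent estimates.
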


%This reveals the significant difference between $(n-1)$-type fully nonlinear equations with $\Gamma\neq\Gamma_n$ and $(n-1)$ Monge-Amp\`ere equation.

%Given $(f,\Gamma)$, we have $(\tilde{f},\tilde{\Gamma})$  as in \eqref{map1} and \eqref{def-f}.
%First by Lemma \ref{lemma5.11}  we conclude that 
To achieve this  we first check that
\begin{lemma}
	\label{proposition1-newvwesion}
	%Let $\varrho$ be a constant with $\varrho<n$ and $\varrho  \neq0$.
	Given a cone $\Gamma$, as in \eqref{map1} we take $\tilde{\Gamma}$.
	Then 
	\begin{enumerate}
		\item 
		$\tilde{\Gamma}$ is of type 2 
		%(equivalently $(0,\cdots,0,1)\in \tilde{\Gamma}$) 
		if and only if %$\varrho$ satisfies \eqref{assumption-4}.
		$\varrho<\varrho_\Gamma$, $\varrho\neq0.$

		\item %$(0,\cdots,0,1)\in \bar{\tilde{\Gamma}}$
		$\tilde{\Gamma}$ is of type 1 if $\varrho=\varrho_{\Gamma}.$
		%	\begin{equation} 	\label{hypercritical-2} \begin{aligned} 	\varrho=  \varrho_{\Gamma}.   \end{aligned} 	\end{equation}
	\end{enumerate}

\end{lemma}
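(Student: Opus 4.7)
The plan is to exploit the fact that \eqref{map1} defines $\tilde\Gamma$ as the image $P(\Gamma)$ of the linear map $P\colon \mathbb{R}^n \to \mathbb{R}^n$ given by $P(\mu)_i = \frac{1}{\varrho}\bigl(\sum_j \mu_j - (n-\varrho)\mu_i\bigr)$. Both parts of the lemma will follow by computing $P^{-1}(0,\ldots,0,1)$ and reading off whether this preimage lies inside $\Gamma$ or on $\partial\Gamma$, since a linear isomorphism sends interior to interior and boundary to boundary.

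First I would verify that $P$ is invertible on the relevant range. Splitting $\mathbb{R}^n = \mathbb{R}\vec{\bf 1} \oplus \vec{\bf 1}^\perp$, a direct computation shows that $P$ fixes $\vec{\bf 1}$ and acts as multiplication by $(\varrho-n)/\varrho$ on $\vec{\bf 1}^\perp$, so $P$ is a linear isomorphism whenever $\varrho \neq 0$ and $\varrho \neq n$. Part (1) assumes $\varrho < \varrho_\Gamma \leq n$ with $\varrho \neq 0$, and Part (2) with $\varrho = \varrho_\Gamma$ is nontrivial only when $\Gamma \neq \Gamma_1$ (so $\varrho_\Gamma < n$); both cases lie in the invertibility range. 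In particular $P(\partial\Gamma) = \partial\tilde\Gamma$ and $P(\Gamma) = \tilde\Gamma$.

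Next I would compute the preimage of $e_n = (0,\ldots,0,1)$. Using the symmetry in the first $n-1$ slots, I look for $\mu = (a,\ldots,a,b)$. The equations $P(\mu)_i = 0$ for $i<n$ reduce to $b = (1-\varrho)a$, and then $P(\mu)_n = 1$ forces $a = 1/(n-\varrho)$, giving
\[
P^{-1}(e_n) \;=\; \tfrac{1}{n-\varrho}\,(1,\ldots,1,1-\varrho),
\]
a positive scalar multiple of $(1,\ldots,1,1-\varrho)$. Since $\Gamma$ is a cone with vertex at the origin, this vector lies in $\Gamma$ (resp.\ $\partial\Gamma$) iff $(1,\ldots,1,1-\varrho)$ does. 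By the definition of $\varrho_\Gamma$ in \eqref{def1-varrho-Gamma}, together with convexity of $\Gamma$ and $\Gamma_n\subseteq \Gamma$, the line $t\mapsto (1,\ldots,1,1-t)$ sits inside $\Gamma$ for every $t < \varrho_\Gamma$ and meets $\partial\Gamma$ precisely at $t = \varrho_\Gamma$.

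Combining the three steps proves both parts. If $\varrho = \varrho_\Gamma$, then $P^{-1}(e_n)\in\partial\Gamma$, so $e_n\in\partial\tilde\Gamma$ and $\tilde\Gamma$ is of type~1, which gives~(2). If instead $\varrho < \varrho_\Gamma$ and $\varrho\neq 0$, then $P^{-1}(e_n)\in\Gamma$, so $e_n$ lies in the interior of $\tilde\Gamma$ and $\tilde\Gamma$ is of type~2; the converse in (1) follows because within the admissible range $\varrho \leq \varrho_\Gamma$, $\varrho\neq 0$, the only remaining value is $\varrho = \varrho_\Gamma$, which is excluded by the type-2 hypothesis in view of (2). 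I do not expect any serious obstacle here; the only mildly delicate point is the interior/boundary trichotomy along the distinguished line in $\Gamma$, and that follows immediately from convexity and the defining property of $\varrho_\Gamma$.
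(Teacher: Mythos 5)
Your proof is correct, and it uses what is essentially the only reasonable route: read $\tilde\Gamma = P(\Gamma)$ as the image under a linear isomorphism, pull $e_n=(0,\dots,0,1)$ back to $P^{-1}(e_n)=\tfrac{1}{n-\varrho}(1,\dots,1,1-\varrho)$, and decide membership in $\Gamma$ versus $\partial\Gamma$ from the defining property \eqref{def1-varrho-Gamma} of $\varrho_\Gamma$. The computation of $P^{-1}(e_n)$ checks out, the spectral decomposition of $P$ along $\mathbb{R}\vec{\bf 1}\oplus\vec{\bf 1}^\perp$ is right, and the interior/boundary dichotomy along $t\mapsto(1,\dots,1,1-t)$ follows exactly as you say from convexity and $\Gamma_n\subseteq\Gamma$. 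The paper omits the proof of this lemma (it points to a companion note), but the paper's own proof of Proposition~\ref{proposition2-Gamma} carries out the inverse of precisely this calculation --- relating $(0,\dots,0,1)\in\overline{\tilde\Gamma}$ to $(1,\dots,1,1-\varrho)\in\bar\Gamma$ --- so your approach is consistent with the paper's method. One minor polish: rather than proving the reverse implication in (1) by exclusion within the admissible range $\varrho\leq\varrho_\Gamma$, you could argue directly that $(1,\dots,1,1-\varrho)\in\Gamma$ forces $\varrho<\varrho_\Gamma$ using that $\Gamma$ is increasing (i.e.\ $\Gamma+\bar\Gamma_n\subseteq\Gamma$), since otherwise $(1,\dots,1,1-\varrho_\Gamma)$ would be an interior point; but this is a cosmetic improvement, not a gap.
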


% This was   observed in prequel \cite{yuan-PUE2-note}, which   gives rise to 
%leading to 
%the definition  of $\varrho_\Gamma$. 
%This observation leads to the  definition of $\varrho_\Gamma$  \cite{yuan-PUE2-note}.
%This observation leads us to introducing $\varrho_\Gamma$  \cite{yuan-PUE2-note}. 
 %This also partially answers a question which is left open by \cite{GQY2018}. %and further  posed in \cite{GGQ2022}.

\begin{remark}
	 This was  also observed in %prequel 
	 \cite{yuan-PUE2-note}.
%	which   gives rise to  	the definition  of	$\varrho_\Gamma$. 
	% The condition \eqref{assumption1-parameters} is sharp.
	% where $\varrho_\Gamma$ is given by \eqref{def1-varrho-Gamma}.  
	%In particular, %\eqref{assumption-4}
	A somewhat surprising 
	%consequence of Proposition \ref{proposition-n-varrho} 
	fact to us is that   $(n-1)$-type fully nonlinear equation  is of uniform ellipticity whenever  $\Gamma\neq\Gamma_n$.
	%(Proposition \ref{proposition-n-varrho}). 
	This is in contrast with  the $(n-1)$ Monge-Amp\`ere equation,
	%analogous to \eqref{MA1-ricci},
	which is in close connections with $(n-1)$-plurisubharmonic functions in the sense of Harvey-Lawson \cite{Harvey2012Lawson} as well as Form-type Calabi-Yau equation \cite{FuWangWuFormtype2010} and Gauduchon's conjecture \cite{Gauduchon84}  
	(see also
	\cite{Popovici2015,Tosatti2019Weinkove}).  
	In recent years,  Sz\'ekelyhidi-Tosatti-Weinkove \cite{STW17} proved the Gauduchon conjecture  for  % $n\geq3$, 
	higher dimensions, 
	extending earlier work of 
	Cherrier  \cite{Cherrier1987} on complex surfaces. Subsequently, the author  \cite{yuan-n-1MA} solved the Dirichlet problem, in which the equation 
	%%right-hand side  
	probably allows degeneracy.   
	%Also this reveals the significant difference between the cases 	$\Gamma\neq\Gamma_n$ and $\Gamma=\Gamma_n$ for $(n-1)$-type fully nonlinear  equations.
\end{remark}

\subsubsection{Uniform ellipticity case}

% The assumption \eqref{assumption-4} was introduced by the author \cite{yuan2020conformal,yuan2022-3} to conformally deform modified Schouten tensors.
%Condition \eqref{assumption-4} gives $\varrho<n$ and $\varrho\neq 0$.  
%In this paper we observe that %As a consequence of Lemma \ref{yuan-k+1} and Proposition \ref{proposition1-newvwesion}, we obtain 
In practice,    
Lemma  \ref{lemma5.11}  and the first part of Lemma \ref{proposition1-newvwesion} 
%(or Lemma \ref{yuan-k+1}) 
together give  the following key ingredient. 
%This was also observed by \cite{yuan-PUE2-note}.
%The construction of this type can be found in \cite{yuan-PUE1,yuan-PUE2-note}.
%\begin{proposition} 	\label{key-lemma1}	Given a constant $\varrho$ satisfying  	\eqref{assumption-4},	we define $(\tilde{f},\tilde{\Gamma})$  as  in \eqref{map1} and \eqref{def-f}.	Suppose that $(f,\Gamma)$ satisfies %\eqref{concave} and 	\eqref{addistruc}. Then there is a uniform positive constant $\theta$ such that 	\begin{equation}	 \label{FUE-1}	\begin{aligned}	\frac{\partial \tilde{f}}{\partial \lambda_i}(\lambda) \geq \theta \sum_{j=1}^n \frac{\partial \tilde{f}}{\partial \lambda_j}(\lambda) >0, \,\, \forall \lambda\in\tilde{\Gamma},   \mbox{ } \forall 1\leq i\leq n. 	\end{aligned}	\end{equation}	Moreover,  \eqref{assumption-4} is sharp in the sense that $\tilde{f}$ is not uniformly elliptic whenever $\varrho_\Gamma\leq \varrho<n$. \end{proposition}

\begin{proposition} 
	\label{key-lemma1}
	%Given a constant $\varrho$ satisfying \eqref{assumption-4},
%Let $\varrho$ be a constant with $\varrho<n$ and $\varrho  \neq0$. 
Fix   a constant with $\varrho<n$, $\varrho  \neq0$.
Given $(f,\Gamma)$ satisfying  \eqref{addistruc}, 
  as  in \eqref{map1} and \eqref{def-f}	we can define $(\tilde{f},\tilde{\Gamma})$.
	%Suppose that $(f,\Gamma)$ satisfies %\eqref{concave} and 
%	\eqref{addistruc}.
	 Then the following are equivalent:
	 \begin{itemize}
	 	\item %$\varrho$ satisfies \eqref{assumption-4}.
	 		$\varrho<\varrho_\Gamma$.
	 		%$\varrho\neq0.$
\item
  $\tilde{f}$ is of fully uniform ellipticity in $\tilde{\Gamma}$. Namely,  
	%there exists a uniform positive constant $\theta$ such that 
	\begin{equation}	 
		\label{FUE-1}
		\begin{aligned}
			\frac{\partial \tilde{f}}{\partial \lambda_i}(\lambda) \geq \theta \sum_{j=1}^n \frac{\partial \tilde{f}}{\partial \lambda_j}(\lambda) >0
	%\,\, \forall (\lambda_1,\cdots,\lambda_n)\in \tilde{\Gamma},
		\mbox{ in }	\tilde{\Gamma},   \mbox{ } \forall 1\leq i\leq n. \nonumber
		\end{aligned}
	\end{equation}
	 \end{itemize}
	%Moreover,  \eqref{assumption-4} is sharp in the sense that $\tilde{f}$ is not uniformly elliptic whenever $\varrho_\Gamma\leq \varrho<n$. 
	\end{proposition}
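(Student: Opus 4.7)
The plan is to obtain Proposition \ref{key-lemma1} essentially as a composition of two already-available results: Lemma \ref{lemma5.11}, which characterizes full uniform ellipticity of $(\tilde f,\tilde\Gamma)$ by the purely geometric condition that $\tilde\Gamma$ be of type 2; and Lemma \ref{proposition1-newvwesion}, which translates the condition ``$\tilde\Gamma$ of type 2'' into the inequality $\varrho<\varrho_\Gamma$ (with $\varrho\neq 0$) on the structural parameter. Once both lemmas are in hand, the equivalence claimed here is almost formal.

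First I would check the prerequisites needed to invoke Lemma \ref{lemma5.11} for $(\tilde f,\tilde\Gamma)$. The change of variables
\[
\lambda_i=\tfrac{1}{\varrho}\Bigl(\sum_{j=1}^n \mu_j-(n-\varrho)\mu_i\Bigr)
\]
is an invertible linear map (since $\varrho\neq0$), and it carries the superlevel sets $\{f>\sigma\}$ bijectively to the superlevel sets $\{\tilde f>\sigma\}$. Concavity and symmetry of $\tilde f$ follow, and the growth condition \eqref{addistruc} for $(f,\Gamma)$ passes to $(\tilde f,\tilde\Gamma)$ exactly as remarked in Subsection \ref{subsec1-equi}. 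This puts us squarely in the hypotheses of Lemma \ref{lemma5.11}.

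With that in place, the forward direction is immediate: if $\varrho<\varrho_\Gamma$ (and $\varrho\neq 0$), part (1) of Lemma \ref{proposition1-newvwesion} yields that $\tilde\Gamma$ is of type 2, and then Lemma \ref{lemma5.11} supplies a uniform constant $\theta>0$ for which $\partial_{\lambda_i}\tilde f\ge\theta\sum_j\partial_{\lambda_j}\tilde f>0$ throughout $\tilde\Gamma$. For the converse, suppose $\tilde f$ is fully uniformly elliptic on $\tilde\Gamma$. Lemma \ref{lemma5.11} forces $\tilde\Gamma$ to be of type 2, hence not of type 1; the contrapositive of part (2) of Lemma \ref{proposition1-newvwesion} rules out $\varrho=\varrho_\Gamma$, and combined with the standing constraint $\varrho\le\varrho_\Gamma$ from the definition of $(\tilde f,\tilde\Gamma)$ in \eqref{map1}, this delivers $\varrho<\varrho_\Gamma$.

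The only step that requires genuine work is the preparatory Lemma \ref{proposition1-newvwesion}, which demands tracking the image under the linear map of the distinguished boundary ray $(1,\ldots,1,1-\varrho_\Gamma)\in\partial\Gamma$ and verifying whether the resulting boundary direction in $\tilde\Gamma$ does or does not coincide with $(0,\ldots,0,1)$; this is where one sees the dichotomy between $\varrho<\varrho_\Gamma$ and $\varrho=\varrho_\Gamma$ cleanly. Given that lemma and Lemma \ref{lemma5.11}, the proposition here is a short, routine composition, so I would present the proof in the compressed form above rather than redoing the underlying geometry.
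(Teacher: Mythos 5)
Your proposal is correct and follows exactly the route the paper takes: the paper explicitly states that Proposition \ref{key-lemma1} is obtained by combining Lemma \ref{lemma5.11} with the first part of Lemma \ref{proposition1-newvwesion}. One minor simplification: since part (1) of Lemma \ref{proposition1-newvwesion} is already an equivalence, your converse does not need to invoke the contrapositive of part (2) and the constraint $\varrho\le\varrho_\Gamma$; the ``if and only if'' in part (1) delivers both directions directly.
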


%\medskip
%\vspace{1mm}
\subsubsection{Ellipticity case}
%\subsubsection{Hypercritical case} %When $\Gamma=\Gamma_n$, it reduces to $$\varrho=1.$$ 

%This case is fairly different from the former %uniform ellipticity
%case.
Assume $\varrho=\varrho_\Gamma$ and $\Gamma\neq\Gamma_1$. 
%we shall assume \eqref{kappa-leq-n-1} and \eqref{unbounded-4}.
Obviously, from the construction of $(\tilde{f},\tilde{\Gamma})$  we have
\begin{lemma}
	\label{lemma2-unbound-condition}
	 %Let $\varrho_\Gamma<n$ and $\varrho\neq0$.
  Given $(f,\Gamma)$ satisfying  \eqref{addistruc},
  % and $\Gamma\neq\Gamma_1$,
   we assume $\varrho=\varrho_\Gamma$ ($\Gamma\neq\Gamma_1$).
Let $(\tilde{f},\tilde{\Gamma})$ be as  in \eqref{map1} and \eqref{def-f}.
	Suppose in addition that  $f$ obeys \eqref{unbounded-4} in $\Gamma$. Then  
	%$(\tilde{f},\tilde{\Gamma})$ is well-defined. Moreover,
	$\tilde{f}$ satisfies the unbounded condition \eqref{unbounded-1} in $\tilde{\Gamma}$.
\end{lemma}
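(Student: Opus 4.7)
The plan is to invert the linear change of variables that defines $\tilde{\Gamma}$ from $\Gamma$, translate the coordinate ray appearing in \eqref{unbounded-1} into the corresponding ray in $\Gamma$, and then invoke \eqref{unbounded-4}.

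First I would explicitly invert the map in \eqref{map1}. Summing the relation $\varrho\lambda_i=\sum_j \mu_j-(n-\varrho)\mu_i$ over $i$ immediately yields $\sum_j \lambda_j=\sum_j \mu_j$, and solving for $\mu_i$ gives
\[\mu_i=\frac{1}{n-\varrho}\Bigl(\sum\nolimits_j \lambda_j-\varrho\lambda_i\Bigr).\]
This inversion is well defined precisely because $n-\varrho>0$, which is guaranteed by the hypothesis $\Gamma\neq\Gamma_1$ together with $\varrho=\varrho_\Gamma$, since the paper has already recorded that $\varrho_\Gamma=n\Leftrightarrow\Gamma=\Gamma_1$.

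Next, fix $\lambda\in\tilde{\Gamma}$ with preimage $\mu\in\Gamma$, set $\lambda^{(t)}:=(\lambda_1,\ldots,\lambda_{n-1},\lambda_n+t)$, and use the inverse formula to read off its preimage $\mu^{(t)}\in\Gamma$. A short, mechanical calculation should give
\[\mu^{(t)}=\mu+s\,(1,\ldots,1,1-\varrho_\Gamma),\qquad s:=\frac{t}{n-\varrho_\Gamma}.\]
Since $\tilde{f}(\lambda^{(t)})=f(\mu^{(t)})$ by \eqref{def-f} and $s\to+\infty$ as $t\to+\infty$, hypothesis \eqref{unbounded-4} then gives $f(\mu^{(t)})\to+\infty$, which is exactly \eqref{unbounded-1} for $\tilde{f}$ in $\tilde{\Gamma}$. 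The argument is essentially bookkeeping with the linear change of variables; the only substantive observation is that perturbing the last coordinate on the $\tilde{\Gamma}$ side corresponds exactly to moving in the direction $(1,\ldots,1,1-\varrho_\Gamma)$ singled out in \eqref{unbounded-4}, and it is precisely the choice $\varrho=\varrho_\Gamma$ that makes this identification work. I do not anticipate a genuine obstacle beyond verifying this matching of distinguished directions.
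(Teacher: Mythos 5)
Your proof is correct and carries out precisely the computation the paper is implicitly invoking (the paper simply asserts the lemma ``obviously'' from the construction of $(\tilde f,\tilde\Gamma)$): inverting \eqref{map1} shows that the ray $\lambda\mapsto\lambda+t(0,\ldots,0,1)$ in $\tilde\Gamma$ pulls back to the ray $\mu\mapsto\mu+\frac{t}{n-\varrho_\Gamma}(1,\ldots,1,1-\varrho_\Gamma)$ in $\Gamma$, so \eqref{unbounded-4} transfers directly to \eqref{unbounded-1} for $\tilde f$. The only detail worth making explicit, which you correctly use, is that $\Gamma\neq\Gamma_1$ forces $\varrho_\Gamma<n$ so the change of variables is invertible and $t/(n-\varrho_\Gamma)\to+\infty$.
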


%Notice that $\tilde{f}$ is  not uniformly elliptic in $\tilde{\Gamma}$ when $\varrho=  \varrho_{\Gamma}.$ (Otherwise $(1,\cdots,1,1-\varrho_\Gamma)\in\Gamma$). In the following we prove that the operator $\tilde{f}$ is of ellipticity. 

Together with Lemma \ref{lemma1-unbound-yield-elliptic} we can conclude that
\begin{proposition}
	\label{key-lemma2}
	Suppose  
	$\varrho=  \varrho_{\Gamma}$ (${\Gamma} \neq \Gamma_1$) and that  $(f,\Gamma)$ satisfies %\eqref{concave}, 
	% 	\eqref{elliptic},
	\eqref{addistruc}  %\eqref{kappa-leq-n-1} 
	and \eqref{unbounded-4}.
	%or equivalently $\Gamma$  is not of type 2.
Let $(\tilde{f},\tilde{\Gamma})$ be as in \eqref{map1} and \eqref{def-f}. Then 
%$\tilde{f}$ satisfies the unbounded condition \eqref{unbounded-1} in $\tilde{\Gamma}$, and 
$\tilde{f}$ satisfies \eqref{elliptic} in $\tilde{\Gamma}.$ 
 That is	\begin{equation}	\frac{\partial\tilde{f}}{\partial\lambda_i}(\lambda)>0,  \mbox{ } \forall	\lambda %=(\lambda_1,\cdots,\lambda_n)
 	\in\tilde{\Gamma},  \mbox{ } \forall  1\leq i\leq n.  \nonumber\end{equation}
\end{proposition}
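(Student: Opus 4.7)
The plan is to obtain Proposition \ref{key-lemma2} as an immediate consequence of combining Lemma \ref{lemma2-unbound-condition} with Lemma \ref{lemma1-unbound-yield-elliptic}, once the structural properties of $(\tilde{f},\tilde{\Gamma})$ are verified. First I would record the book-keeping: because $\Gamma \neq \Gamma_1$ we have $\varrho = \varrho_\Gamma < n$, so the linear map $\mu \mapsto \lambda$ with $\lambda_i = \frac{1}{\varrho}\bigl(\sum_j \mu_j - (n-\varrho)\mu_i\bigr)$ defining \eqref{map1} is a symmetric linear isomorphism of $\mathbb{R}^n$. Consequently $\tilde{\Gamma}$ is an open, symmetric, convex cone with vertex at the origin and nonempty boundary, and $\tilde{f}$ inherits smoothness, symmetry, and concavity from $f$ (the concavity having already been noted in Subsection \ref{subsec1-equi}).

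Next, I would invoke Lemma \ref{lemma2-unbound-condition}: under the standing hypotheses $\varrho = \varrho_\Gamma$, together with \eqref{addistruc} and \eqref{unbounded-4} on $(f,\Gamma)$, the lemma yields that $\tilde{f}$ satisfies the unbounded condition \eqref{unbounded-1} on $\tilde{\Gamma}$, i.e.\ for every $\lambda \in \tilde{\Gamma}$,
\begin{equation*}
\lim_{t \to +\infty} \tilde{f}(\lambda_1,\cdots,\lambda_{n-1},\lambda_n + t) = +\infty.
\end{equation*}

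Finally, I would apply Lemma \ref{lemma1-unbound-yield-elliptic} to the pair $(\tilde{f},\tilde{\Gamma})$ in place of $(f,\Gamma)$. Its hypotheses — concavity together with \eqref{unbounded-1} — are precisely what we have assembled in the previous two steps, and its conclusion is exactly \eqref{elliptic} for $\tilde{f}$, namely $\partial \tilde{f}/\partial \lambda_i(\lambda) > 0$ for all $\lambda \in \tilde{\Gamma}$ and all $1 \leq i \leq n$, which is the claim of the proposition.

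There is no substantive obstacle here; the only point that requires any care is the verification that concavity and symmetry of $\tilde{f}$ are genuinely transferred through the reparametrization \eqref{map1}, but this is immediate since the change of variables is linear, invertible (as $\varrho < n$), and commutes with coordinate permutations because both $\vec{\bf 1}$ and the identity appear symmetrically in the formula defining $P^{-1}$.
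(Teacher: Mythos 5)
Your proposal is correct and follows exactly the same route as the paper: the paper itself obtains Proposition \ref{key-lemma2} by first invoking Lemma \ref{lemma2-unbound-condition} to transfer \eqref{unbounded-4} on $(f,\Gamma)$ into \eqref{unbounded-1} for $(\tilde f,\tilde\Gamma)$, and then applying Lemma \ref{lemma1-unbound-yield-elliptic} (which uses only concavity plus \eqref{unbounded-1}) to conclude \eqref{elliptic}. Your preliminary verification that the linear change of variables \eqref{map1} is invertible when $\varrho=\varrho_\Gamma<n$ and carries concavity and symmetry over to $\tilde f$ is the same book-keeping the paper relies on implicitly (cf.\ Subsection \ref{subsec1-equi} and Proposition \ref{proposition2-Gamma}), so your argument matches the paper's.
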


%\begin{proof} 	Note that  $\Gamma\neq\Gamma_1$. Thus $0<\varrho_{\Gamma}<n$. Without loss of generality, we assume $\mu_1\leq \mu_2\leq \cdots\leq \mu_n$, i.e. $\lambda_1\geq \lambda_2\geq\cdots\geq \lambda_n.$ By the concavity and symmetry of $\tilde{f}$, 	\[\frac{\partial \tilde{f}}{\partial\lambda_n}\geq \cdots\geq \frac{\partial \tilde{f}}{\partial\lambda_{2}} \geq \frac{\partial \tilde{f}}{\partial\lambda_{1}}.\]	By \eqref{unbounded-4} and concavity we know \begin{equation} 	\label{computation-1} 	\begin{aligned} 	(n-\varrho_\Gamma)	\frac{\partial \tilde{f}}{\partial\lambda_1}(\lambda)  	 \sum_{j=1}^n\frac{\partial f}{\partial\mu_j}(\mu)-\varrho_\Gamma \frac{\partial f}{\partial\mu_1} (\mu) \geq \frac{f(\mu+t(1,\cdots,1,1-\varrho_\Gamma))-f(\mu)}{t} 	>0   \nonumber 	\end{aligned}	\end{equation} whenever  $t\gg1$. \end{proof}

%\begin{remark} 	Set $(f,\Gamma)=(\sigma_1,\Gamma_1)$. We know $\varrho_\Gamma=n$, which means that $Q^{-1}$ does not make sense. This shows that the assumption $0\leq\kappa_\Gamma\leq n-2$ in Proposition \ref{key-lemma2} 	and Corollary \ref{coro3.6}	is necessary. \end{remark}

%\section{Structures of nonlinear concave operators} 
%\label{sec3-structure-operator}

  %\subsection{Reduction of  equations}

 \subsection{Further remarks on $\Gamma_\infty$-admissible boundary}

 	Let $\tilde{\Gamma}$ be as in \eqref{map1}.
  We can  check that
 %Under \eqref{map1}, we get
 \begin{lemma}
 	\label{lemma1-boundary-admissible}
 
 	%Let $\tilde{\Gamma}$ be as in \eqref{map1}.	Then
 %	\begin{equation} 	\label{admissible-boundary2} 	\begin{aligned} 	(\kappa_1,\cdots,\kappa_{n-1})\in \tilde{\Gamma}_\infty 	\end{aligned} 	\end{equation}
 $\partial M$ is $\tilde{\Gamma}_\infty$-admissible  (i.e., $(\kappa_1,\cdots,\kappa_{n-1})\in \tilde{\Gamma}_\infty$) 	if and only if
 	\begin{equation}
 		\label{admissible-boundary3}
 		\begin{aligned}
 			\sum_{i=1}^{n-1} \kappa_{i} \vec{\bf 1} -\varrho (\kappa_1,\cdots,\kappa_{n-1},0)+t(1,\cdots,1,1-\varrho) \in \Gamma \mbox{ for } t\gg1.
 		\end{aligned}
 	\end{equation}
 \end{lemma}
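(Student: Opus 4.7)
The plan is to directly invert the linear correspondence \eqref{map1} between $\Gamma$ and $\tilde{\Gamma}$, transferring the defining condition of $\tilde{\Gamma}_\infty$-admissibility to the explicit vector inclusion \eqref{admissible-boundary3} in $\Gamma$.

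First, unwinding the definitions, $(\kappa_1,\cdots,\kappa_{n-1})\in\tilde\Gamma_\infty$ is by construction the statement that $(\kappa_1,\cdots,\kappa_{n-1},R)\in\tilde\Gamma$ for some $R>0$. By \eqref{map1}, this in turn means that there exists $\mu\in\Gamma$ with $(\kappa_1,\cdots,\kappa_{n-1},R)=P(\mu)$, where $P(\mu)_i:=\tfrac{1}{\varrho}\bigl(\sum_j\mu_j-(n-\varrho)\mu_i\bigr)$. Summing the components of $\lambda=P(\mu)$ gives $\sum_i\lambda_i=\sum_j\mu_j$, so (since $\varrho<n$ in the regimes where \eqref{map1} is applied) $P$ is invertible with $\mu_i=\tfrac{1}{n-\varrho}\bigl(\sum_j\lambda_j-\varrho\lambda_i\bigr)$.

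The second step is an explicit algebraic check: specializing to $\lambda=(\kappa_1,\cdots,\kappa_{n-1},R)$ and multiplying through by the positive scalar $n-\varrho$ (using that $\Gamma$ is a cone), a component-by-component computation yields
\[(n-\varrho)\mu=\sum_{i=1}^{n-1}\kappa_i\,\vec{\bf 1}-\varrho(\kappa_1,\cdots,\kappa_{n-1},0)+R(1,\cdots,1,1-\varrho).\]
Hence $(\kappa_1,\cdots,\kappa_{n-1},R)\in\tilde\Gamma$ if and only if the right-hand side lies in $\Gamma$, which is precisely \eqref{admissible-boundary3} with $t=R$.

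The step I expect to require a little care is promoting ``for some $R>0$'' to ``for all $t\gg 1$''. The direction vector $v:=(1,\cdots,1,1-\varrho)$ belongs to $\bar\Gamma$ in the range $\varrho\leq\varrho_\Gamma$ where the lemma is applied, by the definition \eqref{def1-varrho-Gamma} of $\varrho_\Gamma$ together with convexity of $\Gamma$. Using the standard fact that if $x\in\Gamma$ and $y\in\bar\Gamma$ then $x+sy\in\Gamma$ for every $s\geq 0$ (since $(1+s)^{-1}(x+sy)$ is a convex combination of an interior point with a boundary point of the open convex set, hence itself interior, and $\Gamma$ is a cone), we see that if \eqref{admissible-boundary3} holds at some $R=R_0>0$ then adding $(t-R_0)v$ keeps us in $\Gamma$ for all $t\geq R_0$; the reverse implication is immediate. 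The substantive content therefore lies in the algebraic inversion of Step 2, with the recession-direction argument in Step 3 being the only structural input needed.
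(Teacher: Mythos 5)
Your argument is correct, and since the paper itself offers no proof of this lemma (it merely asserts ``We can check that''), there is nothing to compare against directly; what you supply is precisely the intended verification. A small note on notation: the map you call $P$ (from $\mu$ to $\lambda$) is the transformation implicit in \eqref{map1}, whereas the paper reserves $P$ for its inverse in Proposition \ref{proposition2-Gamma}; the algebra agrees in either convention. The one substantive point that deserved explicit justification — that ``for some $R>0$'' can be upgraded to ``for all $t\gg1$'' — is handled correctly by your observation that $(1,\cdots,1,1-\varrho)\in\bar\Gamma$ (from \eqref{def1-varrho-Gamma} together with $(0,\cdots,0,1)\in\bar\Gamma$ and closedness of $\bar\Gamma$ under addition) combined with the standard fact that an open convex cone is stable under adding multiples of a closure direction.
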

 
  %\begin{proof}	Proved by straightforward computations. \end{proof}

 \begin{corollary}
 	If %$\varrho$ satisfies \eqref{assumption-4}, 
 		$\varrho<\varrho_\Gamma$, $\varrho\neq0,$ then %\eqref{admissible-boundary2} holds for
 		 any smooth boundary is $\tilde{\Gamma}_\infty$-admissible. 
 		 On the other hand, when $\varrho= \varrho_\Gamma$  the condition \eqref{admissible-boundary3} coincides with \eqref{admissible-boundary4}.
 \end{corollary}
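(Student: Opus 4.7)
The corollary is a direct consequence of Lemma \ref{lemma1-boundary-admissible} plus elementary properties of the open convex cone $\Gamma$. The second assertion (the case $\varrho=\varrho_\Gamma$) is immediate: substituting $\varrho=\varrho_\Gamma$ into condition \eqref{admissible-boundary3} recovers \eqref{admissible-boundary4} symbol by symbol, so no work is required. All effort goes into the first assertion.

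For the first assertion, by Lemma \ref{lemma1-boundary-admissible} it suffices to show that for \emph{arbitrary} $(\kappa_1,\ldots,\kappa_{n-1})\in\mathbb{R}^{n-1}$, the vector
\[
v_t := \sum_{i=1}^{n-1}\kappa_i\,\vec{\bf 1}-\varrho(\kappa_1,\ldots,\kappa_{n-1},0)+t(1,\ldots,1,1-\varrho)
\]
lies in $\Gamma$ for $t\gg 1$. The plan is to write $v_t = t\bigl(t^{-1}w+e\bigr)$, where $w:=\sum_{i=1}^{n-1}\kappa_i\vec{\bf 1}-\varrho(\kappa_1,\ldots,\kappa_{n-1},0)$ is a fixed vector (depending on the boundary geometry) and $e:=(1,\ldots,1,1-\varrho)$. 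If we can verify that $e\in\Gamma$, then since $\Gamma$ is open and $t^{-1}w\to 0$ as $t\to\infty$, we get $t^{-1}w+e\in\Gamma$ once $t$ is large enough; the cone property then yields $v_t\in\Gamma$ for all such $t$, which is exactly what \eqref{admissible-boundary3} asks for.

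The one nontrivial input is therefore the membership $e\in\Gamma$ under the hypothesis $\varrho<\varrho_\Gamma$ (and $\varrho\neq 0$). Consider the set $I:=\{s\in\mathbb{R}:(1,\ldots,1,1-s)\in\Gamma\}$. As the intersection of the open convex set $\Gamma$ with an affine line, $I$ is an open convex subset of $\mathbb{R}$, hence an open interval. For every $s\leq 0$ the vector $(1,\ldots,1,1-s)$ has strictly positive entries, so it lies in $\Gamma_n\subseteq\Gamma$; thus $(-\infty,0]\subseteq I$. On the other hand, the defining relation $(1,\ldots,1,1-\varrho_\Gamma)\in\partial\Gamma$ identifies $\varrho_\Gamma$ as the supremum of $I$: for $s>\varrho_\Gamma$ the point lies on the far side of the boundary hyperplane through $(1,\ldots,1,1-\varrho_\Gamma)$ by convexity of $\bar\Gamma$, and for $s=\varrho_\Gamma$ it lies on $\partial\Gamma$. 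Hence $I=(-\infty,\varrho_\Gamma)$, and any $\varrho<\varrho_\Gamma$ (including negative values) belongs to $I$, i.e., $e\in\Gamma$. No other step in the proof poses any difficulty; the main (minor) obstacle is precisely this verification that the ray-direction $e$ remains on the interior side of the boundary of $\Gamma$ for every $\varrho<\varrho_\Gamma$, which is secured by the one-variable convexity argument just outlined.
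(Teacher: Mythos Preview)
Your proof is correct. The paper gives no explicit proof of this corollary, but the intended route is to combine Lemma~\ref{lemma1-boundary-admissible} with Lemma~\ref{proposition1-newvwesion}(1): when $\varrho<\varrho_\Gamma$ the cone $\tilde{\Gamma}$ is of type~2, hence $\tilde{\Gamma}_\infty=\mathbb{R}^{n-1}$, and any boundary is automatically admissible. You instead bypass Lemma~\ref{proposition1-newvwesion} and verify condition~\eqref{admissible-boundary3} directly, the key step being that $e=(1,\ldots,1,1-\varrho)\in\Gamma$ whenever $\varrho<\varrho_\Gamma$. In fact this membership is exactly what underlies Lemma~\ref{proposition1-newvwesion}(1): under the linear map $P$ of Proposition~\ref{proposition2-Gamma} one has $P(0,\ldots,0,1)=\tfrac{1}{n-\varrho}(1,\ldots,1,1-\varrho)$, so $(0,\ldots,0,1)\in\tilde{\Gamma}$ (i.e.\ $\tilde{\Gamma}$ is type~2) if and only if $e\in\Gamma$. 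Your one-variable convexity argument for $I=(-\infty,\varrho_\Gamma)$ is thus essentially the proof of that lemma, carried out in place; it makes the corollary self-contained at the cost of a few extra lines, while the paper's route is shorter because it reuses structure already established.
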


 %In particular, we obtain
 
 %\begin{corollary}
 	
 %	Assume  $\Gamma=\Gamma_n$, $\varrho %=\varrho_{\Gamma_n}
 %	=1$. Then %\eqref{admissible-boundary2} holds 
 %	$\partial M$ is $\tilde{\Gamma}_\infty$-admissible if and only if $\sum_{i=1}^{n-1}\kappa_i>0$. \end{corollary}

 %%%%%%%%%%%%%%%%%%%%%%%%%%%%%%%%%%%%%%%%%%%%%%%%%%%%%%%%%%%%%%%%%%%%%%%%%%%%%%%%%%%%%%%%%%%%%%%%%%%%%%%%%%%%%%%%%%%%%%%%%%%%%%%%%%%%%%%%%%%%%%%%%%%%%%%%%

  \medskip

 \section{Construct  admissible functions via Morse functions}
 \label{construction}

 %Next we construct admissible functions when %$\Gamma$ is of type 2 cone, or equivalently
 %$(f,\Gamma)$ satisfies  \eqref{uniform-elliptic-2}.
 
 \begin{lemma}
 	%[Construction of admissible functions]
 	\label{lemma-construction-function}
 	Let $(\bar M,\omega)$ be a compact Hermitian manifold 
 	with smooth boundary.
 	Then there is a smooth admissible function $\underline{w}$ subject to $\lambda(\chi+\sqrt{-1}\partial\overline{\partial}  \underline{w})\in\Gamma$ in $\bar M$,
 	% for the equation \eqref{main-equ2},
 	% \begin{equation}  	 f(\lambda(\chi+\sqrt{-1}\partial\overline{\partial}  \underline{w})) \geq \psi[\underline{w}] 	   \end{equation} 
 	provided that $\Gamma$ is of type 2, i.e., $\Gamma_\infty=\mathbb{R}^{n-1}$.
 	%$[\chi]$ is $\Gamma_\infty$-admissible given by Definition \ref{Gamma1-infty-admissible}.
 	% admits a $\Gamma_\infty$-admissible form.
 	%$(f,\Gamma)$ satisfies \eqref{uniform-elliptic-2} and  \eqref{addistruc}.
 \end{lemma}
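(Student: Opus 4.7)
The plan is to exploit that the hypothesis $\Gamma_\infty=\mathbb R^{n-1}$ places $(0,\ldots,0,1)$ in the interior of $\Gamma$, so that any Hermitian form whose eigenvalue vector approximates a sufficiently large positive multiple of $(0,\ldots,0,1)$ must land in $\Gamma$. The natural way to produce such a rank-one contribution in $\sqrt{-1}\partial\overline\partial\underline w$ is to take $\underline w$ to be the exponential of a function whose gradient is nowhere vanishing, which Lemma \ref{lemma-diff-topologuy} supplies.

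First I would apply Lemma \ref{lemma-diff-topologuy} to obtain a smooth function $v\in C^\infty(\bar M)$ with $dv$ nowhere vanishing; by compactness of $\bar M$, the norm $|\partial v|^2\geq c_0>0$ uniformly (taken with respect to $\omega$). Setting $\underline w=e^{Nv}$ with $N>0$ to be chosen large, a direct computation yields
\[
\chi+\sqrt{-1}\partial\overline\partial\underline w=\chi+Ne^{Nv}\sqrt{-1}\partial\overline\partial v+N^2 e^{Nv}\sqrt{-1}\partial v\wedge\overline\partial v,
\]
in which the rank-one term $\sqrt{-1}\partial v\wedge\overline\partial v$ has $|\partial v|^2$ as its unique nonzero eigenvalue relative to $\omega$.

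Since $\Gamma$ is a cone, the eigenvalue ray of the form above coincides with that of its rescaling by the positive scalar $N^2 e^{Nv}$, namely
\[
\tfrac{1}{N^2 e^{Nv}}\chi+\tfrac{1}{N}\sqrt{-1}\partial\overline\partial v+\sqrt{-1}\partial v\wedge\overline\partial v,
\]
which converges uniformly on $\bar M$ to $\sqrt{-1}\partial v\wedge\overline\partial v$ as $N\to\infty$, using uniform bounds on $\chi$ and on the second covariant derivatives of $v$ provided by compactness. By continuity of eigenvalues, the eigenvalue vector of the rescaled form converges uniformly on $\bar M$ to $(0,\ldots,0,|\partial v|^2)$.

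Finally, from $\Gamma_\infty=\mathbb R^{n-1}$ I obtain some $R>0$ with $(0,\ldots,0,R)\in\Gamma$, and the cone property places $(0,\ldots,0,1)$ in the open cone $\Gamma$; combined with $|\partial v|^2\geq c_0>0$, this shows $(0,\ldots,0,|\partial v|^2)\in\Gamma$ at every point of $\bar M$. Compactness of $\bar M$ then gives a uniform positive distance from this continuous family of limiting eigenvalue vectors to $\partial\Gamma$, which together with the uniform convergence forces $\lambda(\chi+\sqrt{-1}\partial\overline\partial\underline w)\in\Gamma$ throughout $\bar M$ once $N$ is chosen sufficiently large. The only point requiring attention is the uniformity of the convergence over $\bar M$, but this is automatic from compactness; no serious obstacle is anticipated.
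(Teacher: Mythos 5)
Your proposal takes essentially the same route as the paper: take a critical-point-free $v$ from Lemma \ref{lemma-diff-topologuy}, set $\underline{w}=e^{Nv}$, and use the cone property of $\Gamma$ to reduce the question to the dominance of the rank-one term $\sqrt{-1}\partial v\wedge\overline{\partial}v$, whose eigenvalue ray $(0,\dots,0,|\partial v|^2)$ lies in $\Gamma$ by the hypothesis $\Gamma_\infty=\mathbb{R}^{n-1}$. The one hygiene step you omitted is normalizing $v$ (the paper takes $v\geq 1$): without a nonnegative lower bound on $v$, the factor $1/(N^2 e^{Nv})$ multiplying $\chi$ need not tend to $0$ uniformly as $N\to\infty$, since $e^{Nv}$ decays exponentially where $v<0$; this is fixed simply by replacing $v$ with $v+\mathrm{const}$, which does not affect $dv\neq 0$.
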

 \begin{proof}
 	By Lemma \ref{lemma-diff-topologuy}, we have a  smooth  function $v$  with  $v\geq1$ and
 	$\partial v\neq 0$ on $\bar M$.  Let
 	$\underline{w}=e^{tv}$. Note that $\lambda(\sqrt{-1}\partial v\wedge\overline{\partial}v)=|\partial v|^2(0,\cdots,0,1)$ and
 	\begin{equation}
 		\label{formular1-compu}
 		\chi+\sqrt{-1}\partial\overline{\partial}  \underline{w}=\chi+te^{tv}(\sqrt{-1}\partial\overline{\partial} v+t\sqrt{-1}\partial v\wedge \overline{\partial} v).
 	\end{equation}
 	%On the other hand, $\Gamma_\infty=\mathbb{R}^{n-1}$ according to Lemma \ref{lemma5.11}.
 	%Since $[\chi]$ is $\Gamma_\infty$-admissible,  
 	Since $\Gamma_\infty=\mathbb{R}^{n-1}$, 
 	% we  know 
 	%$(0,\cdots,0,1)\in\Gamma$ and then
 	$\lambda(\sqrt{-1}\partial\overline{\partial} v+t\sqrt{-1}\partial v\wedge \overline{\partial} v)\in\Gamma$ for $t\gg1$. 
 	Together with the openness of $\Gamma$,  $\underline{w}$ is an  admissible function when $t\gg1$. 
 	
 \end{proof}  	
 
 %From \eqref{conformal-formula1} we know that 
 %\begin{lemma}		\label{lemma-cone-condition1}
 %	Let 	$\lambda(-\Ric^{\langle\alpha, \beta, \gamma\rangle}_\omega)=(r_1,\cdots,r_n)$.
 %Assume $r_1\geq \cdots\geq r_n$ and 	\begin{equation}	\label{cone-condition1}	\begin{aligned}		(r_{1},\cdots,r_{n})	+N(\beta,\cdots,\beta,\beta+n\alpha+2\gamma)\in \Gamma \mbox{ for } N\gg1.
 %%	(\lambda_{\tau_1},\cdots\geq\lambda_{\tau_n})	+N(\beta,\cdots,\beta,\beta+n\alpha+2\gamma)\in \Gamma \mbox{ for } N\gg1.
 %	\end{aligned} \end{equation}
 %Let $v\geq 0$ be as in Lemma \ref{lemma-diff-topologuy}. Take $\tilde{\omega}=e^{e^{tv}}\omega$. Then $\lambda(-\Ric^{\langle\alpha, \beta, \gamma\rangle}_{\tilde{\omega}})\in\Gamma$ for  $t\gg1$.
 %that for any   permutation $ (\tau_1, \cdots, \tau_n)$  of $(1, \cdots, n)$,
 
 %	for some $N>0$. 
 
 %\end{lemma}

 % \begin{corollary} 	Condition \eqref{cone-condition1} automatically holds 	for the case \eqref{assumption1-parameters}.
 % if 
 %\begin{equation}
 %	%\label{assumption1-parameters}
 %	\begin{aligned}   \varrho_{\Gamma}\beta+ {n\alpha+2\gamma} >0,	\mbox{ } \beta>0,  	\mbox{ } 	n\alpha+2\gamma\neq 0.  \nonumber 	\end{aligned} 	\end{equation}
 % \end{corollary}
 
 %\begin{corollary}	\label{coro-4.4}	When $(\alpha,\beta,\gamma)$ satisfies \eqref{assumption2-parameters}, 
 %\begin{equation}
 %%	\label{assumption2-parameters}
 %	\begin{aligned}  	\varrho_{\Gamma}\beta+ {n\alpha+2\gamma} =0, 	\,\, 	\beta>0, \nonumber 	\end{aligned} 	\end{equation}
 %the condition \eqref{cone-condition1} reads as in \eqref{cone-condition2}. \end{corollary}

 \begin{lemma}	
 	%[Admissible functions on complete noncompact manifolds]
 	\label{lemma1-asymcondition}
 	Under the assumptions of Theorem %\ref{theorem1-complete-3},
 	\ref{theorem4-complete-noncompact}, there exists a $C^2$-admissible function 
 	$\underline{u}$ 
 	satisfying %$\lambda(\chi+\sqrt{-1}\partial\overline{\partial}\underline{u})$  and
 	\begin{equation}
 		\label{asymp-condition2}
 		\begin{aligned}
 			f(\lambda(\chi+\sqrt{-1}\partial\overline{\partial}\underline{u}))\geq \Lambda_1\psi  e^{\Lambda_0\underline{u}} \,\, \mbox{ in } M  %\nonumber 
 		\end{aligned}
 	\end{equation}
 	for some constant $\Lambda_1>0$. Moreover, $\underline{u}\geq \underline{v}-C_1$ for some $C_1>0$, 
 	where $\underline{v}$ is as in Theorem \ref{theorem4-complete-noncompact}.
 	
 \end{lemma}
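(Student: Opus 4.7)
The plan is to interpolate $\underline{v}$ with a locally constructed admissible function via a smooth cutoff, using that $\underline{v}$ fails to be admissible only on the compact set $K_0$. The key preliminary observation is that since $\psi>0$ everywhere and $f=0$ on $\partial\Gamma$ by \eqref{homogeneous-1-buchong2}, the hypothesis \eqref{asymptotic-assumption1} forces $\lambda(\omega^{-1}(\chi+\sqrt{-1}\partial\overline{\partial}\underline{v}))$ to lie in the \emph{open} cone $\Gamma$ throughout $M\setminus K_0$; in particular, on any compact subset of $M\setminus K_0$ these eigenvalues remain at a uniform positive distance from $\partial\Gamma$, so $\underline{v}$ is already strictly admissible there.

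Choose open sets $K_0\subset U_0\subset\subset U_1\subset\subset M$ with smooth boundaries. Applying Lemma \ref{lemma-construction-function} to the compact Hermitian manifold with boundary $\bar U_1$ (permitted since $\Gamma_\infty=\mathbb{R}^{n-1}$), one obtains a smooth $\underline{w}\in C^\infty(\bar U_1)$ with $\lambda(\chi+\sqrt{-1}\partial\overline{\partial}\underline{w})\in\Gamma$; adding a constant (which preserves admissibility), arrange that $\underline{w}\geq\underline{v}$ on $\bar U_1$. Fix a cutoff $\eta\in C_c^\infty(M)$ with $\eta\equiv 1$ on a neighborhood of $K_0$ contained in $U_0$ and $\eta\equiv 0$ on $M\setminus U_1$, and for small $\tau\in(0,1)$ define
\begin{equation*}
\underline{u}_\tau := \underline{v}+\tau\eta(\underline{w}-\underline{v}) = (1-\tau\eta)\,\underline{v}+\tau\eta\,\underline{w}.
\end{equation*}

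Admissibility is verified piecewise. Where $\eta\equiv 1$ (in particular on $K_0$), one has the exact identity
\begin{equation*}
\chi+\sqrt{-1}\partial\overline{\partial}\underline{u}_\tau = (1-\tau)(\chi+\sqrt{-1}\partial\overline{\partial}\underline{v}) + \tau(\chi+\sqrt{-1}\partial\overline{\partial}\underline{w}),
\end{equation*}
a convex combination of a Hermitian form with eigenvalues in $\bar\Gamma$ and one with eigenvalues in $\Gamma$, which lies in the open convex cone $\{H:\lambda(H)\in\Gamma\}$ for any $\tau\in(0,1]$. On the transition annulus $\bar U_1\setminus U_0$ (which sits in $M\setminus K_0$), expanding $\sqrt{-1}\partial\overline{\partial}(\eta(\underline{w}-\underline{v}))$ shows that $\chi+\sqrt{-1}\partial\overline{\partial}\underline{u}_\tau$ is an $O(\tau)$ $C^0$-perturbation of $\chi+\sqrt{-1}\partial\overline{\partial}\underline{v}$; by the preliminary observation, for $\tau$ small the eigenvalues stay in $\Gamma$. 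Outside $U_1$, $\underline{u}_\tau=\underline{v}$ is admissible. The required inequality then holds with constant $1$ on $M\setminus U_1$ directly from \eqref{asymptotic-assumption1}, and on the compact set $\bar U_1$ one obtains it by choosing $\Lambda_1:=\min(1,\delta/N)$, where $\delta>0$ is a lower bound for the continuous positive function $f(\lambda(\chi+\sqrt{-1}\partial\overline{\partial}\underline{u}_\tau))$ on $\bar U_1$ and $N$ is an upper bound for $\psi e^{\Lambda_0\underline{u}_\tau}$ there. Finally $\underline{u}_\tau\geq\underline{v}$ since $\eta\geq 0$ and $\underline{w}\geq\underline{v}$, so $C_1=0$ suffices.

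The main obstacle is the admissibility check on the transition annulus: the derivatives of $\eta$ generate $O(\tau)$ cross terms $\sqrt{-1}\partial\eta\wedge\overline{\partial}(\underline{w}-\underline{v})$ and $(\underline{w}-\underline{v})\sqrt{-1}\partial\overline{\partial}\eta$, which in principle could push eigenvalues out of $\Gamma$. The argument works precisely because the preliminary observation upgrades $\underline{v}$ from pseudo-admissibility to \emph{strict} admissibility outside $K_0$, creating the uniform slack from $\partial\Gamma$ needed to absorb these cross terms for small enough $\tau$.
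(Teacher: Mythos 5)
Your proof is correct. Both you and the paper begin with the same key observation: from \eqref{homogeneous-1-buchong2}, the positivity of $\psi$, and \eqref{asymptotic-assumption1}, the eigenvalues $\lambda(\omega^{-1}(\chi+\sqrt{-1}\partial\overline{\partial}\underline{v}))$ must lie in the \emph{open} cone $\Gamma$ throughout $M\setminus K_0$. From there the constructions diverge. The paper normalizes $\underline{v}\equiv 0$, takes cutoff $\zeta$ and a nowhere-critical $v\leq 0$ on $\bar M_2$ from Lemma \ref{lemma-diff-topologuy}, and writes down the single global formula $\underline{u}=e^{Nh}$ with $h=\zeta v-1$ inside $M_2$, $h=-1$ outside; for $N\gg 1$ the quadratic gradient term $N^2e^{Nh}\sqrt{-1}\partial v\wedge\overline{\partial}v$ pushes into $\Gamma$ over $K_0$ (where $\chi$ is only in $\bar\Gamma$), while $Ne^{Nh}$ and $N^2e^{Nh}$ decay because $h\leq-1$, making the perturbation negligible on the annulus where $\chi$ already has uniform slack. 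Your construction instead delegates the exponential-of-Morse-function trick to Lemma \ref{lemma-construction-function} to produce a local admissible $\underline{w}$ on $\bar U_1$, and then linearly interpolates $\underline{u}_\tau=\underline{v}+\tau\eta(\underline{w}-\underline{v})$ with a small parameter $\tau$, checking admissibility piecewise via convexity of the cone on the core and via the uniform slack from $\partial\Gamma$ on the annulus. Both approaches use the same two ingredients — convexity of $\{\lambda\in\Gamma\}$ together with openness of the cone to absorb a small perturbation — but the paper's single exponential formula is more compact, while your two-stage interpolation is arguably more transparent and modular, since admissibility on the transition region is just a continuity estimate rather than a decay estimate. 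One small point to tidy: as written, $\eta\equiv 1$ only on a neighborhood $V$ of $K_0$ inside $U_0$, so your convex-combination identity covers $V$ while your small-perturbation estimate covers $\bar U_1\setminus U_0$, leaving $U_0\setminus V$ unaddressed. This is easily fixed by demanding $\eta\equiv 1$ on $\bar U_0$, or by replacing $\bar U_1\setminus U_0$ by $\bar U_1\setminus V$ in the annulus step (which still lies in $M\setminus K_0$); with that adjustment the piecewise check exhausts $M$.
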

 
 \begin{proof}
 	Without loss of generality,  we may assume that $\underline{v}=0$ is pseudo-admissible and   satisfies %\eqref{asymp-condition1}.
 	\eqref{asymptotic-assumption1}.
 	Let $K_0$ be   the compact subset as in %\eqref{asymp-condition1}
 	\eqref{asymptotic-assumption1}.    	From  \eqref{homogeneous-1-buchong2}, the pseudo-admissible assumption and the positivity of $\psi$, we know 
 	%$\underline{u}=0$ is admissible when restricted to $M\setminus K_0$, i.e. 
 	$\lambda(\chi)\in \Gamma$ in $M\setminus K_0$.
 	
 	Pick  % $n$-dimensional 
 	compact 
 	submanifolds $M_1$, $M_2$ of complex dimension $n$ and with smooth boundary  and with $K_0\subset\subset M_1\subset\subset M_2$. Choose a cutoff function satisfying
 	\begin{equation}
 		\begin{aligned}
 			\zeta\in C^{\infty}_0(M_2), \, 0\leq\zeta\leq 1 \mbox{ and } \zeta\big|_{M_1}=1. \nonumber
 		\end{aligned}
 	\end{equation}  
 By   Lemma \ref{lemma-diff-topologuy}, we   take  a smooth %Morse 
 	function $v$  with $dv\neq0$ and %$v\leqslant -1$ 
 	$v\leq  0$ 
 	on $\bar M_2$.
 	%Similar to the proof of Lemma \ref{lemma-construction-function}, 
 	From \eqref{formular1-compu},  for $t\gg1$,	$\underline{w}=e^{t(v-1)}$ 
 	is  an admissible function on $\bar M_2$.  
 	%Let $\zeta$ and $\underline{w}$ be as above. 
Take $\underline{u}=e^{Nh}$, where
 	\begin{equation}
 		h=
 		\begin{cases}
 			\zeta  v -1\,& \mbox{ if } x\in M_2,\\
 			-1 \,& \mbox{ otherwise.}\nonumber
 		\end{cases}
 	\end{equation} 
 When $N\gg1$, $\underline{u}$ is an admissible function and satisfies \eqref{asymp-condition2}. 
 %(Note $\lambda(\chi)\in \Gamma$ in $M\setminus K_0$).
 	
 \end{proof}

   \medskip
  
  \section{The Dirichlet problem}
  \label{Dirichlet-problem}

  %  \subsection{Reduction to } 

 From Subsection \ref{subsec1-equi} we know that  \eqref{equ-deform-Ric+2}, and so \eqref{equ-deform-Ric+},  falls into equation of the form 
\eqref{equ1-hehe}. From now on, we consider  more general equation than  \eqref{equ1-hehe}  
\begin{equation}
	\label{main-equ2}
	\begin{aligned}
		%F(\chi+\sqrt{-1}\partial\overline{\partial}u):=
		f(\lambda(\omega^{-1}(\chi+\sqrt{-1}\partial\overline{\partial}u)))
		=  \psi(z,u).
	\end{aligned}
\end{equation} 
  
Throughout this section, and Sections \ref{Dirichlet-problem-3} as well as   \ref{Sec-Estimates1},  we suppose that $(\bar M,\omega)$ is a compact Hermitian manifold with smooth boundary.

In this  section we consider %Dirichlet problem for
 %a more general
  the equation  \eqref{main-equ2} 
  %than \eqref{equ1-hehe}
  prescribing boundary value data
  \begin{equation}\label{bdy-condition2} \begin{aligned}		
  		u=\varphi \,\mbox{ on } \partial M.
  	\end{aligned}
  \end{equation}
%\eqref{main-equ2} is more general  than \eqref{equ1-hehe}.
  Furthermore, we assume that
  $\psi(z,t)$ is a smooth function on $\bar M\times \mathbb{R}$ with 
  \begin{equation}
  	\label{nondegenerate-assumption-1}
  	\begin{aligned}	
   \inf_{z\in  M}\psi(z,t)>\sup_{\partial\Gamma}f, \,\, \forall t\in\mathbb{R}.
  %\psi(z,t)>\sup_{\partial\Gamma}f, \, \, \forall (z,t)\in \bar M\times \mathbb{R}.
  	  	\end{aligned}
  \end{equation}

%%%%%%%%%%%%%%%%%%%%%%%%%%
%%%%%%%%%%%%%%%%%%%%%%%%%%%%

%For general cone $\Gamma$ (not necessarily type 2) case, we prove
\begin{theorem}
	\label{thm2-existence-bdy}
	Let $(\bar M,\omega)$ be a compact Hermitian manifold with smooth $\Gamma_\infty$-admissible boundary.  %defined in Definition \ref{def1-admissible-boundary}.
	%\eqref{assump2-boundary}.
	%Assume  \eqref{concave}-\eqref{homogeneous-1}, \eqref{elliptic},   \eqref{unbounded-1} hold.
	In addition to %\eqref{concave}, 
	%	\eqref{elliptic},  
	\eqref{unbounded-1}  and \eqref{addistruc}, we assume that $\psi(z,t)$ is a smooth function on $\bar M\times \mathbb{R}$ subject to  \eqref{nondegenerate-assumption-1} %\eqref{assump2-psi}, \eqref{assump1-psi} 
	and
		\begin{equation}
		\label{assump2-psi}
		\begin{aligned}
			\psi_t(z,t):=\frac{\partial \psi(z,t)}{\partial t}>0, \mbox{  } \forall 
			(z,t)\in  M\times\mathbb{R},
		\end{aligned}
	\end{equation}
	\begin{equation}
		\label{assump1-psi}
		\begin{aligned}
			%	\lim_{t\to +\infty}\psi(z,t)=\sup_{\Gamma} f, \mbox{  }
			\lim_{t\to -\infty}\psi(z,t)= \inf_{\Gamma} f, \mbox{  } \forall 
			z\in\bar M.
		\end{aligned}
	\end{equation}
	Suppose in addition that there is a $C^{2}$ admissible function $\underline{w}$. %with \eqref{admissible-function2}.
	%for equation \eqref{main-equ2}. 
	Then for any $\varphi\in C^\infty(\partial M)$, there is a unique smooth admissible function satisfying \eqref{main-equ2} and \eqref{bdy-condition2}.
	%	\begin{equation} 	\label{main-equ2-1} 	\begin{aligned} 	f(\lambda(\chi+\sqrt{-1}\partial\overline{\partial}u))=\psi(z,u) 	\mbox{ in } M, 	\,\, 	u=\varphi \mbox{ on } \partial M. 	\end{aligned} 	\end{equation}
	
\end{theorem}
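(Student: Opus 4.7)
The plan is to combine the method of continuity with a priori estimates for admissible solutions. \textbf{Uniqueness} follows from a standard comparison principle: if $u_1,u_2$ are two admissible solutions agreeing on $\partial M$, then at an interior point where $u_1-u_2$ attains its maximum, $\sqrt{-1}\partial\overline{\partial}(u_1-u_2)\leq 0$, so $\lambda_i(\chi+\sqrt{-1}\partial\overline{\partial}u_1)\leq \lambda_i(\chi+\sqrt{-1}\partial\overline{\partial}u_2)$ in the ordered sense. By Lemma \ref{lemma1-unbound-yield-elliptic} applied to \eqref{unbounded-1}, $f$ is monotone, so $\psi(\cdot,u_1)\leq \psi(\cdot,u_2)$ at that point; \eqref{assump2-psi} then forces $u_1\leq u_2$ there, hence everywhere, and reversing the roles gives equality.

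For \textbf{existence} I would run a continuity family parametrised by $t\in[0,1]$, starting from a reference problem that admits an explicit solution. Using the admissible function $\underline{w}$, first shift it down by a large constant $C_0$ so that by \eqref{assump1-psi} the quantity $\psi(z,\underline{w}-C_0)$ lies strictly below $f(\lambda(\chi+\sqrt{-1}\partial\overline{\partial}\underline{w}))$ on $\bar M$; this produces a strict admissible subsolution $\underline{u}:=\underline{w}-C_0$ with $\underline{u}\leq\varphi$ on $\partial M$. Interpolating between $(\psi(\cdot,\underline{u}), \underline{u}|_{\partial M})$ at $t=0$ and $(\psi,\varphi)$ at $t=1$, openness follows from the implicit function theorem: by Lemma \ref{lemma1-unbound-yield-elliptic} the linearisation is elliptic, and the Dirichlet maximum principle combined with \eqref{assump2-psi} yields invertibility on H\"older spaces.

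Closedness is the substance and requires uniform $C^{2,\alpha}$ bounds along the family. The zero-order bound is obtained from $\underline{u}\leq u$ (by the same concavity-comparison used for uniqueness) together with an upper bound derived at an interior maximum using \eqref{nondegenerate-assumption-1} and \eqref{assump2-psi}. The boundary gradient estimate comes from the $\Gamma_\infty$-admissibility of $\partial M$ via the local barrier technique of \cite{Guan1993Boundary,Guan1998The,Guan12a}. The interior gradient estimate is where assumption \eqref{addistruc} enters decisively: following \cite{Gabor,Dinew2017Kolo}, a blow-up argument applied to a hypothetical sequence of admissible solutions with $\sup_M|\nabla u_k|\to\infty$ would, after rescaling and passing to the limit, produce a nonconstant maximal admissible function on $\mathbb{C}^n$, contradicting the Liouville-type theorem that holds under \eqref{addistruc}.

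For the second-order estimates, I would establish the quantitative boundary bound \eqref{quantitative-BE} by adapting the barrier construction of \cite{yuan2017,yuan-regular-DP}; the decisive technical tool is the eigenvalue-perturbation statement Lemma \ref{yuan's-quantitative-lemma}, which lets one control the mixed tangential--normal and the double-normal second derivatives on $\partial M$ even in the absence of full uniform ellipticity, by exploiting instead the $(\kappa_\Gamma+1)$-partial uniform ellipticity provided by Lemma \ref{yuan-k+1}. With $|\nabla u|$ and $\sup_{\partial M}|\partial\overline{\partial}u|$ under control, a Hou--Ma--Wu type maximum-principle argument yields a global $C^2$ bound; Evans--Krylov then gives $C^{2,\alpha}$, and Schauder bootstrap produces $C^\infty$. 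The principal difficulty is clearly the sharp boundary Hessian estimate \eqref{quantitative-BE}: since $f$ need not be uniformly elliptic, the classical Caffarelli--Nirenberg--Spruck scheme for the tangential--normal block cannot be applied directly, and one must instead engineer the barrier so that Lemma \ref{yuan's-quantitative-lemma} converts partial uniform ellipticity into the desired quadratic-in-gradient bound.
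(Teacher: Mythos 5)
Your proposal tracks the paper's architecture closely: a $C^{0}$ bound via the admissible function and the comparison principle, boundary gradient estimate from a local barrier built with the $\Gamma_\infty$-admissibility of $\partial M$, the quantitative boundary Hessian estimate \eqref{quantitative-BE}, a Hou--Ma--Wu type global second estimate, then Sz\'ekelyhidi's blow-up/Liouville argument for the gradient, and Evans--Krylov plus Schauder. That is indeed the paper's route. Two points in your account need repair, however.

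First, your continuity family does not start from a known solution as written: at $t=0$ the equation is $f(\lambda(\chi+\sqrt{-1}\partial\overline{\partial}u))=\psi(z,\underline{u}(z))$ with $u=\underline{u}$ on $\partial M$, but since $\underline{u}$ is a \emph{strict} subsolution you have $f(\lambda(\chi+\sqrt{-1}\partial\overline{\partial}\underline{u}))>\psi(z,\underline{u}(z))$, so $\underline{u}$ solves nothing at $t=0$. One should rather take the initial right-hand side to be $f(\lambda(\chi+\sqrt{-1}\partial\overline{\partial}\underline{u}))$ itself (so that $\underline{u}$ is the exact solution of the starting problem) and then build a homotopy of right-hand sides that preserves the monotonicity in $u$ and the structural hypotheses \eqref{nondegenerate-assumption-1}, \eqref{assump2-psi}, \eqref{assump1-psi} along the path; this is standard but must be said.

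Second, your attribution of the tools used for the boundary Hessian estimate is off. In the paper, Lemma \ref{yuan's-quantitative-lemma} is used only for the \emph{double-normal} derivative (Proposition \ref{yuan-k2v}), in conjunction with concavity and Lemma \ref{lemma3.4}; the \emph{tangential--normal} block (Proposition \ref{mix-general}) is handled by the Guan subsolution barrier method together with the trichotomy Lemma \ref{guan2014}, not by Lemma \ref{yuan's-quantitative-lemma}. Moreover, Lemma \ref{yuan-k+1} (the $(\kappa_\Gamma+1)$-partial uniform ellipticity) does not enter either of these boundary estimates; it is used in the paper for the structural results on type-2 cones (Lemmas \ref{lemma5.11}, \ref{lemma1-unbound-type2}) and for the interior estimates in Section \ref{Sec-Estimates2}, not here. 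Once these two mismatches are corrected, your outline is sound.
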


%As a consequence, we obtain Theorems \ref{thm1-1Ric} and \ref{thm2-compact}.
%and hence solve Problem \ref{Q0}.

%%%%%%%%%%%%%%%%%%%%%%%%%%%%%%%

%\subsection{Dirichlet problem revisited}
%\label{subsec1-DP-revisited}
When $\Gamma_\infty=\mathbb{R}^{n-1}$  
or %equivalently 
$f$ satisfies \eqref{uniform-elliptic-2},   
% the following theorem shows that
we will show that the Dirichlet problem is %much more simpler. 
uniquely solvable without assumptions on boundary and existence of admissible function, beyond $\partial M\in C^\infty$. 
%beyond $\partial M,$ $\chi\in C^\infty$.
It is a fully nonlinear analogue of existence theory for Poisson's equation and  Liouville's equation.
%and more general uniformly elliptic linear equation of second order.

\begin{theorem}  
	\label{thm2-existence-bdy-UE}
	Let $(\bar M,\omega)$ be a compact Hermitian manifold with smooth boundary.  Suppose 
	$\Gamma_\infty=\mathbb{R}^{n-1}$, $\sup_\Gamma f=+\infty$
	and that $(f,\Gamma)$  satisfies \eqref{addistruc}. %\eqref{uniform-elliptic-2}. 
 Assume in addition that $\psi(z,t)$ 
 %is a smooth function on $\bar M\times \mathbb{R}$ 
	satisfies \eqref{nondegenerate-assumption-1} and 
	$\psi_t(z,t)\geq0.$  
	%\eqref{assump2-psi},  	\eqref{assump1-psi}
	Then for any   
	$\varphi\in C^\infty(\partial M)$, 
 the Dirichlet problem  
	\eqref{main-equ2} and \eqref{bdy-condition2} has a unique smooth admissible solution.
		
	%Moreover, the solution is unique if $\psi_t\geq0$.
\end{theorem}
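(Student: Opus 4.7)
The strategy is to reduce Theorem \ref{thm2-existence-bdy-UE} to Theorem \ref{thm2-existence-bdy} via an approximation of $\psi$, with the full uniform ellipticity of $f$ (forced by $\Gamma_\infty=\mathbb{R}^{n-1}$) doing most of the analytic work. The key observation is that the gap between the two theorems is: (i) the relaxation from strict to weak monotonicity $\psi_t\ge 0$, (ii) the absence of the limit condition \eqref{assump1-psi}, (iii) the absence of $\Gamma_\infty$-admissibility of $\partial M$, and (iv) the absence of a pre-existing admissible function. All four relaxations can be bridged using the hypothesis $\Gamma_\infty=\mathbb{R}^{n-1}$ together with $\sup_\Gamma f=+\infty$.

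First I would collect the structural pieces already available from Section \ref{preliminaries1}. By Lemma \ref{lemma5.11}, the assumption $\Gamma_\infty=\mathbb{R}^{n-1}$ together with \eqref{addistruc} yields the full uniform ellipticity \eqref{uniform-elliptic-2} of $f$ on $\Gamma$. Combined with $\sup_\Gamma f=+\infty$, Lemma \ref{lemma1-unbound-type2} then delivers the unbounded condition \eqref{unbounded-1}. The boundary $\partial M$ is tautologically $\Gamma_\infty$-admissible because $\Gamma_\infty=\mathbb{R}^{n-1}$, and Lemma \ref{lemma-construction-function} furnishes a smooth admissible function $\underline{w}$ on $\bar M$, built from a critical-point-free Morse function $v$ via $\underline{w}=e^{tv}$ with $t\gg1$. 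Hence, except for the stronger requirements imposed on $\psi$ in Theorem \ref{thm2-existence-bdy}, every hypothesis of that theorem is automatic.

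The remaining task is to approximate $\psi$ by a sequence $\psi^\varepsilon\in C^\infty(\bar M\times\mathbb{R})$ satisfying \eqref{nondegenerate-assumption-1}, the strict monotonicity \eqref{assump2-psi}, and the limit \eqref{assump1-psi}, with $\psi^\varepsilon\to\psi$ in $C^\infty_{\mathrm{loc}}$ as $\varepsilon\to0$. A concrete choice is $\psi^\varepsilon(z,t)=\zeta_\varepsilon(t)\,\psi(z,t)+\varepsilon\,\theta(t)$, where $\theta$ is smooth, strictly increasing, positive, and tends to $0$ as $t\to-\infty$ (e.g.\ $\theta(t)=e^t$), and $\zeta_\varepsilon$ is a smooth cutoff transitioning from $0$ at $t=-1/\varepsilon$ to $1$ for $t\ge-1/(2\varepsilon)$. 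Then Theorem \ref{thm2-existence-bdy} applied to $\psi^\varepsilon$ yields a smooth admissible solution $u^\varepsilon$ of \eqref{main-equ2}--\eqref{bdy-condition2} with $u^\varepsilon=\varphi$ on $\partial M$. The main obstacle is to establish $\varepsilon$-uniform $C^k$ estimates. Here the full uniform ellipticity \eqref{uniform-elliptic-2} is essential: the $C^0$ estimate (difficult without strict monotonicity) is obtained by comparison arguments using $\underline{w}+\text{const}$ as sub-/supersolutions, after rescaling via $f(t\lambda)$ through $\sup_\Gamma f=+\infty$; the gradient bound follows from the blow-up method of \cite{Chen,Gabor,Dinew2017Kolo}, coupled with the quantitative boundary estimate \eqref{quantitative-BE}; the second-order estimate is then standard thanks to \eqref{uniform-elliptic-2}, and Evans--Krylov plus Schauder theory upgrades regularity to $C^\infty$. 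Passing to a subsequential limit gives the desired admissible solution $u$, and uniqueness follows from the maximum principle for uniformly elliptic equations using $\psi_t\ge 0$.
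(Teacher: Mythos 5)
Your structural setup in the opening paragraph is right: Lemma \ref{lemma5.11} gives the full uniform ellipticity \eqref{uniform-elliptic-2}, Lemma \ref{lemma1-unbound-type2} upgrades $\sup_\Gamma f=+\infty$ to the unbounded condition \eqref{unbounded-1}, Lemma \ref{lemma-construction-function} furnishes an admissible function from a critical-point-free Morse function, and $\Gamma_\infty=\mathbb{R}^{n-1}$ makes any smooth boundary $\Gamma_\infty$-admissible. However, the approximation wrapper that you build on top of this is both technically flawed and conceptually a detour.

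Technically, the specific perturbation $\psi^\varepsilon(z,t)=\zeta_\varepsilon(t)\psi(z,t)+\varepsilon e^{t}$ does not in general satisfy \eqref{assump1-psi}: once $\zeta_\varepsilon$ vanishes you have $\psi^\varepsilon=\varepsilon e^{t}\to 0$, whereas \eqref{assump1-psi} asks for the limit to be $\inf_\Gamma f$. Since Theorem \ref{thm2-existence-bdy-UE} does \emph{not} assume \eqref{homogeneous-1-buchong2}, $\inf_\Gamma f$ need not be $0$; it can be any value $\le\sup_{\partial\Gamma}f$, including $-\infty$. In the same regime $\psi^\varepsilon=\varepsilon e^{t}$ may also violate \eqref{nondegenerate-assumption-1} when $\sup_{\partial\Gamma}f\ge 0$. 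This is repairable by re-centering the auxiliary function $\theta$ at $\inf_\Gamma f$, but you do not carry out that case analysis.

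More importantly, the approximation buys nothing. You yourself identify the crux as the $\varepsilon$-uniform $C^0$ estimate, and the argument you sketch for it (comparison with $\underline{w}+\mathrm{const}$ after inflating via $f(t\lambda)\to+\infty$) is exactly the paper's direct proof for the \emph{original} equation with $\psi_t\ge 0$: the paper's Proposition \ref{thm2-estimate-bdy-UE} first gets the upper bound $u\le\hat u$ from \eqref{upper-comparison}, uses $\psi_u\ge 0$ to obtain $\psi(z,u)\le C_1$, then chooses $t\gg 1$ so that $f(\lambda(\chi+\sqrt{-1}\partial\overline\partial\, e^{tv}))>C_1$ (this is where \eqref{formular1-compu}, $\Gamma_\infty=\mathbb{R}^{n-1}$, and $\sup_\Gamma f=+\infty$ are used), and closes with the maximum principle — no strict monotonicity of $\psi$ in $t$ and no condition \eqref{assump1-psi} are needed. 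Once you have proved this uniform estimate you may as well apply it directly to the unperturbed problem, which is what the paper does: it confirms \eqref{admifunction1-local} and \eqref{c0-boundary-c1-2} for admissible solutions of the original Dirichlet problem and then invokes the a priori estimate and existence machinery of Section \ref{Dirichlet-problem} without any intermediate perturbation of $\psi$.
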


%%%%%%%%%%%%%%%%%%%%%%%%%%%%
%%%%%%%%%%%%%%%%%%%%%%%%%%%%%

 %Combing with the degree theory (see \cite{CNS1}) we obtain
%\begin{theorem}   	\label{thm3-existence-bdy-UE} 	Let $(\bar M,\omega)$ be a compact Hermitian manifold with smooth boundary.  Suppose   	$\Gamma_\infty=\mathbb{R}^{n-1}$,   	$\sup_\Gamma f=+\infty$ and that $(f,\Gamma)$ satisfies  \eqref{homogeneous-1-buchong2}. Then for any smooth positive function $0<\psi\in C^{\infty}(\bar M\times\mathbb{R})$  and $\varphi\in C^\infty(\partial M)$, equation  	\eqref{ma 	with prescribed boundary value $u=\varphi$ on $\partial M$.   \end{theorem}

%%%%%%%%%%%%%%%%%%%%%%%%%%%%%%%%%%%%%%%%%%%%%%%%%%%%%%%%%%%

As a special case, we obtain 
\begin{theorem}  
	\label{thm1-existence-bdy-UE}
%	Let $(\bar M,\omega)$ be a compact Hermitian manifold with smooth boundary.  
Suppose   	$\Gamma_\infty=\mathbb{R}^{n-1}$,  %\eqref{uniform-elliptic-2} 
	$\sup_\Gamma f=+\infty$ and that $(f,\Gamma)$ satisfies  \eqref{homogeneous-1-buchong2}. Then for any smooth positive function $\psi$ in $\bar M$ and $\varphi\in C^\infty(\partial M)$, the equation \eqref{equ1-hehe} possesses a unique smooth admissible solution  
	with %prescribed boundary value 
	$u=\varphi$ on $\partial M$.
\end{theorem}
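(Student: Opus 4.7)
The plan is to derive Theorem \ref{thm1-existence-bdy-UE} as an immediate consequence of Theorem \ref{thm2-existence-bdy-UE} applied with the non-autonomous right hand side $\Psi(z,t) := \psi(z) e^{\Lambda_0 t}$. All the heavy analytic machinery---blow-up gradient estimate, quantitative boundary second-order estimate, construction of admissible functions via Morse theory, and the continuity method---is packaged inside Theorem \ref{thm2-existence-bdy-UE}, so the proof reduces to a verification of hypotheses.

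First I would verify that $(f,\Gamma)$ obeys \eqref{addistruc}: this is exactly Lemma \ref{lemma23}, whose hypotheses $\sup_\Gamma f = +\infty$ and \eqref{homogeneous-1-buchong2} are both given. The remaining structural assumptions of Theorem \ref{thm2-existence-bdy-UE}, namely $\Gamma_\infty = \mathbb{R}^{n-1}$ and $\sup_\Gamma f = +\infty$, are identical to those assumed here, so nothing needs to be checked on the cone side. For $\Psi$, smoothness on $\bar{M} \times \mathbb{R}$ is immediate, and $\Psi_t = \Lambda_0 \Psi > 0$ gives the required monotonicity $\Psi_t \geq 0$. The subtlest point is \eqref{nondegenerate-assumption-1}: from \eqref{homogeneous-1-buchong2} one has $f \equiv 0$ on $\partial\Gamma$, so $\sup_{\partial\Gamma} f = 0$; meanwhile, for each fixed $t \in \mathbb{R}$, compactness of $\bar{M}$ together with the positivity of $\psi$ yields $\inf_{z \in M} \Psi(z,t) = (\min_{\bar{M}}\psi)\, e^{\Lambda_0 t} > 0$, which is the required strict inequality. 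Note that \eqref{nondegenerate-assumption-1} permits a lower bound depending on $t$, so the degeneration $\inf_z \Psi(z,t) \to 0$ as $t \to -\infty$ is harmless.

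Having verified every hypothesis, Theorem \ref{thm2-existence-bdy-UE} produces a unique smooth admissible $u$ solving $f(\lambda(\omega^{-1}(\chi + \sqrt{-1}\partial\overline{\partial} u))) = \Psi(z,u) = \psi(z) e^{\Lambda_0 u}$ in $M$ with $u|_{\partial M} = \varphi$, which is precisely the conclusion of Theorem \ref{thm1-existence-bdy-UE}. I do not anticipate a serious obstacle in this reduction; the only potential source of confusion is the pointwise-in-$t$ reading of \eqref{nondegenerate-assumption-1}, which is resolved by inspecting the quantifier "for all $t \in \mathbb{R}$". Uniqueness is delivered directly by Theorem \ref{thm2-existence-bdy-UE}, and could alternatively be recovered from the strict monotonicity $\Psi_t > 0$ via a standard maximum-principle comparison between any two admissible solutions sharing the same boundary data.
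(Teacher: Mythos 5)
Your proposal is correct and matches the paper's own reasoning: the paper states Theorem \ref{thm1-existence-bdy-UE} immediately after Theorem \ref{thm2-existence-bdy-UE} with the phrase ``as a special case,'' and your write-up simply makes explicit the verification that $\Psi(z,t)=\psi(z)e^{\Lambda_0 t}$ satisfies the hypotheses of that theorem (with \eqref{addistruc} supplied by Lemma \ref{lemma23}, and \eqref{nondegenerate-assumption-1} following from $\sup_{\partial\Gamma}f=0$ and the positivity of $\psi$ on the compact $\bar M$). No gap.
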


% \begin{remark}	Theorem \ref{thm1-existence-bdy-UE} still holds if  $\Lambda_0=0$ in \eqref{equ1-hehe}.\end{remark}

%\begin{remark}	Theorem \ref{thm2-existence-bdy-UE} still holds, replacing \eqref{assump2-psi}-\eqref{assump1-psi} by $\psi(z,t)=\psi(z)$.   \end{remark}

%%%%%%%%%%%%%%%%%%%%%%%%%%%%%%%

%\subsection{Solve the Dirichlet problem}
\subsection{Set-up}

%  To solve  Dirichlet problem, 
According to  the
  Evans-Krylov theorem  \cite{Evans82,Krylov83} 
  and Schauder theory, 
   it %only requires  
  suffices to 
  establish
   estimates for complex Hessian up to boundary
  \begin{equation}
  	\label{estimate-c0-c2}
  	\sup_{\bar M}|\partial \overline{\partial} u|\leq C. %\nonumber
  \end{equation}
  %The Evans-Krylov theorem  %\cite{Evans82,Krylov83} 
  %and Schauder theory  give higher regularities.
  %\begin{equation}
  %	|u|_{C^{k,\alpha}(\bar M)}\leq C, \,\, \forall k\geq 2.
  %\end{equation}
  %The key ingredient is to derive the quantitative boundary estimate. 
  %To achieve this we assume there is a 
  
  Let $\hat{u}$ be the solution to
  \begin{equation}
  	\label{supersolution1}
  	\begin{aligned}
  		\Delta \hat{u}+\mathrm{tr}(\omega^{-1}\chi)=0 \mbox{ in } M, \,\, 
  		\hat{u}=\varphi \mbox{ on } \partial M. %\nonumber
  	\end{aligned}
  \end{equation}
  The existence and regularity 
  of $\hat{u}$ can be found in standard textbooks; see e.g. \cite{GT1983}. The maximum principle yields
  \begin{equation}
  	\label{upper-comparison}
  	\begin{aligned}
  		u\leq \hat{u} \mbox{ in } M, \,\, 
  		u=\hat{u}=\varphi \mbox{ on } \partial M. 
  	\end{aligned}
  \end{equation}

 \noindent \textit{\bf Key assumption}:
  %\subsubsection*{\bf Key assumption}
  %To achieve this,  
  Near the boundary 
  we assume that
  there exists a local admissible function $\underline{u}$ satisfying
   %taking the same boundary value data
  \begin{equation}
  	\label{admifunction1-local}
  	\begin{aligned}
  		u\geq \underline{u}  \mbox{ in } M_\delta, \,\,
  		\underline{u}=\varphi \mbox{ on } \partial M 
  	\end{aligned}
  \end{equation} 
  for  some $\delta>0$,
  %we can prove the quantitative boundary estimate using some techniques developed by the author in \cite{yuan-regular-DP}. 
  where %$M_\delta$ is as defined in \eqref{M-delta}.
    \begin{equation}
  	\label{M-delta}
  	\begin{aligned}
  		M_{\delta}:=\{z\in M: \sigma(z)<\delta\}.
  	\end{aligned}
  \end{equation}

  %Let $\underline{u}$ be a $C^2$-smooth admissible function. For any admissible solution $u$ satisfying
  %\begin{equation}	\label{admifunction1}	u\geq \underline{u}  \mbox{ in } M, \,\,	\underline{u}=\varphi \mbox{ on } \partial M,\end{equation} 
  %we obtain zero order and boundary gradient estimate
 % Consequently we have
  \begin{lemma}
  	%[$C^0$ and boundary $C^1$ estimates]
  	\label{lemma-c0-boundary-c1}
  	Any  admissible solution $u$ satisfying   \eqref{admifunction1-local} shall obey
  	\begin{equation}
  		\label{c0-boundary-c1}
  		\sup_M u\leq C, \,\, \sup_{\partial M}|\partial u|\leq C.
  	\end{equation}
  
  	Moreover, 
  if replacing  local condition \eqref{admifunction1-local} by a global version
  	\begin{equation}	
  		\label{admifunction1}
  		u\geq \underline{u}  \mbox{ in } M, \,\,	\underline{u}=\varphi \mbox{ on } \partial M,
  	\end{equation} 
   then we have zero order and boundary gradient estimates
  	\begin{equation}
  		\label{c0-boundary-c1-2}
  		\sup_M |u|+\sup_{\partial M}|\partial u|\leq C.
  	\end{equation}
  \end{lemma}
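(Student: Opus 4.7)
The plan is to bootstrap everything off the two given comparison functions: $\hat u$ (globally defined via \eqref{supersolution1}) serves as an upper barrier, and $\underline u$ (defined in $M_\delta$ via \eqref{admifunction1-local}, or globally via \eqref{admifunction1}) serves as a lower barrier. No fine structural properties of $f$ beyond admissibility are needed, because admissibility already forces $u$ to be a subsolution of a linear elliptic equation.

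\smallskip

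\emph{Upper bound for $u$.} The comparison \eqref{upper-comparison} is already stated in the excerpt, and the argument behind it is the observation that $\Gamma \subseteq \Gamma_1$ (a standard consequence of the symmetry and convexity of $\Gamma$ together with $\Gamma_n \subseteq \Gamma$ and $\partial\Gamma \neq \emptyset$), so any admissible $u$ satisfies $\Delta u + \mathrm{tr}(\omega^{-1}\chi) > 0$; then $u - \hat u$ is a subsolution of the linear elliptic operator $\Delta$ vanishing on $\partial M$, hence non-positive by the maximum principle. Since $\hat u$ solves a linear Poisson-type equation on a compact manifold with smooth data, standard elliptic theory yields $\|\hat u\|_{C^1(\bar M)} \leq C$, so $\sup_M u \leq \sup_M \hat u \leq C$.

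\smallskip

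\emph{Boundary gradient estimate.} I split $\partial u|_{\partial M}$ into its tangential and normal components. The tangential components are immediately controlled by $\|\varphi\|_{C^1(\partial M)}$ since $u \equiv \varphi$ on $\partial M$. For the normal component, the key observation is that near any $p \in \partial M$ we have the two-sided trapping
\[
\underline u \;\leq\; u \;\leq\; \hat u \quad \text{in } M_\delta,
\qquad
\underline u = u = \hat u = \varphi \quad \text{on } \partial M.
\]
Letting $\nu$ denote the inward unit normal at $p$, the non-negative functions $u - \underline u$ and $\hat u - u$ both vanish at $p$, forcing
\[
\partial_\nu \underline u(p) \;\leq\; \partial_\nu u(p) \;\leq\; \partial_\nu \hat u(p).
\]
Both outer quantities are bounded by $\|\underline u\|_{C^1(M_\delta)}$ and $\|\hat u\|_{C^1(\bar M)}$ respectively, giving a uniform control on the normal derivative.

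\smallskip

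\emph{Lower bound under the global condition \eqref{admifunction1}.} This step is immediate: $u \geq \underline u$ throughout $M$, so $\inf_M u \geq \inf_M \underline u \geq -C$, and combined with the upper bound above yields $\sup_M |u| \leq C$. The boundary gradient bound from the previous step carries over unchanged, since that argument only used \eqref{admifunction1-local}, which is weaker than \eqref{admifunction1}.

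\smallskip

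There is no real obstacle in this lemma; the entire proof is a direct application of the maximum principle together with elementary normal-derivative comparison between $u$ and its two barriers. The only point requiring a moment's care is the derivation of \eqref{upper-comparison} itself (i.e.\ verifying $\Gamma \subseteq \Gamma_1$ so that $u$ is a subsolution of the linear operator $\Delta$), and this is already implicit in the setup.
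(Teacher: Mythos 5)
Your proposal is correct and follows exactly the barrier argument that the paper has implicitly set up with $\hat u$, \eqref{upper-comparison}, and the key assumption \eqref{admifunction1-local}; the paper does not write out a separate proof, treating the lemma as an immediate consequence of the two-sided trapping $\underline u \leq u \leq \hat u$ near $\partial M$ together with equality on $\partial M$. Your spelling-out of the normal-derivative sandwich and the observation that $\Gamma\subseteq\Gamma_1$ (so that $u$ is $\Delta$-subharmonic relative to $\hat u$) are precisely the intended details.
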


  %\begin{remark}	When $\partial M$ is $\Gamma_\infty$-admissible, such a local admissible function will be constructed in Subsection \ref{Dirichlet-problem-2}. 
  	%Subsubsection \ref{construction1-local-subsolution}. 
 % \end{remark}

  %Next, we establish gradient estimate and second estimate up to the boundary. 
  %When the equation further obeys the unbounded condition \eqref{unbounded-1}, every admissible function is a  $\mathcal{C}$-subsolution introduced in \cite{Gabor}. 
  %
  %In order to solve fully nonlinear elliptic equations on   Hermitian manifolds with boundary, 
  The primary problem is to derive gradient estimate as described in introduction. 
  %Our method is based on blow-up argument,
  %To achieve this, following the strategy used in \cite{yuan-regular-DP,yuan-n-1MA}, 
  %via establishing 
  Our strategy is to establish quantitative boundary estimate of the form \eqref{quantitative-BE}, i.e.,
  \begin{equation}
  %	\label{quantitative-BE}
  	\begin{aligned}
  		\sup_{\partial M} |\partial\overline{\partial} u|\leq C(1+\sup_M|\partial u|^2), \nonumber
  	\end{aligned}
  \end{equation}
%under the assumption of existence of
given a local admissible function near boundary.
% following 
 %using the techniques developed by the author in \cite{yuan-regular-DP} 
 %(see also the follow-up %(but published earlier) 
  %paper \cite{yuan-n-1MA,yuan2020PAMQ}).  %(\eqref{admifunction1-local})
  We leave the proof to Section \ref{Sec-Estimates1}. 
  
  \begin{proposition}
  	\label{bdy-quantive-estimate-1} 
  	Assume 	%\eqref{elliptic}, 
  	 \eqref{unbounded-1}, \eqref{addistruc} and 	 \eqref{admifunction1-local} hold.
  	Then for any admissible solution $u\in C^3(M)\cap C^2(\bar M)$ to the Dirichlet problem  \eqref{main-equ2}-\eqref{bdy-condition2}, we have the quantitative boundary estimate \eqref{quantitative-BE}.
  	%	\begin{equation}	\begin{aligned}
  	%		\sup_{\partial M}\Delta u\leq C(1+\sup_M|\partial u|^2). \nonumber
  	%	\end{aligned}	\end{equation}	
  \end{proposition}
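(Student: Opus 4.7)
The plan is to establish \eqref{quantitative-BE} at an arbitrary fixed boundary point $p\in\partial M$. Choose holomorphic coordinates $(z_1,\ldots,z_n)$ centered at $p$ such that $\partial M$ is locally a graph over $\{x_n=0\}$ and the complex tangent space at $p$ is spanned by $\partial_1,\ldots,\partial_{n-1}$. The desired bound on $|\partial\overline{\partial}u(p)|$ decomposes into three cases: the pure tangential block $u_{\alpha\bar\beta}$ for $\alpha,\beta\le n-1$, the mixed entries $u_{\alpha\bar n}$ for $\alpha\le n-1$, and the double-normal entry $u_{n\bar n}$. Throughout, the zero-order and boundary gradient estimates from Lemma \ref{lemma-c0-boundary-c1} together with \eqref{admifunction1-local} are free.

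For the tangential-tangential block, differentiating the boundary identity $u-\varphi\equiv 0$ twice along $T_{\partial M}\cap JT_{\partial M}$ at $p$ expresses $u_{\alpha\bar\beta}(p)$ in terms of $\varphi_{\alpha\bar\beta}$, the Levi form of $\partial M$ times the normal derivative $u_\nu(p)$, and lower-order geometric contributions, yielding $|u_{\alpha\bar\beta}(p)|\le C(1+|\partial u(p)|)$. For the tangential-normal entries, I would deploy Guan's local barrier technique as refined in \cite{Guan12a}, building an auxiliary function on a half-ball $B_\delta(p)\cap M$ of the form $A(\underline{u}-u)+B\sigma-N\sum_{\alpha<n}|z_\alpha|^2$ and comparing it against $\pm T(u-\varphi)$ for the tangential-normal first-order operator $T$. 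Concavity of $f$, the admissibility of the local subsolution $\underline{u}$, and Lemma \ref{lemma3.4} (which is guaranteed by \eqref{addistruc}) make the linearized operator behave well on this barrier, and the maximum principle produces $|u_{\alpha\bar n}(p)|\le C(1+\sup_M|\partial u|^2)$.

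The double-normal entry $u_{n\bar n}(p)$ is the main obstacle and is where \eqref{unbounded-1} together with Lemma \ref{yuan's-quantitative-lemma} must be used. With Steps 1 and 2 in hand, the Hermitian matrix $\omega^{-1}(\chi+\sqrt{-1}\partial\overline{\partial}u)(p)$ fits the template of Lemma \ref{yuan's-quantitative-lemma}: the diagonal entries $d_\alpha$ ($\alpha<n$) and off-diagonals $a_\alpha$ are controlled by $C(1+\sup_M|\partial u|^2)$, while $\mathbf{a}=\chi_{n\bar n}+u_{n\bar n}(p)$ is the unknown. Argue by contradiction and suppose $\mathbf{a}$ is much larger than any multiple of $1+\sup_M|\partial u|^2$. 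Selecting $\epsilon$ in Lemma \ref{yuan's-quantitative-lemma} proportional to $1+\sup_M|\partial u|^2$, the eigenvalues at $p$ satisfy $\lambda_\alpha=d_\alpha+O(\epsilon)$ for $\alpha<n$ and $\lambda_n\ge\mathbf{a}$. By the unboundedness condition \eqref{unbounded-1}, pushing $\mathbf{a}\to\infty$ forces $f(\lambda(p))\to+\infty$, contradicting $f(\lambda(p))=\psi(p,u(p))$, which is bounded thanks to the boundary $C^0$ estimate. This yields $u_{n\bar n}(p)\le C(1+\sup_M|\partial u|^2)$.

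The delicate point in the last step is the calibration of $\epsilon$: one must keep the quantitative scale linear in $1+\sup_M|\partial u|^2$ on the right-hand side, without letting $\sup_M|\partial\overline{\partial}u|$ leak in. Any such leak would create a circular dependence that would spoil the subsequent use of \eqref{quantitative-BE} inside the blow-up argument for the global gradient estimate. The architecture of Lemma \ref{yuan's-quantitative-lemma}, which decouples the small diagonal perturbations from the large $\mathbf{a}$, is precisely what prevents this circularity and makes the argument close.
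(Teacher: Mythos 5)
Your overall architecture — decompose into pure-tangential, tangential-normal, and double-normal blocks, bound the first two, then use Lemma~\ref{yuan's-quantitative-lemma} together with \eqref{unbounded-1} for the last — is the right one, and it matches the paper (Propositions~\ref{yuan-k2v} and \ref{mix-general}). The tangential-tangential step (differentiating the boundary identity) and the tangential-normal step (Guan's local barrier technique) are substantially the same as the paper's, though the paper's barrier additionally carries the $\frac{1}{\sqrt{b_1}}\sum_{\tau<n}|(u-\varphi)_\tau|^2$ term, which produces the crucial quadratic terms $\sum_{i\ne r}f_i\lambda_i^2$ needed to absorb $\sum f_i|\lambda_i|$; your simpler barrier likely cannot close the argument without this.

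The genuine gap is in the double-normal step. Your contradiction argument — apply Lemma~\ref{yuan's-quantitative-lemma} with a calibrated $\epsilon$ and conclude via \eqref{unbounded-1} that $f(\lambda(p))\to+\infty$ — yields only a \emph{pointwise, non-uniform} bound. The unboundedness condition \eqref{unbounded-1} says $\lim_{t\to+\infty}f(\lambda_1,\dots,\lambda_{n-1},\lambda_n+t)=+\infty$ for each fixed $\lambda\in\Gamma$, but the threshold at which $f$ first exceeds $\psi$ depends on how far the tangential eigenvalues $(\lambda_1,\dots,\lambda_{n-1})$ sit inside $\Gamma_\infty$. Nothing in your argument rules out $(\lambda_1,\dots,\lambda_{n-1})$ drifting arbitrarily close to $\partial\Gamma_\infty$ as $p$ ranges over $\partial M$, and as it does, the threshold for $\mathbf{a}$ blows up and the estimate degenerates. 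Put differently, Lemma~\ref{yuan's-quantitative-lemma} tells you the eigenvalues are near $(d_1,\dots,d_{n-1},\mathbf{a})$, but it says nothing about $(d_1,\dots,d_{n-1})$ being in a \emph{compact} subset of $\Gamma_\infty$. This is exactly the difficulty that the paper addresses with the $t_0$-deformation: writing $\mathfrak{g}_{\alpha\bar\beta}=(1-t_0)\underline{\mathfrak{g}}_{\alpha\bar\beta}+(A_{t_0})_{\alpha\bar\beta}$ and proving $(1-t_0)^{-1}\le C$ (Lemma~\ref{keylemma1-yuan3}, following \cite{CNS3} and \cite{LiSY2004}) guarantees that a uniform fraction $(1-t_0)\underline{\mathfrak{g}}_{\alpha\bar\beta}$ of the local subsolution's strict admissibility survives, which is what makes the application of Lemma~\ref{yuan's-quantitative-lemma} plus concavity plus Lemma~\ref{lemma3.4} close uniformly. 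Without $(1-t_0)^{-1}\le C$ or an equivalent quantitative non-degeneracy, the contradiction as written does not establish \eqref{quantitative-BE}.

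A secondary issue: your tangential-normal bound $|u_{\alpha\bar n}(p)|\le C(1+\sup_M|\partial u|^2)$ is too weak for the exponent bookkeeping. In your calibration $\epsilon\sim 1+\sup_M|\partial u|^2$, Lemma~\ref{yuan's-quantitative-lemma} then only places $\lambda_\alpha$ in an interval of length $\sim 1+\sup_M|\partial u|^2$ around $d_\alpha$, which can push $(\lambda_1,\dots,\lambda_{n-1})$ far outside $\Gamma_\infty$, destroying the contradiction. To apply the quantitative lemma with a \emph{fixed small} $\epsilon$ (as the paper does) and still land at the quadratic scale for $\mathbf{a}$, you need the \emph{linear} mixed estimate $|\mathfrak{g}_{\alpha\bar n}(p)|\le C(1+\sup_M|\partial u|)$ that Proposition~\ref{mix-general} delivers, so that $\frac{1}{\epsilon}\sum|a_\alpha|^2\sim 1+\sup_M|\partial u|^2$.
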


  %We shall mention that such a boundary estimate was established by \cite{Chen,Phong-Sturm2010,Boucksom2012,Collins2019Picard} for
  %See also \cite{} 
  %complex Monge-Amp\`ere and complex $k$-Hessian equations, in which the proof relies crucially upon the specific structure of operators which cannot apply to general equations.
%   We leave the proof to Section \ref{Sec-Estimates1}. 
   
   On the other hand, 
  following closely the proof of %  extending  % Hou-Ma-Wu's second order estimate 
 Hou-Ma-Wu \cite[Theorem 1.1]{HouMaWu2010}, or the generalization by Sz\'ekelyhidi \cite[Section 4]{Gabor},
   %\cite{HouMaWu2010} 
%  for complex $k$-Hessian equation, %almost words by words,
   one can derive 
  \begin{proposition}
  	\label{global-quantive-estimate-1}
%  	Let $(M,\omega)$ be a compact Hermitian manifold with smooth  	boundary.
  	Suppose,  in addition to
  	%\eqref{elliptic}, 
  	\eqref{addistruc}
  	and \eqref{unbounded-1},
  	that there is a  $C^2$-smooth admissible function $\underline{w}$.
  	% satisfying \eqref{admissible-function2}.
  	Then for any admissible solution  $u\in C^4(M)\cap C^2(\bar M)$ to equation \eqref{main-equ2}, there is a uniform constant $C$ such that
  	\begin{equation}
  		\begin{aligned}
  			\sup_{M} |\partial\overline{\partial} u| \leq 
  			C(1+\sup_M|\partial u|^2+\sup_{\partial M} |\partial\overline{\partial} u|). \nonumber
  		\end{aligned}
  	\end{equation}	
  \end{proposition}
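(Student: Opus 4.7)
The plan is to adapt the Hou--Ma--Wu / Sz\'ekelyhidi maximum principle argument to the Hermitian setting. Set $\mathfrak{g}_{i\bar j} = \chi_{i\bar j} + u_{i\bar j}$, write $\lambda_1 \geq \cdots \geq \lambda_n$ for the eigenvalues of $\mathfrak{g}$ with respect to $\omega$, and denote by $\mathcal{L} = \sum f^{i\bar j} \nabla_i \nabla_{\bar j}$ the linearization of the operator. I would test with a quantity of the form
\[
Q = \log \lambda_1(\mathfrak{g}) \,+\, \phi(|\partial u|^2) \,+\, A(\underline{w} - u),
\]
where $\phi(s) = -\tfrac{1}{2}\log(2K - s)$ with $K = 1 + \sup_M |\partial u|^2$, and $A \gg 1$ is a large constant. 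If the maximum of $Q$ is achieved on $\partial M$ the bound \eqref{quantitative-BE} handles it, so the whole point is to analyze an interior maximum $z_0$. To deal with multiplicity in $\lambda_1$, I would perturb $\mathfrak{g}$ by a diagonal Hermitian matrix $B$ that vanishes at $z_0$ and is designed so that the perturbed top eigenvalue $\tilde\lambda_1$ is smooth in a neighborhood and agrees with $\lambda_1$ at $z_0$. Working in a unitary frame diagonalizing $\mathfrak{g}$ at $z_0$, standard perturbation formulas give first and second derivatives of $\tilde\lambda_1$ in terms of $u_{1\bar 1 k}$, $u_{1\bar 1 k\bar l}$ and $(\lambda_1 - \lambda_i)^{-1}$.

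At the maximum point, $\mathcal{L}(\log\tilde\lambda_1) \leq 0$ decomposes into three pieces: (a) the genuine fourth-order contribution $\tfrac{1}{\lambda_1}\sum f^{i\bar i} u_{1\bar 1 i\bar i}$, controlled by differentiating the equation twice and applying concavity of $f$; (b) a second-order commutator contribution of size $O(\lambda_1 \sum f_i)$ coming from Chern torsion and curvature (this is the only place the non-K\"ahler nature enters, and each commutator introduces at most one derivative of $u$, hence a factor $|\partial u|$ and a factor $\lambda_1$); (c) a third-order piece
\[
\sum_{i \geq 2} \frac{2 f^{i\bar i} |u_{i 1\bar 1}|^2}{\lambda_1 - \lambda_i} - \frac{1}{\lambda_1^2}\sum f^{i\bar i}|u_{1\bar 1 i}|^2,
\]
whose first term is good (by concavity of $f$, since the eigenvalues are ordered) but whose second term is bad. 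The gradient test $\mathcal{L}\phi(|\partial u|^2)$ produces, after using the equation, its own good third-order contribution of size $\phi' \sum f^{i\bar i}(|u_{ki}|^2 + |u_{k\bar i}|^2)$, which has to be used to absorb the bad piece from (c). The admissible function enters through $\mathcal{L}(\underline{w} - u)$: by concavity and Lemma \ref{lemma3.4} one obtains
\[
\sum f^{i\bar i}(\underline{w}_{i\bar i} + \chi_{i\bar i}) \geq f(\lambda(\underline{w})) + \sum f^{i\bar i}\lambda_i \geq \tau_0 \sum f_i - C_0
\]
for some uniform $\tau_0 > 0$, $C_0 \geq 0$ depending only on $\underline{w}$, $\chi$, and $\sup\psi$; so $\mathcal{L}(\underline{w}-u) \geq \tau_0 \sum f_i - C_0$ modulo Chern-torsion lower order terms.

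The main obstacle is the third-order term in (c), and the resolution is the Sz\'ekelyhidi dichotomy at $z_0$. Fix a small $\delta \in (0,1)$ to be chosen. In the first case, $-\lambda_n \leq \delta \lambda_1$; then Lemma \ref{yuan-k+1} together with the admissible function gives $f^{1\bar 1} \geq \vartheta \sum f_i$, so the single index $i = 1$ alone absorbs the bad term, and combining with $A \mathcal{L}(\underline{w}-u)$ closes the estimate. In the second case, $-\lambda_n > \delta \lambda_1$; then one splits indices into $I = \{i : f^{i\bar i} \leq \delta^{-1} f^{1\bar 1}\}$ and its complement. For $i \notin I$ the bad third-order terms in $(c)$ are absorbed by the good piece from $\phi'(|\partial u|^2)\mathcal{L}(|\partial u|^2)$ via Cauchy--Schwarz, using $\phi' \asymp 1/K$. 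For $i \in I$ one uses $\sum_{i \in I} f^{i\bar i} |u_{1\bar 1 i}|^2 /\lambda_1^2$ together with the first-order criticality $\nabla Q(z_0) = 0$ (which expresses $u_{1\bar 1 i}/\lambda_1$ in terms of $\phi' \cdot $ gradient-times-Hessian and $A \cdot (\underline{w}-u)_i$); inserting this into the bad term and exploiting that $A$ is large produces only terms of order $K \cdot \sum f_i + A^2 \sum f_i$, which are dominated by $A \tau_0 \sum f_i$. Balancing, one concludes $\lambda_1(z_0) \leq C(1 + K + \sup_{\partial M}|\partial\bar\partial u|)$, giving the claim. The technical core is the case $i \in I$ with its careful bookkeeping of Chern-torsion commutators, which is the main obstacle but is now well understood following \cite{Gabor}.
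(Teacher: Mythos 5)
The paper does not actually prove this proposition: it states that one ``can derive'' it by ``following closely the proof of Hou-Ma-Wu [Theorem 1.1] or the generalization by Sz\'ekelyhidi [Section 4]''. Your sketch is an attempt to reconstruct that argument, and the high-level skeleton (test quantity $Q=\log\lambda_1+\phi(|\partial u|^2)+A(\underline{w}-u)$, perturbation of $\lambda_1$, decomposition into fourth-order, torsion/commutator, and third-order pieces, dichotomy plus index splitting) is correct. However, one key step as written is wrong.

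You claim, in the case $-\lambda_n\le\delta\lambda_1$, that ``Lemma \ref{yuan-k+1} together with the admissible function gives $f^{1\bar1}\ge\vartheta\sum f_i$''. With your ordering $\lambda_1\ge\cdots\ge\lambda_n$, the concavity/symmetry relation \eqref{concavity-symmetry2} gives $f_1\le f_2\le\cdots\le f_n$, so $f^{1\bar1}$ is the \emph{smallest} derivative. Lemma \ref{yuan-k+1} is stated for the increasing ordering $\lambda_1\le\cdots\le\lambda_n$ and bounds from below the $f_i$ with $i\le 1+\kappa_\Gamma$, i.e.\ exactly the \emph{largest} $\kappa_\Gamma+1$ among the $f_i$ --- those attached to the smallest eigenvalues. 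Translated to your decreasing convention, it gives $f_i\ge\vartheta\sum f_j$ only for $i\ge n-\kappa_\Gamma$, and says nothing about $f^{1\bar1}$ unless $\kappa_\Gamma=n-1$, i.e.\ unless $\Gamma$ is a type~2 cone, which is not assumed in the proposition. In fact the claim is false in general: for $f=\sigma_n^{1/n}$, $\Gamma=\Gamma_n$, and $\lambda=(R,1,\dots,1)$ with $R\to\infty$, one has $f^{1\bar1}/\sum f_i\to 0$. So the absorption of the bad third-order term via ``the single index $i=1$'' is not justified, and this is a genuine gap.

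Relatedly, your chain
\[
\sum f^{i\bar i}(\underline{w}_{i\bar i}+\chi_{i\bar i})\ge f(\lambda(\underline{w}))+\sum f^{i\bar i}\lambda_i\ge\tau_0\sum f_i-C_0
\]
is asserted unconditionally, but the lower bound $\tau_0\sum f_i$ is exactly the content of the $\mathcal{C}$-subsolution dichotomy Lemma \ref{guan2014}, and it only holds in one of the two alternatives of that lemma; in the other alternative one instead uses the uniform ellipticity $F^{i\bar j}\ge\varepsilon(F^{p\bar q}g_{p\bar q})g^{i\bar j}$. Your eigenvalue dichotomy ($-\lambda_n\lessgtr\delta\lambda_1$) is the original Hou--Ma--Wu one, which on its own is not enough for general $(f,\Gamma)$; Sz\'ekelyhidi's argument closes the estimate precisely by invoking the $\mathcal{C}$-subsolution dichotomy at the interior maximum (and in the case $-\lambda_n>\delta\lambda_1$, the dominant good term is $\phi'\sum f_i\lambda_i^2\ge\phi'f_n\lambda_n^2\gtrsim\frac{\delta^2\lambda_1^2}{K}\sum f_i$, coming from $f_n\ge\frac1n\sum f_i$, not from index splitting). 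Replacing the $f^{1\bar1}$-lower-bound step by the Guan--Sz\'ekelyhidi dichotomy, and reorganizing the two cases accordingly, would repair the argument.
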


  The above two propositions together give
  \begin{equation}
  	\sup_M |\partial\overline{\partial} u| \leq C(1+\sup_M|\partial u|^2). \nonumber
  \end{equation}
Using the Liouville type theorem of  Sz\'ekelyhidi \cite{Gabor}, %extending that of   Dinew-Ko{\l}odziej \cite{Dinew2017Kolo},
 we can derive gradient estimate and therefore \eqref{estimate-c0-c2}. 
 % With these estimates at hand, we have \eqref{estimate-c0-c2}.

  %\subsection{The Dirichlet problem I: with assumption on boundary}
  \subsection{The Dirichlet problem   on manifolds with $\Gamma_\infty$-admissible boundary}
  \label{Dirichlet-problem-2}

  %\subsection{The Dirichlet problem}

  %\noindent{$C^0$-estimate.} 
  \subsubsection{$C^0$-estimate}
  By  maximum principle,  we obtain the following  estimate %lemma  
  %$C^0$-estimate.
 as a complement to \eqref{upper-comparison}.    Since the proof is standard, we omit it here.
  \begin{lemma}
  	\label{lemma-c0general}
  	In addition to \eqref{addistruc},
  	 \eqref{assump2-psi}, \eqref{assump1-psi}, we assume that there is an admissible function $\underline{w}$. Let  
  	$u\in C^2(\bar M)$ be an admissible solution to 
  	%the Dirichlet problem
  	 %\eqref{main-equ2-1},
  	 \eqref{main-equ2}-\eqref{bdy-condition2},
  	then 
  	%there is a positive constant $A_1$ depending only on the given data such that
  	\begin{equation}
  		\begin{aligned}
  			\inf_M (u-\underline{w}) \geq 
  			\min\left\{\inf_{\partial M}(\varphi-\underline{w}), \mbox{ } A_1-\sup_M\underline{w}\right\},
  			\nonumber
  		\end{aligned}
  	\end{equation}
  	where  $A_1$ is a constant with
  %	$$	\sup_{z\in M}\psi(z,A_1) \leq \inf_{ M}   f(\lambda(\chi+  	\sqrt{-1}\partial\overline{\partial}\underline{w})).	$$
  		$\underset{z\in M}\sup \, \psi(z,A_1)\leq \underset{M}\inf \,  f(\lambda(\chi+
  	\sqrt{-1}\partial\overline{\partial}\underline{w})).$  
  	
  \end{lemma}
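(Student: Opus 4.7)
The plan is to run a standard minimum-principle argument on the difference $v := u - \underline{w}$. Since $\bar M$ is compact and $v \in C^{2}(\bar M)$, the infimum of $v$ is attained either on $\partial M$ or at an interior point. If the infimum is attained on $\partial M$, the inequality $\inf_M v \geq \inf_{\partial M}(\varphi - \underline{w})$ is immediate. The substantive case is when $v$ attains its infimum at some interior point $z_{0} \in M$, and the goal is to show $u(z_{0}) \geq A_{1}$, which yields $v(z_{0}) \geq A_{1} - \sup_{M}\underline{w}$.

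First, at the interior minimum $z_{0}$, one has $\sqrt{-1}\partial\overline{\partial}(u-\underline{w})(z_{0}) \geq 0$ as a Hermitian $(1,1)$-form. Writing $A = \omega^{-1}(\chi + \sqrt{-1}\partial\overline{\partial} u)$ and $B = \omega^{-1}(\chi + \sqrt{-1}\partial\overline{\partial}\underline{w})$, this says $A(z_{0}) \geq B(z_{0})$ as Hermitian endomorphisms in the frame diagonalizing $\omega$. By the Weyl monotonicity of eigenvalues of Hermitian matrices, one gets $\lambda_{i}(A)(z_{0}) \geq \lambda_{i}(B)(z_{0})$ for all $i$ after ordering. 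Since both $u$ and $\underline{w}$ are admissible, $\lambda(A)(z_{0})$ and $\lambda(B)(z_{0})$ belong to $\Gamma$, and assumption \eqref{addistruc} together with Lemma \ref{yuan-k+1} guarantees $f_{i} \geq 0$ throughout $\Gamma$. Since $f$ is symmetric and monotone in each coordinate on $\Gamma$, it follows that
\begin{equation*}
f(\lambda(A))(z_{0}) \;\geq\; f(\lambda(B))(z_{0}) \;\geq\; \inf_{M} f(\lambda(\chi + \sqrt{-1}\partial\overline{\partial}\underline{w})).
\end{equation*}

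Second, the equation \eqref{main-equ2} at $z_{0}$ gives $f(\lambda(A))(z_{0}) = \psi(z_{0}, u(z_{0}))$. Combining with the above and the defining property of $A_{1}$,
\begin{equation*}
\psi(z_{0}, u(z_{0})) \;\geq\; \inf_{M} f(\lambda(\chi + \sqrt{-1}\partial\overline{\partial}\underline{w})) \;\geq\; \sup_{z \in M} \psi(z, A_{1}) \;\geq\; \psi(z_{0}, A_{1}).
\end{equation*}
By the strict monotonicity \eqref{assump2-psi}, $t \mapsto \psi(z_{0}, t)$ is increasing, so $u(z_{0}) \geq A_{1}$, as required. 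I still need to note that an $A_{1}$ with the stated property genuinely exists: since $\underline{w}$ is $C^{2}$ on compact $\bar M$ and $\lambda(B)$ lies in the open cone $\Gamma$, the quantity $m := \inf_{M} f(\lambda(B))$ satisfies $m > \inf_{\Gamma} f$, and hypothesis \eqref{assump1-psi} then lets us take $A_{1}$ sufficiently negative so that $\sup_{z} \psi(z, A_{1}) \leq m$.

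The only slightly delicate step is the monotonicity chain $A \geq B \Rightarrow f(\lambda(A)) \geq f(\lambda(B))$: it requires both the Weyl comparison of Hermitian eigenvalues and the sign information $f_{i} \geq 0$, which is why \eqref{addistruc} (rather than just concavity) is invoked. Everything else is bookkeeping with the maximum principle, so I expect the whole argument to be short once the monotonicity statement is in place.
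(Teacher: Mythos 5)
Your proof is correct and is precisely the "standard" minimum-principle argument that the paper omits: split into boundary and interior cases, at an interior minimum of $u-\underline{w}$ use $\mathfrak{g}[u]\geq\mathfrak{g}[\underline{w}]$ together with the monotonicity $f_i\geq 0$ (from \eqref{addistruc} via Lemma \ref{yuan-k+1}) to get $\psi(z_0,u(z_0))\geq\inf_M f(\lambda(\chi+\sqrt{-1}\partial\overline{\partial}\underline{w}))$, and then invoke \eqref{assump2-psi} to conclude $u(z_0)\geq A_1$. Your closing remark on the existence of $A_1$ (compactness of $\lambda(\chi+\sqrt{-1}\partial\overline{\partial}\underline{w})(\bar M)\subset\Gamma$ plus \eqref{assump1-psi}, with Dini giving the needed uniformity in $z$) is a sensible addition that the statement implicitly takes for granted.
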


  %\noindent{\em Local subsolution satisfying \eqref{admifunction1-local}.}
  %	and boundary gradient estimate.} 
%\subsubsection{Local subsolution satisfying \eqref{admifunction1-local}}
\subsubsection{The construction of local barriers}
\label{construction1-local-subsolution}

 As above   $\sigma$ denotes the distance function 
 %from $z$
 to $\partial M$,
%  As stated in Introduction, 
 and $\kappa_1,\cdots,\kappa_{n-1}$ are the eigenvalues of Levi form $L_{\partial M}$. 
 %with respect to $\omega^{\prime}=\left.\omega\right|_{T_{\partial M} \cap J T_{\partial M}}$,  where $J$ is the complex structure.
 %     We argue as in last section of \cite{yuan-regular-DP}.
  Under the  assumption $(\kappa_1,\cdots,\kappa_{n-1})\in\Gamma_\infty$, %\eqref{assump2-boundary},
  we may use   %the distance function 
  $\sigma$  
  to construct local barriers, thereby confirming  \eqref{admifunction1-local}. 
  %The construction is more or less standard (cf. \cite{Guan2008IMRN}).   
  % First we denote \begin{equation}\label{omega-delta}	\Omega_\delta:=\{z\in M: \sigma(z)<\delta\}.\end{equation}
  %Following \cite{Guan2008IMRN} we 
  Fix $k\geq1$.  %As in \cite{Guan2008IMRN} 
Similar to the Riemannian case (see e.g. \cite{Guan2008IMRN}) we take
  \begin{equation}
  	\label{barrier1-w}
  	\begin{aligned}
  		w(z)= %\varepsilon 
  		2\log \frac{\delta^2}{\delta^2 +k\sigma(z)}. % \,\, \varepsilon>0.
  	\end{aligned}
  \end{equation}
  
The straightforward computation gives the following:
  %\begin{equation}	\begin{aligned}	w_{i\bar j}	=\frac{2k^2\sigma_{i} \sigma_{\bar j}}{(\delta^2+k\sigma)^2}		- \frac{2k\sigma_{i\bar j}}{\delta^2+k\sigma}.	\end{aligned}\end{equation}
  \begin{equation}
  	\begin{aligned}
  		\partial\overline{\partial} w
  		%=\frac{ %\varepsilon 
  		%	2 k^2\partial\sigma\wedge \overline{\partial}\sigma }{(\delta^2+k\sigma)^2}-\frac{%\varepsilon
  			% 2k\partial\overline{\partial}\sigma}{\delta^2+k\sigma}
  			=\frac{ %\varepsilon 
  			 2k}{\delta^2+k\sigma} \left(\frac{k}{\delta^2+k\sigma}\partial\sigma\wedge\overline{\partial}\sigma-\partial\overline{\partial}\sigma\right). \nonumber
  	\end{aligned}
  \end{equation}

  Note that  $|\partial\sigma| =\frac{1}{2}$ on $\partial M$, and   $\frac{k}{\delta^2+k\sigma}=\frac{1}{\sigma+ {\delta^2}/{k}}\geq \frac{1}{\sigma+ {\delta^2} }$ on    $M_\delta$. 
 % \begin{equation} 	\begin{aligned} 	\lambda\left(\sqrt{-1}\partial\sigma\wedge \overline{\partial}\sigma\right)=|\partial \sigma|^2(0,\cdots,0, 1). \nonumber 	\end{aligned}  \end{equation}
   %and $|\partial\sigma|=\frac{1}{2}$.
   %\begin{equation}	\begin{aligned}	\lambda( \sqrt{-1}\partial\overline{\partial} w)=\frac{|\nabla \sigma|^2}{\delta^2+\sigma}[(0,\cdots,0,  {1}/{(\delta^2+\sigma)})-\lambda( \sqrt{-1}\partial\overline{\partial} \sigma)].	\end{aligned}\end{equation}
Together with Lemma \ref{yuan's-quantitative-lemma}, we  can take
 $0<\delta\ll1$ such that $w$ is smooth in $M_\delta$ and
  %$\lambda( \sqrt{-1}\partial\overline{\partial} w)\in \Gamma$,
  \begin{equation}
  	\begin{aligned}
  		\lambda\left(
  		\frac{k\sqrt{-1}}{\delta^2+k\sigma}\partial\sigma\wedge\overline{\partial}\sigma
  		-\sqrt{-1}\partial\overline{\partial}\sigma
  		\right)\in\Gamma \mbox{ and }
  		\lambda\left(\chi+\sqrt{-1}\partial\overline{\partial}  \varphi
  		+\frac{\sqrt{-1}}{2}  \partial\overline{\partial}  w
  		\right)\in\Gamma 
  		\mbox{ in } M_{\delta}. \nonumber
  	\end{aligned}
  \end{equation}
  Here is the only place to use the $\Gamma_\infty$-admissible assumption  %\eqref{assump2-boundary}. %
  on the boundary. %Moreover, we also use Lemma \ref{yuan's-quantitative-lemma}.
  %Using \eqref{unbounded-1} we have
  %\begin{equation}	\begin{aligned}	f\left(\lambda( \chi+ \sqrt{-1}\partial\overline{\partial}(w+\varphi) )\right)	\geq \frac{ c_0 |\nabla \sigma|^2}{2(\delta^2+\sigma)}.	\end{aligned}\end{equation}

By Lemma \ref{lemma23}, $f$ obeys  \eqref{addistruc}. Using lemma    \ref{lemma3.4},  we can derive $f(\lambda+\mu)\geq f(\lambda)$ for $\lambda, \, \mu\in\Gamma$.   Notice  $w\leq 0$.
 %Using Lemma \ref{lemma3.4}, 
Take $0<\delta_1\ll1$, we  conclude 
  \begin{equation}
  	\label{mp1-inequality}
  	\begin{aligned}
 %\,&
 f\left(\lambda(\chi+ \sqrt{-1}\partial\overline{\partial}(w+\varphi))\right)
  \geq  
  f(\lambda(\frac{\sqrt{-1}}{2}\partial\overline{\partial} w))   
%  \\=\,&  	f\left(\frac{ %\varepsilon 
%  		k}{\delta^2+k\sigma}  \lambda\left(-\sqrt{-1}\partial\overline{\partial}\sigma+\frac{k\sqrt{-1}}{\delta^2+k\sigma}\partial\sigma\wedge\overline{\partial}\sigma\right)\right) 	\\ 
%\\= \,& 
%f\left(\frac{ %\varepsilon 
%	k}{\delta^2+k\sigma}  \lambda\left(-\sqrt{-1}\partial\overline{\partial}\sigma+\frac{k\sqrt{-1}}{\delta^2+k\sigma}\partial\sigma\wedge\overline{\partial}\sigma\right)\right)  
  		\geq   
  		 \psi  (z,w+\varphi) \mbox{ in } M_{\delta_1}.  %\nonumber
  	\end{aligned}
  \end{equation}
%To derive the  inequalities we  use Lemmas   \ref{lemma3.4}  and \ref{yuan's-quantitative-lemma}, respectively.
  % Note also that for $\sigma(z)=\delta$, $w(z)=   \varepsilon \log\frac{\delta}{\delta+k}\to -\infty  \mbox{ as } \delta\to 0^+.$
%  \begin{equation} 	\begin{aligned} 		w(z)=\varepsilon \log\frac{\delta}{\delta+k}\to -\infty  \mbox{ as } \delta\to 0^+,  \mbox{ for } \sigma(z)=\delta. \nonumber  	\end{aligned} \end{equation}
By Lemma \ref{lemma-c0general}, 
$u$ has a uniform lower bound, i.e., there is a  constant $\delta_2$ such that 
%$$\underset{M}\inf(u-\varphi)\geq \varepsilon \log\frac{\delta_2}{\delta_2+k}. $$
 \begin{equation} \label{lower-bound2}	\begin{aligned} \underset{M}\inf(u-\varphi)\geq %\varepsilon 
 		2\log\frac{\delta_2}{\delta_2+k}.  	\end{aligned} \end{equation}
%Denote $\delta_0=\min\{\delta_1,\delta_2\}.$
Consequently, the comparison principle 
%(Lemma \ref{lemma-mp}) 
yields that %there is a uniform but small constant $\delta>0$ such that
  %\begin{equation 	\label{lower-comparison} 	\begin{aligned} 		u  \geq 	w+\varphi=
  	%%	\varepsilon 
  	%	2\log \frac{\delta^2}{\delta^2 +k\sigma}+\varphi \mbox{ on } M_{\delta_0}, \, \delta_0=\min\{\delta_1,\delta_2\}.
  		 %\forall k\geq1.
  %	\end{aligned} \end{equation}
    \begin{equation}
  	\label{lower-comparison}
  	\begin{aligned}
  		u  \geq 	w+\varphi=
  		%	\varepsilon 
  		2\log \frac{\delta^2}{\delta^2 +k\sigma}+\varphi \mbox{ on } M_{\delta}, \, \delta=\min\{\delta_1,\delta_2\}.
  		%\forall k\geq1.
  	\end{aligned}
  \end{equation}
So
$\underline{u}=w+\varphi$ is a desired local admissible function satisfying  \eqref{admifunction1-local}.

\subsection{The Dirichlet problem with type 2 cone}

To obtain Theorem \ref{thm2-existence-bdy-UE}, it suffices to confirm  
\eqref{admifunction1-local} and 
%zero order and boundary gradient estimates
 \eqref{c0-boundary-c1-2}.
 %hence obtaining Theorem \ref{thm2-existence-bdy-UE} by the continuity method.

\begin{proposition}  
	\label{thm2-estimate-bdy-UE}
	Let $(\bar M,\omega)$ be a compact Hermitian manifold with smooth boundary.  Suppose 
	$\Gamma_\infty=\mathbb{R}^{n-1}$, $\sup_\Gamma f=+\infty$
	and that $(f,\Gamma)$  satisfies \eqref{addistruc}. %\eqref{uniform-elliptic-2}. 
Assume  $\varphi\in C^\infty(\partial M)$ and $\psi(z,t)$  is a  smooth function
	satisfying  \eqref{nondegenerate-assumption-1} and
	$\psi_t(z,t)\geq0$.
	Let $u\in C^2(\bar M)$ be an admissible solution to the Dirichlet problem %\eqref{main-equ2-1}
	\eqref{main-equ2} and \eqref{bdy-condition2}. Then $u$
	satisfies \eqref{admifunction1-local} and \eqref{c0-boundary-c1-2}.
	
	%Moreover, the solution is unique if $\psi_t\geq0$.
\end{proposition}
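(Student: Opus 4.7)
The plan is to handle the two conclusions separately, exploiting the type 2 hypothesis $\Gamma_\infty=\mathbb{R}^{n-1}$ to manufacture both a global admissible subsolution and a local one, entirely without extra geometric assumptions on $\partial M$. I will first establish the $C^0$ bound \eqref{c0-boundary-c1-2} and then deduce \eqref{admifunction1-local} from the distance-function barrier of Subsection \ref{construction1-local-subsolution}; the boundary gradient estimate will fall out for free by pinching $u$ between two-sided barriers that agree with $u$ on $\partial M$.

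For the upper part of \eqref{c0-boundary-c1-2}, I take the harmonic majorant $\hat u$ from \eqref{supersolution1}: since $u$ is admissible and $\Gamma\subseteq\Gamma_1$, the argument giving \eqref{upper-comparison} yields $u\leq\hat u\leq C$ on $\bar M$. For the lower bound I would invoke Lemma \ref{lemma-construction-function} to produce a one-parameter family $\underline{w}_t=e^{tv}$ with $v$ a smooth function without critical points on $\bar M$, via Lemma \ref{lemma-diff-topologuy}. The identity \eqref{formular1-compu} combined with $\Gamma_\infty=\mathbb{R}^{n-1}$ makes $\underline{w}_t$ admissible on $\bar M$ for all sufficiently large $t$, and the unbounded condition \eqref{unbounded-1}, valid here by Lemmas \ref{lemma5.11} and \ref{lemma1-unbound-type2}, lets me enlarge $t$ so that $\inf_{\bar M} f(\lambda(\chi+\sqrt{-1}\partial\overline{\partial}\underline{w}_t))$ exceeds $\sup_{\bar M}\psi(\,\cdot\,,\sup_{\bar M}\hat u)$. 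Then at any interior minimum of $u-\underline{w}_t$, matrix monotonicity of $f$ (guaranteed by $f_i\geq 0$ via Lemma \ref{lemma1-unbound-yield-elliptic}) combined with $\psi_t\geq 0$ gives a contradiction, forcing the minimum to lie on $\partial M$ and hence $u\geq\underline{w}_t-C'$ on $\bar M$.

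With the $C^0$ bound in hand, I would carry out the local barrier construction of Subsection \ref{construction1-local-subsolution}, which applies verbatim because $\Gamma_\infty=\mathbb{R}^{n-1}$ makes $\partial M$ automatically $\Gamma_\infty$-admissible. Setting $w(z)=2\log\frac{\delta^2}{\delta^2+k\sigma(z)}$ on $M_\delta$, Lemma \ref{yuan's-quantitative-lemma} together with \eqref{uniform-elliptic-2} yields $\lambda(\chi+\sqrt{-1}\partial\overline{\partial}(w+\varphi))\in\Gamma$ and the inequality \eqref{mp1-inequality}, and the global lower bound from the previous step controls $u-\varphi$ on the inner boundary $\{\sigma=\delta\}$. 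The comparison principle then gives \eqref{lower-comparison}, so $\underline{u}=w+\varphi$ is the required local admissible function establishing \eqref{admifunction1-local}. The boundary gradient estimate follows at once: the two-sided bound $\underline{u}\leq u\leq\hat u$ on $M_\delta$ with equality on $\partial M$ pinches the inward normal derivative of $u$ from both sides.

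The main obstacle I anticipate is the calibration step where $t$ in $\underline{w}_t=e^{tv}$ must be enlarged to drive $f(\lambda(\chi+\sqrt{-1}\partial\overline{\partial}\underline{w}_t))$ above the prescribed threshold while preserving admissibility uniformly on all of $\bar M$. Admissibility is robust because $|\partial v|$ is bounded below on $\bar M$ and $\Gamma$ is open, but the quantitative dependence of the infimum of $f$ on $t$ rests on the \emph{pointwise} statement \eqref{unbounded-1}; converting it into a uniform blow-up requires compactness of $\bar M$ and continuity of $f$ along the family $\lambda(\chi+\sqrt{-1}\partial\overline{\partial}\underline{w}_t)$, a routine but nontrivial verification. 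Once this is secured, the remainder is a clean assembly of matrix monotonicity, the local barrier construction, and the normal-derivative comparison.
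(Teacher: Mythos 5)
Your proposal is correct and follows essentially the same route as the paper's own proof: the harmonic majorant $\hat u$ gives the upper bound; Lemma \ref{lemma-diff-topologuy} and the exponential Morse construction $\underline{w}_t = e^{tv}$ give the global lower bound via matrix monotonicity and the blow-up supplied by \eqref{unbounded-1} (secured by Lemmas \ref{lemma5.11} and \ref{lemma1-unbound-type2}); and the distance-function barrier $w = 2\log\tfrac{\delta^2}{\delta^2 + k\sigma}$ of Subsection \ref{construction1-local-subsolution} delivers \eqref{admifunction1-local}, with \eqref{c0-boundary-c1-2} then read off from Lemma \ref{lemma-c0-boundary-c1}. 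The uniform calibration of $t$ that you flag as the main obstacle is indeed the point one must check, but it is handled exactly as you anticipate — compactness of $\bar M$, boundedness of $|\partial v|$ away from $0$, and the concavity argument behind Lemma \ref{lemma1-unbound-type2} reduce the estimate to $f\bigl(\tfrac{t}{2}(0,\dots,0,1)\bigr)\to\infty$, so there is no gap.
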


%As a consequence, we obtain Theorem \ref{thm2-existence-bdy-UE} by the continuity method.

%\begin{remark}	In this theorem we assume $\psi_t(z,t)\geq 0$, which is in contrast with \eqref{assump2-psi} in Theorem \ref{thm2-existence-bdy}.\end{remark}

 \begin{proof} 	
 	%[Proof of Proposition \ref{thm2-estimate-bdy-UE}]
	According to Lemma  \ref{lemma5.11}, 	%$\Gamma_\infty=\mathbb{R}^{n-1}$ and thus every boundary is admissible for such equations.
	$f$ satisfies \eqref{uniform-elliptic-2}.
	% in $\Gamma$. 
%	If $\sup_\Gamma f=+\infty$, then  
In addition, $f$ satisfies 
	%the unbounded condition
	\eqref{unbounded-1}  by   Lemma \ref{lemma1-unbound-type2}. 
	%The admissible function $\underline{w}$ is constructed in Lemma \ref{lemma-construction-function}.
	%
%	In the case when  $\psi(z,t)$ obeys \eqref{assump2-psi} and	\eqref{assump1-psi}, 
	%then together with Theorem \ref{thm2-existence-bdy}, we complete the proof.
%	
%Next we consider general case. 
	%%%%%%%%%%%%%%%%%%%%%%%%%%%%%
	%%%%%%%%%%%%%%%%%%%%%%%%%%%%%%%
	%Suppose 
	%\begin{equation}	\label{assump4-psi}	\sup_M \lim_{t\to+\infty}\psi(z,t)<+\infty. \end{equation}
From \eqref{upper-comparison} we know  $u$ has a upper bound $u\leq \hat{u}$. Together with   %\eqref{assump1-psi},
%\eqref{nondegenerate-assumption-1},
	$\psi_u(z,u)\geq0$,
	%$\psi_t\geq0$, 
	we know there is a uniform constant $C_1$ such that
	\begin{equation}
		\begin{aligned}
			\psi(z,u)\leq \psi(z,\hat{u})\leq C_1, \, \forall z\in M. \nonumber
		\end{aligned}
	\end{equation}
As in proof of Lemma \ref{lemma-construction-function}, let $\underline{w}=e^{tv}$. Note $\lambda(\sqrt{-1}\partial v\wedge\overline{\partial}v)=|\partial v|^2(0,\cdots,0,1)\in\Gamma$. Then $\underline{w}$ is admissible for $t\gg1$.
	Using  \eqref{formular1-compu}  and Lemma %\ref{lemma3.4}, \ref{lemma1-unbound-type2},
for $t\gg1$ we get
	\begin{equation}
		\begin{aligned}
			f(\lambda(\chi+\sqrt{-1}\partial\overline{\partial}\underline{w})) >%\psi(z,u)=
			f(\lambda(\chi+\sqrt{-1}\partial\overline{\partial} u)). \nonumber
		\end{aligned}
	\end{equation}
By the maximum principle,  $u$ has a uniform lower bound 
	\begin{equation}
		\label{inequality2}
	\begin{aligned}
		\underset{M} \inf( u-\underline{w})=\underset{\partial M} \inf(\varphi-\underline{w}).	%\nonumber	
\end{aligned}
\end{equation}
%	\begin{equation}	\inf_M ( u-\underline{w})=\inf_{\partial M} (\varphi-\underline{w}). \nonumber 	\end{equation}
	%Hence $u$ has a uniform lower bound. 
Hence  \eqref{lower-bound2}. 
	%a similar discussion as in Subsubsection \ref{construction1-local-subsolution} works.  
As in \eqref{barrier1-w} we take  $w= %\varepsilon
2 \log \frac{\delta^2}{\delta^2 +k\sigma}$.
		Similar to \eqref{mp1-inequality}, we get
	\begin{equation} 
		\begin{aligned}
			f\left(\lambda(\chi+ \sqrt{-1}\partial\overline{\partial}(w+\varphi))\right)
			>C_1\geq %\psi(z,u)=
		%%	f(\lambda(\chi+\sqrt{-1}\partial\overline{\partial} u)) \mbox{ in } M_{\delta_1}. \nonumber 
			f(\lambda(\chi+\sqrt{-1}\partial\overline{\partial} u)) \mbox{ in } M_{\delta_1}. \nonumber
		\end{aligned}
	\end{equation}
	Then we get \eqref{lower-comparison}, hence confirming \eqref{admifunction1-local}. 
	Combining with \eqref{inequality2} we obtain  \eqref{c0-boundary-c1-2}.

 \end{proof}

  \section{The Dirichlet problem with infinite boundary value condition, and completeness of conformal metrics}
  \label{Dirichlet-problem-3}

%Throughout this section, we assume that $(\bar M,\omega)$ is a compact connected Hermitian manifold with smooth boundary.
  
  %\subsection{Solutions of the Dirichlet problem which are infinite on the boundary}

  %The following theorem is the key ingredient for existence of complete metrics. 

  	When  the right-hand side
  $\psi(z,t)$  satisfies  exponential growth in $t$ at infinity, %\eqref{assump5-psi},  
%  not required to be positive, 
  we can solve the Dirichlet problem with infinity boundary data.
  
  \begin{theorem} 
  	%\label{theorem3-complete}
  	\label{theorem1-complete-2} 
  	Let $(\bar M,\omega)$ be a compact Hermitian manifold with smooth boundary.
  	In addition to 
  	$\Gamma_\infty=\mathbb{R}^{n-1}$, $\sup_\Gamma f=+\infty$, we assume  $f$ satisfies %\eqref{concave},
  	\eqref{addistruc}. %in $\Gamma$.
  	%\eqref{uniform-elliptic-2}.  
Let   $\psi(z,t)$  be a  smooth function
satisfying  \eqref{nondegenerate-assumption-1} and $\psi_t(z,t)\geq0$. Suppose in addition   that 
  	   %\eqref{assump2-psi},  %\eqref{assump1-psi}, 
  	 % \begin{equation}	\label{assump5-psi}
  	 %% \lim_{t\to-\infty} \psi(z,t) =\inf_\Gamma f, \,\,
  	%  \lim_{t\to+\infty}	\psi(z,t) e^{-l(z) t}\geq h(z),
  	%%  \lim_{t\to+\infty}	\psi(z,t) e^{-\Lambda_0 t}\geq h(z)   
  %	\,\, \forall z\in \bar M  \end{equation}
     	  \begin{equation}
    	\label{assump5-psi} 
     \psi(z,t) \geq h(z)e^{ l(z) t}, 
    	\,\, \forall z\in \bar M, \, \forall t>T
    \end{equation}
    for some  $T>0$ and positive valued continuous
     functions $h$, $l\in C^0(\bar M)$.  
  	There is  
  	an admissible function $u\in C^\infty(M)$ satisfying
  	\eqref{main-equ2} and $\underset{z\to\partial M}\lim u(z)=+\infty.$	
  	%\begin{equation} 	\begin{aligned} 		f(\lambda(\chi+\sqrt{-1}\partial\overline{\partial}u))=\psi (z,u) \mbox{ in } M,  	\,\, \lim_{z\to\partial M} u(z)=+\infty. \nonumber	\end{aligned}  	\end{equation}
  		Moreover, $u$ is minimal in the sense that $u\leq w$  
  	for any  admissible solution $w$ with infinity boundary value.
  \end{theorem}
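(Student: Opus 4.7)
The plan is to construct $u$ as the monotone limit of a sequence of solutions to the finite Dirichlet problem, obtaining the needed local bounds from a single global supersolution that blows up at $\partial M$. Concretely:

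\textbf{Step 1 (approximating problems).} For each positive integer $k$, apply Theorem \ref{thm2-existence-bdy-UE} with constant Dirichlet data $\varphi \equiv k$ to obtain a unique admissible $u_k \in C^\infty(\bar M)$ solving \eqref{main-equ2} with $u_k|_{\partial M}=k$; the hypotheses here match those in Theorem~\ref{thm2-existence-bdy-UE}. From $\psi_t\ge 0$, the standard linearization–comparison argument produces $u_k\le u_{k+1}$ on $\bar M$.

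\textbf{Step 2 (supersolution blowing up at $\partial M$).} The heart of the proof is to produce $\bar u \in C^\infty(M)$ with $\bar u\to +\infty$ at $\partial M$, $\lambda(\omega^{-1}(\chi+\sqrt{-1}\partial\overline\partial\bar u))\in\Gamma$, and $f(\lambda(\omega^{-1}(\chi+\sqrt{-1}\partial\overline\partial\bar u)))\le \psi(z,\bar u)$. I take the ansatz $\bar u=-A\log\sigma+B$ in a tubular neighborhood $M_{\delta_0}$ (with $\sigma$ the distance to $\partial M$), smoothly patched into the interior via a large multiple of the admissible function supplied by Lemma~\ref{lemma-construction-function}. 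Lemma~\ref{yuan's-quantitative-lemma} gives the eigenvalue behavior: one eigenvalue of size $\sim A/(4\sigma^{2})$ in the normal direction and the remaining $n-1$ of size $O(A/\sigma)$. Since $\Gamma_\infty=\mathbb{R}^{n-1}$, this vector lies in $\Gamma$ once $A$ is large and $\sigma$ is small, which gives admissibility. Concavity of $f$ together with $f=0$ on $\partial\Gamma$ forces $f(\lambda)\le C A/\sigma^{2}$, while the exponential lower bound \eqref{assump5-psi} yields
\[
\psi(z,\bar u)\ge h(z)\,e^{l(z)\bar u}\ge c_{1}\,\sigma^{-l_{\min}A}.
\]
Choosing $A>2/l_{\min}$ and $B$ large, the right-hand side dominates $CA/\sigma^{2}$ on $M_{\delta_0}$; in the interior the extension satisfies the supersolution inequality because $\psi$ is bounded below by \eqref{nondegenerate-assumption-1} and $\bar u$ is chosen large enough there.

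\textbf{Step 3 (local uniform estimates and limit).} The linearization–strong maximum principle argument of Step 5 below, applied with $\bar u$ in place of $w$, gives $u_k\le\bar u$ on $M$ for every $k$, so $\{u_k\}$ is locally uniformly bounded on every $K\Subset M$. On such $K$, Proposition~\ref{global-quantive-estimate-1} combined with the Sz\'ekelyhidi Liouville-type gradient bound from \cite{Gabor}, Evans--Krylov, and Schauder theory yield uniform $C^{\infty}_{\mathrm{loc}}(M)$ bounds on $u_k$. The monotone limit $u=\lim_k u_k$ therefore lies in $C^{\infty}(M)$ and solves \eqref{main-equ2}. For the blow-up at $\partial M$, fix $k$: since $u\ge u_k$ and $u_k\in C(\bar M)$ with $u_k|_{\partial M}=k$, continuity gives $\liminf_{z\to\partial M}u(z)\ge k$, and $k$ was arbitrary.

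\textbf{Step 4 (minimality).} Let $w$ be any admissible solution with $w\to+\infty$ at $\partial M$. Then $u_k-w$ is smooth on $M$ and tends to $-\infty$ at $\partial M$, hence attains its supremum at an interior point $z_0$. Writing the difference of the two equations as a path integral in the admissible line segment from $w$ to $u_k$,
\[
\widetilde F^{i\bar j}\,\partial_i\overline\partial_j(u_k-w)\;-\;\widetilde\psi_t\,(u_k-w)\;=\;0,
\]
where $\widetilde F^{i\bar j}$ is uniformly elliptic (by Lemma~\ref{lemma5.11}) and $\widetilde\psi_t\ge 0$, the strong maximum principle forces $u_k-w$ to be constant as soon as its maximum is nonnegative; the boundary behavior rules this out, so $u_k\le w$, and letting $k\to\infty$ gives $u\le w$.

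\textbf{Main obstacle.} The crux is Step 2: designing an ansatz for $\bar u$ that is simultaneously admissible (this requires the quantitative eigenvalue analysis together with $\Gamma_\infty=\mathbb{R}^{n-1}$) and a supersolution (this requires matching the polynomial growth rate of $f$ against the boundary rate $\sigma^{-lA}$ coming from the exponential lower bound on $\psi$), while patching smoothly into the interior of $M$. Once $\bar u$ is available, the remainder of the argument is a standard exhaustion, interior-estimate, and strong-maximum-principle package.
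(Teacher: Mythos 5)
Your overall architecture — monotone exhaustion by solutions of the Dirichlet problem with boundary data tending to $+\infty$, followed by an upper barrier, interior estimates, and a comparison principle for minimality — matches the paper, and Steps~1, 3 and 4 are essentially the paper's Steps (modulo the cosmetic choice $\varphi\equiv k$ vs.\ $\varphi\equiv 2\log k$, and modulo the fact that for interior bounds one should invoke Proposition~\ref{thm0-inter}, which is the interior estimate under uniform ellipticity, rather than the boundary-dependent Proposition~\ref{global-quantive-estimate-1}). The genuine divergence is in Step~2, and there it contains a real gap.

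Your Step~2 tries to build a single global admissible supersolution $\bar u$ by taking $-A\log\sigma+B$ in a collar $M_{\delta_0}$ and ``smoothly patching'' it with a large multiple of the Morse-constructed admissible function in the interior. The collar part of your analysis is sound: near $\partial M$, the eigenvalues of $\chi+\sqrt{-1}\partial\overline\partial\bar u$ behave like $\bigl(O(A/\sigma),\dots,O(A/\sigma),\,cA/\sigma^{2}\bigr)$, the type~2 hypothesis $\Gamma_\infty=\mathbb{R}^{n-1}$ gives admissibility once $\sigma$ is small, concavity controls $f$ from above by $CA/\sigma^{2}$, and the exponential lower bound on $\psi$ beats this with $A>2/l_{\min}$. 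What is missing is the passage from this \emph{collar} barrier to a \emph{global} one. The two natural patching devices both fail. Taking the maximum $\max(\bar u_1,\bar u_2)$ of two admissible supersolutions, while it produces an admissible function and does blow up on $\partial M$, does not produce a supersolution: the supersolution inequality $f(\lambda)\le\psi$ is the wrong sign for maxima (minima of supersolutions are supersolutions, and the minimum here would be bounded at $\partial M$, losing the blow-up). A smooth cutoff patching $\zeta\bar u_1+(1-\zeta)\bar u_2$ introduces cross terms $\partial\zeta\wedge\overline\partial(\bar u_1-\bar u_2)$ and $\partial\overline\partial\zeta$ that are not controlled; in the transition annulus one loses both admissibility and the inequality $f\le\psi$, and you do not address this. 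One cannot, in general, replace $\sigma$ by a function $\rho$ with $\rho=0$ on $\partial M$, $\rho>0$ in $M$ and $\nabla\rho\neq 0$ throughout $\bar M$, because the level sets of such a $\rho$ would exhibit $M$ as a product collar, which is a topological restriction on $M$ that is not assumed.

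The paper circumvents exactly this obstacle by not constructing a nonlinear supersolution at all. It instead invokes Aviles--McOwen (Lemma~\ref{lemma1-Aviles-McOwen}) to produce a \emph{globally defined} $\tilde u\in C^\infty(M)$ blowing up on $\partial M$ and satisfying a scalar-curvature equation, derives the differential inequality $\Delta\tilde u\le e^{\tilde u}+A$ (Lemma~\ref{Prop-key2}), and combines it with the trace inequality $\Delta u_{(k)}\ge A_f(\psi[u_{(k)}]-f(\vec{\bf 1}))+\mathrm{const}$ from Lemma~\ref{lemma-add5}, applying the maximum principle to $\Lambda u_{(k)}-\tilde u$ (Proposition~\ref{lemma-key3}). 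That comparison operates entirely at the level of the scalar Laplacian, so no admissibility is required of the comparison function and no patching is needed. Your plan could in principle be repaired by proving the collar bound first and then separately bounding $u_{(k)}$ on $\{\sigma\ge\delta_0\}$ — but the latter again requires an interior $C^0$ bound for $u_{(k)}$, which is precisely what you set out to prove; as written the argument is circular. You should either adopt the paper's Laplacian-comparison device, or give an explicit, non-circular construction of a global admissible $C^2$ supersolution.
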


When $\psi(z,t)=\psi(z) e^{\Lambda_0 t}$ we obtain Theorem \ref{theorem3-complete}.
%and \ref{theorem4-complete-general}. 
Moreover, we have 
 \begin{theorem}
 	\label{theorem2-complete-2}  
 	Let $u$ be the  minimal solution asserted in Theorem \ref{theorem1-complete-2}.
 Suppose  the assumptions in Theorem \ref{theorem3-complete} hold.
In additon, we assume   that $f$  obeys \eqref{homogeneous-1}  and 
$\psi(z,t)=\psi(z) e^{\varsigma t}$. Here $\varsigma$ is as in \eqref{homogeneous-1}.  Then $e^u\omega$ is complete.

 	\end{theorem}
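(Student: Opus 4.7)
The plan is to reduce completeness of the Riemannian metric $\tilde g$ underlying $e^u\omega$ to a quantitative lower bound on $u$ near $\partial M$. Since $ds_{\tilde g}=e^{u/2}ds_\omega$, it suffices to show $\int_\gamma e^{u/2}ds_\omega=+\infty$ for every smooth curve escaping every compact subset of $M$. The targeted estimate is
\[
u(z)\ \geq\ -2\log\sigma(z)+B\qquad \mbox{in a neighborhood } M_\delta,
\]
for some constant $B$; together with $|d\sigma/ds_\omega|\leq|\nabla\sigma|_\omega\leq 1$ near $\partial M$, this gives
\[
\int_\gamma e^{u/2}ds_\omega\ \geq\ e^{B/2}\int_\gamma\sigma^{-1}ds_\omega\ \geq\ e^{B/2}\int \sigma^{-1}|d\sigma|\ =\ +\infty
\]
whenever $\sigma(\gamma(t))\to 0$, proving completeness.

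To produce the required barrier, fix a small parameter $\varepsilon>0$ and consider
\[
\underline{u}_\varepsilon(z):=-2\log(\sigma(z)+\varepsilon)+B,
\]
which is smooth in a neighborhood $M_\delta$ of $\partial M$. A direct computation gives
\[
\sqrt{-1}\partial\overline{\partial}\underline{u}_\varepsilon=\frac{2\sqrt{-1}\partial\sigma\wedge\overline{\partial}\sigma}{(\sigma+\varepsilon)^2}-\frac{2\sqrt{-1}\partial\overline{\partial}\sigma}{\sigma+\varepsilon}.
\]
The first term is positive semidefinite of rank one with dominant eigenvalue of order $(\sigma+\varepsilon)^{-2}$ (since $|\partial\sigma|_\omega^2=\tfrac12$ near $\partial M$), while the remaining term together with $\chi$ is $O((\sigma+\varepsilon)^{-1})$. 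Applying Lemma~\ref{yuan's-quantitative-lemma} places the eigenvalues of $\chi+\sqrt{-1}\partial\overline{\partial}\underline{u}_\varepsilon$ close to $(d_1,\dots,d_{n-1},\mathbf{a})$ with $\mathbf{a}\sim(\sigma+\varepsilon)^{-2}$ dominating all the $d_i$. The assumption $\Gamma_\infty=\mathbb{R}^{n-1}$, equivalent to $(0,\dots,0,1)\in\Gamma$, together with the openness of $\Gamma$, ensures that $\underline{u}_\varepsilon$ is admissible throughout $M_\delta$ for a $\delta>0$ independent of $\varepsilon$. Homogeneity \eqref{homogeneous-1} then yields
\[
f\bigl(\lambda(\omega^{-1}(\chi+\sqrt{-1}\partial\overline{\partial}\underline{u}_\varepsilon))\bigr)\ \geq\ c_0(\sigma+\varepsilon)^{-2\varsigma},
\]
while the right-hand side of the equation equals $\psi(z)e^{\varsigma B}(\sigma+\varepsilon)^{-2\varsigma}$; choosing $B$ sufficiently negative (independent of $\varepsilon$) makes $\underline{u}_\varepsilon$ a classical admissible subsolution in $M_\delta$.

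To transfer this lower bound to $u$, realize $u$ as the monotone increasing pointwise limit $u=\lim_{k\to\infty}u_k$ of the admissible solutions $u_k$ of the Dirichlet problem with $u_k|_{\partial M}=k$, furnished by Theorem~\ref{thm2-existence-bdy-UE}. Monotonicity $u_k\leq u_{k+1}$ follows by the maximum principle applied to $u_k-u_{k+1}$, using ellipticity from Lemma~\ref{lemma1-unbound-yield-elliptic} and strict monotonicity of $\psi(z)e^{\varsigma t}$ in $t$. Setting $\varepsilon_k:=e^{(B-k)/2}$ makes $\underline{u}_{\varepsilon_k}=k$ on $\partial M$, matching $u_k$; decreasing $B$ further if needed guarantees $\underline{u}_{\varepsilon_k}\leq u_1\leq u_k$ on the inner boundary $\{\sigma=\delta\}$ (the left side being uniformly bounded there). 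The comparison principle in $M_\delta$ then yields $\underline{u}_{\varepsilon_k}\leq u_k$ throughout $M_\delta$, and letting $k\to\infty$ produces the required estimate $u(z)\geq -2\log\sigma(z)+B$ in $M_\delta$, completing the proof via the integration argument above.

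The main obstacle is the subsolution verification: the two competing scales $(\sigma+\varepsilon)^{-2}$ and $(\sigma+\varepsilon)^{-1}$ in the Hessian, the rank-one degeneracy which forces reliance on $(0,\dots,0,1)\in\Gamma$, and the homogeneity degree $\varsigma$ of $f$ must combine precisely so that both sides of \eqref{main-equ2} scale as $(\sigma+\varepsilon)^{-2\varsigma}$. This is exactly what pins down the coefficient $-2$ in the barrier $-2\log\sigma$.
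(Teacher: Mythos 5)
Your proposal is correct and follows essentially the same route as the paper. The paper works with the barrier
\[
w+\varphi=2\log\frac{k\delta^2}{\delta^2+k\sigma}=-2\log\bigl(\sigma+\delta^2/k\bigr)+4\log\delta,
\]
which coincides with your $\underline{u}_\varepsilon=-2\log(\sigma+\varepsilon)+B$ upon setting $\varepsilon=\delta^2/k$ and $B=4\log\delta$; the only superficial difference is that the paper keeps boundary value $2\log k$ for $u_{(k)}$ while you use $k$, which changes nothing since both sequences increase to the same minimal solution. Both arguments then verify the subsolution inequality via the rank-one dominant term, the quantitative eigenvalue Lemma~\ref{yuan's-quantitative-lemma}, the hypothesis $(0,\dots,0,1)\in\Gamma$, and homogeneity \eqref{homogeneous-1}, exactly as you do, and conclude $u\geq -2\log\sigma - C_0$ by comparison in a collar. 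Your closing remark — that the $(\sigma+\varepsilon)^{-2}$ and $(\sigma+\varepsilon)^{-1}$ scales, the rank-one degeneracy, and the degree $\varsigma$ must conspire to make both sides scale like $(\sigma+\varepsilon)^{-2\varsigma}$ — is precisely the mechanism that pins down the exponent $-2$ in the paper's barrier.
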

  
  %As a result we obtain Theorem \ref{theorem4-complete-general}. Take $f=\frac{\sigma_n}{\sigma_{n-1}}$, we derive Theorem \ref{thm1-complex-surface}.
  
  %\subsection{Key ingredients for upper $C^0$-estimate}
  
  %\subsection{Key ingredients}  To achieve this, it is required to derive interior estimates up to second derivatives. There are two ingredients.  The first ingredient is \eqref{uniform-elliptic-2}. This allows us to derive interior estimates for second and first derivatives.    The second ingredient is  Aviles-McOwen's \cite{Aviles1988McOwen} complete conformal metrics with negative constant scalar curvature on Riemannian manifolds, extending extensively a seminal result  of Loewner-Nirenberg \cite{Loewner1974Nirenberg}.   We use their metrics to construct a supersolution  and then derive interior $C^0$ estimate from above.
  
  %% In contrast with  Riemannian case as done in \cite{yuan-PUE1}, the treatment is fairly different and requires new insight and  ideas, 
%Moreover, it should be noted that    the identity   $2\Delta  u=\Delta_{\mathbb{R}} u$  breaks down in general,  unless $\omega$ is balanced.  Here $\Delta_{\mathbb{R}} u=\mathrm{tr}({g}^{-1}\nabla_{\mathbb{R}}^2 u)$   and $\nabla_{\mathbb{R}}^2 u$  denotes the real Hessian of $u$ under Levi-Civita connection of $(M,g)$ (it is a Riemmannian manifold of real dimension $2n$).    Due to the failure of this identity,    the treatment    is not that straightforward.     This will be done in Lemma \ref{Prop-key2}.
  
  \subsection{Lemmas}
   To fix the notation,  $\nabla_{{g}}^2 u$  denotes the real Hessian of $u$ under Levi-Civita connection   of $(M,g)$
   (a Riemmannian manifold of real dimension $2n$). 
 Let  $\Delta_{{g}} u=\mathrm{tr}({g}^{-1}\nabla_{{g}}^2 u)$.
  %It should be noted that    the identity   $2\Delta  u=\Delta_{ {g}} u$  breaks down in general,  unless $\omega$ is balanced. 
  %For general Hermitian metric, we have a formula as in the following:
% As above  $\Delta u=\mathrm{tr}({\omega}^{-1}\sqrt{-1}\partial\overline{\partial}u)$ and $\Delta_{\mathbb{R}} u=\mathrm{tr}({g}^{-1}\nabla_{\mathbb{R}}^2 u)$.  Such two Laplacian operators obey a formula.
  % due to Gauduchon \cite{Gauduchon84}. 
 It is  known that the complex Laplacian %$\Delta$  
 differs from
  standard Laplacian of Levi-Civita connection %of  $g$ 
 by a linear first order term;  see \cite{Gauduchon84}.  That is
 %or  \cite[p. 237, Remark]{Jost-Yau1993}. 
 %\cite[Page 237]{Jost-Yau1993}. 
  \begin{lemma}
% [\cite{Gauduchon84}]
  	\label{lemma-Gauduchon1}
  Let $(M,\omega)$ be a Hermitian manifold of complex dimension $n$. 
  	Let $\tau$ be the torsion $1$-form with 
  	$
  	d\omega^{n-1}=\omega^{n-1}\wedge \tau. 
  	$
  	For any $u\in C^2(M)$, we have
  	$$2\Delta u=\Delta_{{g}} u-\langle du,\tau\rangle_{\omega}.$$
  	%\begin{equation} 	\label{Laplace-R-C} 	\begin{aligned} 	2\Delta u=\Delta_{{g}} u-\langle du,\tau\rangle_{\omega}, \nonumber 	\end{aligned} 	\end{equation}

  	%In \cite{} the  K\"ahler form is $1/2$ the imaginary part of the Weil-Petersson pairing,while here the factor $1/2$ does not appear. So our answers are different by a power	of 2.)
  	
  \end{lemma}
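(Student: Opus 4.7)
The plan is to derive the identity by computing $d(d^c u \wedge \omega^{n-1})$ in two different ways and equating the results, where $d^c u := \sqrt{-1}(\bar\partial u - \partial u)$, so that $dd^c u = 2\sqrt{-1}\,\partial\bar\partial u$.

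First, I would apply the Leibniz rule together with the defining relation $d\omega^{n-1} = \omega^{n-1}\wedge \tau$ and the pointwise identity $\sqrt{-1}\,\partial\bar\partial u \wedge \omega^{n-1} = (\Delta u/n)\,\omega^n$, which is immediate from $\Delta u = g^{i\bar j} u_{i\bar j}$. This gives
\begin{equation*}
d\bigl(d^c u \wedge \omega^{n-1}\bigr) \;=\; \tfrac{2\Delta u}{n}\,\omega^n \;-\; d^c u \wedge \omega^{n-1} \wedge \tau .
\end{equation*}

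Second, I would exploit the standard Hermitian identity $*du = \tfrac{1}{(n-1)!}\,d^c u \wedge \omega^{n-1}$, verified by a direct computation in a unitary coframe, together with $d{*}du = \Delta_g u\cdot dV_g$ and $dV_g = \omega^n/n!$, to obtain
\begin{equation*}
d\bigl(d^c u \wedge \omega^{n-1}\bigr) \;=\; \tfrac{\Delta_g u}{n}\,\omega^n .
\end{equation*}
The same Hodge-dual identity, combined with the defining property $\tau \wedge *du = \langle\tau, du\rangle_\omega\, dV_g$, also produces the pointwise formula $d^c u \wedge \omega^{n-1}\wedge \tau = -\,\tfrac{\langle du,\tau\rangle_\omega}{n}\,\omega^n$ (after commuting the two 1-forms $\tau$ and $d^c u$ through $\omega^{n-1}$). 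Substituting this back into the first expression and comparing with the second yields $\Delta_g u = 2\Delta u + \langle du,\tau\rangle_\omega$ (up to the sign convention for $\tau$), which is precisely the desired formula.

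I expect the main obstacle to be bookkeeping of signs and normalizations, in particular nailing down the identification $(n-1)!\,*du = d^c u \wedge \omega^{n-1}$ with the correct sign: this depends on the chosen convention for $d^c$, for the compatibility of $g$ and $J$ through $\omega$, and for the Hermitian inner product on 1-forms. Once these conventions are fixed consistently, the rest of the argument is a short manipulation, and this is essentially the path taken in Gauduchon's original derivation.
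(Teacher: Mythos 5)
The paper does not include its own proof of this identity; it simply refers to \cite{Gauduchon84}, so there is no in-text argument to compare against. Your derivation is correct and is indeed the standard one. The two computations of $d(d^cu\wedge\omega^{n-1})$ close up without any residual sign ambiguity once the conventions of this paper are fixed: with $d^c=\sqrt{-1}(\bar\partial-\partial)$ and $\Delta u=g^{i\bar j}u_{i\bar j}$, the Leibniz rule gives $d(d^cu\wedge\omega^{n-1})=\frac{2\Delta u}{n}\omega^n - d^cu\wedge\omega^{n-1}\wedge\tau$; the pointwise Hermitian Hodge identity $d^cu\wedge\omega^{n-1}=(n-1)!\,*du$ (which needs no integrability and is easily checked in a unitary coframe) gives both $d(d^cu\wedge\omega^{n-1})=(n-1)!\,d{*}du=\frac{\Delta_g u}{n}\omega^n$ and, after commuting $\tau$ past a $(2n-1)$-form and using $\tau\wedge*du=\langle\tau,du\rangle_\omega\,dV_g$ with $dV_g=\omega^n/n!$, the relation $d^cu\wedge\omega^{n-1}\wedge\tau=-\frac{\langle du,\tau\rangle_\omega}{n}\omega^n$. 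Substituting yields $\Delta_g u = 2\Delta u + \langle du,\tau\rangle_\omega$, i.e.\ $2\Delta u=\Delta_g u-\langle du,\tau\rangle_\omega$, exactly as stated; the hedge ``up to the sign convention for $\tau$'' can be dropped.
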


The following important result is due to Aviles-McOwen \cite{Aviles1988McOwen}, who extended extensively a seminal result  of Loewner-Nirenberg \cite{Loewner1974Nirenberg}.
\begin{lemma}
	[\cite{Aviles1988McOwen}]
	\label{lemma1-Aviles-McOwen}
	
Suppose that $(\bar X,g)$
is a compact Riemannian manifold of real dimension $m\geq3$ with smooth boundary $\partial X$, $\bar X:=X\cup\partial X$. Then the interior $X$ admits a  complete conformal metric  with negative constant scalar curvature.	
\end{lemma}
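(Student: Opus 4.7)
The plan is to reduce the geometric problem to a semilinear elliptic PDE via conformal change, solve approximating Dirichlet problems with large finite boundary data, and extract a limit whose boundary blow-up rate is sharp enough to render the conformal metric complete. Writing the target metric as $\tilde g = u^{4/(m-2)}g$ with $u>0$ on $X$, the conformal transformation law for scalar curvature gives
\begin{equation*}
-c_m \Delta_g u + R_g u = R_{\tilde g}\, u^{(m+2)/(m-2)}, \qquad c_m=\frac{4(m-1)}{m-2}.
\end{equation*}
Choosing $R_{\tilde g}\equiv -1$ (any negative constant works after scaling), the problem is equivalent to producing $u\in C^\infty(X)$, positive, with $u(z)\to\infty$ as $z\to\partial X$, solving
\begin{equation*}
c_m\Delta_g u = R_g\, u + u^{p}, \qquad p=\frac{m+2}{m-2}.
\end{equation*}

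First, for each integer $k\geq 1$, I would solve the Dirichlet problem with finite data $u_k|_{\partial X}=k$ by the monotone iteration/sub-super-solution method. A sufficiently large constant is a super-solution on $\bar X$ because $R_g$ and $\psi$ are bounded, while a sufficiently small positive constant is a sub-solution, so classical theory yields smooth positive $u_k$. Moreover $L(u):=c_m\Delta_g u - R_g u - u^p$ satisfies a comparison principle in the admissible range (its linearization has a sign-definite zero-order term), so $u_k$ is monotone in $k$.

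Next, I would derive $k$-independent local interior bounds. The crucial ingredient is a universal super-solution blowing up at $\partial X$: writing $\sigma$ for the boundary defining function, I would verify that in a collar $\{0<\sigma<\delta\}$, for sufficiently large $A$, the function $\overline u=A\sigma^{-(m-2)/2}$ satisfies $L(\overline u)\leq 0$. The computation uses $|\nabla\sigma|_g=1$ on $\partial X$ and expansion of $\Delta_g\sigma$ in terms of the mean curvature of $\partial X$, and the key point is that the leading terms $c_m\Delta_g\overline u$ and $\overline u^p$ match up with the sharp exponent $-(m-2)/2$. Comparison on $\{\sigma\geq\varepsilon\}$ produces $u_k\leq\overline u$ independently of $k$, hence locally uniform upper bounds on compact subsets of $X$. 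A symmetric sub-barrier $\underline u=B\sigma^{-(m-2)/2}$ (valid for small $B$ in a smaller collar) combined with monotonicity of $u_k$ in $k$ gives the matching lower estimate. Standard interior Schauder theory provides $C^{2,\alpha}_{\mathrm{loc}}$ compactness, and the monotone limit $u=\lim_k u_k\in C^\infty(X)$ solves the equation and satisfies $\underline u\leq u\leq \overline u$ near $\partial X$, so $u\to\infty$ at $\partial X$ with the sharp rate.

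Finally, completeness of $\tilde g$: any divergent curve $\gamma$ in $X$ approaching $\partial X$ has
\begin{equation*}
L_{\tilde g}(\gamma)=\int u^{2/(m-2)}\,ds_g\geq c\int_0^{\delta}\sigma^{-1}\,ds_g=\infty,
\end{equation*}
using $u\geq B\sigma^{-(m-2)/2}$, hence $\tilde g$ is complete. The main obstacle I anticipate is the precise barrier construction yielding the sharp rate $\sigma^{-(m-2)/2}$ on both sides, which requires carefully expanding $\Delta_g\sigma^{-(m-2)/2}$ and absorbing the subleading contributions coming from the mean curvature of $\partial X$ and from $R_g$; this is exactly the Loewner--Nirenberg half-space ansatz globalized to $(\bar X,g)$, as executed by Aviles--McOwen.
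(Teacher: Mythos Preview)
Your proposal is a correct sketch of the Aviles--McOwen argument; note, however, that the paper does not prove this lemma at all but simply quotes it from \cite{Aviles1988McOwen}, so there is no ``paper's own proof'' to compare against. What you have outlined is essentially the original proof from that reference.
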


From %the concavity of $f$, 
 \eqref{concavity2} 
 we can deduce the following lemma. 
 % for symmetric concave functions.
  % satisfying  \begin{equation}	\label{1leqsup}	f(\vec{\bf 1})<\sup_\Gamma f, \end{equation}which implies $\sum_{i=1}^n f_i(\vec{\bf 1})>0$.
  \begin{lemma}
  	\label{lemma-add5}
  	%Let $(f,\Gamma)$ satisfy  %\eqref{concave} and \eqref{addistruc},	then
  Assume $f(\vec{\bf1})<\sup_\Gamma f$ and   $A_f= {n}\left(\sum_{i=1}^n f_i(\vec{\bf 1})\right)^{-1}$.
  	%where $\vec{\bf 1}=(1,\cdots, 1)\in \mathbb{R}^n.$  
  	Then
  	\begin{equation}
  		\label{key1-main}
  		\begin{aligned}
  			\sum_{i=1}^n \lambda_i\geq n+ A_f \left(f(\lambda)-f(\vec{\bf 1})\right), \mbox{ }\forall \lambda\in\Gamma. \nonumber
  		\end{aligned}
  	\end{equation}
  	
  \end{lemma}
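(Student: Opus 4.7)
The plan is to apply the concavity inequality \eqref{concavity2} at the distinguished point $\vec{\bf 1}$ and then use the symmetry of $f$ to simplify. Specifically, I would set $\lambda \leftarrow \vec{\bf 1}$ and $\mu \leftarrow \lambda$ in \eqref{concavity2} to obtain
\begin{equation*}
f(\lambda) \leq f(\vec{\bf 1}) + \sum_{i=1}^n f_i(\vec{\bf 1})(\lambda_i - 1).
\end{equation*}
Since $f$ is symmetric, its gradient at the symmetric point $\vec{\bf 1}$ is also symmetric, so $f_1(\vec{\bf 1}) = \cdots = f_n(\vec{\bf 1}) = \frac{1}{n}\sum_{j=1}^n f_j(\vec{\bf 1})$. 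Plugging this in collapses the right-hand side to $f(\vec{\bf 1}) + \frac{1}{n}\left(\sum_j f_j(\vec{\bf 1})\right)\left(\sum_i \lambda_i - n\right)$, which upon rearranging is exactly the claimed inequality with $A_f = n/\sum_j f_j(\vec{\bf 1})$.

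The step I expect to require a little care is justifying that $\sum_{j=1}^n f_j(\vec{\bf 1}) > 0$, so that the division is legitimate and the inequality flips in the correct direction. I would deduce this from the assumption $f(\vec{\bf 1}) < \sup_\Gamma f$ together with symmetry and concavity: by symmetry the hyperplane $\{\sum \lambda_i = n\}\cap\Gamma$ is a level where $f$ is maximized along the ray $t\vec{\bf 1}$ only when $\nabla f$ is proportional to $\vec{\bf 1}$, and the hypothesis $f(\vec{\bf 1})<\sup_\Gamma f$ together with Lemma \ref{lemma23} and Lemma \ref{lemma3.4} (applied with $\mu=\vec{\bf 1}$) gives $\sum_j f_j(\vec{\bf 1})\cdot 1 > 0$. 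Alternatively, because the admissible $(f,\Gamma)$ considered elsewhere in the paper satisfies $\sup_\Gamma f = +\infty$ or the homogeneity \eqref{homogeneous-1}, one can compare $f(\vec{\bf 1})$ with $f(t\vec{\bf 1})$ for $t>1$ and invoke concavity at $\vec{\bf 1}$ to force $\sum_j f_j(\vec{\bf 1}) > 0$ directly.

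Once positivity of $\sum_j f_j(\vec{\bf 1})$ is in hand, the rest is a one-line algebraic rearrangement. The result is essentially a tangent-plane estimate for the concave function $f$ at the balanced eigenvalue vector $\vec{\bf 1}$, giving a useful lower bound for $\sum\lambda_i = \mathrm{tr}(\omega^{-1}(\chi + \sqrt{-1}\partial\overline{\partial} u))$ in terms of $f(\lambda) = \psi[u]$; this is precisely the kind of linear control that later lets one compare the complex Laplacian $\Delta u$ to a function of the equation's right-hand side.
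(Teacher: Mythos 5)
Your proof is correct and is essentially the argument the paper has in mind (the paper only remarks that the lemma follows from \eqref{concavity2} and gives no further detail): apply the tangent-plane inequality \eqref{concavity2} at the base point $\vec{\bf 1}$, use the symmetry of $f$ to get $f_1(\vec{\bf 1})=\cdots=f_n(\vec{\bf 1})$, and rearrange. Your justification of $\sum_j f_j(\vec{\bf 1})>0$ via Lemma \ref{lemma23} and Lemma \ref{lemma3.4} with $\mu=\vec{\bf 1}$ is also the right and cleanest route (the earlier sentence about the hyperplane is dispensable), so there is nothing to fix.
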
 
  
  %\begin{proof}
  \subsection{Proof of Theorems \ref{theorem1-complete-2} and \ref{theorem2-complete-2}}
By Lemma  \ref{lemma5.11}, 	 $\Gamma_\infty=\mathbb{R}^{n-1}$ 
%and thus every boundary is admissible for such equations.
 implies that $f$ satisfies \eqref{uniform-elliptic-2} in $\Gamma$.
 According to Theorem \ref{thm2-existence-bdy-UE}, for any integer $k\geq 1$, the following Dirichlet problem has a unique smooth admissible solution 
  \begin{equation}
  	\label{equations-k}
  	\begin{aligned}
  		f(\lambda(\chi+\sqrt{-1}\partial\overline{\partial}u_{(k)}))
  		= \psi (z, u_{(k)})    \mbox{ in } M,  
  		\,\,  u_{(k)} %=	\frac{2\varsigma}{\Lambda_0}\log k 
  		=	%\varepsilon
  		2\log k 
  		 \mbox{ on }   \partial M.
  	\end{aligned}
  \end{equation} 
%where $\varsigma$ is as in \eqref{homogeneous-1}.
  The  comparison
  %maximum 
  principle (see e.g. \cite{GT1983})
  %(Lemma \ref{lemma-mp})
   yields that
  \begin{equation}
  	\label{inequality-key4}
  	\begin{aligned}
  		u_{(k)}\leq u_{(k+1)}  \mbox{ in } M, \,\, \forall k\geq 1.
  	\end{aligned}
  \end{equation}
%From \eqref{inequality-key4} we see that
So $u_{(k)}$ has a common lower bound for all $k\geq1$.
On the other hand,
%Together with %\eqref{inequality-key4} and 
from the  %positivity of $\psi(z,t)$ and
assumption
\eqref{assump5-psi} it follows that there are  
positive
constants $\gamma$, $\Lambda$ and $T_1$ such that
\begin{equation}
	\label{define-Lambda}
	\begin{aligned}
		\psi(z,t) \geq \gamma e^{\Lambda t},  \,\, \forall   z\in \bar M, \, \forall t\geq T_1.
	\end{aligned}
\end{equation}
  
Below we prove local $C^0$ bound from above.
   %In order to derive interior $C^0$ bound from above, we use 
  %To complete the proof of Theorem \ref{theorem1-complete-2}, 
 %To achieve this we apply
% By a result of   Aviles-McOwen  \cite{Aviles1988McOwen}, extending the result of Loewner-Nirenberg \cite{Loewner1974Nirenberg},
Applying Lemma \ref{lemma1-Aviles-McOwen} to  
Hermitian manifold 
$(M,\omega)$
 (note that it is a manifold of real dimension $2n$ with Riemannian  metric $g$),
%(a Riemmannian manifold of real dimension $2n$ and with metric $g$), 
there is $\tilde{u}\in C^\infty(M)$  with
%\eqref{scalar-equ1}, i.e.
\begin{equation}
	%	\label{AM-equ-0}
	\label{scalar-equ1}
	%\left\{
	\begin{aligned}
		\frac{1}{2}\Delta_{{g}}\tilde{u}
		%+ \frac{n-1}{4}|\nabla_{\mathbb{R}}  \tilde{u}|^2_{g}
		+ \frac{n-1}{4}| d\tilde{u}|^2_{g}
		-\frac{S_g}{2(2n-1)}= e^{\tilde{u}} \mbox{ in } M, 
		\,\,\lim_{z\rightarrow \partial M} \tilde{u}(z)=+\infty,
	\end{aligned}
	%\right.
\end{equation}
where %$S_g$  is the Riemannian scalar curvature
%$S_g=\underset{1\leq i,j\leq 2n}\sum g^{ij}R_{ij}$  
$S_g$  is the Riemannian scalar curvature of $g$. That is,   $\tilde{g}=e^{\tilde{u}}g$ is a complete metric with Riemannian scalar curvature  $S_{\tilde{g}}=-2(2n-1)$.
%i.e., $S_{\tilde{g}}\equiv -2(2n-1)$.
%$|\nabla_{\mathbb{R}}  \tilde{u}|^2_{g}=2|\partial u|^2$
  %We observe that, together with  Lemma \ref{lemma-Gauduchon1} and Cauchy-Schwarz inequality, Aviles-McOwen's theorem    implies the following proposition, which is a crucial ingredient in this paper.  
  %Building on Aviles-McOwen's theorem,  
  %
  Together with  Lemma  \ref{lemma-Gauduchon1},
  % and Cauchy-Schwarz inequality, 
  we may use Cauchy-Schwarz inequality to 
  verify the following key lemma.
  \begin{lemma}
  	\label{Prop-key2}
  	%Let $(M,\omega)$ be a compact Hermitian manifold
  	% of complex dimension $n\geq 2$
  	%with smooth boundary, 
  	Let $\tilde{u}$ be as in \eqref{scalar-equ1}. There exists  a uniform constant $A$ such that
  	%	\begin{equation}	\label{AM-equ-0}
  		%	\begin{aligned}	\,& \Delta \tilde{u}+\frac{1}{2}\langle d\tilde{u},\theta\rangle_{\omega}+  (n-1)|\nabla  \tilde{u}|^2-\frac{S_g}{2(2n-1)}= e^{\tilde{u}} \mbox{ in } M, 	\,& \lim_{x\rightarrow \partial M} \tilde{u}(x)=+\infty.	\end{aligned}	\end{equation}
  	\begin{equation}
  		\label{AM-equ-1}
  		%\left\{
  		\begin{aligned}
  			\Delta \tilde{u} \leq  e^{\tilde{u}} +A
  			%+\frac{S_g}{2(2n-1)} 
  			\mbox{ in } M, 
  			\,\, \lim_{z\rightarrow \partial M} \tilde{u}(z)=+\infty.
  		\end{aligned}
  		%\right.
  	\end{equation}
  \end{lemma}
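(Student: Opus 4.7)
The plan is to combine Lemma \ref{lemma-Gauduchon1} with the defining equation \eqref{scalar-equ1} to express the complex Laplacian $\Delta\tilde{u}$ in terms of $e^{\tilde{u}}$ plus lower order quantities involving $|d\tilde{u}|^2_g$, $\langle d\tilde{u},\tau\rangle_\omega$ and $S_g$, and then dispose of the gradient terms via Cauchy--Schwarz combined with Young's inequality.

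First I would invoke Lemma \ref{lemma-Gauduchon1} to replace the Riemannian Laplacian, writing $\Delta_{g}\tilde{u}=2\Delta\tilde{u}+\langle d\tilde{u},\tau\rangle_\omega$ with $\tau$ the torsion $1$-form of $\omega$. Substituting into \eqref{scalar-equ1} and solving for $\Delta\tilde{u}$ gives the identity
\[
\Delta\tilde{u} \;=\; e^{\tilde{u}} \;-\;\tfrac{1}{2}\langle d\tilde{u},\tau\rangle_\omega \;-\;\tfrac{n-1}{4}\,|d\tilde{u}|^2_{g} \;+\;\tfrac{S_g}{2(2n-1)}.
\]
Thus it suffices to show that the three ``error'' terms are bounded above by a uniform constant.

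Next I would treat the cross term $-\tfrac{1}{2}\langle d\tilde{u},\tau\rangle_\omega$ by Cauchy--Schwarz followed by Young's inequality, choosing the weight so as to cancel against the definite-sign term $-\tfrac{n-1}{4}|d\tilde{u}|^2_g$. Concretely,
\[
\bigl|\tfrac{1}{2}\langle d\tilde{u},\tau\rangle_\omega\bigr| \;\leq\; \tfrac{1}{2}|d\tilde{u}|_{g}\,|\tau|_{g} \;\leq\; \tfrac{n-1}{4}|d\tilde{u}|^2_{g}+\tfrac{|\tau|^2_{g}}{4(n-1)},
\]
which is valid since $n\geq 2$ makes the coefficient $\tfrac{n-1}{4}$ strictly positive. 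The gradient square is therefore absorbed, leaving
\[
\Delta\tilde{u} \;\leq\; e^{\tilde{u}} + \tfrac{|\tau|^2_{g}}{4(n-1)}+\tfrac{S_g}{2(2n-1)}.
\]

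Finally, because $\bar M$ is compact and $\omega$ is smooth, both $|\tau|_g$ and $|S_g|$ are bounded uniformly on $\bar M$, so setting
\[
A:=\sup_{\bar M}\Bigl(\tfrac{|\tau|^2_{g}}{4(n-1)}+\tfrac{|S_g|}{2(2n-1)}\Bigr)<\infty
\]
yields \eqref{AM-equ-1}. The infinite boundary value $\lim_{z\to\partial M}\tilde{u}(z)=+\infty$ is inherited directly from \eqref{scalar-equ1}. There is no real obstacle here: the only structural point is that the conformal Laplacian contributes a \emph{negative} gradient square term whose coefficient is strictly positive in complex dimension $n\geq 2$, which is exactly what is needed to swallow the torsion cross term produced by passing from $\Delta_g$ to $\Delta$.
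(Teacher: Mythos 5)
Your proof is correct and follows exactly the route the paper indicates: substitute Lemma \ref{lemma-Gauduchon1} into \eqref{scalar-equ1} to isolate the complex Laplacian $\Delta\tilde u$, then absorb the torsion cross term into the negative gradient square $-\tfrac{n-1}{4}|d\tilde u|^2_g$ via Cauchy--Schwarz (valid since $n\geq 2$), leaving only quantities bounded on the compact $\bar M$. The paper merely asserts ``Together with Lemma \ref{lemma-Gauduchon1}, we may use Cauchy--Schwarz inequality to verify the following key lemma,'' and your argument is a faithful write-out of that one-line hint.
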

  
  %By the %comparison
  %maximum principle and Lemma \ref{lemma-add5}, 
  We have the following  proposition. 
  \begin{proposition}
  	\label{lemma-key3}
  	Let $\tilde{u}$ be as in  \eqref{scalar-equ1}.
  	Let $\Lambda$, $T_1$ be as in \eqref{define-Lambda}.
  	%Lemma \ref{Prop-key2}. 
  	There is a uniform constant $C_o$ depending on   $\inf_M \tilde{u}$  and other known data but not on $k$ such that
  	\begin{equation}
  		\begin{aligned}
  			u_{(k)} \leq \max\left\{ \frac{\tilde{u}+ C_o}{\Lambda},\, T_1+\frac{\tilde{u}-\inf_M {\tilde{u}}}{\Lambda} \right\}  \mbox{ in } M, \, \forall k\geq1. \nonumber
  		\end{aligned}
  	\end{equation}
  \end{proposition}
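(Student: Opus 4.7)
The plan is to run a comparison argument against the barrier $V := (\tilde{u} + C_o)/\Lambda$, where $C_o > 0$ will be chosen large (but independently of $k$) to absorb all fixed error terms. Since $\tilde{u} \to +\infty$ at $\partial M$ while $u_{(k)} \equiv 2\log k$ on $\partial M$, the function $u_{(k)} - V$ tends to $-\infty$ near the boundary, so $\sup_{\bar M}(u_{(k)} - V)$ is attained at some interior point $z_0 \in M$. Set $M_o := (u_{(k)} - V)(z_0)$; the goal is to show that $M_o \leq 0$ in the regime $u_{(k)}(z_0) > T_1$, which gives the first expression in the max, while the case $u_{(k)}(z_0) \leq T_1$ will directly produce the second expression.

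At the interior maximum $z_0$ the real Hessian of $u_{(k)} - V$ is nonpositive, so in particular $\Delta u_{(k)}(z_0) \leq \Delta V(z_0)$. Since $\sup_\Gamma f = +\infty$, Lemma~\ref{lemma-add5} applied to $\lambda(\omega^{-1}(\chi+\sqrt{-1}\partial\bar\partial u_{(k)}))$ gives
\begin{equation*}
\mathrm{tr}(\omega^{-1}\chi)(z_0) + \Delta u_{(k)}(z_0) \;\geq\; n + A_f\bigl(\psi(z_0, u_{(k)}(z_0)) - f(\vec{\bf 1})\bigr).
\end{equation*}
Plugging in $\Delta u_{(k)}(z_0) \leq \Delta V(z_0) = \Delta \tilde{u}(z_0)/\Lambda$ and using Lemma~\ref{Prop-key2} to bound $\Delta\tilde{u}(z_0) \leq e^{\tilde{u}(z_0)} + A$ yields
\begin{equation*}
A_f\,\psi(z_0, u_{(k)}(z_0)) \;\leq\; \frac{e^{\tilde{u}(z_0)} + A}{\Lambda} + \mathrm{tr}(\omega^{-1}\chi)(z_0) + A_f f(\vec{\bf 1}) - n.
\end{equation*}

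Now split cases at $z_0$. If $u_{(k)}(z_0) \leq T_1$, then for every $z \in M$,
\begin{equation*}
u_{(k)}(z) - V(z) \leq u_{(k)}(z_0) - V(z_0) \leq T_1 - \tilde{u}(z_0)/\Lambda - C_o/\Lambda,
\end{equation*}
so $u_{(k)}(z) \leq T_1 + (\tilde{u}(z) - \tilde{u}(z_0))/\Lambda \leq T_1 + (\tilde{u}(z) - \inf_M \tilde{u})/\Lambda$, which is the second bound. If instead $u_{(k)}(z_0) > T_1$, apply \eqref{define-Lambda} to obtain $\psi(z_0, u_{(k)}(z_0)) \geq \gamma e^{\Lambda u_{(k)}(z_0)} = \gamma e^{\tilde{u}(z_0)+C_o+\Lambda M_o}$. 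Combining with the previous inequality and dividing by $e^{\tilde{u}(z_0)}$, then using $\tilde{u}(z_0) \geq \inf_M\tilde{u}$ to bound the error terms uniformly, produces a relation of the form $A_f\gamma\, e^{C_o+\Lambda M_o} \leq C_1$, where $C_1$ depends only on $\inf_M \tilde{u}$, $A$, $A_f$, $\gamma$, $\Lambda$, $f(\vec{\bf 1})$, $\sup_M \mathrm{tr}(\omega^{-1}\chi)$ and $n$ — crucially, not on $k$ or $C_o$. Choosing $C_o$ so that $A_f\gamma\, e^{C_o} > C_1$ then forces $M_o \leq 0$, giving $u_{(k)} \leq V = (\tilde{u}+C_o)/\Lambda$ throughout $M$, which is the first bound.

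The main obstacle is the bookkeeping in the second case: one has to ensure that after dividing by $e^{\tilde{u}(z_0)}$ the residual constant $C_1$ is genuinely $k$-independent. This is where the hypothesis that $\tilde{u}$ is bounded below on $M$ enters (for any fixed $M$, $\inf_M\tilde{u}$ is finite), whereas the $k$-dependent data in the problem (the boundary value $2\log k$) is shielded from the argument by the fact that both $V$ and $\tilde{u}$ blow up at $\partial M$, pushing the maximum point strictly into the interior regardless of $k$.
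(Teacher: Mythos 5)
Your proof is correct and follows essentially the same route as the paper's: you compare $u_{(k)}$ against the Aviles--McOwen barrier $\tilde u$ at an interior maximum (which exists since both blow up at $\partial M$), using Lemma~\ref{lemma-add5} for a lower bound on $\Delta u_{(k)}$ and Lemma~\ref{Prop-key2} for an upper bound on $\Delta\tilde u$, then split on whether $u_{(k)}(z_0)$ exceeds $T_1$. The only cosmetic difference is that you phrase the comparison as $u_{(k)}-V$ with $V=(\tilde u+C_o)/\Lambda$ and ``choose $C_o$ large,'' while the paper works directly with $\Lambda u_{(k)}-\tilde u$ and reads $C_o$ off the resulting inequality; since $C_o+\Lambda M_o$ is independent of $C_o$, these are the same estimate.
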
 
  \begin{proof}
  	Fix $k$.
By Lemma \ref{lemma-add5}, we get
  	\begin{equation} 
  		\begin{aligned}
  			\Delta u_{(k)}  \geq 
  			A_f \left(\psi [u_{(k)}]-f(\vec{\bf 1})\right)
  			+n	-\mathrm{tr} (\omega^{-1}\chi). \nonumber
  		\end{aligned}
  	\end{equation}
  We know that $\Lambda u_{(k)}-\tilde{u}$   %achieves 
  attains its maximum at some interior point $x_0$, where
  $\Delta \tilde{u}\geq \Lambda\Delta u_{(k)}.$
  We assume $u_{k}(x_0)\geq T_1$ (otherwise we are done).  Then   at $x_0$ we have 
  $\psi [u_{(k)}]\geq \gamma e^{\Lambda u_{(k)}}$ by \eqref{define-Lambda}, hence
  \begin{equation}
  	\label{equ-111}
  	\begin{aligned}
  		\Delta u_{(k)}  \geq 
  		A_f \left(\gamma e^{\Lambda u_{(k)}}-f(\vec{\bf 1})\right)
  		+n	-\mathrm{tr} (\omega^{-1}\chi).
  	\end{aligned}
  \end{equation}
  	Combining   \eqref{equ-111} and \eqref{AM-equ-1}, we have  
  	\begin{equation}
  		\begin{aligned}
  			\Lambda A_f \gamma e^{\sup_M(\Lambda u_{(k)} -\tilde{u})}
  			<
  			 1+ \sup_M \left[ e^{-\tilde{u}}\left(A+\Lambda A_f f(\vec{\bf 1})+\Lambda\mathrm{tr}  (\omega^{-1}\chi)  
  			 \right) \right],  \nonumber
  		\end{aligned}
  	\end{equation} 
  	where $A$ comes from Lemma \ref{Prop-key2}. 
  	This completes the proof.
 %Furthermore, from the proof  we can set 	$C_o=\log\frac{ 1+ \sup_M e^{-\tilde{u}}\left(A+\Lambda A_f f(\vec{\bf 1})+\Lambda\mathrm{tr}  (\omega^{-1}\chi)  +\frac{S_g}{2(2n-1)}	\right)}{\Lambda A_f\gamma}$.
 
  \end{proof} 

\begin{proof}
  [Proof of  Theorem \ref{theorem1-complete-2}]
  %Now we prove  Theorem \ref{theorem1-complete-2}.
  %According to
  By \eqref{inequality-key4} and Proposition \ref{lemma-key3}  the following limit 
  exists
  \begin{equation}
  	\label{limt1}
  	\begin{aligned}
  		u(z)=\lim_{k\to +\infty} u_{(k)}(z), 
  		\mbox{  } \forall z\in M.  %\nonumber
  	\end{aligned}
  \end{equation}
  Using the interior estimates %established
  proved in 
  %Propositions \ref{thm1-inter-3}  and \ref{thm1-inter-4} 
  Proposition \ref{thm0-inter}, together with  Evans-Krylov theorem and  Schauder theory,  we know
  $u\in C^\infty(M).$ 
  On the other hand, by the maximum principle, 
  %$u$ is minimal, 
we have   $u\leq w$ for any admissible solution $w$ with $w\big|_{\partial M}=+\infty$.
  % in the sense of Definition \ref{def1-admissible}.
  % This completes the proof.
  
  \end{proof}

%\begin{remark}	The obtained solution given by \eqref{limt1} is minimal. That is $u\leq w$ for any admissible solution $w$ with infinity boundary value.\end{remark}

\begin{proof}
  [Proof of Theorem \ref{theorem2-complete-2}]
Note that in Theorem \ref{theorem2-complete-2}, 
  % the completeness of metric when
  %in which % we assume  
   $f$ satisfies \eqref{homogeneous-1} and
  $\psi(z,u)=\psi(z) e^{\varsigma u}$.
  %$ \Lambda_0=\varsigma$. 
In \eqref{barrier1-w} and \eqref{equations-k},  we take %$\varepsilon  =2$. 
%So
  $ w= 2\log \frac{\delta^2}{\delta^2 +k\sigma }$, $\varphi=2\log k$. That is 
  \begin{equation}
	\label{equations-k-2}
	\begin{aligned}
		f(\lambda(\chi+\sqrt{-1}\partial\overline{\partial}u_{(k)}))
		= \psi e^{\varsigma u_{(k)}}   \mbox{ in } M,  
		\,\,  u_{(k)}=2\log k  \mbox{ on }   \partial M. \nonumber
	\end{aligned}
\end{equation} 
%From Lemma \ref{lemma-c0general} 
   %\begin{equation} 	\begin{aligned}  		\inf_M (u-\underline{w}) \geq  	\min\left\{\inf_{\partial M}(\varphi-\underline{w}), \mbox{ } A_1-\sup_M\underline{w}\right\}, 		\nonumber   	\end{aligned} \end{equation}
% there is a uniform constant $C_A$ such that $u_{(k)}  \geq -C_A \mbox{ in } M,  \forall k\geq1.$
Let $u$ be the limit as  we defined in \eqref{limt1}. 
%It requires to study  asymptotic behavior near boundary.
Using Lemma \ref{lemma3.4} and $(0,\cdots,0,1)\in\Gamma$, we can
  establish  an inequality similar to \eqref{mp1-inequality}. 
  %\begin{equation}  	\label{mp1-inequality}  	\begin{aligned} 		f\left(\lambda(\chi+ \sqrt{-1}\partial\overline{\partial}(w+\varphi))\right)\geq \psi  (z,w+\varphi) \mbox{ in } M_\delta.  %\nonumber  	\end{aligned}  \end{equation}
   Notice by \eqref{inequality-key4}   that $u_{(k)}$ has a common lower bound for all $k\geq1$.
  %Together with \eqref{inequality-key4}, 
  Therefore, by comparison principle shows that there is a uniform %positive 
  constant $\delta$ %($0<\delta\ll1$) 
  such that
  \begin{equation}
  	\begin{aligned}
  		u_{(k)} (z)\geq  2\log \frac{k\delta^2}{\delta^2 +k\sigma(z)}  \mbox{ on } M_\delta, \,\, \forall k\geq1.  \nonumber
  	\end{aligned}
  \end{equation}
%for a uniform positive constant $\delta$.
%  Thus $u\geq -2\log\sigma+4\log\delta$  near the boundary.
 Thus $u\geq -2\log\sigma-C_0$  for some constant $C_0$ near boundary, 
  which yields the completeness of the metric $e^u\omega$.
  % Furthermore, 
% So $e^u\omega$ is complete.
    %The proof is complete.
  
  \end{proof}

  %\medskip
 \section{Equations on complete noncompact Hermitian manifolds}
  %\section{Proof of Theorem \ref{theorem4-complete-noncompact}}
  \label{sec1-noncompactcomplete}
  
 %Throughout this section we assume $(M,\omega)$ is a complete noncompact Hermitian manifold.
  %Building on Theorem \ref{theorem1-complete-2}  
In this section we solve the equation \eqref{main-equ2} on a complete noncompact Hermitian manifold.  
Together with Lemma \ref{lemma1-asymcondition}, we obtain Theorem \ref{theorem4-complete-noncompact}.

  \begin{theorem} 
  	%\label{theorem3-complete}
  	\label{theorem1-complete-4} 
 	Let $(M,\omega)$ be a complete noncompact   Hermitian manifold. 
 	Let %$\psi\in C^\infty( M\times \mathbb{R})$ 
 	$\psi(z,t)$ be a smooth function on $ M\times \mathbb{R}$. 
  	Assume, in addition to
  	$\Gamma_\infty=\mathbb{R}^{n-1}$, $\sup_\Gamma f=+\infty$,  that $f$ satisfies  \eqref{addistruc}
  	%\eqref{homogeneous-1-buchong2} 
  	in $\Gamma$.   Suppose $\psi(z,t)$ satisfies \eqref{nondegenerate-assumption-1},
  	%\eqref{assump2-psi} 
  	  %\eqref{assump1-psi},
  	\eqref{assump5-psi} and  $\psi_t(z,t)\geq0$. 
  	Assume in addition that there is an admissible function $\underline{u}\in C^2(M)$ such that
  	\begin{equation}
  		\label{asymptotic-assumption3}
  		\begin{aligned}
  			f(\lambda (\chi+\sqrt{-1}\partial\overline{\partial}
  			\underline{u})) \geq  \psi  (z,\underline{u})
  			%e^{\varsigma \underline{v}} 
  			\,  \mbox{ in } M\setminus K_0
  		\end{aligned}
  	\end{equation}
  	where $K_0$ is a compact subset of $M$. 
  	Then there is 
  	%at least one 
  	an admissible function $u\in C^\infty(M)$ satisfying \eqref{main-equ2}.
  	%	\begin{equation} 	\label{equ2} 	\begin{aligned} 	f(\lambda(\chi+\sqrt{-1}\partial\overline{\partial}u))=\psi (z,u).  	\end{aligned}  	\end{equation}
  	Moreover,  $u$ is the maximal solution and $u\geq \underline{u}$.
  	
  \end{theorem}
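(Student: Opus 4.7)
I realize $u$ as the decreasing limit of the infinite-boundary Dirichlet solutions on a compact exhaustion, using Theorem \ref{theorem1-complete-2} as the building block, and use a modified version of $\underline{u}$ (constructed by a Morse-function perturbation in the spirit of Lemma \ref{lemma1-asymcondition}) as a uniform lower barrier. Specifically, I first fix a smooth compact exhaustion $M_1 \subset\subset M_2 \subset\subset \cdots$ with $K_0 \subset M_1$ and $\bigcup_k M_k = M$, and on each $\bar M_k$ apply Theorem \ref{theorem1-complete-2} to obtain an admissible $u_k \in C^\infty(M_k)$ solving \eqref{main-equ2} with $u_k \to +\infty$ on $\partial M_k$. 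The hypotheses transfer directly: $\Gamma_\infty = \mathbb{R}^{n-1}$, $\sup_\Gamma f = +\infty$, $\psi_t \geq 0$ and the exponential growth \eqref{assump5-psi} all descend to $\bar M_k$. Since $f$ is elliptic on admissible functions (Lemmas \ref{lemma5.11} and \ref{lemma1-unbound-type2}) and $\psi_t \geq 0$, comparing $u_k$ and $u_{k+1}|_{\bar M_k}$ by the maximum principle---using $u_k = +\infty > u_{k+1}$ on $\partial M_k$---yields the monotonicity $u_{k+1} \leq u_k$ on $M_k$. The same comparison applied to an arbitrary admissible solution $w$ of \eqref{main-equ2} on $M$ gives $w \leq u_k$ on $M_k$, hence $w \leq u$ in the limit, which is the maximality assertion.

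\textbf{Uniform lower bound.} To prevent $u_k(z)$ from tending to $-\infty$, I construct a $C^2$ global admissible subsolution $\underline{V}$ of \eqref{main-equ2} with $\underline{V} \geq \underline{u}$ on $M$. Pick $K_0 \subset\subset N \subset\subset M_1$ with smooth boundary and, by Lemma \ref{lemma-diff-topologuy}, a smooth $v \in C^\infty(\bar N)$ without critical points, normalized so $v \leq 0$. Following the device in Lemma \ref{lemma1-asymcondition} (cf.\ \eqref{formular1-compu} and Lemma \ref{lemma-construction-function}), set $\underline{V} := \underline{u} + \eta\, e^{N(v-1)}$ with $\eta \in C_c^\infty(N)$ equal to $1$ near $K_0$ and $N \gg 1$. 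The term $N^2 e^{N(v-1)}\sqrt{-1}\partial v \wedge \overline{\partial}v$ dominates $\sqrt{-1}\partial\bar\partial \underline{V}$ on $\{\eta = 1\}$; since $\Gamma_\infty = \mathbb{R}^{n-1}$, a single positive eigenvalue direction is enough to keep $\lambda(\omega^{-1}(\chi+\sqrt{-1}\partial\bar\partial\underline{V})) \in \Gamma$ and make $f$ bounded below positively there. Outside $\operatorname{supp}\eta$, $\underline{V} = \underline{u}$, which is already a subsolution by \eqref{asymptotic-assumption3}. Because $v - 1 \leq -1$, the perturbation $\eta e^{N(v-1)}$ stays uniformly bounded, so $\psi(z, \underline{V})$ remains bounded, and for $N$ sufficiently large, together with an appropriate cutoff across the transition region, one verifies $f(\lambda(\omega^{-1}(\chi+\sqrt{-1}\partial\bar\partial\underline{V}))) \geq \psi(z,\underline{V})$ on all of $M$. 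The comparison principle then yields $u_k \geq \underline{V} \geq \underline{u}$ on $M_k$ for every $k$.

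\textbf{Convergence, and the main obstacle.} With $\underline{V} \leq u_k \leq u_1$ on every relatively compact subset, the interior $C^1$ and $C^2$ estimates developed in Sections \ref{Sec-Estimates1}--\ref{Sec-Estimates2}, combined with the Evans--Krylov theorem and Schauder theory, give $C^\infty_{\mathrm{loc}}(M)$ compactness of $\{u_k\}$. The monotone limit $u(z) := \lim_k u_k(z)$ belongs to $C^\infty(M)$, is admissible, and solves \eqref{main-equ2}; it inherits $u \geq \underline{u}$ from the previous step and maximality from the first step. The hardest point of the argument is the construction of $\underline{V}$: one has to arrange a Morse-type perturbation that pushes $\lambda[\underline{V}]$ deep enough into $\Gamma$ on $K_0$ to beat $\psi(z,\underline{V})$---which by \eqref{assump5-psi} grows exponentially in $\underline{V}$---yet remains small enough in magnitude not to blow up the right-hand side. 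The type-2 condition $\Gamma_\infty = \mathbb{R}^{n-1}$ is precisely what allows the one-direction gradient-squared term $\sqrt{-1}\partial v \wedge \overline{\partial}v$ to be exploited for this purpose, and the normalization $v \leq 0$ is what keeps the perturbation controlled.
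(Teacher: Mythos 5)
Your skeleton is exactly the paper's: exhaust $M$ by compact submanifolds $M_k$ with smooth boundary, invoke Theorem~\ref{theorem1-complete-2} on each $\bar M_k$ to get $u^{(k)}$ with $u^{(k)}\to+\infty$ on $\partial M_k$, get $u^{(k)}\geq u^{(k+1)}$ and maximality by the comparison principle, then pass to the monotone limit and upgrade to $C^\infty_{\mathrm{loc}}$ via Proposition~\ref{thm0-inter}, Evans--Krylov, and Schauder. For the uniform lower bound the paper simply writes ``by the maximum principle, $u^{(k)}\geq\underline{u}$ in $M_k$,'' i.e.\ it compares directly with $\underline{u}$. You instead noticed that \eqref{asymptotic-assumption3} only gives the subsolution inequality on $M\setminus K_0$ and proposed replacing $\underline{u}$ by a modified global comparison $\underline{V}$; that is a legitimate thing to worry about and is more explicit than what the paper does.

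The gap is in the construction of $\underline{V}=\underline{u}+\eta\,e^{N(v-1)}$. You normalize $v\leq 0$, so $v-1\leq -1$ and hence $e^{N(v-1)}\leq e^{-N}$. Then
\[
\sqrt{-1}\,\partial\overline{\partial}\bigl(e^{N(v-1)}\bigr)=N e^{N(v-1)}\bigl(\sqrt{-1}\,\partial\overline{\partial}v+N\sqrt{-1}\,\partial v\wedge\overline{\partial}v\bigr),
\]
and both $N e^{N(v-1)}$ and $N^2 e^{N(v-1)}$ tend to $0$ as $N\to\infty$. So the claim that ``$N^2 e^{N(v-1)}\sqrt{-1}\,\partial v\wedge\overline{\partial}v$ dominates $\sqrt{-1}\,\partial\overline{\partial}\underline{V}$ on $\{\eta=1\}$'' is the opposite of what happens: the perturbation {\em vanishes}. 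Consequently $\underline{V}\to\underline{u}$ and $\lambda[\underline{V}]\to\lambda[\underline{u}]$ uniformly on $K_0$ as $N\to\infty$, so if $f(\lambda[\underline{u}])<\psi(z,\underline{u})$ somewhere in $K_0$, then $\underline{V}$ is strictly not a subsolution there for every large $N$, and the comparison step fails. The source of the confusion is that the device of Lemma~\ref{lemma1-asymcondition}, which you cite, only produces an admissible function with $f(\lambda[\underline{u}])\geq\Lambda_1\psi e^{\Lambda_0\underline{u}}$ for a possibly very small constant $\Lambda_1>0$. That is enough for the paper's Theorem~\ref{theorem4-complete-noncompact} because the right-hand side there is exactly $\psi(z)e^{\Lambda_0 t}$ and the constant $\Lambda_1$ can be absorbed by the vertical translation $\underline{u}\mapsto\underline{u}+\Lambda_0^{-1}\log\Lambda_1$; but for a general nonlinearity $\psi(z,t)$ satisfying only \eqref{assump5-psi} and $\psi_t\geq0$ there is no such absorption, so a shrinking perturbation of $\underline{u}$ cannot upgrade a local subsolution into a global one. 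To repair the argument you would need either to argue the comparison $u^{(k)}\geq\underline{u}$ directly (examining the minimum of $u^{(k)}-\underline{u}$ and using that it cannot occur in $K_0$, or invoking a strong maximum principle across $K_0$), or to build a genuinely deeper perturbation of $\underline{u}$ on $K_0$, rather than a vanishing one; the present construction does neither.
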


  \begin{remark} When $(M,\omega)$ is complete noncompact, in  assumption \eqref{assump5-psi}  $\bar M$ shall be replaced by $M.$ \end{remark}

%As a corollary we obtain Theorem \ref{theorem4-complete-noncompact}.
  
  \begin{proof}
  	%	[Proof of Theorem \ref{theorem1-complete-4}]
%In this section we assume $(M,\omega)$ is a complete noncompact Hermitian manifold.

  %\subsection{Proof of Theorem \ref{theorem4-complete-noncompact}}
  
  %\begin{proof}[Proof of Theorem \ref{theorem1-complete-3}]
  	
  	%The proof is based on an approximate method. 
  	%We fix an exhausting sequence {Dk} ? M by open domains with smooth boundaries such that
  	%Let $\{M_k\}_{k=1}^{+\infty}$ be an exhaustion series of complex submanifolds, with the same complex dimension, satisfying
  	Fix an exhausting sequence 
  	%$\{M_k\}_{k=1}^{+\infty}$  by relatively compact
  	$\{ M_k\}_{k=1}^{+\infty}$  by   
  	complex submanifolds  of complex dimension $n$  with smooth boundary such that 
  %	\begin{itemize} \item 
  		$M=\cup_{k=1}^{\infty} M_k, \mbox{  }  \bar M_k =M_k\cup\partial M_k,  
  		%\mbox{ (compact with boundary)},
  		\mbox{  }  \bar M_k\subset\subset M_{k+1}$. 
  		 %\item  $\bar M_k$ is a compact complex submanifold with smooth boundary $\partial M_k$.
  %	\end{itemize}
  	For any integer $k\geq 1$ we denote $u^{(k)}$ the admissible  solution to 
  	\begin{equation}
  		\begin{aligned}
  			f(\lambda(\chi+\sqrt{-1}\partial\overline{\partial}u^{(k)}))
  			%= \psi  e^{\Lambda_0 u^{(k)}}  
  		=	\psi(z,u^{(k)}) \mbox{ in } M_k, \,\, 
  			\lim_{z\to \partial M_k} u^{(k)}(z)=+\infty.  \nonumber
  		\end{aligned}
  	\end{equation}
  	Moreover, $u^{(k)}\in C^\infty(M_k)$. 
  	The existence and regularity %of $u^{(k)}$
  	 follow from Theorem \ref{theorem1-complete-2}.
  	
  	%By the comparison principle (Lemma \ref{lemma-mp}), we deduce that
  	By the maximum principle, we deduce that 
  	$$u^{(k)}\geq u^{(k+1)}  \mbox{ in } M_k.$$
 % 	\begin{equation}\begin{aligned}	u^{(k)}\geq u^{(k+1)}  \mbox{ in } M_k.  \nonumber	\end{aligned}	\end{equation}
  	%i.e., $\{u^{(k)}\}_{k=1}^\infty$ is a decreasing sequence.
  	On the other hand,
  	using the maximum principle again,  
   $$u^{(k)}\geq  \underline{u}  	\mbox{ in } M_k.$$
 %	\begin{equation} 	\begin{aligned}
  	 %	u^{(k)}\geq  \underline{u} 
  			%+\frac{\log \Lambda_1}{\Lambda_0}  
  	%		\mbox{ in } M_k. \nonumber 	\end{aligned} 	\end{equation}
 % where $\underline{u}$ and $\Lambda_1$ are as in Lemma \ref{lemma1-asymcondition}.
  	Let's take $u=\underset{k\to+\infty} \lim u^{(k)}.$
  	%\begin{equation} 	\label{solution-infty1} 		u=\lim_{k\to+\infty} u^{(k)}. \nonumber 	\end{equation}  	
  	Such a limit exists and $u\geq \underline{u} 
  	%+\frac{\log \Lambda_1}{\Lambd{mainequ-Dirichlet}}
  	$.
  	 In addition, 
  	$u\in C^\infty(M)$ according to
  	%(Propositions \ref{thm1-inter-4} and \ref{thm1-inter-3}), 
  	Evans-Krylov theorem,  Schauder theory, and the
 interior estimates 
  	(Proposition \ref{thm0-inter}).
  	Moreover, by the maximum principle, $u$ is the maximal solution. 
  		%More precisely, let 
  		%$u$ be the solution as constructed in \eqref{solution-infty1}, and 
  		%$w$ be another solution. Then $u\geq w$ in $M$.
  	
 \end{proof}

%\begin{remark}
	%
%Denote the mixed Chern-scalar curvature of $\omega$ by 
 %  Moreover, an analogue of Theorem \ref{thm1-complete}  holds for  mixed Chern-scalar curvature  $R_\omega^{\langle\alpha,\gamma\rangle}:=g^{i\bar j} (\alpha R_{i\bar j}^{(1)} +\gamma  R_{i\bar j}^{(3)})$  with $n\alpha + \gamma>0.$  
  %We omit it  to cut the length of paper.
  %As a corollary we obtain Theorem \ref{theorem4-complete-noncompact}.
  %\end{remark}
  %
  %\subsection*{Remarks on equations  on complex Euclidean spaces} \label{sec-Euclideanspace}
 %Below we briefly discuss %some results concerning 
 %the equations  on complex Euclidean spaces.
 %equations on $\mathbb{C}^n$.

 % \begin{remark}
  	 
%In the case $f=\sigma_1$ we obtain the existence of maximal smooth solution for the Poisson equation   $\Delta u=\psi e^u$ in $\mathbb{C}^n$.
 % \begin{theorem} 	\label{thm2-Euclideanspace} 	For any $0<\psi\in C^\infty(\mathbb{C}^n)$ with  \eqref{asympt-condition1-euclidean} 	for some $\sigma>2$, the Poisson equation  	$\Delta u=\psi e^u$
 %\begin{equation} 	\Delta u=\psi e^u \mbox{ in } \mathbb{C}^n  \nonumber 	\end{equation}
  %	admits a unique maximal smooth solution in $\mathbb{C}^n$.   \end{theorem}
  
%\end{remark}

 %\medskip
  %\vspace{3mm}
  
  %\section{\textit{A priori} estimates}
  %\label{Sec-Estimates1}

  \section{Quantitative boundary estimate}
  \label{Sec-Estimates1}
  
We establish quantitative boundary estimate \eqref{quantitative-BE}, assuming local admissible function $\underline{u}$ satisfying \eqref{admifunction1-local}  near boundary, instead of existence of subsolution. 
 % This is an extension of the works of \cite{Boucksom2012,Chen,Collins2019Picard,Phong-Sturm2010,yuan2020PAMQ,yuan-regular-DP}.

  Pick $p_0\in \partial M$ and let $M_\delta$ be as in \eqref{M-delta}.
  We choose local %holomorphic
  coordinates 
  %\eqref{goodcoordinate1}
  \begin{equation}
  	\label{goodcoordinate1}
  	\begin{aligned}
  		(z_1,\cdots, z_n), \mbox{  } z_i=x_i+\sqrt{-1}y_i 
  	\end{aligned}
  \end{equation}
  centered at $p_0$ in a neighborhood which we assume to be contained in $M_\delta$ %defined in \eqref{M-delta}
  such that at $p_0$ $(z=0)$, $g_{i\bar j}(0)=\delta_{ij}$ and $\frac{\partial}{\partial x_{n}}$ is the interior normal to $\partial M$. %at $p_0$.
  %We also denote $\sigma(z)$  the distance function from $z$ to $\partial M$ with respect to $\omega$.
  %$\rho(z)$   the distance function from $z$ to $p_0$. 
  Denote
  \begin{equation}
  	\begin{aligned}
  		\Omega_{\delta}=\{z\in M: |z|<\delta\}.
  	\end{aligned}
  \end{equation}
  Throughout this section the Greek letters $\alpha, \beta$ run from $1$ to $n-1$. 
  
    %To derive the quantitative boundary estimate in Proposition  \ref{bdy-quantive-estimate-1}, it suffices to prove  Propositions \ref{yuan-k2v} and \ref{mix-general}. 
  
  The quantitative boundary estimate consists of the following two propositions.
  \begin{proposition}
  	\label{yuan-k2v} 
Assume %$\psi_t(z,t)\geq0$ and 
$f$ satisfies  \eqref{unbounded-1} and	\eqref{addistruc}.  
  	Suppose   near boundary  that
  	 there is a local admissible function $\underline{u}$    satisfying \eqref{admifunction1-local}.
  	Then for any admissible solution $u\in C^2(\bar M)$ to \eqref{main-equ2}-\eqref{bdy-condition2}, there is a uniform positive constant $C$ 
  	%depending on  %$(1-t_0)^{-1}$, $|t_0|$, 
  	%$R_1$ 
%	and other known data 
such that 
  	%at the origin
  	\begin{equation}
  		\begin{aligned}
  			\mathfrak{g}_{n\bar n}(p_0)\leq C\left(1+\sum_{\alpha=1}^{n-1}|\mathfrak{g}_{\alpha\bar n}(p_0)|^2\right) \, \mbox{ for } p_0\in\partial M.  %\nonumber
  		\end{aligned}
  	\end{equation}
  \end{proposition}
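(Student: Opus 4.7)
The plan is to argue by contradiction, combining the quantitative eigenvalue-splitting of Lemma \ref{yuan's-quantitative-lemma} with the unbounded condition \eqref{unbounded-1}. In the boundary-normal coordinates \eqref{goodcoordinate1} at $p_0$, the tangential--tangential block of the complex Hessian can be read off directly from the boundary data: since $u = \varphi$ on $\partial M$ and the complex tangential directions at $p_0$ are exactly $\partial/\partial z_\alpha$ for $1 \le \alpha \le n-1$, one has $\mathfrak{g}_{\alpha\bar\beta}(p_0) = \chi_{\alpha\bar\beta}(p_0) + \varphi_{\alpha\bar\beta}(p_0)$, which is bounded uniformly by known data. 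Moreover, since $u$ is admissible, $\lambda(\mathfrak{g}(p_0)) \in \Gamma$, and Cauchy interlacing combined with $\Gamma_n \subseteq \Gamma$ (which yields $e_i \in \bar\Gamma$ and hence closure of $\Gamma$ under adding non-negative multiples of $e_i$) forces the eigenvalues $\tilde\mu_1, \ldots, \tilde\mu_{n-1}$ of this tangential block to satisfy $(\tilde\mu_1, \ldots, \tilde\mu_{n-1}) \in \Gamma_\infty$; together with the uniform boundedness of the block, there exists a fixed $R_0 > 0$ with $(\tilde\mu_1, \ldots, \tilde\mu_{n-1}, R_0) \in \Gamma$.

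Next I would rotate the tangential coordinates to diagonalize $\mathfrak{g}_{\alpha\bar\beta}(p_0)$, and fix a small $\epsilon > 0$ such that the perturbed vector $(\tilde\mu_1 - \epsilon, \ldots, \tilde\mu_{n-1} - \epsilon, R_0)$ still lies in $\Gamma$ uniformly in $p_0$, which is possible by openness of $\Gamma$ and the uniform bound on $\tilde\mu_\alpha$. Now suppose, for contradiction, that $\mathfrak{g}_{n\bar n}(p_0) > C\bigl(1 + \sum_\alpha |\mathfrak{g}_{\alpha\bar n}(p_0)|^2\bigr)$ for a constant $C$ to be chosen. Applying Lemma \ref{yuan's-quantitative-lemma} with $d_\alpha = \tilde\mu_\alpha$, $a_\alpha = \mathfrak{g}_{\alpha\bar n}(p_0)$ and $\mathrm{{\bf a}} = \mathfrak{g}_{n\bar n}(p_0)$, a choice of $C$ large enough relative to $(2n-3)/\epsilon$ and the known bound on $\tilde\mu_\alpha$ verifies the quantitative hypothesis, so the eigenvalues of $\mathfrak{g}(p_0)$ take the form $\tilde\mu_\alpha + O(\epsilon)$ for $\alpha \le n-1$ together with one large eigenvalue $\lambda_n \ge \mathfrak{g}_{n\bar n}(p_0)$.

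Finally, by the choice of $\epsilon$ the vector $(\tilde\mu_1 - \epsilon, \ldots, \tilde\mu_{n-1} - \epsilon, \lambda_n)$ lies in $\Gamma$ once $\lambda_n \ge R_0$, and the monotonicity of $f$ provided by Lemma \ref{lemma1-unbound-yield-elliptic} gives
\[
\psi = f(\lambda(\mathfrak{g}(p_0))) \;\ge\; f(\tilde\mu_1 - \epsilon, \ldots, \tilde\mu_{n-1} - \epsilon, \lambda_n),
\]
and the right-hand side tends to $+\infty$ as $\lambda_n \to +\infty$ by the unbounded condition \eqref{unbounded-1}. This contradicts the uniform upper bound on $\psi$ over $\bar M$, forcing $\mathfrak{g}_{n\bar n}(p_0) \le C\bigl(1 + \sum_\alpha |\mathfrak{g}_{\alpha\bar n}(p_0)|^2\bigr)$. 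The principal obstacle I anticipate is the uniformity in $p_0$: the threshold $R_0$, the perturbation $\epsilon$, and the final constant $C$ must all be chosen independently of the boundary point. This is where the global bound on the tangential block by known data $(\chi,\varphi)$, the compactness of $\partial M$, and the interior-cone structure of $\Gamma$ play an essential role, ensuring that the contradiction is drawn with constants depending only on the data of the problem and not on the particular admissible solution $u$.
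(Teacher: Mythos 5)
Your proposal has two genuine gaps, both concentrated exactly where you flag the ``principal obstacle'' and then wave it away. First, the identity $\mathfrak{g}_{\alpha\bar\beta}(p_0)=\chi_{\alpha\bar\beta}(p_0)+\varphi_{\alpha\bar\beta}(p_0)$ is false. A complex tangential second derivative $u_{\alpha\bar\beta}$ at a boundary point is \emph{not} intrinsic to the boundary trace: if $w$ vanishes on $\partial M$, the chain rule at $p_0$ gives $w_{\alpha\bar\beta}(p_0) = w_{x_n}(p_0)\,\sigma_{\alpha\bar\beta}(p_0)$ rather than $0$. This is exactly the paper's \eqref{yuan3-buchong5}, so $\mathfrak{g}_{\alpha\bar\beta}(p_0)$ involves the unknown normal derivative $(u-\underline{u})_{x_n}(p_0)$; the bound $|\mathfrak{g}_{\alpha\bar\beta}(p_0)|\leq C$ in \eqref{ineq2-bdy} does hold, but only because the boundary gradient estimate from Lemma \ref{lemma-c0-boundary-c1} is invoked first.

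Second, and more seriously, the quantities $\tilde\mu'=\lambda_{\omega'}(\mathfrak{g}_{\alpha\bar\beta}(p_0))$ lying in $\Gamma_\infty$ is \emph{not} sufficient: you need $\tilde\mu'$ to lie a \emph{uniform} distance inside $\Gamma_\infty$, otherwise neither $\epsilon>0$ with $\tilde\mu'-\epsilon\vec{\bf 1}\in\Gamma_\infty$ nor a uniform $R_0$ with $(\tilde\mu',R_0)\in\Gamma$ can be chosen. A uniform bound $|\tilde\mu'|\leq C_0$ does not give this: as $\tilde\mu'\to\partial\Gamma_\infty$, every admissible $\epsilon$ shrinks to $0$ and the required $R_0$ blows up, and there is no a priori reason a solution $u$ cannot produce $\tilde\mu'$ arbitrarily close to $\partial\Gamma_\infty$. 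This is precisely what the paper's mechanism circumvents: since $(A_1)_{\alpha\bar\beta}=\mathfrak{g}_{\alpha\bar\beta}$ interpolates linearly (through \eqref{410-buchong}--\eqref{A_t}) to $\underline{\mathfrak{g}}_{\alpha\bar\beta}$ which is \emph{uniformly} inside $\Gamma_\infty$, one picks the ``exit time'' $t_0$ in \eqref{key0-yuan3}, proves the crucial uniform bound $(1-t_0)^{-1}\leq C$ (Lemma \ref{keylemma1-yuan3}, whose proof is essentially the whole work of Section \ref{proof-keylemma1}), and splits $A(R)=A'(R)+A''(R)$ so that the quantitative perturbation Lemma \ref{yuan's-quantitative-lemma} is applied to $A'$ whose tangential block $(1-t_0)(\underline{\mathfrak{g}}-\frac{\varepsilon_0}{4}\mathrm{Id})$ \emph{is} uniformly interior, while $A''$ is handled by the monotonicity of Lemma \ref{lemma3.4} plus concavity \eqref{concavity2}. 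Without $t_0$ and this splitting, the constants in your argument do not close up.
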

  
  %Similar to \cite{Boucksom2012,Collins2019Picard,yuan-regular-DP} we can prove
  %One may extend the quantitative boundary for mixed derivatives in \cite{Boucksom2012,Collins2019Picard,yuan-regular-DP}.
  %It requires to prove the following:
  \begin{proposition}
  	\label{mix-general}
  	
  	%et $(M,\omega)$ be a compact Hermitian manifold with $C^3$-smooth boundary. 
  	%Let $\underline{u}$ be a local admissible function near boundary subject to \eqref{admifunction1-local}. 
  	Suppose  near boundary that there is a local admissible function  $\underline{u}$ obeying \eqref{admifunction1-local}. 
Assume  %$\psi_t(z,t)\geq0$,
  	%\eqref{elliptic}, %\eqref{concave},   
  	\eqref{addistruc} and \eqref{unbounded-1} hold. 
  	Then 
  	for any admissible solution $u\in C^3(M)\cap C^2(\bar M)$ to  Dirichlet problem
  	 \eqref{main-equ2}-\eqref{bdy-condition2},
  	there is a uniform positive constant $C$ 
  	 depending on %$|\varphi|_{C^{3}(\bar M)}$,  $|\psi|_{C^1(M)}$, 
  $|u|_{C^0(\bar M)}$ 	%	$|\partial u|_{C^0(\partial M)}$
  	%$\partial M$ up to third derivatives 
  		and other known data under control,  
  	%	(but not on  $\sup_{M}|\partial u|$)
  	such that
  	\begin{equation}
  		\label{quanti-mix-derivative-00} 
  		|\mathfrak{g}_{\alpha \bar n}(p_0)|\leq C(1+\sup_{M}|\partial u|)\, \mbox{ for } p_0\in\partial M. %\nonumber
  	\end{equation}
  	%	for any $T\in T_{\partial M}$ with $|T|=1$,   	where  $\nabla^2 u$ denotes the real Hessian of $u$.
  \end{proposition}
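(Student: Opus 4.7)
The plan is to adapt the classical barrier method of Caffarelli--Nirenberg--Spruck \cite{CNS3} and Guan \cite{Guan1998The,Guan12a}, refined so that the third-order commutator error terms are tracked carefully in order to produce the linear dependence on $\sup_M |\partial u|$. Throughout, set $K := 1 + \sup_M |\partial u|$. I work in the local coordinates from \eqref{goodcoordinate1} centered at $p_0$, let $T$ denote any first-order differential operator tangent to $\partial M$ at $p_0$ (a suitable real combination of $\partial/\partial x_\alpha,\partial/\partial y_\alpha$ for $\alpha < n$ together with $\partial/\partial y_n$), and note that since $u = \varphi$ on $\partial M$ we have $T(u-\varphi) \equiv 0$ on $\partial M \cap \Omega_\delta$. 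Bounding $|\mathfrak{g}_{\alpha\bar n}(p_0)|$ reduces to bounding $|\partial_{x_n} T(u-\varphi)(p_0)|$ by $CK$.

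First I would set up the linearized operator $\mathcal{L} := F^{i\bar j} \nabla_i \nabla_{\bar j}$, where $F^{i\bar j}$ are the coefficients obtained from $f_i$ via the eigenframe of $\mathfrak{g}$. By \eqref{unbounded-1} together with Lemma \ref{lemma1-unbound-yield-elliptic}, the equation is elliptic ($F^{i\bar j}>0$), and \eqref{addistruc} with Lemma \ref{lemma3.4} yields $\sum_i F^{i\bar i} > 0$. Applying $T$ to \eqref{main-equ2} and using the Chern-connection commutation formulas \eqref{formula-1}, one obtains schematically
\begin{equation}
	F^{i\bar j}(Tu)_{i\bar j} \;=\; \psi_u \, Tu \,+\, T\psi \,-\, F^{i\bar j}(T\chi)_{i\bar j} \,+\, E, \nonumber
\end{equation}
where the error $E$ collects torsion and curvature contributions of $(\omega,\chi)$ applied to $\nabla u$ and $\nabla^2 u$ of order at most one in $\partial u$, hence $|E| \leq C\,K\,\bigl(1 + \sum_i F^{i\bar i}\bigr)$. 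Dividing by $K$ yields $|\mathcal{L}(K^{-1}T(u-\varphi))| \leq C\bigl(1 + \sum_i F^{i\bar i}\bigr)$.

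Next I would introduce, on $\Omega_\delta$, the barrier
\begin{equation}
	\Phi^\pm \;:=\; A_1 (u - \underline{u}) \,+\, A_2 |z|^2 \,-\, A_3 \sigma \,\pm\, K^{-1}\, T(u-\varphi), \nonumber
\end{equation}
with $1 \ll A_3 \ll A_2 \ll A_1$ and $\delta$ small (chosen after all other constants). On $\partial M \cap \bar\Omega_\delta$ we have $u = \underline{u} = \varphi$, $\sigma = 0$ and $T(u-\varphi) = 0$, so $\Phi^\pm = A_2 |z|^2 \geq 0$. On the spherical cap $\partial \Omega_\delta \cap M$, the term $A_2 |z|^2 \sim A_2 \delta^2$ dominates all others (using $u \geq \underline{u}$ in $M_\delta$ from \eqref{admifunction1-local} and the trivial bound $|K^{-1} T(u-\varphi)| \leq C$), so again $\Phi^\pm \geq 0$. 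For the interior inequality, concavity of $f$ together with admissibility of $\underline{u}$ and $|u|_{C^0} \leq C$ yield
\begin{equation}
	\mathcal{L}(u - \underline{u}) \;\leq\; \psi(z,u) - f(\lambda(\chi + \sqrt{-1}\partial\bar\partial \underline{u})) \;\leq\; C_0 \nonumber
\end{equation}
(here the pointwise bound on $\psi$ absorbs everything). Since $\mathcal{L}\sigma$, $\mathcal{L}|z|^2$ are controlled by $C\bigl(1 + \sum_i F^{i\bar i}\bigr)$, choosing $A_3$ large enough to dominate the error $C(1 + \sum F^{i\bar i})$ from both $A_2 |z|^2$ and $\pm K^{-1} T(u-\varphi)$, and then $A_1$ large to dominate the $+A_3$ contribution from $\mathcal{L}(-A_3\sigma)$ via $\mathcal{L}(u-\underline{u})$, gives $\mathcal{L}\Phi^\pm \leq 0$ in $\Omega_\delta$. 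Since $\Phi^\pm(p_0) = 0$, the maximum principle produces $\Phi^\pm \geq 0$ in $\Omega_\delta$, and Hopf's lemma at $p_0$ gives $\partial_{x_n} \Phi^\pm(p_0) \geq 0$; combining the two signs gives $|\partial_{x_n}T(u-\varphi)(p_0)| \leq C\,K$, which is \eqref{quanti-mix-derivative-00}.

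The hard part will be making the ordering of constants $A_1 \gg A_2 \gg A_3 \gg 1$ consistent with a controlled choice of $\delta$ so that simultaneously (a) $\Phi^\pm \geq 0$ on the cap $\partial \Omega_\delta \cap M$ (which favors large $A_2 \delta^2$ versus large $A_1 \cdot (u-\underline{u})$), and (b) $\mathcal{L}\Phi^\pm \leq 0$ in the interior, where the $-A_3 \sigma$ term must absorb error contributions coming from \emph{all} of the barrier pieces plus the $K^{-1}$-normalized third-order commutator. The delicate quantitative point is to avoid picking up an extra factor of $K$ anywhere: the factor $K^{-1}$ in front of $T(u-\varphi)$ is precisely what converts the a priori $K^2$-type dependence (which would arise if one used $\pm T(u-\varphi)$ directly) into the linear bound asserted in the proposition, and verifying this requires the careful error bound $|E| \leq CK(1 + \sum F^{i\bar i})$ rather than a worse quadratic one. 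A secondary technical issue is that \eqref{elliptic} is not assumed a priori, so the positivity of $\sum_i F^{i\bar i}$ (required throughout the barrier argument) must be extracted from \eqref{addistruc} via Lemma \ref{lemma3.4} at every step.
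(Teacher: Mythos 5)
Your proposal has a genuine gap at precisely the point you flag as "the hard part," and it is more serious than a bookkeeping issue in the constants.

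\textbf{The commutator error is second order, not first order.} When you apply $T=\partial_{t_\alpha}-\widetilde\eta\,\partial_{x_n}$ to \eqref{main-equ2}, the expression $F^{i\bar j}\bigl[(Tu)_{i\bar j}-T(u_{i\bar j})\bigr]$ picks up $-2\,\mathfrak{Re}\bigl(F^{i\bar j}\widetilde\eta_i\,(u_{x_n})_{\bar j}\bigr)$ plus lower order terms: the middle factor involves $\nabla^2 u$ and is \emph{not} estimable by $CK(1+\sum_i F^{i\bar i})$. Even after dividing by $K$, what you would need is a bound of the shape $\sum_i f_i|\lambda_i|\le CK\sum_i f_i$, and this is false: the concavity inequality only yields $0\le\sum_i f_i\lambda_i\le C\sum_i f_i$ (no absolute values), and $|\lambda_i|$ carries no first-derivative information. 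In the paper's proof this term is absorbed by \emph{adding} the quadratic auxiliary pieces $\frac{1}{\sqrt{b_1}}(u_{y_n}-\varphi_{y_n})^2$ and $\frac{1}{\sqrt{b_1}}\sum_{\tau<n}|(u-\varphi)_\tau|^2$ to the barrier, because $\mathcal{L}$ applied to those produces the positive quadratic form $\frac14\sum_{i\neq r}f_i\lambda_i^2$, and then the sharp Cauchy--Schwarz step \eqref{flambda-0} converts $\sum f_i|\lambda_i|$ into a multiple of that quadratic form plus $C\sqrt{b_1}\sum f_i$. Your barrier $\Phi^{\pm}$ has no such piece, so the bad term has nowhere to go.

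\textbf{The sign structure of your barrier cannot yield $\mathcal{L}\Phi^{\pm}\le 0$.} You have $+A_2|z|^2$, which contributes $+A_2\sum_i F^{i\bar i}$ to $\mathcal{L}\Phi^{\pm}$, a large \emph{wrong-sign} term; the paper instead puts $-A_2\sqrt{b_1}|z|^2$ in $\widetilde\Psi$ (and wants $\mathcal{L}\widetilde\Psi\ge 0$). More fundamentally, your mechanism for producing a compensating negative term is $A_1\mathcal{L}(u-\underline{u})$; but $\underline{u}$ is only admissible, not a subsolution, so the concavity inequality only gives $\mathcal{L}(u-\underline{u})\le\psi(z,u)-f(\underline\lambda)\le C_0$, where $C_0$ may be positive. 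There is no sign to exploit there. The paper's proof instead treats $\underline{u}$ as a $\mathcal{C}$-subsolution (valid thanks to \eqref{unbounded-1}) and invokes Lemma~\ref{guan2014} to split into two cases: when $|\lambda|\ge R_0$, either $\mathcal{L}(\underline u-u)\ge\varepsilon\sum F^{i\bar i}$ (which gives the needed negative contribution), or $F^{i\bar j}\ge\varepsilon(\sum F^{p\bar q}g_{p\bar q})g^{i\bar j}$ (uniform ellipticity), in which case the term $2A_1N\sqrt{b_1}F^{i\bar j}\sigma_i\sigma_{\bar j}$ coming from the $N\sigma^2$ piece of $w$ dominates. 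Your barrier has no $N\sigma^2$-type piece, so even if you added the case split you would have nothing to use in the second case. Both the auxiliary quadratic terms and the Guan--Sz\'ekelyhidi dichotomy are load-bearing in the paper's argument and are absent here.
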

  
 % The estimate of this type was proved by \cite{Boucksom2012} for complex Monge-Amp\`ere equation, complement to \cite{Chen,Phong-Sturm2010}. More recently it was extended by   \cite{Collins2019Picard} to complex $k$-Hessian equations, and further by    \cite{yuan-regular-DP} to general cases. 

\subsection{Preliminaries}

%\subsubsection{Linearized operator}
Denote $F(\chi+\sqrt{-1}\partial\overline{\partial}u):= f(\lambda(\chi+\sqrt{-1}\partial\overline{\partial}u)).$
The linearized operator of   \eqref{main-equ2} at $u$, 
say $\mathcal{L}$,
 is  locally 
 given by
\begin{equation}
	\mathcal{L} v=F^{i\bar j} v_{i\bar j}  \nonumber
\end{equation}
where $F^{i\bar j}=\frac{\partial F(\mathfrak{g})}{\partial \mathfrak{g}_{i\bar j}},$
$\mathfrak{g}_{i\bar j}=\chi_{i\bar j}+u_{i\bar j}.$
%One knows the eigenvalues of $F^{i\bar j}$ (w.r.t. $g^{i\bar j}$)  are precisely $f_1(\lambda), \cdots, f_n(\lambda),$ where  $ \lambda=\lambda(\mathfrak{g}).$ 
Moreover, we denote $ {\lambda}=\lambda (\mathfrak{{g}})$ and 
\begin{align*} 
	\mathfrak{\underline{g}}_{i\bar j}=\,&
	\chi_{i\bar j} + \underline{u}_{i\bar j},  \,\,  
	\underline{\lambda}=\lambda (\mathfrak{\underline{g}}). 
\end{align*}  
We have standard identities $F^{i\bar j}\mathfrak{g}_{i\bar j}=\sum_{i=1}^n f_i(\lambda)\lambda_i, \,  F^{i\bar j}g_{i\bar j}=\sum_{i=1}^n f_i(\lambda).$ 
   %$F^{i\bar j}\mathfrak{g}_{i\bar j}=\sum_{i=1}^n f_i(\lambda)\lambda_i,$ $ F^{i\bar j}g_{i\bar j}=\sum_{i=1}^n f_i(\lambda).$

%\subsubsection*{\bf Setup}
Let $\underline{u}$ be the local admissible function  given by \eqref{admifunction1-local}.
Since $f$ satisfies the unbounded condition \eqref{unbounded-1},  
%under the control of $\sup_M|u|$,
we can check that $\underline{u}$ 
is a local $\mathcal{C}$-subsolution,
introduced by \cite{Guan12a} and \cite{Gabor},   of \eqref{main-equ2} near boundary.
  %(see \cite[Definition 1]{Gabor}).
% (see \cite[Definition 1]{Gabor};   when $\Gamma$ is of type 1, as noted in \cite{Guan-Dirichlet} the $\mathcal{C}$-subsolution is equivalent to  a subsolution introduced in \cite{Guan12a}). 
This allows us to apply the following lemma, 
%which is an analogue of key lemmas proved in earlier work of Guan  \cite{Guan12a,Guan-Dirichlet}.
which is %an analogue of 
a refinement of
\cite[Theorem  2.18]{Guan12a}. 
We also refer to \cite{Guan-Dirichlet} for more analogue result.
%When $\Gamma$ is of type 1, the $\mathcal{C}$-subsolution is equivalent to  a subsolution introduced in \cite{Guan12a}.
%\cite[Theorems 2.16, 2.17]{Guan12a}.
%Applying Sz\'{e}kelyhidi's result  to equation \eqref{main-equ2}, one has 
\begin{lemma}
	[\cite{Gabor}]
	\label{guan2014}
	%Suppose  that there exists a $\mathcal{C}$-subsolution $\underline{u}\in C^{2}(\bar M )$.
	There exist positive constants $R_0$, $\varepsilon$ 
	%with the following property: If $|\lambda|\geq R_0$, then we either have
	such that if $|\lambda|\geq R_0$  then we either have
	\begin{equation}
		\label{gabor-case1}
		\begin{aligned}
			F^{i\bar j}(\underline{\mathfrak{g}}_{i\bar j}-\mathfrak{g}_{i\bar j})
			\geq \varepsilon F^{i\bar j}g_{i\bar j}  \nonumber
		\end{aligned}
	\end{equation}
	or
	\begin{equation}
		\label{gabor-case2}
		\begin{aligned}
			F^{i\bar j}\geq \varepsilon (F^{p\bar q}g_{p\bar q})g^{i\bar j}.
			\nonumber
		\end{aligned}
	\end{equation}
\end{lemma}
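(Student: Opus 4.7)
My plan is to reduce the statement to a pointwise eigenvalue inequality via the Davis--Rado concavity of $F$ on Hermitian matrices, and then to close the dichotomy using the $\mathcal{C}$-subsolution property of $\underline{u}$. At the base point I would choose a unitary frame diagonalizing $\mathfrak{g}$, so that $\mathfrak{g}=\mathrm{diag}(\lambda_1,\ldots,\lambda_n)$ with $\lambda_1\le\cdots\le\lambda_n$ and $F^{i\bar j}=f_i\delta_{ij}$ with $f_1\ge\cdots\ge f_n$ (using symmetric concavity of $f$). Since $f_n$ is the smallest derivative, conclusion (B) is equivalent to the scalar inequality $f_n\ge\varepsilon\sum_j f_j$, and the full lemma reduces to the implication ``$f_n<\varepsilon\sum_j f_j\ \Longrightarrow\ \sum_i f_i(\underline{\mathfrak{g}}_{i\bar i}-\lambda_i)\ge\varepsilon\sum_i f_i$'' for fixed $\varepsilon>0$ and all $|\lambda|\ge R_0$.

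The key ingredient is that admissibility of $\underline u$ combined with the unbounded condition \eqref{unbounded-1} promotes $\underline u$ to a $\mathcal{C}$-subsolution: the set $(\underline\lambda+\Gamma_n)\cap\partial\Gamma^{\psi}$ is compact at every base point, uniformly on the boundary strip. Quantitatively this provides $\delta_0, T_0, \eta>0$ (depending only on $\underline u$, $\omega$, and $\sup \psi$) such that the Hermitian matrix
\[
\mathfrak{g}_0:=\underline{\mathfrak{g}}-\delta_0 I+T_0\,e_n\otimes e_n,
\]
written in the frame of $\mathfrak{g}$, satisfies $f(\lambda(\mathfrak{g}_0))\ge \psi+\eta$ uniformly. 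Applying concavity of $F$ at $\mathfrak{g}$ gives
\[
\psi+\eta\ \le\ F(\mathfrak{g}_0)\ \le\ F(\mathfrak{g})+F^{i\bar j}\bigl((\mathfrak{g}_0)_{i\bar j}-\mathfrak{g}_{i\bar j}\bigr)\ =\ \psi+\sum_i f_i\bigl(\underline{\mathfrak{g}}_{i\bar i}-\delta_0+T_0\delta_{in}-\lambda_i\bigr),
\]
which, after rearrangement, yields the master inequality
\[
\sum_i f_i(\underline{\mathfrak{g}}_{i\bar i}-\lambda_i)\ \ge\ \eta+\delta_0\sum_i f_i-T_0 f_n .
\]

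From here the dichotomy closes mechanically: with $\varepsilon:=\delta_0/(2T_0)$, failure of (B) means $T_0 f_n<(\delta_0/2)\sum_i f_i$, and the master inequality gives $\sum_i f_i(\underline{\mathfrak{g}}_{i\bar i}-\lambda_i)\ge (\delta_0/2)\sum_i f_i\ge\varepsilon\sum_i f_i$, which is (A). The threshold $|\lambda|\ge R_0$ enters only to guarantee that $\lambda$ lies in the asymptotic regime where the $\mathcal{C}$-subsolution constants apply uniformly; convexity of $\Gamma^{\psi}$ produces such an $R_0$.

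The main obstacle, as I see it, is the first step: extracting the uniform quantitative $\mathcal{C}$-subsolution data $(\delta_0, T_0, \eta)$ from the qualitative hypotheses ``$\underline u$ is admissible'' and ``$f$ satisfies \eqref{unbounded-1}''. The unbounded condition is what rules out $\partial\Gamma^{\psi}$ having an asymptote parallel to $e_n$, thereby ensuring that a finite shift $T_0 e_n$ truly pushes $\underline\lambda$ into $\Gamma^{\psi+\eta}$; the uniformity in the base point follows from compactness of $\partial M$ and continuity of $\underline\lambda$ in the base point. Once this is in hand, the concavity computation above finishes the proof in a few lines.
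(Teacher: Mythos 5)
Your proposal is a genuine proof attempt of a lemma the paper itself does not prove — it is imported verbatim from S\'ekelyhidi (our reference \cite{Gabor}, refining \cite[Theorem 2.18]{Guan12a}), with the paper only remarking that \eqref{unbounded-1} plus admissibility of $\underline u$ makes $\underline u$ a local $\mathcal C$-subsolution. Your strategy (diagonalize $\mathfrak g$, apply concavity of $F$ on Hermitian matrices against the shifted matrix $\mathfrak g_0=\underline{\mathfrak g}-\delta_0 I+T_0\,e_n e_n^*$, split the dichotomy on the size of $f_n$) is precisely the argument those sources use; the algebra in your ``master inequality'' and the final split with $\varepsilon=\delta_0/(2T_0)$ are correct, and the reduction of case (B) to ``$f_n\ge\varepsilon\sum_j f_j$'' in a unitary frame is also correct.

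There is one step you wave at and should not: you assert that the $\mathcal C$-subsolution data $(\delta_0,T_0,\eta)$ — which is a statement about \emph{eigenvalue vectors}, namely $f(\underline\lambda-\delta_0\vec{\bf 1}+T_0 e_i)\ge\sigma+\eta$ for coordinate shifts — directly yields $f(\lambda(\mathfrak g_0))\ge\psi+\eta$ for the Hermitian matrix $\mathfrak g_0=\underline{\mathfrak g}-\delta_0 I+T_0\,e_n e_n^*$. This is not automatic, because $e_n$ is an eigenvector of $\mathfrak g$, not of $\underline{\mathfrak g}$, so $\lambda(\mathfrak g_0)$ is \emph{not} a coordinate shift of $\underline\lambda$. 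The gap is fillable but needs an explicit bridge: by Weyl monotonicity, $\mathfrak g_0\ge\underline{\mathfrak g}-\delta_0 I$ as Hermitian matrices gives $\lambda_i(\mathfrak g_0)\ge\underline\lambda_i-\delta_0$ for every $i$, and the diagonal entry bound gives $\lambda_n(\mathfrak g_0)\ge\underline{\mathfrak g}_{n\bar n}-\delta_0+T_0\ge T_0-C$, so $\lambda(\mathfrak g_0)$ dominates coordinate-wise the vector $(\underline\lambda_1-\delta_0,\dots,\underline\lambda_{n-1}-\delta_0,\,T_0-C)$; then monotonicity of $f$ in each slot (Lemma~\ref{lemma1-unbound-yield-elliptic}, which is where \eqref{unbounded-1} enters) and \eqref{unbounded-1} itself push $f(\lambda(\mathfrak g_0))$ above any prescribed level for $T_0$ large, uniformly over the compact range of $\underline\lambda$ on the boundary strip. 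With that insertion your write-up is a complete proof and matches the route of the cited source; the ``$+\eta$'' is not even needed once you replace it with the weaker bound $F(\mathfrak g_0)\ge F(\mathfrak g)$, which is all the master inequality consumes.
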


%\begin{remark} 
%	 When $\Gamma$ is of type 1, %as noted by \cite{Guan-Dirichlet} 
%	  $\mathcal{C}$-subsolution is equivalent to the  enlarged subsolution   introduced in \cite{Guan12a}. As a contrast,  %significantly  
%	 when $\Gamma$ is of type 2, any $f$ satisfying \eqref{addistruc} is of  uniform ellipticity in $\Gamma$ according to Lemma \ref{lemma5.11}. \end{remark}

Let $\underline{u}$ be the local admissible function near boundary as  in \eqref{admifunction1-local}.
%The boundary value condition gives
Then
\begin{equation}
	\label{yuan3-buchong5}
	\begin{aligned}
		% \,&  u_\alpha(0)=\underline{u}_\alpha(0), \,&
		u_{\alpha\bar\beta}(0)=\underline{u}_{\alpha\bar\beta}(0)+(u-\underline{u})_{x_n}(0)\sigma_{\alpha\bar\beta}(0).
	\end{aligned}
\end{equation}
Also this gives the bound of
second estimates for pure tangential derivatives
\begin{equation}
	\label{ineq2-bdy}
	\begin{aligned}
		|u_{\alpha\bar\beta}(0)|\leq C.
	\end{aligned}
\end{equation}

  \subsection{Double normal derivative case}

    We assume that $\Gamma$ is of type 1. Then $\Gamma_\infty$ is a symmetric convex cone as noted in \cite{CNS3}. 
    %(The case of type 2 cone is much more simpler since $\Gamma_\infty=\mathbb{R}^{n-1}$,
    % by Lemma \ref{lemma5.11}, 
  %  see Proposition \ref{RK6.3}). 
  (For the type 2 case, see Proposition \ref{RK6.3}).
  
  At $p_0$ ($z=0$),  by \eqref{yuan3-buchong5} we have 
 % \begin{equation} 	\begin{aligned} 	\mathfrak{g}_{\alpha\bar\beta}=\underline{\mathfrak{g}}_{\alpha\bar\beta}+ (u-\underline{u})_{x_n} \sigma_{\alpha\bar\beta}. \nonumber 	\end{aligned} \end{equation}
%  We rewrite %%$u_{\alpha\bar\beta}+\chi_{\alpha\bar\beta}$
 % $\mathfrak{g}_{\alpha\bar\beta}$ as follows:
  \begin{equation}
  	\label{410-buchong}
  	\begin{aligned}
  		\mathfrak{g}_{\alpha\bar\beta}= (1-t)\underline{\mathfrak{g}}_{\alpha\bar\beta}
  		+\{t\underline{\mathfrak{g}}_{\alpha\bar\beta}+ (u-\underline{u})_{x_n}\sigma_{\alpha\bar\beta}\}.
  	\end{aligned}
  \end{equation}
  For simplicity, %as in \cite{CNS3} also as in \cite{LiSY2004}, 
  we denote
  \begin{equation}
  	\label{A_t}
  	\begin{aligned}
  		A_t=\sqrt{-1} \left[t\underline{\mathfrak{g}}_{\alpha\bar\beta}+ (u-\underline{u})_{x_n}\sigma_{\alpha\bar\beta}\right]dz_\alpha\wedge d\bar z_\beta.
  	\end{aligned}
  \end{equation}
  Clearly, from \eqref{410-buchong}
  $(A_{1})_{\alpha\bar\beta}=\mathfrak{g}_{\alpha\bar\beta}$. 
  %so  $\lambda_{\omega'}(A_1)\in\Gamma_\infty$. 
  %for any $t>1$ and
    Let $t_0$ be the first  
  $t$ as we decrease $t$ from $1$  
  such that
  \begin{equation}
  	\label{key0-yuan3}
  	\begin{aligned}
  		% \mathfrak{g}_{\alpha\bar\beta}=(1-t_0) \underline{\mathfrak{g}}_{\alpha\bar\beta}+(A_{t_0})_{\alpha\bar\beta} \mbox{ and }
  		\lambda_{\omega'}(A_{t_0})\in\partial\Gamma_\infty.
  	\end{aligned}
  \end{equation}
Henceforth, $\lambda_{\omega'}(\chi')$ denotes the eigenvalues of $\chi'$ with respect to $\omega'=\sqrt{-1}g_{\alpha\bar\beta} dz_\alpha\wedge d\bar z_\beta.$
  Such $t_0$ exists, since %Clearly, %$(A_{1})_{\alpha\bar\beta}={u}_{\alpha\bar\beta}+\chi_{\alpha\bar\beta}$ so 
  $\lambda_{\omega'}(A_1)\in\Gamma_\infty$ and
  $\lambda_{\omega'}(A_t)\in \mathbb{R}^{n-1}\setminus\Gamma_\infty$ for $t\ll -1$. Furthermore, for a uniform positive constant $T_0$ under control,
  \begin{equation}
  	\label{1-yuan3}
  	\begin{aligned}
  		-T_0< t_0<1.
  	\end{aligned}
  \end{equation}
  
  Let   \begin{equation}
  	\label{underlambda-2}
  	\begin{aligned}
  		\underline{\lambda}'=\lambda_{\omega'}( \underline{\mathfrak{g}}_{\alpha\bar\beta}).
  	\end{aligned}
  \end{equation} 
Since $\underline{u}$ is admissible, there is $\varepsilon_0>0$ small such that
  \begin{equation} 
  	\label{varepsilon-pro0}
	\begin{aligned}
	 \underline{\lambda}-\varepsilon_0\vec{\bf 1}\in\Gamma.
	\end{aligned}
\end{equation}
  By the unbounded condition \eqref{unbounded-1}
  there is a uniform positive constant $R_1$ depending on $(1-t_0)^{-1}$,
  $\sup_{\partial M}\psi[u]$, %$(\delta_{\psi,f})^{-1}$, 
  $\varepsilon_0$ and $\underline{\lambda}'$
  such that  	 
  \begin{equation}
  	\label{key-03-yuan3}
  	\begin{aligned} 	f\left((1-t_0)(\underline{\lambda}'_{1}-{\varepsilon_0}/{2}),\cdots, (1-t_0)(\underline{\lambda}'_{n-1}
  		-{\varepsilon_0}/{2}), R_1\right)\geq \psi[u], 
  	\end{aligned}
  \end{equation}
and $(\underline{\lambda}'_1-\varepsilon_{0}, \cdots, \underline{\lambda}'_{n-1}-\varepsilon_0, {(1-t_0)^{-1}}{R_1})\in \Gamma.$

  \vspace{1mm}
  Following the idea from  \cite{CNS3}  (refined by \cite{LiSY2004} in complex variables), for such $t_0$
one can  prove  that 
%(cf.  \cite{yuan-regular-DP})
  \begin{lemma}%[\cite{yuan-regular-DP}]
  	\label{keylemma1-yuan3}
  	%Let $t_0$ be as defined in \eqref{key0-yuan3}, then 
  	There is a uniform positive constant  $C$ depending on 
  	$|u|_{C^0(M)}$, $|\partial u|_{C^0(\partial M)}$,  $\inf_M\psi[u]$,
  	%	$\sup_M\psi$,	$(\delta_{\psi,f})^{-1}$, 
  	$\partial M$ up to third derivatives and other known data, such that
  	\begin{equation}
  		%\label{bound-t0}
  		\begin{aligned}
  			(1-t_0)^{-1}\leq C.  \nonumber
  		\end{aligned}
  	\end{equation}

  \end{lemma}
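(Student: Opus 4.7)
The plan is a contradiction argument in the tradition of Caffarelli–Nirenberg–Spruck \cite{CNS3} and its complex refinement by Li \cite{LiSY2004}, adapted to accommodate the possible absence of full uniform ellipticity. Assume for contradiction that $1-t_0$ can be driven below any prescribed threshold along a sequence of boundary points. The central comparison vector is
\begin{equation}
\eta := \bigl((1-t_0)(\underline{\lambda}'_1 - \tfrac{\varepsilon_0}{2}),\, \ldots,\, (1-t_0)(\underline{\lambda}'_{n-1} - \tfrac{\varepsilon_0}{2}),\, R_1\bigr), \nonumber
\end{equation}
which lies in $\Gamma$ (by the cone property applied to $(\underline{\lambda}' - \varepsilon_0\vec{\bf 1},\, R_1/(1-t_0)) \in \Gamma$) and satisfies $f(\eta) \geq \psi[u]$ by \eqref{key-03-yuan3}. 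Inserting $\eta$ into the concavity inequality \eqref{concavity2} at the equation $f(\lambda(\mathfrak{g})) = \psi[u]$ reduces the lemma to proving strict negativity of the remainder $\sum_i f_i(\eta)(\lambda_i(\mathfrak{g}) - \eta_i)$ once $1-t_0$ is sufficiently small.

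Next, control $\lambda(\mathfrak{g})$ by its block structure via Lemma \ref{yuan's-quantitative-lemma}. The tangential entries $\mathfrak{g}_{\alpha\bar\beta}$ are a priori bounded by \eqref{ineq2-bdy}; a bounded $\mathfrak{g}_{n\bar n}$ already yields bounded $(1-t_0)^{-1}$ through the formula \eqref{yuan3-buchong5}, so we may assume $\mathfrak{g}_{n\bar n}$ dominates $\sum_\alpha|\mathfrak{g}_{\alpha\bar n}|^2 + \max_\alpha|\mathfrak{g}_{\alpha\bar\beta}|$. Lemma \ref{yuan's-quantitative-lemma} then approximates $\lambda_\alpha(\mathfrak{g})$ by $\lambda_\alpha(\mathfrak{g}_{\alpha\bar\beta})$ within any prescribed $\epsilon$ and places $\lambda_n(\mathfrak{g})$ in $[\mathfrak{g}_{n\bar n},\, \mathfrak{g}_{n\bar n}+(n-1)\epsilon]$. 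Since $\mathfrak{g}_{\alpha\bar\beta} = (1-t_0)\underline{\mathfrak{g}}_{\alpha\bar\beta} + (A_{t_0})_{\alpha\bar\beta}$, the eigenvalues $\lambda_\alpha(\mathfrak{g}_{\alpha\bar\beta})$ collapse onto those of $A_{t_0}$ as $1-t_0 \to 0$, and $\lambda_{\omega'}(A_{t_0}) \in \partial\Gamma_\infty$ by construction.

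The heart of the proof combines these approximations inside the concavity remainder. After absorbing $O(\epsilon) + O(1-t_0)$, the remainder essentially reduces to
\begin{equation}
\sum_{\alpha=1}^{n-1} f_\alpha(\eta)\,\lambda_\alpha(A_{t_0}) + f_n(\eta)\bigl(\mathfrak{g}_{n\bar n} - R_1\bigr). \nonumber
\end{equation}
The main obstacle is that the $n$-th term can be large and positive, and to close the contradiction one must dominate it by the tangential sum. Two ingredients are required: partial uniform ellipticity from Lemma \ref{yuan-k+1}, which for indices $\alpha \leq 1+\kappa_\Gamma$ forces $f_\alpha(\eta) \gtrsim \sum_i f_i(\eta)$ and thus outweighs $f_n(\eta)$; and the boundary membership $\lambda_{\omega'}(A_{t_0}) \in \partial\Gamma_\infty$, which furnishes a supporting hyperplane of $\Gamma$ along which the tangential sum acquires the required negative contribution at scale $\sum_i f_i(\eta)$. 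A careful tracking of the dependencies among $\varepsilon_0$, $R_1$, $\epsilon$ and the smallness threshold on $1-t_0$ then produces the contradiction, establishing the desired uniform bound $(1-t_0)^{-1}\leq C$.
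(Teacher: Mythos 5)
Your proposal takes a genuinely different route from the paper's proof, and it contains a fundamental gap. You are attempting a purely \emph{pointwise} algebraic argument at the single boundary point $p_0$: insert the comparison vector $\eta = ((1-t_0)(\underline{\lambda}'_1 - \varepsilon_0/2),\ldots, (1-t_0)(\underline{\lambda}'_{n-1} - \varepsilon_0/2), R_1)$ together with $\lambda(\mathfrak{g})(p_0)$ into the concavity inequality \eqref{concavity2}, and derive a contradiction from the sign of the remainder. This cannot work for two independent reasons. First, \eqref{concavity2} is oriented the wrong way for your plan: with $\lambda = \eta$ and $\mu = \lambda(\mathfrak{g})$ it gives $\psi[u] \leq f(\eta) + \sum_i f_i(\eta)(\lambda_i(\mathfrak{g}) - \eta_i)$, and since $f(\eta) \geq \psi[u]$ by \eqref{key-03-yuan3}, this provides only a \emph{lower} bound $\sum_i f_i(\eta)(\lambda_i(\mathfrak{g}) - \eta_i) \geq \psi[u] - f(\eta)$, whose right-hand side is $\leq 0$. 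Showing this remainder is strictly negative is therefore perfectly consistent with concavity and yields no contradiction whatsoever. Second, and more fundamentally, at a single boundary point there is no obstruction to $\lambda(\mathfrak{g}(p_0)) \in \Gamma$ when $1-t_0$ is small: the tangential block of $\mathfrak{g}(p_0)$ is then close to $\partial\Gamma_\infty$, but this is freely compensated by a large entry $\mathfrak{g}_{n\bar n}(p_0)$, which at this stage of the argument carries no a priori bound. Indeed your reduced remainder contains the term $f_n(\eta)(\mathfrak{g}_{n\bar n} - R_1)$, which can be arbitrarily large and positive; the partial uniform ellipticity inequality $f_\alpha(\eta) \gtrsim \sum_i f_i(\eta)$ does nothing to control an unbounded factor of $\mathfrak{g}_{n\bar n} - R_1$.

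The paper's proof of Lemma \ref{keylemma1-yuan3} is instead a neighborhood barrier argument over $\Omega_\delta$, following \cite{CNS3,LiSY2004}. From $\lambda_{\omega'}(A_{t_0}) \in \partial\Gamma_\infty$ one extracts, via \cite[Lemma 6.1]{CNS3}, nonnegative supporting-hyperplane coefficients $(\mu_\alpha)$ with $\sum_\alpha\mu_\alpha\tilde{\lambda}'_\alpha = 0$; combined with the admissibility of $\underline{u}$ this yields the crucial geometric sign $-\sum_\alpha\mu_\alpha\sigma_{\alpha\bar\alpha}(0) \geq a_1 > 0$. That sign drives the construction of the barrier $w$ in \eqref{w-buchong1}, for which Lemma \ref{lemma-key2-yuan3} gives $\Lambda_\mu(\mathfrak{g}[w]) \leq 0$ \emph{throughout} $\Omega_\delta$, and hence (by Lemma \ref{lemma-yuan3-buchong1}) $\lambda[w]$ stays in a compact set disjoint from $\bar\Gamma^{\inf_M\psi[u]}$. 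After a small perturbation to $h$, the maximum principle applied over the full ball $\Omega_\delta$ yields $u \leq h$, and comparison of normal derivatives at $p_0$ --- where $u$ and $h$ agree --- bounds $(u-\underline{u})_{x_n}(0)$ and therefore $(1-t_0)^{-1}$. It is exactly this passage through a comparison principle over an entire neighborhood of $p_0$ that your pointwise scheme is missing, and without it no a priori bound on $(1-t_0)^{-1}$ can be extracted.
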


%Below we  prove that the double normal derivative  of solution at the boundary can be dominated by quadratic power   of tangential-normal derivatives.
%The proof follows   closely  \cite{LiSY2004}; 
%$we shall omit the details to cut paper's length.  %Please refer to \cite{CNS3,LiSY2004} for more details. 
%for completeness  we will present the proof in Appendix \ref{proof-keylemma1}.

  Below we complete the proof of Proposition \ref{yuan-k2v}. And we leave the proof of Lemma \ref{keylemma1-yuan3} to the end of this subsection.
  
  \subsubsection{Proof of Proposition \ref{yuan-k2v}}
  %\begin{proof}
  %	 	[Proof of Proposition \ref{yuan-k2v}]
  	
   	 %The proof %follows the outline from
  	 %uses some idea from  \cite{yuan2017,yuan-regular-DP}.
  	 % (see also \cite{yuan2020PAMQ,yuan-n-1MA}). %word by word. 
 	%In the proof we %employ
  %	adapt some idea from  \cite{yuan2017,yuan-regular-DP}.
   The proof is based on Lemmas \ref{yuan's-quantitative-lemma} and \ref{lemma3.4}. %and assume the Greek letters $\alpha, \beta$ run from $1$ to $n-1$.
  	Let $$ {A}(R) =
  	\left( \begin{matrix}
  		\mathfrak{{g}}_{\alpha\bar \beta} &\mathfrak{g}_{\alpha\bar n}\\
  		\mathfrak{g}_{n\bar \beta}& R  \nonumber
  	\end{matrix}\right).$$  %Fix $\varepsilon_0$.
  	By %\eqref{A_t} 
  	\eqref{410-buchong} we can decompose $A(R)$ into
  	\begin{equation}
  		\begin{aligned}
  			{A}(R) = A'(R)+A''(R) 
  		\end{aligned}
  	\end{equation}
  	where % $\varepsilon_0$ is the constant from \eqref{key-002-yuan3},
  	\[A'(R)=\left(\begin{matrix}
  		(1-t_0)(\mathfrak{\underline{g}}_{\alpha\bar \beta}-\frac{\varepsilon_0}{4}\delta_{\alpha\beta}) &\mathfrak{g}_{\alpha\bar n}\\
  		\mathfrak{g}_{n\bar \beta}& R/2  \nonumber
  	\end{matrix}\right), \mbox{  }
  	%\]\[
  	A''(R)=\left(\begin{matrix} (A_{t_0})_{\alpha\bar \beta}+\frac{(1-t_0)\varepsilon_0}{4} \delta_{\alpha\beta} &0\\ 0& R/2  \end{matrix}\right).\]

We denote
  	\begin{equation}
  		\label{yuan3-buchong3}
  		\lambda_{\omega'}(A_{t_0}):=
  		\tilde{\lambda}'=(\tilde{\lambda}_1',\cdots,\tilde{\lambda}_{n-1}').
  	\end{equation}
  	One can see that there is a uniform constant $C_0>0$ depending on $|t_0|$,
  	$\sup_{\partial M}|\partial u|$ and other known data, such that $|\tilde{\lambda}'|\leq C_0$, that is
  	$\tilde{\lambda'}$ is contained in a compact subset of $\overline{\Gamma}_\infty$, i.e.,
  	\begin{equation}
  		\label{yuan3-buchong4v}
  		\begin{aligned}
  			\tilde{\lambda}'\in {K}:=\{\lambda'\in \overline{\Gamma}_\infty: |\lambda'|\leq C_0\}.  \nonumber
  		\end{aligned}
  	\end{equation}
  	Thus  there is a 
  	uniform positive constant $R_2$ depending on $((1-t_0)\varepsilon_0)^{-1}$, $K$ and other known data, 
  	%$|t_0|$ and other known data
  	such that  
  	\begin{equation}
  		\label{at0-1}
  		\begin{aligned}
  			\lambda(A''(R))\in\Gamma, \mbox{ } \forall R>R_2.
  		\end{aligned}
  	\end{equation}

  	Let's pick  $\epsilon=\frac{(1-t_0)\varepsilon_0}{4}$ in 
  	Lemma  \ref{yuan's-quantitative-lemma}, then 
  	as in \cite{yuan-regular-DP} we set
  	\begin{equation}
  		\begin{aligned}
  		 R_c= \,&
  		  \frac{8(2n-3)}{(1-t_0)\varepsilon_0}\sum_{\alpha=1}^{n-1} | \mathfrak{g}_{\alpha\bar n}|^2 	
  		 %+\frac{ (2n^2-4n+1)(1-t_0)\varepsilon_0}{2(2n-3)}	
  		 	% +\frac{(2n^3-7n^2+9n-5)(1-t_0)\varepsilon_0}{2(2n-3)}		  
  		 + 2(n-1) (1-t_0) \sum_{\alpha=1}^{n-1}|{\underline{\lambda}_\alpha'}|
	 \\\,&	
  		 		 +\frac{n(n-1)(1-t_0)\varepsilon_0}{2}
  		 	+2R_1+2R_2  \nonumber
  			%	R_c	= \,& \frac{8(2n-3)}{(1-t_0)\varepsilon_0}\sum_{\alpha=1}^{n-1} | \mathfrak{g}_{\alpha\bar n}|^2 		+\frac{(2n^3-7n^2+9n-5)(1-t_0)\varepsilon_0}{2(2n-3)}		\\ \,&	+ 2(n-1) (1-t_0) |{\underline{\lambda}'}|	+2R_1+2R_2,  \nonumber
  		\end{aligned}
  	\end{equation}
  	where $\varepsilon_0$, $R_1$ and $R_2$ are the constants as we %have chosen 
  	fixed in \eqref{key-03-yuan3} and \eqref{at0-1}.
  	
  	According to Lemma \ref{yuan's-quantitative-lemma},
  	the eigenvalues $\lambda({A}'(R_c))$ of ${A}'(R_c)$
  	%(possibly with an order)    (possibly with a proper permutation)
  	%(possibly with an appropriate order)
  	shall behave like
  	\begin{equation}
  		\label{lemma12-yuan}
  		\begin{aligned}
  			\,& \lambda_{\alpha}({A}'(R_c))\geq (1-t_0)(\underline{\lambda}'_{\alpha}-\frac{\varepsilon_0}{2}), \mbox{  } 1\leq \alpha\leq n-1, \\\,&
  			\lambda_{n}({A}'(R_c))\geq R_c/2-(n-1)(1-t_0)\varepsilon_0/4.
  		\end{aligned}
  	\end{equation}
  	In particular, $\lambda({A}'(R_c))\in \Gamma$. So $\lambda(A(R_c))\in \Gamma$. 
  	%  where  $\vec{\bf 1}_{n-1}=(1,\cdots,1)\in \mathbb{R}^{n-1}$ is the projection of $\vec{\bf1}$ in $\mathbb{R}^{n}.$
  	%
  	%Next we will use Lemma \ref{lemma3.4}. %\eqref{key-01-yuan3} and
  	%Precisely, 
  	Together with %\eqref{at0-1}  and 
  	\eqref{concavity2},
  	%\eqref{addistruc-1} gives
  	Lemma \ref{lemma3.4} yields that
  	\begin{equation}
  		\label{yuan-k1}
  		\begin{aligned}
  			f(\lambda(A(R_c)))\geq f(\lambda(A'(R_c))). 
  		\end{aligned}
  	\end{equation}
  	From \eqref{key-03-yuan3}, \eqref{lemma12-yuan} and \eqref{yuan-k1}, we deduce
  	  $\mathfrak{g}_{n\bar n} \leq R_c.$  
  %	\begin{equation}	\label{yuan-k2}	\begin{aligned}		\mathfrak{g}_{n\bar n} \leq R_c.   \nonumber\end{aligned}	\end{equation}
  	
 % \end{proof}

\begin{proposition}
	\label{RK6.3}
	 When $f$ is of uniform ellipticity, 
	% \begin{equation}
	 %	%	\label{uniform-elliptic-2}
	 %	\begin{aligned}	f_i(\lambda)\geq \theta \sum_{j=1}^n f_j(\lambda)>0      	\mbox{ in } \Gamma,	\,\, \forall 1\leq i\leq n,  \nonumber 	\end{aligned} \end{equation}
	 we have a more delicate %quantitative boundary 
	 estimate
	 \begin{equation}
	 	\label{QBE-2}
	 	\begin{aligned}
	 		\mathfrak{g}_{n\bar n}(p)\leq C\left(1+\sum_{\alpha=1}^{n-1}|\mathfrak{g}_{\alpha\bar n}(p)|\right), \,\, \forall p\in\partial M.  
	 	\end{aligned}
	 \end{equation}
 \end{proposition}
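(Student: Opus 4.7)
The plan is to exploit the matrix form of the concavity of $f$ with a carefully chosen diagonal reference matrix $B$, and to use uniform ellipticity to control the mixed entries $\mathfrak{g}_{\alpha\bar n}$ linearly rather than quadratically. First I would pick local complex coordinates centered at $p_0$ as in \eqref{goodcoordinate1}, keeping $\partial_{x_n}$ as the inner normal, and simultaneously choose the tangential coordinates so that the tangential block $(\mathfrak{g}_{\alpha\bar\beta}(p_0))_{1\leq\alpha,\beta\leq n-1}$ is diagonalized. By Lemma \ref{lemma5.11}, the uniform ellipticity assumption is equivalent to $\Gamma$ being of type 2, namely $\Gamma_\infty=\mathbb{R}^{n-1}$; and together with $\sup_\Gamma f=+\infty$, Lemma \ref{lemma1-unbound-type2} gives the unbounded condition \eqref{unbounded-1}. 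Since $|\mathfrak{g}_{\alpha\bar\alpha}(p_0)|$ is already controlled by \eqref{ineq2-bdy}, this lets me pick a \emph{uniform} constant $R>0$ (depending only on controlled data and $\sup\psi$) so that the diagonal Hermitian matrix
\[
B := \mathrm{diag}\bigl(\mathfrak{g}_{1\bar 1}(p_0),\, \ldots,\, \mathfrak{g}_{n-1,\overline{n-1}}(p_0),\, R\bigr)
\]
satisfies $\lambda(B)\in\Gamma$ and $f(\lambda(B))\geq \psi[u](p_0)$.

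Next I would apply the standard matrix form of concavity,
\[
f(\lambda(B)) - f(\lambda(\mathfrak{g})) \;\leq\; F^{i\bar j}(\mathfrak{g})\,(B-\mathfrak{g})_{i\bar j},
\]
together with $f(\lambda(\mathfrak{g}))=\psi$ and $f(\lambda(B))\geq\psi$, to conclude that $F^{i\bar j}(\mathfrak{g})(B-\mathfrak{g})_{i\bar j}\geq 0$. By our choice of $B$ and of coordinates, $(B-\mathfrak{g})_{\alpha\bar\alpha}=0$ and $(B-\mathfrak{g})_{\alpha\bar\beta}=-\mathfrak{g}_{\alpha\bar\beta}=0$ for $\alpha\neq\beta$ with $\alpha,\beta\leq n-1$; the only surviving contributions come from $(B-\mathfrak{g})_{n\bar n}=R-\mathfrak{g}_{n\bar n}$ and the mixed entries $(B-\mathfrak{g})_{\alpha\bar n}=-\mathfrak{g}_{\alpha\bar n}$. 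After expanding and using $F^{n\bar\alpha}\mathfrak{g}_{n\bar\alpha}+F^{\alpha\bar n}\mathfrak{g}_{\alpha\bar n}=2\Re(F^{\alpha\bar n}\mathfrak{g}_{\alpha\bar n})$, the inequality reduces to
\[
F^{n\bar n}(\mathfrak{g})\,(\mathfrak{g}_{n\bar n}-R) \;\leq\; 2\sum_{\alpha=1}^{n-1} |F^{\alpha\bar n}(\mathfrak{g})|\,|\mathfrak{g}_{\alpha\bar n}|.
\]

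Finally I would use the positivity of the Hermitian matrix $(F^{i\bar j}(\mathfrak{g}))$ (ellipticity at $\mathfrak{g}$) to obtain $|F^{\alpha\bar n}|^2\leq F^{\alpha\bar\alpha}F^{n\bar n}$, hence by AM--GM $|F^{\alpha\bar n}|\leq\tfrac12(F^{\alpha\bar\alpha}+F^{n\bar n})$. Dividing by $F^{n\bar n}>0$ and invoking the uniform ellipticity bound $F^{\alpha\bar\alpha}/F^{n\bar n}\leq 1/\theta$ (which follows because in any orthonormal basis the diagonal entries of $F^{i\bar j}$ are convex combinations of $f_1,\ldots,f_n$, whose ratios are bounded by $1/\theta$ via \eqref{uniform-elliptic-2}) produces
\[
\mathfrak{g}_{n\bar n}(p_0) \;\leq\; R + \bigl(1+\tfrac{1}{\theta}\bigr)\sum_{\alpha=1}^{n-1}|\mathfrak{g}_{\alpha\bar n}(p_0)| \;\leq\; C\Bigl(1+\sum_{\alpha=1}^{n-1}|\mathfrak{g}_{\alpha\bar n}(p_0)|\Bigr),
\]
which is precisely \eqref{QBE-2}. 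The main obstacle is the first step: securing $f(\lambda(B))\geq\psi$ with a \emph{uniform} $R$ and diagonal entries equal to $\mathfrak{g}_{\alpha\bar\alpha}(p_0)$ themselves (not a shifted version). This is exactly where the type 2 hypothesis is indispensable—$\Gamma_\infty=\mathbb{R}^{n-1}$ allows arbitrary tangential slots, and the unbounded condition then makes $R$ finite. If $\Gamma$ were of type 1, no such clean $B$ exists, and one is forced back into the quantitative eigenvalue analysis of Lemma \ref{yuan's-quantitative-lemma}, whose error term $\tfrac{2n-3}{\epsilon}\sum|\mathfrak{g}_{\alpha\bar n}|^2$ is exactly what produces the quadratic dependence in Proposition \ref{yuan-k2v}.
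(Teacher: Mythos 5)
Your argument is correct, and its core matches the paper's: one applies the matrix form of the concavity inequality at $\mathfrak{g}(p_0)$ against a fixed reference Hermitian matrix, and then divides by $F^{n\bar n}$ using the uniform-ellipticity input $F^{n\bar n}\geq\theta\sum_i F^{i\bar i}$ together with $|F^{\alpha\bar n}|\le\frac12(F^{\alpha\bar\alpha}+F^{n\bar n})$. The only real divergence is the choice of reference matrix, and that choice carries the hypotheses. You diagonalize the tangential block of $\mathfrak{g}(p_0)$ and take $B=\mathrm{diag}(\mathfrak{g}_{1\bar1},\dots,\mathfrak{g}_{n-1,\overline{n-1}},R)$, which makes the tangential contributions $(B-\mathfrak{g})_{\alpha\bar\beta}$ vanish exactly; but to place $B$ in $\Gamma$ with a uniform $R$ you are forced to invoke $\Gamma_\infty=\mathbb{R}^{n-1}$ via Lemma \ref{lemma5.11} and the unbounded condition via Lemma \ref{lemma1-unbound-type2}, which in turn brings in $\sup_\Gamma f=+\infty$. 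The paper instead tests against $C_{\sup\psi[u]}\vec{\bf 1}$ with $f(C_{\sup\psi[u]}\vec{\bf 1})=\sup_M\psi[u]$; this point lies in $\Gamma_n\subseteq\Gamma$ for any cone, so no type-$2$ or unboundedness hypothesis is needed at this step, the only cost being that the tangential terms $F^{\alpha\bar\beta}\mathfrak{g}_{\alpha\bar\beta}$ survive in the concavity inequality and must then be absorbed using the pure-tangential bound \eqref{ineq2-bdy} together with $|F^{\alpha\bar\beta}|\le\frac12(F^{\alpha\bar\alpha}+F^{\beta\bar\beta})$. Both roads give \eqref{QBE-2} with essentially the same constant; the paper's reference matrix is a bit leaner in hypotheses, while your variant more explicitly isolates the role of the type-$2$ cone, and your closing remark correctly identifies why the same construction cannot be carried out in the type-$1$ case.
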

 \begin{proof} %[Sketch of the proof]
 %	[Proof of Proposition \ref{RK6.3}]
 	 Fix $p_0\in\partial M$.  
 	We assume 
 	%$\mathfrak{g}_{nn}(x_0)\geqslant c_{\psi[u](x_0)}$
 	$\mathfrak{g}_{nn}(p_0)\geq  1$.
 	%(otherwise we are done).
From \eqref{uniform-elliptic-2}, 
 	$F^{n\bar n}\geq  \theta  F^{i\bar j}g_{i\bar j}. $
 	%\end{equation}
 	Let $C_{\sup\psi[u]}$ be the positive constant with
 	\begin{equation}
 		\label{concve-inequ2}
 		f(C_{\sup\psi[u]}\vec{\bf 1})=\sup_{z\in M}\psi[u](z).  
 	\end{equation} 
The concavity yields 
$	0\geq   		%\psi[u]  (p_0)		-F(C_{\sup\psi[u]}g)	\geq  	
 				F^{i\bar j} (\mathfrak{g}_{i\bar j}-C_{\sup\psi[u]}\delta_{ij}).  $
 %	\begin{equation}	\begin{aligned} 		0\geq   		\psi[u] %(p_0)
% 		-F(C_{\sup\psi[u]}g)	\geq  			F^{i\bar j} (\mathfrak{g}_{i\bar j}-C_{\sup\psi[u]}\delta_{ij}).  
 			%\\ 	\geq  \,&  	-\sum_{i+j<2n}|\mathfrak{g}_{i\bar j}|\sum_{i=1}^n f_i (\lambda)+  (\theta\mathfrak{g}_{n\bar n}-C_{\sup\psi[u]})  \sum_{i=1}^n f_i(\lambda).  
 		%	\nonumber		\end{aligned}	\end{equation}
Together with \eqref{ineq2-bdy}, we get \eqref{QBE-2}.
%$\mathfrak{g}_{n\bar n}(x_0) \leqslant C(1+\sum_{\alpha=1}^{n-1}|\mathfrak{g}_{\alpha\bar n}|).$  
 	\end{proof}

%\medskip

 \subsubsection{Proof of  Lemma \ref{keylemma1-yuan3}} \label{proof-keylemma1}

%\begin{proof} [Proof of  Lemma \ref{keylemma1-yuan3}]
	
We follow closely  \cite{LiSY2004}.
We assume that $\Gamma$ is of type 1. Then $\Gamma_\infty$ is a symmetric convex cone as noted by \cite{CNS3}. 
(The case of type 2 cone is much more simpler since $\Gamma_\infty=\mathbb{R}^{n-1}$). 
The  proof presented below is a slight modification of that in \cite{yuan-regular-DP}.

Let $\check{u}$ and $\underline{u}$ be as in \eqref{supersolution1} and \eqref{admifunction1-local}, respectively.
Let  
$\underline{\lambda}'$ and $\tilde{\lambda}'$   be as in \eqref{underlambda-2} and   \eqref{yuan3-buchong3}, respectively. 
For simplicity, we denote $$\eta=(u-\underline{u})_{x_n}(0).$$ 
We assume $\eta>0$ (otherwise we are done). 
Without loss of generality, we assume $$t_0>\frac{1}{2} \mbox{ and } \tilde{\lambda}_1'\leq \cdots \leq \tilde{\lambda}_{n-1}'.$$ 
%	where $\tilde{\lambda}'$ is as we denoted in \eqref{yuan3-buchong3}. 
It was proved in \cite[Lemma 6.1]{CNS3} that for $\tilde{\lambda}'\in\partial\Gamma_\infty$
%with $\tilde{\lambda}_1'\leq \cdots \leq \tilde{\lambda}_{n-1}'$,
there is a supporting plane for
$\Gamma_\infty$ and one can choose $\mu_j$ with 
$\mu_1\geq \cdots\geq \mu_{n-1}\geq0$ so that
\begin{equation}
	\label{key-18-yuan3}
	\begin{aligned}
		\Gamma_\infty\subset \left\{\lambda'\in\mathbb{R}^{n-1}: \sum_{\alpha=1}^{n-1}\mu_\alpha\lambda'_\alpha>0 \right\}, \mbox{  }
		\mbox{  } \sum_{\alpha=1}^{n-1} \mu_\alpha=1, \mbox{  } \sum_{\alpha=1}^{n-1}\mu_\alpha \tilde{\lambda}_\alpha'=0.
	\end{aligned}
\end{equation} 
%By a result of \cite{Marcus1956} (see also  \cite[Lemma 6.2]{CNS3})
% Assume $\underline{\lambda}_1'\leq \cdots\leq\underline{\lambda}_{n-1}'$.

%Let $\varepsilon_0$ be as in \eqref{varepsilon-pro0}, i.e.,
%	Let us take a uniform positive constant ${mainequ-Dirichlet}$ with
Note that as in \eqref{varepsilon-pro0},
$\underline{\lambda}-\varepsilon_0\vec{\bf1}\in\Gamma.$  
Then  $(\underline{\lambda}'_1-\varepsilon_0,\cdots,\underline{\lambda}'_{n-1}-\varepsilon_0)\in\Gamma_\infty$ and so
\begin{equation}
	\label{adf}
	\begin{aligned}
		\sum_{\alpha=1}^{n-1}\mu_\alpha \underline{\lambda}'_\alpha
		\geq \varepsilon_0>0. 
	\end{aligned}
\end{equation} 
According to %\eqref{adf} and 
\cite[Lemma 6.2]{CNS3}
(without loss of generality,  assume $\underline{\lambda}_1'\leq \cdots\leq\underline{\lambda}_{n-1}'$), 
\begin{equation}
	\begin{aligned}
		\sum_{\alpha=1}^{n-1} \mu_\alpha \underline{\mathfrak{g}}_{\alpha\bar\alpha}\geq \sum_{\alpha=1}^{n-1}\mu_\alpha \underline{\lambda}'_\alpha  
		\geq \varepsilon_0. 
	\end{aligned}
\end{equation} 
Without loss of generality,  we assume $({A_{t_0}})_{\alpha\bar\beta}=t_0\underline{\mathfrak{g}}_{\alpha\bar\beta}+\eta\sigma_{\alpha\bar\beta}$ is diagonal at $p_0$. From \eqref{key-18-yuan3} one has at the origin
\begin{equation}
	\begin{aligned}
		0=  t_0 \sum_{\alpha=1}^{n-1}\mu_\alpha\underline{\mathfrak{g}}_{\alpha\bar\alpha}+\eta\sum_{\alpha=1}^{n-1}\mu_\alpha  {\sigma}_{\alpha\bar\alpha} 
		%	\geq \varepsilon_0t_0 +\eta \sum_{\alpha=1}^{n-1} \mu_\alpha \sigma_{\alpha\bar\alpha}
		>\frac{\varepsilon_0}{2} +\eta \sum_{\alpha=1}^{n-1} \mu_\alpha \sigma_{\alpha\bar\alpha}.
	\end{aligned}
\end{equation}
Together with \eqref{c0-boundary-c1} and $\eta=(u-\underline{u})_{x_n}(0)$, we see at the origin $(z=0)$
\begin{equation}
	\label{key-1-yuan3}
	\begin{aligned}
		-\sum_{\alpha=1}^{n-1} \mu_\alpha \sigma_{\alpha\bar\alpha}\geq
		% \frac{{mainequ-Dirichlet} t_0}{\sup_{\partial M}|\nabla (\check{u}-\underline{u})|}>
		\frac{\varepsilon_0}{2\sup_{\partial M}|\nabla (\check{u}-\underline{u})|}=:a_1>0.
	\end{aligned}
\end{equation}
%where $\check{u}$ and $\underline{u}$ are  given by \eqref{supersolution1} and \eqref{admifunction1-local}, respectively.

On $\Omega_\delta=M\cap B_{\delta}(0)$, 
%where $\Omega$ is defined  in \eqref{def1-Omega}, 
we take
\begin{equation}
	\begin{aligned}
		d(z)=\sigma(z)+\tau |z|^2
	\end{aligned}
\end{equation}
where $\tau$ is a positive constant %with $\tau<a_1/4$ and
to be determined. Let 
\begin{equation}
	\label{w-buchong1}
	\begin{aligned}
		w(z)=\underline{u}(z)+({\eta}/{t_0})\sigma(z)+l(z)\sigma(z)+Ad(z)^2,
	\end{aligned}
\end{equation}
where $l(z)=\sum_{i=1}^n (l_iz_i+\bar l_{i}\bar z_{i})$, %is a real-valued linear function
$l_i\in \mathbb{C}$, $\bar l_i=l_{\bar i}$,
to be chosen as in \eqref{chosen-1} below, and $A$ is a positive constant to be determined. 
%As above $\eta=(u-\underline{u})_{x_n}(0)$.
Furthermore, %on $\partial M\cap\bar\Omega_\delta$, $u(z)-w(z)=-A\tau^2|z|^4$.
\begin{equation} \label{u-w1}   \begin{aligned}
		u(z)-w(z)=-A\tau^2|z|^4 \mbox{ on } \partial M\cap\bar\Omega_\delta.
\end{aligned}    \end{equation}
When $A\gg1$, on $M\cap\partial B_{\delta}(0)$ we see  
\begin{equation}
	\label{u-w2}
	\begin{aligned}
		u(z)-w(z) %=\,& u(z)-\underline{u}(z)-({\eta}/{t_0})\sigma(z)-l(z)\sigma(z)-Ad(z)^2 \\
		\leq %\,& |u-\underline{u}|_{C^0(\Omega_\delta)} 
		-(2A\tau \delta^2+\frac{\eta}{t_0}-2n \sup_{i}|l_i| \delta)\sigma(z)-A\tau^2\delta^4 %\\
		\leq %\,&
		-\frac{A\tau^2 \delta^4}{2}. %\nonumber
	\end{aligned}
\end{equation}
%	provided $A\gg1$. %depending on $|u|_{C^0(M)}$.

Let $T_1(z),\cdots, T_{n-1}(z)$ be an orthonormal basis for holomorphic tangent space 
of level hypersurface
$ \{w: d(w)=d(z)\}$ at $z$, so that  for each 
$1\leq\alpha\leq n-1$, $T_\alpha$ is of $ C^1$ class and  %at the origin 
$T_\alpha(0)= \frac{\partial }{\partial z_\alpha}$. 
%As for $T^{1,0}_{\partial M}$  
Such a basis exists and
the holomorphic tangent space 
%of level hypersurface
can be characterized as  $\left\{\xi=\xi^i\frac{\partial}{\partial z^i}: (\sigma_i+\tau \bar z_i ) \xi^i=0\right\}$,
% when $\tau=0$ this coincides with the characterization of $T^{1,0}_{\partial M}$,
see e.g. \cite{Demailly-CADG-book}. 
%$T_\alpha=\sum_{k=1}^nT_\alpha^k\frac{\partial }{\partial z_k}$ are at least $C^1$ and $T_\alpha^k(0)=\delta_{\alpha k}$.  
% 
% Such a basis exists: %As denoted in Proposition \ref{mix-general}, $\nu$ is the unit inner normal vector along the boundary. 
% Such a basis exists: We see at the origin  $\partial d(0)=\partial \sigma(0)$.  
% $\nabla d(0)=\nabla \sigma(0).$
% Thus for $1\leq \alpha\leq n-1$, we can choose $T_\alpha$ such that at the origin $T_\alpha(0)= \frac{\partial }{\partial z_\alpha}$.
%
%The result of \cite{Marcus1956} (see also  \cite[Lemma 6.2]{CNS3})

By   \cite[Lemma 6.2]{CNS3}, we have the following lemma.
\begin{lemma}
	\label{lemma-yuan3-buchong1}
	Let $T_1(z),\cdots, T_{n-1}(z)$ be as above, and  let  $T_n=\frac{\partial d}{|\partial d|}$.
	%(i.e. $T_i(z)\in T_{z}^{1,0}\bar M$ such that $\omega(T_i, J\bar T_{j})=\delta_{ij}$).
	For a real $(1,1)$-form $\Theta=\sqrt{-1}\Theta_{i\bar j}dz_i\wedge d\bar z_j$,
	we denote by $\lambda(\omega^{-1}\Theta)=(\lambda_1(\Theta),\cdots,\lambda_n(\Theta))$  the eigenvalues of $\Theta$ %(with respect to $\omega$) 
	with $\lambda_1(\Theta)\leq \cdots\leq \lambda_n(\Theta)$. Then for any %$\mu_1,\cdots,\mu_n$ with 
	$\mu_1\geq\cdots\geq\mu_n$,
	\[\sum_{i=1}^n \mu_i \lambda_{i}(\Theta)\leq \sum_{i=1}^n\mu_i\Theta(T_i,J\bar T_i).\]
\end{lemma}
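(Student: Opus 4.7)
The plan is to recognize this as a standard rearrangement/Abel summation argument combined with the variational characterization of partial sums of eigenvalues of a Hermitian matrix. First I would observe that since $T_1,\ldots,T_n$ is an orthonormal frame for $T_z M^{1,0}$ with respect to $\omega$ (the first $n-1$ spanning the holomorphic tangent space of the level set and $T_n=\partial d/|\partial d|$ being the unit outward complex normal), the real numbers $d_i:=\Theta(T_i,J\bar T_i)=\Theta_{p\bar q}T_i^p\overline{T_i^q}$ are precisely the diagonal entries of the Hermitian endomorphism $\omega^{-1}\Theta$ expressed in this frame. In particular $\sum_{i=1}^n d_i=\mathrm{tr}(\omega^{-1}\Theta)=\sum_{i=1}^n\lambda_i(\Theta)$.

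Next I would invoke the Ky Fan / Courant--Fischer minimax principle for Hermitian endomorphisms: with the ordering $\lambda_1(\Theta)\leq\cdots\leq\lambda_n(\Theta)$, the partial sum $\sum_{i=1}^k\lambda_i(\Theta)$ equals the infimum of $\mathrm{tr}(\Theta|_V)$ over $k$-dimensional complex subspaces $V\subset T_zM^{1,0}$. Applying this to $V=\mathrm{span}_{\mathbb{C}}(T_1,\ldots,T_k)$ gives
\[
\sum_{i=1}^{k}\lambda_i(\Theta)\leq \sum_{i=1}^{k}d_i, \qquad 1\leq k\leq n-1,
\]
with equality at $k=n$ by the trace identity above.

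The final step is Abel summation. Set $S_k:=\sum_{i=1}^k\lambda_i(\Theta)$ and $\widetilde S_k:=\sum_{i=1}^k d_i$, so that $S_k\leq\widetilde S_k$ for $k<n$ and $S_n=\widetilde S_n$. Then
\[
\sum_{i=1}^{n}\mu_i\lambda_i(\Theta)=\sum_{k=1}^{n-1}(\mu_k-\mu_{k+1})S_k+\mu_n S_n.
\]
Since $\mu_k-\mu_{k+1}\geq 0$ by the monotonicity assumption on $\mu$, I may replace each $S_k$ by $\widetilde S_k$ without decreasing the right-hand side; the telescoping reassembles to $\sum_{i=1}^n\mu_i d_i=\sum_{i=1}^n\mu_i\Theta(T_i,J\bar T_i)$, yielding the desired inequality.

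The argument is essentially routine and I do not anticipate a serious obstacle; the only step demanding a little care is the identification $d_i=\Theta(T_i,J\bar T_i)$ as genuine diagonal entries of $\omega^{-1}\Theta$, which relies on $\{T_i\}$ being orthonormal with respect to $\omega$ and on the compatibility of $J$ with the convention $\Theta=\sqrt{-1}\Theta_{i\bar j}dz_i\wedge d\bar z_j$. Once this is set, the eigenvalue partial-sum bound and Abel summation are automatic.
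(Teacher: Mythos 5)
Your proof is correct. The paper does not actually supply an argument here — it states the lemma and attributes it to \cite[Lemma 6.2]{CNS3} — so what you have written fills in the omitted proof, and it is essentially the canonical one. The two load-bearing pieces are exactly the ones you isolate: (i) since $T_1,\dots,T_n$ is $\omega$-orthonormal (the first $n-1$ spanning the complex tangent space of the level set and $T_n=\partial d/|\partial d|$ the complex normal), the quantities $d_i=\Theta(T_i,J\bar T_i)$ are the diagonal entries of the Hermitian endomorphism $\omega^{-1}\Theta$ in that frame, and in particular $\sum_i d_i=\operatorname{tr}(\omega^{-1}\Theta)=\sum_i\lambda_i(\Theta)$; (ii) the Ky Fan / Courant--Fischer variational characterization gives $\sum_{i\le k}\lambda_i(\Theta)\le\sum_{i\le k}d_i$ for every $k$, with equality at $k=n$. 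Abel summation with $\mu_1\ge\cdots\ge\mu_n$ then immediately yields $\sum_i\mu_i\lambda_i(\Theta)\le\sum_i\mu_i d_i$. One small point worth noting is that you do not need $\mu_n\ge 0$: because the partial-sum inequality is an equality at $k=n$, the coefficient of $S_n$ in the Abel expansion is harmless even when $\mu_n<0$, so the proof covers the lemma as stated (and in particular the paper's application where $\mu_n=0$). This is the same mechanism underlying the real-symmetric version in CNS3, transported verbatim to the Hermitian setting.
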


Let $\mu_1,\cdots,\mu_{n-1}$ be as in \eqref{key-18-yuan3}, and we set $\mu_n=0$. Let's denote $T_\alpha=\sum_{k=1}^nT_\alpha^k\frac{\partial }{\partial z_k}$. %then   $T_\alpha^k(0)=\delta_{\alpha k}$. 
%This lemma leads to a local operator (\cite{CNS3}, also see \cite{LiSY2004}): %acting on   real $(1,1)$-forms: 
For $\Theta=\sqrt{-1}\Theta_{i\bar j}dz_i\wedge d\bar z_j$, we define
\begin{equation}
	\begin{aligned}
		\Lambda_\mu(\Theta):= \sum_{\alpha=1}^{n-1}\mu_{\alpha} T_{\alpha}^i \bar T_{\alpha}^j \Theta_{i\bar j}.  \nonumber
	\end{aligned}
\end{equation}
%  So $ \Lambda_\mu(\Theta)=\sum_{\alpha=1}^{n-1}\mu_{\alpha} T_{\alpha}^i \bar T_{\alpha}^j \Theta_{i\bar j}.$

% It requires to prove
\begin{lemma}
	\label{lemma-key2-yuan3}
	Let $w$ be as in \eqref{w-buchong1}.
	There are parameters $\tau$, $A$, $l_i$, $\delta$ depending only on $|u|_{C^0(M)}$, 
	$|\nabla u|_{C^0(\partial M)}$,  
	$|\underline{u}|_{C^2(M)}$, %$\sup_{\bar M} (\mathrm{disc}(\lambda(\mathfrak{\underline{g}}),\partial\Gamma))^{-1}$, 
	% $\mathrm{{dist}}(\lambda(\mathfrak{\underline{g}}),\partial \Gamma)^{-1}$,
	%$(\delta_{\psi,f})^{-1}$, 
	$\partial M$ up to third derivatives and other known data, such that 
	\begin{equation}
		\begin{aligned}
			\Lambda_\mu (\mathfrak{g}[w])   \leq0  \mbox{ in } \Omega_\delta, \,\, u\leq w \mbox{ on } \partial \Omega_\delta. \nonumber
		\end{aligned}
	\end{equation}
\end{lemma}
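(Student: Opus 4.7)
\textbf{Plan of proof of Lemma \ref{lemma-key2-yuan3}.}
My strategy is to show that the function $G(z):=\Lambda_\mu(\mathfrak{g}[w])(z)$ vanishes at the origin, then kill its linear Taylor part by choosing the free parameters $l_i$, and finally absorb the remaining quadratic error using the strict negativity in \eqref{key-1-yuan3} together with the quadratic barrier $Ad^2$ and a sufficiently large $A$. The boundary inequality $u\le w$ on $\partial\Omega_\delta$ will come for free from \eqref{u-w1} and \eqref{u-w2} once $A$ is chosen large.

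First I would compute $G(0)$. Since $\partial\overline{\partial}(Ad^2)=2A(d\,\partial\overline{\partial}d+\partial d\wedge\overline{\partial}d)$ and $d(0)=0$, while $\partial\overline\partial(l\sigma)=l\,\partial\overline\partial\sigma+\partial l\wedge\overline\partial\sigma+\partial\sigma\wedge\overline\partial l$, the coefficients at $z=0$ collapse because $\sigma_\alpha(0)=0$ for $\alpha<n$ and $\sigma_n(0)=1/2$. This gives $\mathfrak{g}[w]_{\alpha\bar\beta}(0)=\underline{\mathfrak{g}}_{\alpha\bar\beta}(0)+(\eta/t_0)\sigma_{\alpha\bar\beta}(0)=(1/t_0)(A_{t_0})_{\alpha\bar\beta}(0)$ for $\alpha,\beta<n$, and by the normalization $T_\alpha(0)=\partial/\partial z_\alpha$ together with $\mu_n=0$ and the supporting relation $\sum_\alpha\mu_\alpha (A_{t_0})_{\alpha\bar\alpha}(0)=0$ coming from \eqref{key0-yuan3} and \eqref{key-18-yuan3}, one concludes $G(0)=0$.

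Next I would kill the $2n$ first-order derivatives of $G$ at $0$ by solving a linear system for $\{\mathrm{Re}\,l_i,\mathrm{Im}\,l_i\}_{i=1}^n$. Differentiating $(l\sigma)_{i\bar j}=l_i\sigma_{\bar j}+l_{\bar j}\sigma_i+l\sigma_{i\bar j}$ and evaluating at $0$, every $l_i$ appears through a single scalar coefficient built from $\sigma_i(0)$ or $\sigma_{\bar i}(0)$; since $\sigma_n(0)=1/2\ne 0$ and the remaining components of the system are triangular in $l_\alpha$ once one differentiates in the transversal $x_n$-direction (where the variation of $T_\alpha(z)$ off the origin also contributes), the system is solvable and determines
\begin{equation}
\label{chosen-1}
l_i=l_i(\tau,A,\underline{u},\sigma)
\end{equation}
depending only on controlled data. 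After this choice, $G(z)=Q(z)+O(|z|^3)$ with $Q$ a real quadratic form.

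It remains to make $Q$ strictly negative definite. Writing $z=(z',x_n,y_n)$, the coefficient of $x_n$ in $\partial_{x_n}G(0)$ that is not absorbed by the choice of $l_i$ equals $(\eta/t_0)\sum_\alpha\mu_\alpha\partial_{x_n}\sigma_{\alpha\bar\alpha}(0)$ plus similar $\underline{\mathfrak{g}}$-derivatives; however, after taking two derivatives the decisive term is $(\eta/t_0)\sum_\alpha\mu_\alpha\sigma_{\alpha\bar\alpha}(0)$, which by \eqref{key-1-yuan3} is bounded above by $-(\eta/t_0)a_1<0$. This provides the seed negativity; all other entries of $Q$ are of size $O(1)+O(A\tau)$, coming from $\partial d\wedge\overline\partial d$ and the cross terms involving $T_\alpha'(0)$. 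Choosing first $\tau$ small relative to $a_1$, and then $A$ large (and finally $\delta$ small) one makes $Q\le -c|z|^2$ on $\Omega_\delta$, so $G\le 0$ there. Finally \eqref{u-w1} and \eqref{u-w2} show $u\le w$ on $\partial M\cap\bar\Omega_\delta$ and on $M\cap\partial B_\delta(0)$ respectively, once the same $A$ is chosen large enough.

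The main obstacle will be the bookkeeping in step two and three: the vector fields $T_\alpha(z)$ vary in $z$, so $\Lambda_\mu(\mathfrak{g}[w])$ is not simply the $(\alpha,\bar\alpha)$-trace of $\mathfrak{g}[w]_{i\bar j}$, and one must carefully track the first-order variation of $T_\alpha$ in terms of $\sigma_{ij}$ to verify that the linear system for $l_i$ really is solvable and that the resulting quadratic error is dominated by the single favorable sign coming from \eqref{key-1-yuan3}. This is essentially the complex-variables refinement by Li--Shen--Yang of the real barrier construction of Caffarelli--Nirenberg--Spruck; I would closely follow their estimates.
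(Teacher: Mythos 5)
Your overall architecture --- show $\Lambda_\mu(\mathfrak{g}[w])(0)=0$, kill the linear Taylor terms by choosing $l_i$, and use the quadratic barrier $Ad^2$ with the strict negativity of \eqref{key-1-yuan3} to beat the $O(|z|^2)$ error --- is exactly the paper's plan, following \cite{CNS3, LiSY2004}. But there is a genuine gap in the way you propose to close it: you have not used the fact that $T_\alpha d=0$, i.e.\ the frame $T_1,\dots,T_{n-1}$ is by construction tangent to the level sets $\{d=\text{const}\}$. This identity is not cosmetic. It is what makes the $2A\,\partial d\wedge\overline{\partial}d$ piece of $\partial\overline\partial(Ad^2)$ vanish \emph{identically} when contracted against $T_\alpha^i\bar T_\alpha^j$, since $T_\alpha^i\bar T_\alpha^j\,d_i d_{\bar j}=|T_\alpha d|^2\equiv 0$, so that
\begin{equation}
\label{eq:review1}
\sum_{\alpha=1}^{n-1}\mu_\alpha T_\alpha^i\bar T_\alpha^j\,\partial_i\overline\partial_j(Ad^2)
= 2Ad(z)\sum_{\alpha=1}^{n-1}\mu_\alpha T_\alpha^i\bar T_\alpha^j\,d_{i\bar j},
\end{equation}
with the second factor controlled by $\sum\mu_\alpha\sigma_{\alpha\bar\alpha}(0)+\tau+O(|z|)\le -a_1/4$. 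If you instead try to treat $\partial d\wedge\overline\partial d$ as an ``$O(A\tau)$ error'' as you propose, the argument does not close: it contributes at order $O(A)|z|^2$ on $\Omega_\delta$ (since $T_\alpha^i\sigma_i(z)=O(|z|)$ without the exact identity), which has the same $A\tau$-scaling as the favorable term $-\tfrac{Aa_1\tau}{2}|z|^2$ and hence cannot be dominated by choosing $\tau$ small and then $A$ large.

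The same identity is also what you need to compute the $l$-dependent linear part and to justify the solvability that you assert. From $T_\alpha d=0$ one gets $\sum_i T_\alpha^i\sigma_i=-\tau\sum_i T_\alpha^i\bar z_i$, which, combined with $\mu_n=0$ and the Taylor expansion $T_\alpha^i\bar T_\alpha^j(z)=\delta_{\alpha i}\delta_{\alpha j}+O(|z|)$, produces a \emph{diagonal} (not merely triangular) system whose coefficients are, in the $z_n$-slot, $\sum_\beta\mu_\beta\sigma_{\beta\bar\beta}(0)$, and in the $z_\alpha$-slot ($\alpha<n$), $\sum_\beta\mu_\beta\sigma_{\beta\bar\beta}(0)-\tau\mu_\alpha$. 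Both are bounded away from $0$ by $-a_1<0$ from \eqref{key-1-yuan3} and $\mu_\alpha\ge 0$; it is this --- and not $\sigma_n(0)=\tfrac12\neq0$ as you suggest --- that makes the choice of $l_i$ well-defined and uniformly bounded. Once the two consequences of $T_\alpha d=0$ are in place, the rest of your argument (choose $\tau,\delta$ small so that $\sum\mu_\alpha T_\alpha^i\bar T_\alpha^jd_{i\bar j}\le -a_1/4$ and the Taylor remainders are controlled, then $A\gg1$; use \eqref{u-w1}--\eqref{u-w2} for the boundary inequality) goes through and matches the paper.
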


\begin{proof}
	By direct computation
	%Note that $\mathfrak{g}[\underline{u}+(\eta/t_0)\sigma]$
	\begin{equation}
		\begin{aligned}
			\Lambda_\mu (\mathfrak{g}[w])
			=\,&\sum_{\alpha=1}^{n-1} \mu_\alpha T_{\alpha}^i \bar T_{\alpha}^j
			(\chi_{i\bar j}+\underline{u}_{i\bar j}+\frac{\eta}{t_0}\sigma_{i\bar j}) 
			+ 2Ad(z)\sum_{\alpha=1}^{n-1} \mu_\alpha T_{\alpha}^i \bar T_{\alpha}^j d_{i\bar j}
			\\\,&
			+ \sum_{\alpha=1}^{n-1} \mu_\alpha T_{\alpha}^i \bar T_{\alpha}^j (l(z)\sigma_{i\bar j}
			+l_i\sigma_{\bar j}+\sigma_i l_{\bar j}).  \nonumber
		\end{aligned}
	\end{equation}
	Here we use $T_\alpha d=0$  for $1\leq\alpha\leq n-1.$ 
	Next, we will estimate %the quantity 
	$\Lambda_\mu (\mathfrak{g}[w])$ in $\Omega_\delta$.
	\begin{itemize}
		\item
		At the origin $(z=0)$, $T_{\alpha}^i=\delta_{\alpha i}$, 
		\begin{equation}
			\begin{aligned}
				\sum_{\alpha=1}^{n-1} \mu_\alpha T_{\alpha}^i \bar T_{\alpha}^j
				(\chi_{i\bar j}+\underline{u}_{i\bar j}+\frac{\eta}{t_0}\sigma_{i\bar j}) (0)
				=\frac{1}{t_0}\sum_{\alpha=1}^{n-1}\mu_\alpha (A_{t_0})_{\alpha\bar\alpha}=0.  \nonumber
			\end{aligned}
		\end{equation}
		So there are complex constants $k_i$ %with $\bar k_i =k_{\bar i}$ 
		such that
		\begin{equation}
			\begin{aligned}
				\sum_{\alpha=1}^{n-1} \mu_\alpha T_{\alpha}^i \bar T_{\alpha}^j
				(\chi_{i\bar j}+\underline{u}_{i\bar j}+\frac{\eta}{t_0}\sigma_{i\bar j}) (z)=\sum_{i=1}^n (k_i z_i+ \bar k_{i} \bar z_{i})+O(|z|^2). \nonumber
			\end{aligned}
		\end{equation}

		\item   % For  $2Ad(z)\sum_{\alpha=1}^{n-1} \mu_\alpha T_{\alpha}^i \bar T_{\alpha}^j d_{i\bar j}$,
		%$\Lambda_\mu (\sqrt{-1}\partial\overline{\partial}(d(z)^2))$, 
		Note that	\begin{equation}
			\label{yuan3-buchong2}
			\begin{aligned}
				\sum_{\alpha=1}^{n-1} \mu_\alpha T_{\alpha}^i \bar T_{\alpha}^j(z)
				=\sum_{\alpha=1}^{n-1} \mu_\alpha T_{\alpha}^i \bar T_{\alpha}^j(0)+O(|z|) =\sum_{\alpha=1}^{n-1} \mu_\alpha \delta_{\alpha i} \delta_{\alpha j}+O(|z|),
			\end{aligned}
		\end{equation}
		%and
		\begin{equation}
			\label{c3-yuan3}
			\begin{aligned}
				\sum_{\alpha=1}^{n-1}\mu_\alpha\sigma_{\alpha\bar\alpha}(z)=\sum_{\alpha=1}^{n-1}\mu_\alpha\sigma_{\alpha\bar\alpha}(0)+O(|z|).
			\end{aligned}
		\end{equation}  
		Combining with \eqref{key-1-yuan3}, one can pick $\delta, \tau$ sufficiently small such that 
		\begin{equation}
			\begin{aligned}
				\sum_{\alpha=1}^{n-1}\mu_\alpha T_{\alpha}^i \bar T_{\alpha}^j d_{i\bar j}
				=\,&   \sum_{\alpha=1}^{n-1} \mu_\alpha(\sigma_{\alpha\bar\alpha}(z)+\tau)
				%\tau\sum_{\alpha=1}^{n-1}\mu_\alpha
				%\\\,&
				+\sum_{\alpha=1}^{n-1}\mu_\alpha \left(T_{\alpha}^i \bar T_{\alpha}^j (z)- T_{\alpha}^i \bar T_{\alpha}^j (0)\right)d_{i\bar j} \\
				%\leq\,& -a_1+\tau+C_1 \sigma(z) \leq -\frac{a_1}{4}  \nonumber
				\leq\,& -a_1+\tau+O(|z|) \leq -\frac{a_1}{4}.  \nonumber
			\end{aligned}
		\end{equation}
		%provided  $0<\delta, \tau\ll1.$
		%Here we also use \eqref{key-1-yuan3},
		Consequently,
		\begin{equation}
			\begin{aligned}
				% \Lambda_\mu (\sqrt{-1}\partial\overline{\partial}(d(z)^2))   =\,& 2d(z)\Lambda_\mu (\sqrt{-1}\partial\overline{\partial}d(z))
				% +2\sum_{\alpha}\mu_\alpha T_{\alpha}^i \bar T_{\alpha}^j \partial_i d \partial_{\bar j}d \\
				%=\,& 2d(z)\Lambda_\mu (\sqrt{-1}\partial\overline{\partial}d(z))
				2Ad(z)\sum_{\alpha=1}^{n-1} \mu_\alpha T_{\alpha}^i \bar T_{\alpha}^j d_{i\bar j} \leq -\frac{a_1A}{2}d(z). \nonumber
			\end{aligned}
		\end{equation}
		
		% which requires $\partial M\in C^3$. 
		%As a contrast, it only requires $\partial M\in C^2$ in \cite{yuan2019,yuan2017}, since $t_0=0$ by  Levi flat assumption or $t_0\leq0$ by more general condition \eqref{bdry-assumption1}.
		\item  From %\eqref{yuan3-buchong2},  
		$ \sum_{i=1}^n T_\alpha^i \sigma_i=-\tau\sum_{i=1}^n T_\alpha^i
		\bar z_i $
		we have
		\begin{equation}
			\begin{aligned}
				\sum_{\alpha=1}^{n-1}\mu_\alpha T^i_\alpha \bar T^j_\alpha(l_i \sigma_{\bar j} + \sigma_i l_{\bar j})
				%  =\tau  \sum_{\alpha=1}^{n-1}\mu_\alpha T^i_\alpha \bar T^j_\alpha(l_{\bar j} z_{\bar i} + z_j l_{i})
				=-\tau \sum_{\alpha=1}^{n-1} \mu_\alpha(\bar l_{\alpha} \bar z_{\alpha}+l_\alpha z_\alpha)
				+O(|z|^2).   \nonumber
			\end{aligned}
		\end{equation}
		On the other hand, by \eqref{yuan3-buchong2}, 
		\begin{equation}	\begin{aligned}	l(z) \sum_{\alpha=1}^{n-1}\mu_\alpha T^i_\alpha \bar T^j_\alpha \sigma_{i\bar j} 	= l(z) \sum_{\alpha=1}^{n-1}\mu_\alpha\sigma_{\alpha\bar\alpha}(0)	+O(|z|^2).   \nonumber	\end{aligned}  	\end{equation}
		Thus %we obtain
		\begin{equation}
			\begin{aligned}
				\,& 
				%  \sum_{\alpha=1}^{n-1}\mu_\alpha T^i_\alpha \bar T^j_\alpha (-l(z)\sigma)_{i\bar j} \\   =\,&
				l(z)\sum_{\alpha=1}^{n-1}\mu_\alpha T^i_\alpha \bar T^j_\alpha
				\sigma_{i\bar j}+\sum_{\alpha=1}^{n-1}\mu_{\alpha}T^i_\alpha \bar T^j_\alpha(l_i\sigma_{\bar j}
				+\sigma_i l_{\bar j}) \\
				% =\,& -(l_kz_k+\bar l_k\bar z_k) \Lambda_\mu (\sqrt{-1}\partial \overline{\partial}{\sigma})
				% +\tau\sum_{\alpha=1}^{n-1}\mu_{\alpha}T^i_\alpha \bar T^j_\alpha(l_i z_{j} +z_{\bar i}l_{\bar j})  \\
				= \,&
				l(z) \sum_{\alpha=1}^{n-1}\mu_\alpha\sigma_{\alpha\bar\alpha}(0)
				- \tau\sum_{\alpha=1}^{n-1} \mu_\alpha (z_\alpha l_\alpha+\bar z_{\alpha}  \bar l_{\alpha})
				+O(|z|^2). \nonumber
			\end{aligned}
		\end{equation}
		
	\end{itemize}
	
	Putting these together, 
	\begin{equation}
		\label{together1}
		\begin{aligned}
			\Lambda_\mu(\mathfrak{g}[w])\leq\,&
			\sum_{\alpha=1}^{n-1} 2\mathfrak{Re}
			\left\{z_\alpha \left(k_\alpha 
			-\tau\mu_\alpha l_\alpha
			+l_\alpha\sum_{\beta=1}^{n-1}\mu_\beta \sigma_{\beta\bar\beta}(0)\right)\right\}
			\\ \,& + 2\mathfrak{Re}\left\{z_n
			\left(k_n+l_n\sum_{\beta=1}^{n-1}\mu_\beta \sigma_{\beta\bar\beta}(0)\right)\right\} 
			-\frac{Aa_1}{2}d(z) + O(|z|^2).   \nonumber
		\end{aligned}
	\end{equation}
	% For simplicity we set $\mu_n=0$.
	Let $l_n=-\frac{k_n}{\sum_{\beta=1}^{n-1} \mu_\beta \sigma_{\beta\bar\beta}(0)}$.
	For $1\leq \alpha\leq n-1$, we set
	\begin{equation}
		\label{chosen-1}
		\begin{aligned}
			l_\alpha=-\frac{k_\alpha}{\sum_{\beta=1}^{n-1}\mu_\beta \sigma_{\beta\bar\beta}(0)-\tau \mu_\alpha}.
		\end{aligned}
	\end{equation}
	From $\mu_\alpha\geq 0$ and \eqref{key-1-yuan3}, we see
	% $\sum_{\beta=1}^{n-1}\mu_\beta \sigma_{\beta\bar\beta}(0)-\tau \mu_i<0$    and so 
	such $l_i$ (or equivalently the  $l(z)$) are all well defined and uniformly bounded. 
	We thus complete the proof if $0<\tau, \delta\ll1$, $A\gg1$.
\end{proof}

\subsubsection*{\bf Completion of the proof of Lemma \ref{keylemma1-yuan3}}
%\begin{proof}
%  [Proof of Lemma \ref{keylemma1-yuan3}]
Let $w$ be as in \eqref{w-buchong1}.
%Lemma \ref{lemma-key2-yuan3}.
From the construction above, we know that there is a uniform positive constant $C_1'$ such that
$$ |\mathfrak{g}[w]|_{C^0(\Omega_\delta)}\leq C_1'.$$
Denote $\lambda[w] %=\lambda_{\omega}(\mathfrak{g}[w])
=\lambda(\mathfrak{g}[w])
$ and assume 
$\lambda_1[w]\leq \cdots\leq \lambda_n[w]$.
Together with Lemma \ref{lemma-yuan3-buchong1},   		Lemma \ref{lemma-key2-yuan3} implies
%together with \cite[Lemma 6.2]{CNS3}, implies
%a result in \cite{Marcus1956} (see also \cite[Proposition 2.4]{LiSY2004}), implies
\begin{equation}
	\begin{aligned}
		\sum_{\alpha=1}^{n-1} \mu_\alpha \lambda_\alpha[w]\leq 0 \mbox{ in } \Omega_\delta.  \nonumber
	\end{aligned}
\end{equation}
So  by \eqref{key-18-yuan3} $(\lambda_1[w],\cdots,\lambda_{n-1}[w])\notin\Gamma_\infty$. In other words, $\lambda[w]\in X$, where
%	\[X:=\{\lambda=(\lambda',\lambda_n)\in\mathbb{R}^n: \lambda'\in \mathbb{R}^{n-1}\setminus \Gamma_\infty\}\cap \{\lambda\in\mathbb{R}^n:   |\lambda|\leq C_1'\}.\]
\[X:= \left( \mathbb{R}^{n}\setminus \Gamma \right)\cap \{\lambda\in\mathbb{R}^n:   |\lambda|\leq C_1'\}.\]
Consequently, $X\cap \bar\Gamma^{\inf_M\psi[u]}=\emptyset$, where 
$\bar\Gamma^{\inf_M\psi[u]}=\{\lambda\in\Gamma: f(\lambda)\geq \inf_M\psi[u]\}$
is the closure of  $\Gamma^{\inf_{M}\psi[u]}$.
Notice that   %$\Gamma_\infty$  is open, and
$X$ is a compact subset. 
So we can deduce that the distance between $\bar\Gamma^{\inf_M\psi[u]}$ and $X$ 
%is greater than some positive constant under control.
has a uniform positive lower bound.
Therefore, there exists a positive constant $\epsilon_0$  such that 
%for any $z\in\Omega_\delta$
\begin{equation}
	\label{distance1-positive}
	\begin{aligned}
		\epsilon_0\vec{\bf 1} +\lambda[w](z)\notin \bar\Gamma^{\inf_M\psi[u]}, \,\, \forall z\in\Omega_\delta.  %\nonumber
	\end{aligned}
\end{equation}

Near the origin $p_0$, %(when $\partial M\in C^3$), 
under   coordinates
\eqref{goodcoordinate1} 
% {t-coordinate}
the distance can be expressed as 
%there are complex valued constants $a_{i\bar j}$ with $\bar a_{i\bar j}=a_{j\bar i}$ such that 
\begin{equation}
	%\label{asym-sigma1}
	\begin{aligned}
		\sigma(z)=x_n+\sum_{ i,j=1}^{2n} a_{ij} t_it_j+O(|t|^3). \nonumber
	\end{aligned}
\end{equation}
%See also \cite{CKNS2,LiSY2004,Guan1998The}.
%By \eqref{asym-sigma1} 
Thus one can choose a  positive constant $C'$ such that $ x_n\leq C'|z|^2$ on 
$\partial M\cap \bar{\Omega}_\delta$. As a result,  
there is a positive constant $C_2$ depending only on 
% $M$, 
$\partial M$ 
and $\delta$ so that  
$$x_n\leq C_2 |z|^2 \mbox{ on }\partial\Omega_\delta.$$

%Let %$\epsilon_0$ and let
%$C_2$ be as above, 
From \eqref{distance1-positive} we can take ${h}(z)=w(z)+\epsilon  (|z|^2-\frac{x_n}{C_2}) $ for some 
$ \epsilon>0,$ 
%where $0<\epsilon\leq\epsilon_0$ to be determined. 
such that 
\begin{equation}
	\begin{aligned}
		\lambda[h](z)\notin \bar\Gamma^{\inf_M\psi[u]}, \,\, \forall z\in\Omega_\delta.
	\end{aligned}
\end{equation}
Moreover, from \eqref{u-w1}-\eqref{u-w2} we know $u\leq {h} \mbox{ on } \partial \Omega_\delta.$
%\begin{equation} 	\begin{aligned} 		
%u\leq {h} \mbox{ on } \partial \Omega_\delta.  \nonumber 	\end{aligned}	\end{equation}
% Moreover, $\chi_{i\bar j}+{h}_{i\bar j}=(\chi_{i\bar j}+w_{i\bar j})+\epsilon\delta_{ij}$. 
%	We can choose $0<\epsilon\leq\epsilon_0$ 
%sufficiently small
%so that $\lambda[{h}]\notin \bar\Gamma^{\inf_M\psi[u]}$ in $\bar \Omega_\delta.$
%By \cite[Lemma B]{CNS3}, we have
The maximum principle (e.g.  \cite[Lemma B]{CNS3}) implies 
$$u\leq {h} \mbox{ in }\Omega_\delta.$$
Notice 
$u(0)=\varphi(0)$ and ${h}(0)=\varphi(0)$, we have $u_{x_n}(0)\leq h_{x_n}(0)$. Thus
\begin{equation}
	\begin{aligned}
		t_0\leq \frac{1}{1+\epsilon/(\eta C_2)}, \mbox{ i.e., }  (1-t_0)^{-1}\leq 1+\frac{\eta C_2}{\epsilon}.  \nonumber
	\end{aligned}
\end{equation}

%\end{proof}

  \medskip
  \subsection{Tangential-Normal derivatives case}

%\begin{remark} The estimate of this type was proved by Chen \cite{Chen} and Phong-Sturm \cite{Phong-Sturm2010} for complex Monge-Amp\`ere equations on   manifolds with holomorphically flat boundary, which was removed by Boucksom \cite{Boucksom2012}. More recently this was extended by Collins-Picard \cite{Collins2019Picard} to complex $k$-Hessian equations, and further by the author \cite{yuan-regular-DP} to general cases. \end{remark}

  %The following lemma plays an important role in the proof. %of Proposition \ref{mix-general}.
  %The lemma of this type goes back at least to \cite[Theorems 2.16, 2.17]{Guan12a}.
  %Moreover, we have a stronger result.  

%The construction of  (desired or similar)   local barriers was carried out by  \cite{Chen,Phong-Sturm2010,Boucksom2012} for complex Monge-Amp\`ere equation, by \cite{Collins2019Picard} for complex $k$-Hessian equations, and  also by \cite{yuan2020PAMQ,yuan-regular-DP} for general fully nonlinear elliptic equations.

 In establishing Proposition \ref{mix-general}, we %employ 
 use the
 subsolution method
 % local barrier technique 
%a method with historical roots
%which can be traced back to
of \cite{Guan1993Boundary,Guan1998The}  (further refined by  \cite{Guan12a}). 
In order to derive \eqref{quanti-mix-derivative-00}, we shall construct more delicate local barriers near boundary.  
 The  specific instance of such local barriers was investigated  by
%Such local barriers were investigated  by 
\cite{Boucksom2012,Chen,Phong-Sturm2010} for   complex Monge-Ampère equation, and further by \cite{Collins2019Picard}
for more general  complex $k$-Hessian equations.  
The utilization of local barriers for general equations was introduced in \cite{yuan2020PAMQ,yuan2017,yuan-regular-DP}. 
 
  Fix $p_0\in\partial M$.
  Under   local coordinate \eqref{goodcoordinate1} centered at $p_0 $ $(z=0)$,  
  for convenience 
  %we set
  %As in \eqref{t-coordinate} we set
  \begin{equation}
  	%\label{t-coordinate}
  	t_{2k-1}=x_{k}, \ t_{2k}=y_{k},\ 1\leq k\leq n-1;\ t_{2n-1}=y_{n},\ t_{2n}=x_{n}.  \nonumber
  \end{equation}
We use notation as in  \eqref{formula-1}.
   By direct calculations, one derives  
  \begin{equation}
  	\begin{aligned}
  	  (u_{x_k})_{\bar j} %=(u_k+u_{\bar k})_{\bar j}
  		%=\frac{\partial^2 u}{\partial \bar z_j \partial z_k}+\frac{\partial^2 u}{\partial \bar z_j \partial \bar z_k}
  		=u_{\bar j x_k}+\overline{\Gamma_{kj}^l} u_{\bar l},  \,\,
  	  (u_{y_k})_{\bar j}
  	%=\sqrt{-1} \left(\frac{\partial^2 u}{\partial \bar z_j \partial z_k}-\frac{\partial^2 u}{\partial \bar z_j \partial \bar z_k} \right)
  		=u_{\bar j y_k}-{\sqrt{-1}}\overline{\Gamma_{kj}^l} u_{\bar l}, \nonumber
  	\end{aligned}
  \end{equation}
  \begin{equation}
  	\begin{aligned}
  	 (u_{x_k})_{i\bar j}  %=(u_k+u_{\bar k})_{i\bar j}
  		%= \frac{\partial^3 u}{\partial z_i\partial \bar z_{j} \partial z_k}	+\frac{\partial^3 u}{\partial z_i\partial \bar z_{j} \partial \bar z_k}
  		%=	u_{x_ki\bar j}+\Gamma_{ik}^lu_{l\bar j}+\overline{\Gamma_{jk}^l} u_{i\bar l}-g^{l\bar m}R_{i\bar j k\bar m}u_l 
  		=	 	u_{i\bar j x_k}+\Gamma_{ki}^lu_{l\bar j}+\overline{\Gamma_{kj}^l} u_{i\bar l}, 
  		\,\, \nonumber
  		%	\end{aligned} \end{equation}
  		% \begin{equation}	\begin{aligned}		F^{i\bar j}u_{y_{k}i\bar j}=F^{i\bar j}u_{i\bar j y_{k}}+\sqrt{-1} F^{i\bar j}g^{l\bar m}R_{i\bar j k\bar m}u_{l}+		 2\mathfrak{Im}\left(F^{i\bar j}T^{l}_{ik}u_{l\bar j}\right), \nonumber	\end{aligned}  \end{equation}
  		%Similarly,
  		%  \begin{equation} 	\begin{aligned}  	
  		(u_{y_k})_{i\bar j} 
  		%=	u_{y_ki\bar j}+\sqrt{-1}\left(\Gamma_{ik}^l u_{l\bar j}-\overline{\Gamma_{jk}^l} u_{i\bar l}\right)-\sqrt{-1}g^{l\bar m}R_{i\bar j k\bar m}u_l	
  		=u_{i\bar j y_k}
  		+\sqrt{-1}		\left(\Gamma_{ki}^l u_{l\bar j}-\overline{\Gamma_{kj}^l} u_{i\bar l}\right).	\nonumber  	\end{aligned} \end{equation}

%  Let $u$ be a $C^3$  admissible solution to  \eqref{main-equ2},
Let
  $\underline{u}$ be the local admissible function
  satisfying  \eqref{admifunction1-local}. 
 % (See  Subsection \ref{Dirichlet-problem-2} for construction).
  We set
  \begin{equation}
  	\label{barrier1}
  	\begin{aligned}
  		w= (\underline{u}-u)
  		- t\sigma
  		+N\sigma^{2}   \mbox{  in  } \Omega_{\delta}. \nonumber
  	\end{aligned}
  \end{equation}
 Here $N$ is a positive constant to be determined, $\delta$ and $t$ are small enough such that   
 %and,  %in $\Omega_{\delta}$,
 % $\sigma$ is smooth  and
 \begin{equation}
 	\label{bdy1}
 	\begin{aligned}
 		\mbox{  $\sigma$ is smooth, }
 		\frac{1}{4} \leq |\partial \sigma|\leq 2,  \,\,
 		|\mathcal{L}\sigma | \leq   C_\sigma\sum_{i=1}^n f_i 
 		\mbox{ in }  \Omega_{\delta},
 	\end{aligned}
 \end{equation} 
 \begin{equation}
 	\label{yuanbd-11}
 	\begin{aligned}
 		N\delta-t\leq 0, \mbox{ }	 \max\{|2N\delta-t|, t\}
 		%	|2N\delta-t|
 		\leq \frac{\varepsilon}{16 C_\sigma},
 	\end{aligned}
 \end{equation}
for some  constant $C_\sigma>0,$
  where 
  %$\vartheta:= \frac{1}{2}\min_{\bar M} dist(\nu_{\underline{\lambda} }, \partial \Gamma_n)$,
  $\varepsilon$ is the constant asserted in Lemma \ref{guan2014}. %and $C_\sigma$ is the constant in \eqref{bdy1}.
Clearly,
  %Let $\delta>0$ and $t>0$ be sufficiently small with $N\delta-t\leq 0,$ so that
  % in $\Omega_{\delta}$,    
  \begin{equation}
  	\label{yiqi-1}
  	\begin{aligned}
  	  w\leq 0    \mbox{ in } \Omega_{\delta}, %\nonumber
   \end{aligned}  \end{equation} \begin{equation}	\begin{aligned}
		\mathcal{L}w
		= F^{i\bar j}(\mathfrak{\underline{g}}_{i\bar j}-\mathfrak{g}_{i\bar j}) 	+2NF^{i\bar j}\sigma_i\sigma_{\bar j}	+(2N\sigma-t)\mathcal{L}\sigma
		\mbox{ in } \Omega_{\delta}. 
		 %\nonumber
		%\geq \sum_{i=1}^n	f_i(\underline{\lambda}_i-\lambda_i)	+2NF^{i\bar j}\sigma_i\sigma_{\bar j}-|2N\sigma-t|C_\sigma \sum_{i=1}^n f_i,
	\end{aligned}
\end{equation}

 We define the tangential operator on the boundary
\begin{equation}
	\label{tangential-oper-general1}
	\begin{aligned} 
		\mathcal{T}=\nabla_{\frac{\partial}{\partial t_{\alpha}}}- \widetilde{\eta}\nabla_{\frac{\partial}{\partial x_{n}}}, 
		\mbox{ for each fixed }
		1\leq \alpha< 2n,  \nonumber
	\end{aligned}
\end{equation}
where $\widetilde{\eta}=\frac{\sigma_{t_{\alpha}}}{\sigma_{x_{n}}}$. %$\sigma_{x_{n}}(0)=1,$ $\sigma_{t_\alpha}(0)=0$.
One has $\mathcal{T}(u-\varphi)=0$ on $\partial M\cap \bar\Omega_\delta$. 
By % \eqref{asym-sigma1} 
% $\sigma_{t_\alpha}(0)=0$ and $\sigma_{x_{n}}(0)=1$,
$\widetilde{\eta}(0)=0$ one derives $|\widetilde{\eta}|\leq C'|z|$ on $\bar\Omega_\delta$. 
%On % $\partial M\cap \bar\Omega_\delta$, 
Since  $(u-\varphi)\big|_{\partial M}=0$  we obtain
$\mathcal{T}(u-\varphi)\big|_{\partial M}=0$. Together with the boundary gradient estimate contained in \eqref{c0-boundary-c1},
one has
\begin{equation}\begin{aligned}\label{bdr-t}
		|(u-\varphi)_{t_{\alpha}}|\leq C|z| \mbox{ on } \partial M\cap\bar \Omega_\delta,
		%\sup_{\partial M}|  (u-\varphi)|,
		\mbox{  } \forall 1\leq \alpha<2n.
	\end{aligned}
\end{equation}

  Denote  $b_{1}=1+\sup_{M} |\partial u|^{2}.$
  Take   
  \begin{equation}
  	\label{Phi-def1}
  	\begin{aligned}
  		\Phi=\pm \mathcal{T}(u-\varphi)+\frac{1}{\sqrt{b_1}}(u_{y_{n}}-\varphi_{y_{n}})^2 \,\, \mbox{ in } \Omega_\delta. \nonumber
  	\end{aligned}
  \end{equation}
  %Then there is a positive constant $C_{\Phi}$ depending on 	$|\varphi|_{C^{3}(\bar M)}$, $|\chi|_{C^{1}(\bar M)}$, 	$|\nabla_z \psi|_{C^{0}(\bar M)}$, $\sup|\psi_u|$, and other known data  	such that  for some small positive constant $\delta$,
%Similar to \cite[Lemma 5.4]{yuan-regular-DP} we can prove 
Combining Cauchy-Schwarz inequality, we can prove
  \begin{equation}
  	\label{yuan-1}
  	\begin{aligned}
  		\mathcal{L}\Phi \geq 
  	 - C_{\Phi}  \sum_{i=1}^n f_i|\lambda_i|
  	 	-C_{\Phi}  \sqrt{b_1}  \sum_{i=1}^n f_i 
  		-C_\Phi  \sqrt{b_1}   \,\, \mbox{ in } \Omega_{\delta}.  %\nonumber
  	\end{aligned}
  \end{equation} 
Here we use 
\begin{equation}	\begin{aligned}		2|\mathfrak{Re}(F^{i\bar j}(\widetilde{\eta})_i (u_{x_{n}})_{\bar j})|	\leq  \,&  \frac{1}{\sqrt{b_1}}F^{i\bar j}(u_{y_n})_{i} (u_{y_n})_{\bar j} +C\sum_{i=1}^nf_i|\lambda_i|+C\sqrt{b_1}\sum_{i=1}^n f_i.		\nonumber	\end{aligned}\end{equation}

 By straightforward computations and \cite[Proposition 2.19]{Guan12a}, we have 
  \begin{equation}
  	\label{inequ-buchong1}
  	\begin{aligned}
  		\mathcal{L} \left(\sum_{\tau<n}|(u-\varphi)_{\tau}|^2 \right)  
  %\geq \,& 	\frac{1}{2}\sum_{\tau<n}F^{i\bar j} \mathfrak{g}_{\bar \tau i} \mathfrak{g}_{\tau \bar j} 	-C_1'\sqrt{b_1}  \left(\sum_{i=1}^n f_{i}|\lambda_{i}|+\sum_{i=1}^n f_{i}	\right)  +   2\sum_{\tau<n}\mathfrak{Re} [u_\tau \nabla_{\bar \tau}(\psi[u])]\\  
  		\geq  \,&
  		\frac{1}{4}\sum_{i\neq r} f_{i}\lambda_{i}^{2}
  		-C_1'\sqrt{b_1} \left( \sqrt{b_1}+\sum_{i=1}^n f_{i} +\sum_{i=1}^n f_{i}|\lambda_{i}| \right). 
  		 \nonumber
  	\end{aligned}
  \end{equation}
From Lemma \ref{lemma3.4}  we see $\sum_{i=1}^n f_i\lambda_i\geq0$.
  %The concavity of $f$ and Lemma \ref{lemma3.4} imply
  %The condition \eqref{addistruc} and the concavity of $f$   imply 
  Together with \eqref{concavity2}, we can prove that
  \begin{equation}
  	\label{concve-inequ1}
  	0\leq \sum_{i=1}^n f_i\lambda_i \leq C_{\sup\psi[u]}\sum_{i=1}^n f_i,
  \end{equation} 
  where $C_{\sup\psi[u]}$ is as in \eqref{concve-inequ2}.
  Combining Cauchy-Schwarz inequality, we have
  \begin{equation}
  	\label{flambda-0}
  	\begin{aligned}
  		\sum_{i=1}^n f_i |\lambda_i|
  		\leq  \frac{\epsilon}{4\sqrt{b_1}}\sum_{i\neq r} f_i\lambda_i^2 +\left(C_{\sup_M\psi[u]}+\frac{4\sqrt{b_1}}{\epsilon}\right)\sum_{i=1}^n f_i,  \mbox{ for } \epsilon>0. %\nonumber
  		%\sum_{i=1}^n f_i +\sum_{i=1}^n f_{i}(\underline{\lambda}_i-\lambda_i).
  	\end{aligned}
  \end{equation}
  On  the other hand, there is a uniform positive constant $\kappa_\sigma$ depending on $\sigma$ such that
  \begin{equation}
  	\label{sumfi1}	
  	\begin{aligned}	
  		\sum_{i=1}^n f_i(\lambda) \geq\kappa_\sigma, \mbox{ for }
  		f(\lambda)=\sigma.
  		%\mbox{ in }   \partial\Gamma^\sigma.
  \end{aligned}\end{equation}

 One may construct in $\Omega_\delta$ the barrier function as follows:
  \begin{equation}
  	\label{Psi}
  	\begin{aligned} 
  		\widetilde{\Psi} =A_1 \sqrt{b_1}w -A_2 \sqrt{b_1} |z|^2 + A_3 \Phi+ \frac{1}{\sqrt{b_1}} \sum_{\tau<n}|(u-\varphi)_{\tau}|^2.  \nonumber
  		%\mbox{ in } \Omega_\delta.
  	\end{aligned}
  \end{equation}
  Putting the above inequalities together, we obtain
  \begin{equation}
  	\label{bdy-main-inequality}
  	\begin{aligned}
  		\mathcal{L}\widetilde{\Psi} \geq \,&
  		A_1 \sqrt{b_1} F^{i\bar j}(\mathfrak{\underline{g}}_{i\bar j}-\mathfrak{g}_{i\bar j}) 
  		%+\frac{1}{16\sqrt{b_1}} \sum_{i\neq r} f_i\lambda_i^2
  		+ 2A_1N \sqrt{b_1} F^{i\bar j}\sigma_i \sigma_{\bar j}
  			-(C_1'+A_3 C_\Phi)   \sqrt{b_1} 
  		\\ \,&
  		-  \left( A_2+A_3C_\Phi  +A_1C_\sigma |2N\sigma-t|
  		+4(C_1'+A_3C_\Phi)^2 \right) \sqrt{b_1}\sum_{i=1}^n f_i
  		\\\,&
  		-\left(C_1'+(C_1'+A_3C_\Phi) C_{\sup\psi[u]} \right) \sum_{i=1}^n f_i.
  	\end{aligned}
  \end{equation}
  
  Proposition \ref{mix-general} follows from
  % Hopf's lemma and
   the following lemma.
  \begin{lemma}
  	There are constants $A_1\gg A_2\gg A_3\gg1$, $N\gg1$, $0<\delta\ll1$ such that  $\widetilde{\Psi}(0)=0$,
  	$\widetilde{\Psi}\big|_{\partial {\Omega_\delta}}\leq 0$, and 
\begin{equation}
	\label{mainiequality-1} \mathcal{L}\widetilde{\Psi}\geq 0 \mbox{ on } \Omega_\delta.
  	\end{equation}
  	
  \end{lemma}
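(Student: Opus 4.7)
The plan is to verify the three assertions $\widetilde{\Psi}(0)=0$, $\widetilde{\Psi}|_{\partial\Omega_\delta}\leq 0$, and \eqref{mainiequality-1} in turn, choosing constants in the order $A_3$, then $N$, then $A_2$ (large relative to $A_3$ and the fixed small $\delta$), and finally $A_1$ much larger than everything else. The vanishing at the origin is immediate: $\underline{u}(0)=u(0)=\varphi(0)$, $\sigma(0)=0$, $(u-\varphi)_\tau(0)=0$ for each tangential index $\tau<n$, and since $\partial/\partial x_n$ is the interior normal so $y_n$ is a tangent direction, also $(u-\varphi)_{y_n}(0)=0$ and $\mathcal{T}(u-\varphi)(0)=0$, forcing $\Phi(0)=0$ and hence $\widetilde{\Psi}(0)=0$.

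For the boundary estimate $\widetilde{\Psi}|_{\partial\Omega_\delta}\leq 0$, split into $\partial M\cap\bar\Omega_\delta$ and $M\cap\partial B_\delta(0)$. On $\partial M\cap\bar\Omega_\delta$, the equalities $\sigma\equiv 0$ and $u\equiv\underline{u}\equiv\varphi$ give $w\equiv 0$, while the tangentiality of $y_n$ and $\mathcal{T}$ force $\Phi\equiv 0$; together with \eqref{bdr-t} this yields $\widetilde{\Psi}\leq(-A_2+C^2/b_1)\sqrt{b_1}|z|^2\leq 0$ once $A_2\geq C^2$. On $M\cap\partial B_\delta(0)$, use $w\leq 0$ from \eqref{yiqi-1} together with the crude bounds $|\Phi|\leq C\sqrt{b_1}$ and $b_1^{-1/2}\sum_{\tau<n}|(u-\varphi)_\tau|^2\leq C\sqrt{b_1}$ coming from \eqref{c0-boundary-c1-2} and $|\partial u|^2\leq b_1$, obtaining $\widetilde{\Psi}\leq\{-A_2\delta^2+(A_3+1)C\}\sqrt{b_1}$, which is $\leq 0$ once $A_2\delta^2\geq(A_3+1)C$.

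The interior inequality is the main content, and the plan is to feed \eqref{bdy-main-inequality} through the dichotomy of Lemma \ref{guan2014}. Concavity \eqref{concavity2} always gives $F^{i\bar j}(\underline{\mathfrak{g}}_{i\bar j}-\mathfrak{g}_{i\bar j})\geq f(\underline{\lambda})-\psi[u]\geq -C_*$, so the first ``positive'' term is merely bounded below by $-A_1C_*\sqrt{b_1}$. In the regime $|\lambda|\leq R_0$, the assumption that $\psi[u]>\sup_{\partial\Gamma}f$ keeps $\lambda$ in a compact subset of $\Gamma$, so by \eqref{sumfi1} and the ellipticity on this compact set every $f_i$ has a uniform positive lower bound $\theta_0$; then $F^{i\bar j}\sigma_i\sigma_{\bar j}\geq\theta_0/16$ from \eqref{bdy1}, so $2A_1N\sqrt{b_1}F^{i\bar j}\sigma_i\sigma_{\bar j}\geq A_1N\theta_0\sqrt{b_1}/8$ dominates the full set of negative terms once $N$ and then $A_1$ are taken large enough. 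In the regime $|\lambda|\geq R_0$, Lemma \ref{guan2014} provides either $F^{i\bar j}(\underline{\mathfrak{g}}-\mathfrak{g})\geq\varepsilon\sum f_i$ or $F^{i\bar j}\geq\varepsilon(\sum f_j)g^{i\bar j}$, and in both subcases one produces a positive contribution of size $cA_1\sqrt{b_1}\sum f_i$ (with $c=\varepsilon$ or $c=\varepsilon N/8$). The smallness condition \eqref{yuanbd-11} ensures $A_1C_\sigma|2N\sigma-t|\leq A_1\varepsilon/16$, so the coefficient of $\sqrt{b_1}\sum f_i$ on the negative side is bounded by $A_2+A_3C_\Phi+A_1\varepsilon/16+4(C_1'+A_3C_\Phi)^2$, which is absorbed by the positive $cA_1\sqrt{b_1}\sum f_i$ under $A_1\gg A_2\gg A_3$; the residual $-(C_1'+A_3C_\Phi)\sqrt{b_1}$ and $-(\cdots)\sum f_i$ are eaten by the same positive term via \eqref{sumfi1}.

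The main obstacle I anticipate is the simultaneous compatibility of all constraints: the boundary estimate couples $A_2\delta^2\gtrsim A_3$, the interior Case 1 requires $A_1N$ large enough to defeat quadratic expressions in $A_3$ plus $A_2$, Case 2 needs $A_1$ to eat a polynomial in $A_2,A_3$, and the smallness constraint \eqref{yuanbd-11} forces $N\delta\lesssim\varepsilon/C_\sigma$, so that $N\gg 1$ compels $\delta\ll 1/N$. Fixing first $A_3$, then $N$, then $\delta$ (small relative to $1/N$), then $A_2$ (large relative to $A_3/\delta^2$), and finally $A_1$ (large relative to everything, and in particular to $A_2+(A_3C_\Phi)^2$) resolves the hierarchy, and one has to check that this ordering is internally consistent because none of the later constants feeds back into the constraints on the earlier ones.
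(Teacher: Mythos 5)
Your proof is correct and follows essentially the same route as the paper: you verify $\widetilde{\Psi}(0)=0$ directly, handle the two pieces of $\partial\Omega_\delta$ separately using \eqref{bdr-t}, \eqref{yiqi-1} and the boundedness built into $b_1$, and then run the interior inequality \eqref{bdy-main-inequality} through the $\mathcal{C}$-subsolution dichotomy of Lemma \ref{guan2014}, reducing the compact regime $|\lambda|\le R_0$ to the uniformly elliptic subcase exactly as the paper's Case 3 reduces to Case 2; the hierarchy $A_3$, then $N$, then $\delta$, then $A_2$, then $A_1$ is the correct order and matches the implicit ordering in the paper.

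One small inaccuracy worth noting: on $\partial M\cap\bar\Omega_\delta$ you claim $\Phi\equiv 0$, but $\frac{\partial}{\partial y_n}$ is only tangential at the origin, not along all of $\partial M\cap\bar\Omega_\delta$; away from $0$ one has $\mathcal{T}(u-\varphi)=0$ but $(u-\varphi)_{y_n}=\widetilde{\eta}(u-\varphi)_{x_n}=O(|z|)$ by \eqref{bdr-t}, so $\Phi=O(|z|^2/\sqrt{b_1})$ rather than zero. This does not break the argument: the contribution $A_3\Phi\le A_3 C|z|^2/\sqrt{b_1}$ is still dominated by $-A_2\sqrt{b_1}|z|^2$ once $A_2\gg A_3$, which is precisely the constraint you already impose on the spherical part $M\cap\partial B_\delta(0)$.
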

  
  \begin{proof}
  	%Let's take $\vartheta= \frac{1}{2}\min_{\bar M} dist(\nu_{\underline{\lambda} }, \partial \Gamma_n)$ as above, and
  Obviously,   $\widetilde{\Psi}(0)=0$.
  From \eqref{bdr-t} and \eqref{yiqi-1}, we see  
  	 $\widetilde{\Psi}\big|_{\partial {\Omega_\delta}}\leq 0$
  	if $A_2\gg A_3\gg1.$ 
  	Let $\varepsilon$ and $R_0$ be the corresponding positive constants in Lemma \ref{guan2014}. 
  	According to Lemma \ref{guan2014} the discussion can be divided into three cases.
  	
  	{\bf Case 1}: Assume that $|\lambda|\geq R_0$ and
  	% If $|\nu_{\lambda }-\nu_{\underline{\lambda} }|\geq \vartheta$,   then by Lemma \ref{guan2014}
  	%and Lemma 6.2 in \cite{CNS3}
  	\begin{equation}
  		\label{guan-key1}
  		\begin{aligned}
  			F^{i\bar j}(\mathfrak{\underline{g}}_{i\bar j}-\mathfrak{g}_{i\bar j})  \geq \varepsilon  \sum_{i=1}^n F^{i\bar i}.  \nonumber
  		\end{aligned}
  	\end{equation}
  	Note \eqref{yuanbd-11} implies 
  	%$A_1C_\sigma |2N\sigma-t|\leq \frac{1}{2}A_1 \varepsilon$.
  	$C_\sigma |2N\sigma-t|\leq \frac{1}{2} \varepsilon$.
  	 Taking $A_1\gg 1$   by \eqref{sumfi1}  we get \eqref{mainiequality-1}.
%\begin{equation}\mathcal{L}\widetilde{\Psi} \geq 0 \mbox{ on } \Omega_\delta. \end{equation}
  	%Here we use  \eqref{sumfi1}.
  	
  	{\bf Case 2}:   Suppose that  $|\lambda|\geq R_0$ and
  	%$ f_{i} \geq  \varepsilon   \sum_{j=1}^n f_{j},  \forall 1\leq i\leq n. $
  	%$|\nu_{\lambda }-\nu_{\underline{\lambda} }|<\vartheta$ and therefore $\nu_{\lambda }-\vartheta \vec{\bf 1} \in \Gamma_{n}$ and
   \begin{equation} 	\label{2nd-case1} 	\begin{aligned} 	f_{i} \geq  \varepsilon   \sum_{j=1}^n f_{j}, \,\, \forall 1\leq i\leq n.    	\end{aligned} 	\end{equation}
  	By \eqref{bdy1}, we have $|\partial \sigma|\geq\frac{1}{4}$ in $\Omega_\delta$, then 
  	\begin{equation}
  		\label{bbvvv}
  		\begin{aligned}
  			A_1N \sqrt{b_1} F^{i\bar j}\sigma_i \sigma_{\bar j} \geq \frac{A_1N \varepsilon \sqrt{b_1}}{16}\sum_{i=1}^n f_i  \mbox{ on } \Omega_\delta.
  		\end{aligned}
  	\end{equation}
  	This term  controls 
  	all the bad terms containing $\sum_{i=1}^n f_i$ in \eqref{bdy-main-inequality}.
  	% can be controlled by %the good term
  	On the other hand,  
  	%***************************************************
  	$$\mathcal{L}(\underline{u}-u)\geq f(\underline{\lambda})-\psi(z,u)$$ 
  	%***************************************************
  	and
  	%except for  when $\sum_{i=1}^n f_i$ has a lower bound ($\sum_{i=1}^n f_i( \lambda)\geq \kappa$),
  	the bad term  $-(C_1' +A_3 C_\Phi)\sqrt{b_1}$ from  \eqref{bdy-main-inequality}
  	can be dominated by combining \eqref{bbvvv} and \eqref{sumfi1}.
  	%\begin{equation}\label{lambda-sumf-1}
  	% \frac{c_0 }{32 \sqrt{b_1}}|\lambda|^2 \sum_{i=1}^n f_i + \frac{A_1N\vartheta \sqrt{b_1}}{32\sqrt{n}}\sum_{i=1}^n f_i
  	% \geq  \frac{\sqrt{c_0\vartheta A_1N}  }{16\sqrt{n}} |\lambda|\sum_{i=1}^n f_i.  \nonumber
  	%\end{equation}
  	Thus  %$$\mathcal{L}(\widetilde{\Psi}) \geq 0 \mbox{ on }\Omega_\delta, \mbox{ if }  N\gg 1.$$
  	\eqref{mainiequality-1} holds if $N\gg1$.
  	
  	{\bf Case 3}:  Assume $|\lambda|<R_0$. Then an inequality of the form \eqref{2nd-case1} holds with a possibly different constant $\varepsilon$. Consequently, this gives back  {\bf Case 2}.
  	
  \end{proof}

    %\vspace{1mm}
 % \medskip
    \section{Interior estimates for  uniformly elliptic equations}
    \label{Sec-Estimates2}
    %\medskip
  In this section we derive interior estimates for equations of uniform ellipticity.

    %\begin{proposition} 	\label{thm1-inter-3} 	Let $B_r$ be a geodesic ball in $(M,\omega)$. Suppose   	\eqref{addistruc} and \eqref{uniform-elliptic-2} hold. 	Then for any admissible solution $u\in C^3(B_r)$ to   \eqref{main-equ2} in $B_r$, we have  	\begin{equation} 	\begin{aligned} 		\sup_{B_{r/2}} |\partial u|^2\leq C \nonumber 	\end{aligned} 	\end{equation} 	where $C$ is a uniform constant depending only on $r^{-1}$,  $|u|_{C^0(B_r)}$, $|\psi|_{C^1(B_r)}$ and geometric quantities  	on $B_r$. Moreover, \eqref{addistruc} can be removed when $\psi_u\geq0$.   \end{proposition}

   %\begin{proposition} 	\label{thm1-inter-4} 	Let $B_r$ be a geodesic ball in $(M,\omega)$. Suppose   	\eqref{uniform-elliptic-2}  holds. Then for any admissible solution $u\in C^4(B_r)$ to  \eqref{main-equ2} in $B_r$, we have  	\begin{equation} 	\begin{aligned} 	\sup_{B_{r/2}} |\partial\overline{\partial} u|\leq C \nonumber 	\end{aligned} 	\end{equation} 	where $C$ depends only on $r^{-1}$, $|u|_{C^1(B_r)}$, $|\psi|_{C^2(B_r)}$ and geometric quantities on $B_r$.  \end{proposition} 
   
    \begin{proposition}
   	\label{thm0-inter}
   	Let $B_r$ be a geodesic ball in $(M,\omega)$. Suppose  
   	\eqref{addistruc} and \eqref{uniform-elliptic-2} hold.
   	Then for any admissible solution $u\in C^4(B_r)$ to   \eqref{main-equ2} in $B_r$, we have 
   	\begin{equation}
   		\begin{aligned}
   			\sup_{B_{r/2}} (|\partial u|^2+|\partial\overline{\partial} u|)\leq C \nonumber
   		\end{aligned}
   	\end{equation}
   	where $C$ is a uniform constant depending only on $r^{-1}$,  $|u|_{C^0(B_r)}$, $|\psi|_{C^2(B_r)}$ and geometric quantities 
   	on $B_r$.
   	Moreover, \eqref{addistruc} can be removed when $\psi_u(z,u)\geq0$.
   \end{proposition}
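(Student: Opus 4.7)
The plan is to establish the Hessian bound
\[
\sup_{B_{r/2}}|\partial\overline{\partial} u|\leq C\bigl(1+\sup_{B_r}|\partial u|^2\bigr)
\]
and the gradient bound $\sup_{B_{r/2}}|\partial u|\leq C$ by two separate maximum principle arguments, each localized by a cutoff function and each exploiting the uniform ellipticity \eqref{uniform-elliptic-2}. Combining them yields the claimed interior estimate.

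For the Hessian step I would adapt the calculation of Hou--Ma--Wu \cite{HouMaWu2010} and Sz\'ekelyhidi \cite{Gabor} by inserting a cutoff. Fix $\eta\in C_c^\infty(B_r)$ with $\eta\equiv 1$ on $B_{r/2}$ and $|\partial\eta|^2/\eta,|\partial\overline{\partial}\eta|\leq Cr^{-2}$, and apply the maximum principle to
\[
Q=\log\bigl(\eta^2\lambda_1(\mathfrak{g})\bigr)+\varphi(|\partial u|^2)+\tau(u),
\]
where $\lambda_1(\mathfrak{g})$ denotes the largest eigenvalue of $\mathfrak{g}=\chi+\sqrt{-1}\partial\overline{\partial}u$, and $\varphi,\tau$ are convex auxiliary functions. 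Differentiating \eqref{main-equ2} twice in the top eigendirection and combining the concavity inequality \eqref{concavity2} with the standard third-order trick of Andrews--Gerhardt gives the bound, once $\varphi$ and $\tau$ are appropriately chosen.

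For the gradient step I would apply the maximum principle to
\[
W=\eta^2|\partial u|^2 e^{\phi(u)},
\]
with $\phi$ a strictly convex function of large second derivative to be chosen. Computing $\mathcal{L}W$ and using $\mathcal{L}u=\sum_i f_i\lambda_i-F^{i\bar j}\chi_{i\bar j}$ together with the uniform ellipticity bound $f_i\geq\theta\sum_j f_j$ produces a distinguished good term bounded below by $c\phi''|\partial u|^4\sum_i f_i$. For $\phi''$ sufficiently large this dominates the error terms arising from $\partial\psi$, the cutoff, and the torsion and curvature of $\omega$, yielding $\sup_{B_{r/2}}|\partial u|\leq C$. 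An alternative based on the blow-up argument and Liouville theorem of \cite{Gabor} would work equally well.

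The only place \eqref{addistruc} enters these two computations is through Lemma \ref{lemma3.4}, which is used to sign the zero-order contribution $\sum_i f_i\lambda_i\geq 0$. When $\psi_u(z,u)\geq 0$, differentiating the equation produces $\psi_u|\partial u|^2$ with a manifestly favorable sign, so \eqref{addistruc} becomes unnecessary. The main obstacle I anticipate is the Hessian step: controlling the third-order derivatives that arise when differentiating \eqref{main-equ2} twice. Here uniform ellipticity is decisive, since it ensures no eigendirection degenerates and the concavity contribution $F^{p\bar p,q\bar q}$ absorbs the third-order errors uniformly; this is precisely why \eqref{addistruc} alone does not suffice in the interior setting and must be accompanied by \eqref{uniform-elliptic-2}.
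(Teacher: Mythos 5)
Your plan is correct and follows the same skeleton as the paper's Section \ref{Sec-Estimates2}: two cutoff-localized maximum principle arguments, one for $|\partial u|^2$ and one for the complex Hessian, each proved by exploiting the uniform ellipticity \eqref{uniform-elliptic-2} to generate a coercive good term. Your gradient quantity $\eta^2|\partial u|^2 e^{\phi(u)}$ with strictly convex $\phi$ is essentially the one the paper uses (it takes $\eta|\partial u|^2 e^{v^{-N}}$ with $v=u-\inf u+2$, which is the same device), and your reading of where uniform ellipticity is decisive — it generates a term $\sim \phi''\,F^{i\bar j}u_iu_{\bar j}\geq \theta\,\phi''|\partial u|^2\sum F^{i\bar i}$, and without a subsolution or boundary data this is what saves the interior estimate — matches the paper's use of \eqref{5-15} and Lemma \ref{lemma-Lw}.

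The one genuine divergence is your Hessian test function. You propose $\log(\eta^2\lambda_1(\mathfrak{g}))+\varphi(|\partial u|^2)+\tau(u)$ with the largest eigenvalue $\lambda_1$, in the Sz\'ekelyhidi style, together with an Andrews--Gerhardt absorption of third-order terms. The paper instead works with the Tosatti--Weinkove quantity
\[
P=\sup_{z,\,\xi\in T^{1,0}_zM}\ e^{2\phi}\,\mathfrak{g}_{p\bar q}\xi_p\bar\xi_q\sqrt{g^{k\bar l}\mathfrak{g}_{i\bar l}\mathfrak{g}_{k\bar j}\xi_i\bar\xi_j}\big/|\xi|^3,
\qquad \phi=\log\eta+\varphi(w),\ \varphi(w)=(1-\tfrac{w}{2N})^{-1/2},
\]
which is smooth even where eigenvalues collide and so sidesteps the perturbation argument you would need to make $\log\lambda_1$ differentiable at the maximum point. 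Your route buys a more standard and arguably more familiar computation at the price of handling multiplicity; the paper's quantity avoids that but requires the extra work of tracking the $\sqrt{\,\cdot\,}$ factor. Both yield the same intermediate bound $\eta\,\mathfrak{g}_{1\bar 1}\leq C(\sup_{\{\eta>0\}}|\partial u|^2)$, and either order of the two steps closes once you run them on nested balls.

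Two small imprecisions worth flagging. First, the good term in your gradient step should be $\sim\phi''\,\theta\,|\partial u|^2\sum_i f_i$, not $\phi''|\partial u|^4\sum_i f_i$ — the extra $|\partial u|^2$ factor there would not come for free. Second, on removing \eqref{addistruc}: the paper does not rely solely on $\psi_u\geq 0$ giving a favorable sign. It also replaces the use of Lemma \ref{lemma3.4} (which needs \eqref{addistruc}) by the concavity inequality \eqref{sum-2}, $|\lambda|\sum_i f_i(\lambda)\geq\tfrac12\bigl(f(|\lambda|\vec{\bf 1})-f(\lambda)\bigr)>0$ for $|\lambda|\geq R_0$, to guarantee $\sum_i f_i>0$ where it is needed. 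So the hypothesis really is removable, but the mechanism is \eqref{sum-2} plus the favorable sign, not the sign alone.
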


\begin{remark}  
	For the equation \eqref{equation-n-varrho}, such interior estimates were established in \cite{GQY2018} for $\varrho<1$ and further extended by \cite{GGQ2022} to  the case    $\varrho=1$ when $f=\sigma_k^{1/k}$, $k<n$.  
		Together with Proposition \ref{proposition-n-varrho},
		% as a consequence
		 we are able to obtain interior estimates for general equation \eqref{equation-n-varrho}
	%with  $\varrho<\varrho_\Gamma$, $\varrho\neq0$,
	%which is optimal
	under the assumption $\varrho<\varrho_\Gamma$. %(which is optimal),  
	%In particular,  $\varrho_{\Gamma_k}=\frac{n}{k}$ and so the interior estimates hold for $\varrho<\frac{n}{k}$ in the case $\Gamma=\Gamma_k$. 
%	thereby  	covering the %limiting 
%	case $\varrho=1$ whenever $\Gamma\neq\Gamma_n$.
 	This  partially answers a question %which is
 	 left open by \cite{GQY2018}. 
	%and further  posed in \cite{GGQ2022}.
	% We shall point out that the case    $\varrho=1$ was also considered by \cite{GGQ2022} for $f=\sigma_k^{1/k}$ with $k<n$. 
	%for   $f=\sigma_k^{1/k}$ with $k<n$ and $\varrho=1$, such interior estimates were also established by George-Guan-Qiu \cite{GGQ2022}.  
Moreover, as a contrast, in general one could not expect that such  interior estimates hold for 
the limiting case $\varrho=\varrho_\Gamma$. 
 %in which the global estimate for equations with $\psi[u]=\psi(x)$  was established in
%\cite{Gabor}   on closed   manifolds and also by \cite{yuan-regular-DP}  for   Dirichlet problem.
 %(Note  that \eqref{equation-n-varrho} can be transformed into \eqref{equ3-hehe} and vice versa;  see  Subsection \ref{subsec1-equi}).
	 
\end{remark}

 \subsection{Useful formula}
% The following formulas are standard 
 %(see e.g. \cite{Guan2010Li,GQY2018})
% \begin{equation}	\label{deco1}	\begin{aligned}		u_{1\bar j k}-u_{k\bar j 1}  = \,& T^l_{1k}u_{l\bar j},  \\   
 		%\end{aligned}
 		%\end{equation}
 		%\begin{equation}
 		%\begin{aligned}
 %		u_{1\bar 1 i\bar i}-u_{i\bar i 1\bar 1} =R_{i\bar i 1\bar p}u_{p\bar 1}-	R_{1\bar 1 i\bar p}u_{p\bar i} \,& +2\mathfrak{Re}\{\bar T^{j}_{1i}u_{i\bar j 1}\}+T^{p}_{i1}\bar T^{q}_{i1}u_{p\bar q}.    	\end{aligned} \end{equation}

  Denote  
  %\begin{equation}
  %	\label{def-Q1}
 $w = |\partial u|^2 \mbox{ and } Q=|\partial\overline{\partial}u|^2+|\partial \partial u|^2.$
%  \end{equation}
  Under local coordinates $z=(z_1,\cdots,z_n)$ around  $z_0$, with $g_{i\bar j}(z_0)=\delta_{ij}$, 
   we have %by straightforward computations 
   %The following formulas are standard 
   %(see e.g. \cite{Guan2010Li,GQY2018})
   \begin{equation}
   	\label{deco1}
   	\begin{aligned}
   	 \,&	u_{i\bar j k}  
   	 	-u_{k\bar j i}  =T^l_{ik}u_{l\bar j}, \,\, w_i = u_{\bar k} u_{ki} + u_{k} u_{i\bar k}, \\ \nonumber 
   	 %\end{aligned}
   	%	 \end{equation}
   	% \begin{equation}
   	% \begin{aligned}
   		u_{1\bar 1 i\bar i}-u_{i\bar i 1\bar 1} \,& =  R_{i\bar i 1\bar p}u_{p\bar 1}-
   		R_{1\bar 1 i\bar p}u_{p\bar i}   +2\mathfrak{Re}\{\bar T^{j}_{1i}u_{i\bar j 1}\}+T^{p}_{i1}\bar T^{q}_{i1}u_{p\bar q},  \\ \nonumber
%   	\end{aligned}   \end{equation}
% \begin{equation} \begin{aligned}
 	%	\,& w_i = u_{\bar k} u_{ki} + u_{k} u_{i\bar k},  \\
 	w_{i\bar j} 
  =  u_{ki} u_{\bar k\bar j} +  u_{k\bar j} \,& u_{i\bar k} 
  +  u_{\bar k} u_{i\bar j k} +  u_{k} u_{i\bar j \bar k} 
  + R_{i\bar j k\bar l} u_{\bar k} u_l
  - T^{l}_{ik} u_{l\bar j} u_{\bar k} - \overline{T^{l}_{jk}} u_{i\bar l} u_k. \nonumber
   \end{aligned} \end{equation}
  %One then obtains 
  \begin{lemma}
  	\label{lemma-Lw}
  	We have
  	\[F^{i\bar j}w_i w_{\bar j} \leq 2 wQ\sum F^{i\bar i};\]
  	and there exists $C>0$ such that
  	\begin{equation}
  		\begin{aligned}
  			\mathcal{L}(w) \geq   \frac{3\theta Q}{4} \sum F^{ii}-Cw\sum F^{i\bar i}-C|\nabla_z \psi| \sqrt{w}+2\psi_u w. \nonumber
  		\end{aligned}
  	\end{equation}
  	
  \end{lemma}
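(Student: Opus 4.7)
The two inequalities are pointwise, so fix $z_0$ and choose local coordinates with $g_{i\bar j}(z_0)=\delta_{ij}$, unitarily rotated so that $\mathfrak{g}_{i\bar j}(z_0)$ is diagonal. Then $F^{i\bar j}(z_0) = f_i\delta_{ij}$, and by Lemma \ref{lemma5.11} the uniform ellipticity hypothesis \eqref{uniform-elliptic-2} gives $f_i \geq \theta\sum_j f_j$ for each $i$, i.e.\ $F^{i\bar i}\geq \theta\sum_j F^{j\bar j}$.

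The first inequality follows from the formula $w_i = u_{\bar k}u_{ki} + u_k u_{i\bar k}$ and Cauchy--Schwarz: the elementary inequality $|a+b|^2\leq 2(|a|^2+|b|^2)$ together with $|\sum_k u_{\bar k}u_{ki}|^2\leq w\sum_k |u_{ki}|^2$ (and the analogous bound for the mixed term) yields $|w_i|^2\leq 2w\sum_k(|u_{ki}|^2 + |u_{i\bar k}|^2)$. Multiplying by $f_i$, summing, and bounding $f_i \leq \sum_j f_j$ gives $F^{i\bar j}w_i w_{\bar j}\leq 2wQ\sum F^{i\bar i}$.

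For the second inequality, $\mathcal{L}(w) = F^{i\bar j}w_{i\bar j}$ is expanded using the stated formula for $w_{i\bar j}$. The main positive term
\[F^{i\bar j}(u_{ki}u_{\bar k\bar j} + u_{k\bar j}u_{i\bar k}) = \sum_{i,k} f_i(|u_{ki}|^2 + |u_{i\bar k}|^2)\]
is bounded below by $\theta Q \sum F^{i\bar i}$ via uniform ellipticity. The third-order contribution $2\mathfrak{Re}(u_{\bar k}F^{i\bar j}u_{i\bar jk})$ is handled by differentiating the equation $F(\mathfrak{g}) = \psi(z,u)$ in $z_k$: this produces $F^{i\bar j}u_{i\bar j k} = \psi_{z_k} + \psi_u u_k - F^{i\bar j}\chi_{i\bar j k}$, and after multiplication by $u_{\bar k}$, summation, and taking twice the real part one obtains the $2\psi_u w$ term, a contribution $\geq -2|\nabla_z\psi|\sqrt{w}$, and a remainder of order $\sqrt{w}\sum F^{i\bar i}$ from $\chi$, which via $\sqrt{w}\leq \frac{1}{2}(1+w)$ and the positive lower bound \eqref{sumfi1} on $\sum F^{i\bar i}$ is absorbed into $-Cw\sum F^{i\bar i}$. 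The curvature term $F^{i\bar j}R_{i\bar jk\bar l}u_{\bar k}u_l$ is directly bounded below by $-Cw\sum F^{i\bar i}$.

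The delicate step is the treatment of the torsion terms $-F^{i\bar j}T^l_{ik}u_{l\bar j}u_{\bar k}$ and its conjugate. By Cauchy--Schwarz with a small parameter $\epsilon$,
\[|F^{i\bar j}T^l_{ik}u_{l\bar j}u_{\bar k}| \leq \epsilon\sum_{i,l} f_i|u_{l\bar i}|^2 + C\epsilon^{-1}w\sum F^{i\bar i} \leq \epsilon Q\sum F^{i\bar i} + C\epsilon^{-1}w\sum F^{i\bar i},\]
so choosing $\epsilon = \theta/16$ sacrifices at most $\theta Q/4\cdot \sum F^{i\bar i}$ of the main positive term while producing only an admissible $Cw\sum F^{i\bar i}$ error. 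Combining everything gives the stated estimate with $\frac{3\theta Q}{4}$ in front. The main obstacle is precisely this calibration: one must extract a strict margin from the uniform ellipticity constant $\theta$ to absorb the torsion, which is why the assumption \eqref{uniform-elliptic-2} is essential rather than mere ellipticity $f_i>0$.
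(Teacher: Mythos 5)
The paper states Lemma \ref{lemma-Lw} without proof (it only records the auxiliary formulas for $w_i$ and $w_{i\bar j}$), and your argument is the natural one built from exactly those formulas together with the uniform-ellipticity bound $f_i\ge\theta\sum_j f_j$, so the route is essentially what the author intends. Two minor calibration slips should be corrected. With $\epsilon=\theta/16$ the torsion term eats $2\epsilon\, Q\sum F^{i\bar i}=\tfrac{\theta}{8}\,Q\sum F^{i\bar i}$, not $\tfrac{\theta}{4}\,Q\sum F^{i\bar i}$; this is harmless (both give at least $\tfrac{3\theta}{4}$), but the stated constant $\tfrac{3\theta}{4}$ corresponds to $\epsilon=\theta/8$. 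More substantively, absorbing the $\chi$-contribution of size $C\sqrt{w}\sum F^{i\bar i}$ into $-Cw\sum F^{i\bar i}$ is \emph{not} justified by $\sqrt{w}\le\tfrac12(1+w)$ together with the lower bound \eqref{sumfi1}: a lower bound on $\sum F^{i\bar i}$ cannot turn the residual $+\tfrac{C}{2}\sum F^{i\bar i}$ into $-Cw\sum F^{i\bar i}$ when $w<1$. The correct remark is that the inequality is applied only at points where $w\ge1$ (the paper explicitly normalizes $w(z_0)\ge1$ before invoking this lemma in the gradient estimate), in which case $\sqrt{w}\le w$ and the absorption is immediate; you should state that proviso rather than invoke \eqref{sumfi1}.
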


    %\subsubsection{Proof of Proposition \ref{thm1-inter-3}}
  \subsection{Interior gradient estimate}
  Let's consider the quantity 
  \[m_{0} = \max_{\bar M} \eta  |\partial u|^2 e^{\phi},\]
where $\eta$ is   as in \cite{Guan2003Wang} a smooth 
  function with compact support in $B_{r}\subset M$
  satisfying
  \begin{equation}
  	\label{2-22}
  	0\leq \eta \leq 1, ~~\eta\big|_{B_{\frac{r}{2}}}\equiv 1,
  	~~|\partial \eta | \leq \frac{C \sqrt{\eta}}{r},  
  	%\equiv \frac{C \zeta^{1 - \alpha}}{r \alpha},
  	~~|\partial\overline{\partial}\eta| \leq \frac{C}{r^2} 
  	%\equiv \frac{C}{r \alpha^2}.
  \end{equation}
  %and following \cite{Guan2008IMRN},  
  %(see also \cite{GQY2018,yuan2020conformal}), 
  and we take $\phi=v^{-N},$ where $v=u-\inf_{B_r} u+2$ %(hence $v\geq 2$ in $B_r$)  
  and 
  %$N\geq 1$   is an integer  to be chosen later. 
   % We choose 
    $N\gg1$ so that %$Nv^{-N}<1$. Thus
   \begin{equation}\label{5-15}
   	N(N+1)v^{-N-2}-N^2 v^{-2N-2} \geq N^2 v^{-N-2}.
   \end{equation}
  %By direct computation 
  %\begin{equation}\label{phii}\begin{aligned}	\,&  \phi_i=-Nv^{-N-1}  u_i,  \,\, \phi_{\bar i}=-Nv^{-N-1} u_{\bar i}	\\	\,& \phi_{i\bar j} 	=N(N+1)v^{-N-2} u_i u_{\bar j}-Nv^{-N-1} u_{i\bar j}.   \nonumber\end{aligned}\end{equation}
  %which % couple with fully uniform ellipticity \eqref{fully-uniform2}, 

  Suppose that $m_{0}$ is attained at an interior point $z_0 \in B_r$. 
  %By Lemma 2.9 in \cite{ST11} we may 
  We choose local coordinates $(z_{1}, \ldots, z_{n})$ such that 
  $g_{i\bar j} = \delta_{ij}$ 
  %and $T_{ij}^k = 2 \Gamma_{ij}^k$
  at $z_0$.
  As above we denote $w = |\partial u|^2$. Without loss of generality, we assume $w(z_0)\geq 1$.  
  From above,  
  $\log \eta+\log w +\phi$ 
  achieves a maximum at $z_{0}$ and thus, 
  \begin{equation}
  	\label{gqy-G32}
  	\frac{\eta_{i}}{\eta}+\frac{w_{i}}{w}+\phi_{i} = 0, \;\;
  	\frac{\eta_{\bar i}}{\eta}+\frac{w_{\bar i}}{w}+\phi_{\bar i} = 0, \;\; \forall 1\leq i\leq n,
  \end{equation}
  %and 
  \begin{equation}
  	\label{gqy-G34}
  	%F^{i\bj} \Big(	\frac{\eta_{i\bj}}{\eta} 	- \frac{\eta_i \eta_{\bj}}{\eta^{2}} + \frac{w_{i\bj}}{w} 	- \frac{w_i w_{\bj}}{w^{2}} + \phi_{i\bj}\Big) \leq 0.
  	\mathcal{L}(\log \eta+\log w+\phi)\leq 0.
  \end{equation}
  
  Combining \eqref{gqy-G32} and Cauchy-Schwarz inequality, we derive
  \begin{equation}
  	\label{211-eq}
  	\frac{1}{w^2} F^{i\bar j}w_i w_{\bar j} \leq \frac{1+\epsilon}{\epsilon \eta^2} F^{i\bar j}\eta_i \eta_{\bar j}
  	+(1+\epsilon) F^{i\bar j} \phi_i \phi_{\bar j}.
  \end{equation}
  As a result, combining  Lemma \ref{lemma-Lw} and let  $8\epsilon\leq \theta$, we derive at $z_0$
  \begin{equation}
  	\label{L-logw}
  	\begin{aligned}
  		\mathcal{L} \log w %=\,& \frac{\mathcal{L}w}{w}-F^{i\bar j}\frac{w_i w_{\bar j}}{w^2} \\
  		%\geq \,& \frac{(3\theta-8\epsilon)  Q}{4w} \sum F^{i\bar i}	-C\sum F^{i\bar i} -\frac{C|\nabla_z \psi|}{\sqrt{w}} +2\psi_u	\\\,&	-\frac{1-\epsilon^2}{\epsilon \eta^2} F^{i\bar j} \eta_i \eta_{\bar j}	-(1-\epsilon^2) F^{i\bar j} \phi_i \phi_{\bar j} \\
  		\geq %\,& 
  	\left(\frac{\theta Q}{2w} -C\right)\sum F^{i\bar i}	
  		%\\\,&
  		-\frac{1}{\epsilon \eta^2} F^{i\bar j} \eta_i \eta_{\bar j}
  		-F^{i\bar j} \phi_i \phi_{\bar j}
  		%-C\sum F^{i\bar i} 
  		-\frac{C|\nabla_z \psi|}{\sqrt{w}} +2\psi_u. 
  	\end{aligned}
  \end{equation}
  On the other hand
  %\begin{equation} \label{L-logeta} \begin{aligned} 	\mathcal{L} \log \eta  = \frac{F^{i\bar j}\eta_{i \bar j}}{\eta}-F^{i\bar j}\frac{\eta_i \eta_{\bar j}}{\eta^2},	\end{aligned}\end{equation}
  \begin{equation}
  	\label{2-23}
  	\begin{aligned}
  		\frac{1+\epsilon}{\epsilon \eta^{2}} F^{i\bar j} \eta_i \eta_{\bar j} 
  		- \frac{1}{\eta} F^{i\bar j} \eta_{i\bar j}  
  		\leq  \frac{C}{\epsilon r^2 \eta} \sum F^{i\bar i},	 
  	\end{aligned}
  \end{equation}
  \begin{equation}
  	\label{5-12}
  	\begin{aligned}
  		F^{i\bar i} \phi_i \phi_{\bar j}=N^2 v^{-2N-2} F^{i\bar j} u_iu_{\bar j},	 
  	\end{aligned}
  \end{equation}
  %and
  \begin{equation}
  	\label{Lphi}
  	\begin{aligned}
  		\mathcal{L}\phi
  		%= \,& N(N+1)v^{-N-2}F^{i\bar j} u_i u_{\bar j} -Nv^{-N-1}F^{i\bar j} u_{i\bar j}  	\\
  		= N(N+1)v^{-N-2}F^{i\bar j} u_i u_{\bar j} -Nv^{-N-1}F^{i\bar j}    (\mathfrak{g}_{i\bar j}-\chi_{i\bar j}). 
  	\end{aligned}
  \end{equation}

  %The concavity of $f$ implies that there is a positive constant $C_1$ such that
  %\begin{equation}	\label{5-14}	F^{i\bar j}\mathfrak{g}_{i\bar j}=\sum_{i=1}^n f_i \lambda_i \leq C_1 \sum_{i=1}^n f_i.  \end{equation}
  %see also  %\eqref{tau-up1}. 
  %\eqref{012}.

  Plugging \eqref{concve-inequ1}, \eqref{5-15}, \eqref{211-eq}-\eqref{Lphi}
  into  \eqref{gqy-G34}, we obtain
  \begin{equation}
  	\begin{aligned}
  		\theta w N^2v^{-N-2} \sum F^{i\bar i}+\frac{\theta Q}{2w}\sum F^{i\bar i} 
  		\leq CN v^{-N-1} \sum F^{i\bar i}+ \frac{C}{r^2 \eta }\sum F^{i\bar i}
  		+\frac{C}{\sqrt{w}}-2\psi_u. \nonumber
  	\end{aligned}
  \end{equation}
  %*****************************************
  %By Lemma \ref{lemma3.4},
  %\begin{equation}	\label{sum-1}	\begin{aligned}		\sum_{i=1}^n f_i(\lambda) >\frac{f(R\vec{\bf 1})-f(\lambda)}{R}.	\end{aligned}\end{equation}
  We can use \eqref{sumfi1} to control the term $-2\psi_u$.  
    As a result, we derive interior gradient estimate.
Furthermore, note that
  \[\frac{\theta w N^2v^{-N-2}}{2} \sum F^{i\bar i}+\frac{\theta Q}{2w}\sum F^{i\bar i} \geq \theta N v^{-\frac{N}{2}-1} \sqrt{Q}\sum F^{i\bar i}\]
  and there exists $R_0>0$ such that for any $\lambda$ with $|\lambda|\geq R_0$
  \begin{equation}
  	\label{sum-2}
  	\begin{aligned}
  		|\lambda|\sum_{i=1}^n f_i(\lambda) \geq \frac{f(|\lambda|\vec{\bf 1})-f(\lambda)}{2}
  		%=\frac{f(|\lambda|\vec{\bf 1})-\psi}{2}
  		>0.
  \end{aligned}\end{equation}
  So one can remove \eqref{addistruc} %in Theorem \ref{thm1-inter-3} 
  when $\psi_u\geq0$.
  %\end{remark}

  \subsection{Interior estimate for second derivatives}
  
  %\subsubsection{Proof of Proposition \ref{thm1-inter-4}}
  %We derive  second order interior estimate. %in Theorem~\ref{thm1-inter-4}. 

  	As in \cite{GQY2018} 
  	we  consider 
  	 the  quantity  %which is given in local coordinates
  	\begin{equation}
  		\begin{aligned}
  			P:= \sup_{z \in M} \max_{\xi \in T^{1,0}_z M}
  			\; e^{2 \phi} \mathfrak{g}_{p\bar q} \xi_p \bar{\xi_q}
  			\sqrt{g^{k\bar l}  \mathfrak{g}_{i\bar l}  \mathfrak{g}_{k\bar j}  \xi_i \bar{\xi_j}}/|\xi|^3 \nonumber
  		\end{aligned}
  	\end{equation}
  	where $\phi$ is a function depending on $z$ and $|\partial u|$. 
  %	The quantity $P$ 
  	This  is inspired by \cite{Tosatti2019Weinkove}.
  	Assume that it is achieved at an interior point $p_0 \in M$ for some
  	$\xi \in T^{1,0}_{p_0} M$. %See also related references cited in \cite{GQY2018}.
  	%By \cite[Lemma 2.9]{Steets2011Tian} we choose local coordinates around $z_0$  such that
  	%$g_{i\bj} = \delta_{ij}$ and $T_{ij}^k = 2 \Gamma_{ij}^k$
  	%using the lemma of Streets and Tian~\cite{ST11}, and that $\fg_{i\bj}$ is diagonal
  	%at $z_0$ with
  	By \cite[Lemma 2.9]{Steets2011Tian} we may choose local coordinates   $z=(z_{1},\cdots,z_{n})$ around $p_0$, such that at $p_0$, $g_{i\bj} = \delta_{ij}$,   and 
  	$$T_{ij}^k = 2 \Gamma_{ij}^k, \mbox{ }
  	\mathfrak{g}_{i\bar j}=\delta_{ij}\lambda_{i} 
  	\mbox{ and so } F^{i\bar j}=\delta_{ij}f_i.$$
  	 As in \cite{Tosatti2019Weinkove}, 
  	% the maximum $A$ is achieved for $\xi = \partial_1$ at $p_0$.
  	the maximum $P$ is achieved for $\xi = \partial_1$ at $p_0$.
  Assume $\mathfrak{g}_{1\bar{1}} \geq 1$;  otherwise we are done.
  
  	In what follows the computations are given at $p_0$.
  	Similar to the computations in
  	 \cite{GQY2018} one has
  	\begin{equation}
  		\label{mp1}
  		\mathfrak{g}_{1\bar{1} i} + \mathfrak{g}_{1\bar{1}} \phi_i = 0, \,\,
  		\mathfrak{g}_{1\bar{1}\bar i} +  \mathfrak{g}_{1\bar{1}} \phi_{\bar i} = 0,
  	\end{equation}
  	\begin{equation}
  		\label{gblq-C90}
  		\begin{aligned}
  			0  \geq   \frac{F^{i\bar i}   \mathfrak{g}_{1\bar{1} i\bar i}}{\mathfrak{g}_{1\bar 1}}
  			+ F^{i\bar i} (\phi_{i\bar i} - \phi_i \phi_{\bar i})
  			% \\ &
  			+ \frac{1}{8  \mathfrak{g}_{1\bar{1}}^2} \sum_{k>1} F^{i\bar i}
  			\mathfrak{g}_{1 \bar k i}   \mathfrak{g}_{k\bar{1} \bar i}
  			- C\sum F^{i\bar i}.
  		\end{aligned}
  	\end{equation}
  	%(Notice that our notations in the proof is slightly different from that of \cite{STW17}).
  	
  	Combining the standard formula \eqref{deco1}, 
  	%together with straightforward  computation, 
  	we can derive 
  	\begin{equation}
  		\begin{aligned}
  			\mathfrak{g}_{1\bar 1 i\bar i} \geq \,&
  			\mathfrak{g}_{i\bar i 1\bar 1}+
  			2\mathfrak{Re} (\bar T^j_{1i}\mathfrak{g}_{1\bar j i})
  			-C\sqrt{Q} \nonumber
  		\end{aligned}
  	\end{equation}
  	where 
  	$Q=|\partial\overline{\partial}u|^2+|\partial \partial u|^2$, as defined above.
  	%\[Q=|\partial\overline{\partial}u|^2+|\partial \partial u|^2.\]
  	Differentiating the equation 
  	\eqref{main-equ2}
  	%  twice 	(using covariant derivative), 
  	we obtain
  	\begin{equation}
  		\label{gqy-45diff the equation}
  		\begin{aligned}
  			F^{i\bar i}\mathfrak{g}_{i\bar i l}  =\psi_{z_l}+\psi_uu_l,  \nonumber
  		\end{aligned}
  	\end{equation}
  	\begin{equation}
  		\begin{aligned}
  			F^{i\bar i}\mathfrak{g}_{i\bar i 1\bar 1}
  			=\psi_{z_1\bar z_1}+\psi_uu_{1\bar 1}+ 
  			2\mathfrak{Re}(\psi_{z_1u}u_1)
  			+\psi_{uu}|u_1|^2
  			-F^{i\bar j,l\bar m }\mathfrak{g}_{i\bar j1}\mathfrak{g}_{l\bar m\bar 1}.     \nonumber
  		\end{aligned}
  	\end{equation}
  %	Then \begin{equation}\label{mp3}\begin{aligned} 	F^{i\bar i}\mathfrak{g}_{1\bar 1 i\bar i}\geq 2\mathfrak{Re} (F^{i\bar i}\bar T^j_{1i}\mathfrak{g}_{1\bar j i})-C\sqrt{Q}\sum F^{i\bar i}+\psi_uu_{1\bar 1}. \nonumber	\end{aligned}\end{equation}
  	
  	Putting the above inequalities
  	into \eqref{gblq-C90} we get
  	\begin{equation}
  		\label{gblq-C90-2}
  		\begin{aligned}
  			0  \geq  %\,&
  			%	\mathfrak{g}_{1\bar{1}}F^{i\bar i} (\phi_{i\bar i} - \phi_i \phi_{\bar i})	+	2\mathfrak{Re} F^{i\bar i}\bar T^1_{1i}\mathfrak{g}_{1\bar 1 i}	-C\sqrt{Q}\sum F^{i\bar i}	+\psi_uu_{1\bar 1}	\\
  			%	= \,&\mathfrak{g}_{1\bar{1}} F^{i\bar i} (\phi_{i\bar i} - \phi_i \phi_{\bar i})-2\mathfrak{g}_{1\bar 1}\mathfrak{Re} F^{i\bar i}\bar T^1_{1i}\phi_{i}-C\sqrt{Q}\sum F^{i\bar i}	+\psi_uu_{1\bar 1} \\
  			%	=\,&
  			\mathfrak{g}_{1\bar 1} \mathcal{L}\phi -\mathfrak{g}_{1\bar 1}F^{i\bar i}\phi_i\phi_{\bar i}
  			-2\mathfrak{g}_{1\bar 1}\mathfrak{Re} F^{i\bar i}\bar T^1_{1i}\phi_{i}
  			-C\sqrt{Q}\sum F^{i\bar i}
  			+\psi_uu_{1\bar 1}.  \nonumber
  		\end{aligned}
  	\end{equation}
  %	Here we also use \eqref{mp1} in deriving the second equality.

  	Let $\phi=\log\eta+\varphi(w)$, where $w = |\partial u|^2$ is as above,  $\eta$ is the cutoff function given by  \eqref{2-22} and 
  	  	%	Following \cite{Guan2008IMRN}
  	%We set 
  	$$\varphi=\varphi(w)=(1-\frac{w}{2N})^{-\frac{1}{2}} \mbox{ where   $N=\sup_{\{\eta>0\}}|\partial u|^2$}. $$
  	Then
  	\begin{equation}
  		\begin{aligned}
  			\mathcal{L}\phi=\,&
  			\frac{\mathcal{L}\eta}{\eta} -F^{i\bar i}\frac{|\eta_i|^2}{\eta^2}+\varphi' \mathcal{L}w +\varphi''F^{i\bar i}|w_i|^2,  \nonumber
  		\end{aligned}
  	\end{equation}
  	\begin{equation}
  		\begin{aligned}
  			F^{i\bar i}|\phi_i|^2
  			+2 \mathfrak{Re} F^{i\bar i}\bar T^1_{1i}\phi_{i} \leq \frac{4}{3}F^{i\bar i}|\phi_i|^2+C\sum F^{i\bar i},  \nonumber
  		\end{aligned}
  	\end{equation}
  	%and  
  	\begin{equation}
  		\begin{aligned}
  			F^{i\bar i} |\phi_i|^2
  			\leq \frac{3}{2}F^{i\bar i}|\varphi_i|^2+3F^{i\bar i}\frac{|\eta_i|^2}{\eta^2}.  \nonumber
  		\end{aligned}
  	\end{equation}
Moreover, one can check $\varphi'=\frac{\varphi^3}{4N},$ $\varphi''=\frac{3\varphi^5}{16N^2}$ and $1\leq \varphi\leq\sqrt{2}$.
  	And so
  	\begin{equation}
  		\begin{aligned}
  			\varphi''-2\varphi'^2=\frac{\varphi^5}{16N^2}(3-2\varphi)>\frac{\varphi^5}{96N^2}.  \nonumber
  		\end{aligned}
  	\end{equation}
  	By Lemma \ref{lemma-Lw} 
  	\begin{equation}
  		\begin{aligned}
  			\mathcal{L}(w) \geq   \frac{3\theta Q}{4} \sum F^{ii}
  			-C \left(1+\sum F^{i\bar i} \right).   \nonumber
  		\end{aligned}
  	\end{equation}
  	And \eqref{2-22} tells us that
  	\begin{equation}
  		\begin{aligned}
  			0\leq \eta \leq 1, ~~\eta|_{B_{\frac{r}{2}}}\equiv 1,
  			~~\frac{F^{i\bar i}|\eta_i|^2}{\eta^2} \leq \frac{C}{r^2 \eta},  
  			%\equiv \frac{C \zeta^{1 - \alpha}}{r \alpha},
  			~~\frac{\mathcal{L}\eta}{\eta} \leq  \frac{C}{r^2\eta} \sum F^{i\bar i}. \nonumber
  			%\equiv \frac{C}{r \alpha^2}.
  		\end{aligned}
  	\end{equation}
  	
  	In conclusion we finally obtain
  	\begin{equation}
  		\begin{aligned}
  			0	\geq
  			\frac{9\theta Q}{16N} \sum F^{i\bar i}-\frac{C}{r^2\eta}
  			\sum F^{i\bar i}
  			-C\frac{\sqrt{Q}}{\mathfrak{g}_{1\bar{1}}} \sum F^{i\bar i}
  			-\frac{\psi_u\chi_{1\bar 1}}{\mathfrak{g}_{1\bar 1}}
  			+\psi_u.   \nonumber
  		\end{aligned}
  	\end{equation}
  	Combining  \eqref{sum-2}, we obtain
  $\eta \mathfrak{g}_{1\bar 1}\leq C.$
  	%as required. Here we use \eqref{sum-2}.

%\begin{remark} %\ref{Analytic-setting}
%	Our method  applies to fully nonlinear elliptic equations on a Riemannian manifold $(M,g)$
%	\begin{equation}		\label{main-equ2-real}		\begin{aligned} 			f(\lambda(g^{-1}(A+\nabla^2u)))			=  \psi(z,u). \nonumber		\end{aligned}	\end{equation}	Consequently, we can establish %results analogous to some	analogues of Theorems \ref{thm2-existence-bdy}, \ref{thm2-existence-bdy-UE}, \ref{thm1-existence-bdy-UE}, \ref{theorem1-complete-2}, \ref{theorem2-complete-2}  and \ref{theorem1-complete-4}. \end{remark}

   	\bigskip
  	
  \medskip

  	\begin{appendix}
  		
  		%	\label{appendix-1}

  		\section{Proof of Lemmas %\ref{lemma3.4}, 
  			\ref{yuan-k+1}, \ref{lemma5.11}, \ref{lemma1-unbound-type2} and  \ref{lemma23}}
  		\label{appendix2}

  		Fix $\lambda\in\Gamma$ with
  		$\lambda_1 \leq \cdots \leq \lambda_n$. 
  		%  		 One then has  	 $ f_1(\lambda)\geq  f_2(\lambda)\geq \cdots \geq f_n(\lambda),     f_1(\lambda)\geq  \frac{1}{n}\sum_{i=1}^n f_i(\lambda). 	$\\
  		The concavity and symmetry of $f$ yields that
  		\begin{equation}
  			\label{concavity-symmetry2}
  			f_1(\lambda)\geq \cdots\geq f_n(\lambda), 
  			\, f_1(\lambda)\geq \frac{1}{n}\sum f_i(\lambda).
  		\end{equation}
  		When $\Gamma=\Gamma_n$ (if and only if $\kappa_\Gamma=0$), we obtain Lemma \ref{yuan-k+1}. 
  		When $\Gamma\neq\Gamma_n$,
  		Lemmas \ref{yuan-k+1} and \ref{lemma5.11} are consequences of the following two propositions.

  		\begin{proposition}[\cite{yuan2020conformal,yuan-PUE1}]
  			\label{yuanrr-2}
  			Assume $\Gamma\neq \Gamma_n$ and  $f$ satisfies  
  			\eqref{addistruc} in $\Gamma$. Let 
  			%$1\leq \kappa\leq \kappa_\Gamma$, where 
  			$\kappa_\Gamma$ be as  
  			in \eqref{def1-kappa-gamma}. 
  			Let
  			$\alpha_1, \cdots, \alpha_n$ be % $n$ %strictly  
  			positive constants with
  			%nonnegative constants such that 
  		%	\begin{equation}	\label{alpha-s-open}
  	$(-\alpha_1,\cdots,-\alpha_{\kappa_\Gamma}, \alpha_{\kappa_\Gamma+1},\cdots, \alpha_n)\in \Gamma. $
  		%	\end{equation}
  			In addition,  assume $\alpha_1\geq \cdots\geq  \alpha_{\kappa_\Gamma}$.
  			Then for any $ \lambda\in \Gamma$ with order $\lambda_1 \leq  \cdots \leq \lambda_n$,
  			\begin{equation}
  				\label{theta1}
  				\begin{aligned}
  					f_{\kappa_\Gamma+1}(\lambda)\geq \frac{\alpha_1}{\sum_{i=\kappa_\Gamma+1}^n \alpha_i-\sum_{i=2}^{\kappa_\Gamma}\alpha_i}f_1(\lambda).
  				\end{aligned}
  			\end{equation}
  			%Furthermore, $f_1(\lambda)\geqslant \frac{1}{n}\sum_{i=1}^n f_i(\lambda).$
  		\end{proposition}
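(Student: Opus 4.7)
\textbf{Proposal for the proof of Proposition \ref{yuanrr-2}.} The plan is to apply Lemma \ref{lemma3.4} directly to the specific vector $\mu=(-\alpha_1,\ldots,-\alpha_{\kappa_\Gamma},\alpha_{\kappa_\Gamma+1},\ldots,\alpha_n)\in\Gamma$ supplied by the hypothesis, then split the resulting linear inequality at the pivot index $\kappa_\Gamma+1$ and use $f_{\kappa_\Gamma+1}(\lambda)$ as a benchmark to peel off exactly the $\alpha_1 f_1(\lambda)$ term. The only structural input beyond Lemma \ref{lemma3.4} will be the monotonicity $f_1(\lambda)\geq\cdots\geq f_n(\lambda)$, which is forced by symmetry and concavity via the standard swap-and-compare argument and is already recorded in \eqref{concavity-symmetry2}.

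First I would invoke Lemma \ref{lemma3.4} with the stated $\mu$ to obtain
$$0 \;<\; \sum_{i=1}^n f_i(\lambda)\,\mu_i \;=\; -\sum_{i=1}^{\kappa_\Gamma} \alpha_i f_i(\lambda) \,+\, \sum_{i=\kappa_\Gamma+1}^n \alpha_i f_i(\lambda).$$
Since $\mu\in\Gamma\subseteq\Gamma_1$, we have $\sum_i\mu_i>0$, which already gives $\sum_{i=\kappa_\Gamma+1}^n\alpha_i-\sum_{i=2}^{\kappa_\Gamma}\alpha_i>\alpha_1>0$ and hence guarantees positivity of the denominator appearing in \eqref{theta1}.

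The key step is then to bound the two sides against $f_{\kappa_\Gamma+1}(\lambda)$ using the ordering, while keeping the $i=1$ term on the negative side \emph{explicit}. For $2\leq i\leq\kappa_\Gamma$ the ordering gives $f_i(\lambda)\geq f_{\kappa_\Gamma+1}(\lambda)$, hence $\alpha_i f_i(\lambda)\geq \alpha_i f_{\kappa_\Gamma+1}(\lambda)$; for $i\geq\kappa_\Gamma+1$ one has $f_i(\lambda)\leq f_{\kappa_\Gamma+1}(\lambda)$, hence $\alpha_i f_i(\lambda)\leq\alpha_i f_{\kappa_\Gamma+1}(\lambda)$. Substituting these into the previous display yields
$$\alpha_1 f_1(\lambda) \,+\, f_{\kappa_\Gamma+1}(\lambda)\sum_{i=2}^{\kappa_\Gamma}\alpha_i \;<\; f_{\kappa_\Gamma+1}(\lambda)\sum_{i=\kappa_\Gamma+1}^n \alpha_i,$$
and dividing through by the positive denominator produces \eqref{theta1}.

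The one place where care is required is the pivoting itself: one must not fold $\alpha_1 f_1(\lambda)$ into the $f_{\kappa_\Gamma+1}$-benchmarked bound on the negative side, since that inequality goes the wrong way at index $1$ (where $f_1\geq f_{\kappa_\Gamma+1}$ and $\alpha_1>0$). Singling out $\alpha_1 f_1(\lambda)$ is precisely what produces the combinatorial shape of the denominator in \eqref{theta1}. The hypothesis $\alpha_1\geq\cdots\geq\alpha_{\kappa_\Gamma}$ is not used in obtaining this particular inequality, but it fits naturally, since isolating the largest coefficient yields the sharpest constant when one later takes the supremum defining $\vartheta_\Gamma$ in \eqref{theta-gamma}.
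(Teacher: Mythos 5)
Your proof is correct and uses the same two ingredients the paper does: Lemma \ref{lemma3.4} applied to the given $\mu=(-\alpha_1,\ldots,-\alpha_{\kappa_\Gamma},\alpha_{\kappa_\Gamma+1},\ldots,\alpha_n)$, and the monotonicity $f_1(\lambda)\geq\cdots\geq f_n(\lambda)$ from \eqref{concavity-symmetry2}. The difference is only in how the final constant is extracted. The paper first \emph{discards} the terms $\alpha_i f_i(\lambda)$ for $2\leq i\leq\kappa_\Gamma$ on the negative side, obtaining the weaker bound with denominator $\sum_{i=\kappa_\Gamma+1}^n\alpha_i$, and then refers to an unspelled "iteration" to reach \eqref{theta1}. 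You instead \emph{keep} those terms, benchmark them from below by $f_{\kappa_\Gamma+1}(\lambda)$, benchmark the positive-side terms from above by the same $f_{\kappa_\Gamma+1}(\lambda)$, and isolate only $\alpha_1 f_1(\lambda)$; this yields \eqref{theta1} in a single step and makes the shape of the denominator transparent. Your positivity check for the denominator via $\Gamma\subseteq\Gamma_1$ (so $\sum_i\mu_i>0$) is correct, and your aside that $\alpha_1\geq\cdots\geq\alpha_{\kappa_\Gamma}$ is not logically needed for \eqref{theta1} itself, only for making $\alpha_1$ the optimal pivot in the supremum defining $\vartheta_\Gamma$, is an accurate observation.
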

  		
  	 %	\begin{remark}	The positive constants 	$\alpha_1, \cdots, \alpha_n$  satisfying \eqref{alpha-s-open} exist, according to the openness of $\Gamma$ and the definition of $\kappa_\Gamma$.	\end{remark}
  		
  		\begin{proof}
  			%[Proof of Lemma \ref{yuan-k+1}]
  			%The proof is based on Lemma \ref{lemma3.4}. 
  			%We omit here since it is simple. 
  			 By  %$(2)$ in 
  	  Lemma \ref{lemma3.4}  we  conclude that 
  			 	$f_i(\lambda)\geq 0$, $\sum  f_i(\lambda)>0$ and 
  			 	$	-\sum_{i=1}^{\kappa_\Gamma} \alpha_i f_i(\lambda)+\sum_{i=\kappa_\Gamma+1}^n \alpha_i f_i(\lambda)\geq0.$
  		 %Again by Lemma \ref{lemma3.4},
  		 %	\begin{equation}	\label{good1-yuan} 	\begin{aligned} 	-\sum_{i=1}^{\kappa_\Gamma} \alpha_i f_i(\lambda)+\sum_{i=\kappa_\Gamma+1}^n \alpha_i f_i(\lambda)\geq0 \nonumber 	\end{aligned}		\end{equation}
  		This yields $f_{\kappa_\Gamma+1}(\lambda)\geq   \frac{\alpha_1}{\sum_{i=\kappa_\Gamma+1}^n \alpha_i}f_1(\lambda)$. 
  			%Here we use Corollary \ref{coro3.2}.
  	 Moreover,
  			% using % utilizing
  		 one can derive \eqref{theta1} 	by iteration.
  			% and \eqref{good1-yuan}.
  			%This completes the proof.
  			
  		\end{proof}
  		
  		%The proof is based on Lemma \ref{lemma3.4}.

  		%	Denote
  		%	\begin{equation} 	\label{Proj-Rk} 	\Gamma^{\infty}_{\mathbb{R}^k}:=  \left\{\lambda'\in\mathbb{R}^k: (\lambda',c,\cdots,c)\in\Gamma \mbox{ for some } c>0\right\}. 	\end{equation}

  		\begin{proposition}[\cite{yuan2020conformal,yuan-PUE1}]
  			\label{thm-k+1}
  			
  			We assume  that
  			$f$ is of $(k+1)$-uniform ellipticity in the corresponding cone $\Gamma$ for some $1\leq k\leq  n-1$.  Then $\kappa_\Gamma\geq k$.
  			
  		\end{proposition}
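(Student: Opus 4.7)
The plan is to argue by contradiction: assume $\kappa_\Gamma\le k-1$ and set $p_k:=(\underbrace{0,\ldots,0}_{k},\underbrace{1,\ldots,1}_{n-k})$. Since $\Gamma_n\subseteq\Gamma$ we have $p_k\in\bar{\Gamma}_n\subseteq\bar\Gamma$, while $\kappa_\Gamma<k$ says $p_k\notin\Gamma$, so $p_k\in\partial\Gamma$. The first step is to extract a symmetric supporting functional at $p_k$: Hahn--Banach supplies some nontrivial linear $L$ with $L\ge 0$ on $\bar\Gamma$ and $L(p_k)=0$; averaging $L$ over the stabilizer $S_k\times S_{n-k}\le S_n$ of $p_k$ produces a supporting functional of the form $\lambda\mapsto a\sum_{i=1}^k\lambda_i+b\sum_{i=k+1}^n\lambda_i$, and $L(p_k)=0$ together with $L\ge 0$ on $\Gamma_n$ force $b=0$ and $a>0$. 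Hence
\[
\sum_{i=1}^k\lambda_i\ge 0\quad\text{for every }\lambda\in\bar\Gamma,
\]
with equality at $p_k$.

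Next, I would approach $p_k$ from the interior along $\lambda^{(m)}:=p_k+\tfrac{1}{m}\vec{\bf 1}\in\Gamma_n$. The $S_k\times S_{n-k}$-invariance of $\lambda^{(m)}$ and $f$ forces $\nabla f(\lambda^{(m)})=(\alpha_m,\ldots,\alpha_m,\beta_m,\ldots,\beta_m)$ (with $k$, resp.\ $n-k$, repetitions), and concavity combined with the sort order $\tfrac{1}{m}<1+\tfrac{1}{m}$ gives $\alpha_m\ge\beta_m$. Applying $(k+1)$-uniform ellipticity at $\lambda^{(m)}$ to the index $i=k+1$ yields
\[
\beta_m=f_{k+1}(\lambda^{(m)})\ge\vartheta\bigl(k\alpha_m+(n-k)\beta_m\bigr)>0,
\]
so that $\alpha_m$ and $\beta_m$ remain comparable up to a positive constant depending only on $\vartheta$, $k$, $n$.

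The contradiction is meant to come from pitting these bounds against the supporting hyperplane. Moving from $\lambda^{(m)}$ in the direction $-(e_1+\cdots+e_k)$ keeps one in $\bar\Gamma$ only for distances $s\le\tfrac{1}{m}$, because at $s=\tfrac{1}{m}$ the ray lands on $(1+\tfrac{1}{m})p_k\in\partial\Gamma$ (by cone scaling). Concavity of $f$ along this ray, together with continuity of $f$ at the endpoint, forces $\tfrac{1}{m}\,k\alpha_m\to 0$, so $\alpha_m=o(m)$ as $m\to\infty$. On the other hand, concavity between $\lambda^{(m)}$ and a fixed point $(1+\epsilon)p_k\in\partial\Gamma$, combined with the uniform positivity $\beta_m\ge\vartheta\sum_j f_j(\lambda^{(m)})>0$, produces a competing lower bound on $\alpha_m$ (via $\alpha_m\ge\beta_m\ge c>0$ uniform in $m$, since $\sum_j f_j$ cannot collapse along the sequence). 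These two conclusions are incompatible, giving the desired contradiction.

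The main obstacle will be the boundary limiting step: concave functions on $\Gamma$ are only locally Lipschitz and may fail to extend continuously to $\partial\Gamma$, so the boundary behaviour of $f$ at $p_k$ and $(1+\epsilon)p_k$ has to be handled by one-sided radial limits controlled by monotonicity along rays and by the uniform lower bound on $\sum_j f_j$ coming from $(k+1)$-uniform ellipticity. A secondary technical point is the degenerate regime $(n-k)\vartheta\ge 1$, in which the ratio $\beta_m/\alpha_m$ is not directly bounded; there I would run the argument with $\beta_m\ge\vartheta\sum_j f_j(\lambda^{(m)})$ in place of $\beta_m\ge c\,\alpha_m$, which is enough to drive the same contradiction.
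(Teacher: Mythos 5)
Your two estimates, even taken at face value, do not contradict one another: $\alpha_m = o(m)$ is an upper anti-growth bound, while $\alpha_m\ge c>0$ is a constant lower bound, and both are satisfied simultaneously by, say, $\alpha_m\equiv 1$. So the argument does not close — there is no contradiction at the end, and this is the central gap. In addition, two intermediate claims are not justified: (a) the bound $\alpha_m=o(m)$ rests on ``continuity of $f$ at the endpoint,'' but in the framework of the proposition $f$ is only assumed concave on the open cone with $\sup_{\partial\Gamma}f<\sup_\Gamma f$, and need not extend continuously to $\partial\Gamma$ (you flag this yourself, but it is not a secondary point when the whole bound depends on it); (b) the assertion that ``$\sum_j f_j$ cannot collapse along the sequence'' as $\lambda^{(m)}\to p_k\in\partial\Gamma$ is stated without proof and does not follow from the hypotheses.

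The deeper reason the approach stalls is the choice of scale. You approach the boundary ray through $p_k$ at scale $1$, i.e.\ along $\lambda^{(m)}=p_k+\tfrac1m\vec{\bf 1}$. But $\kappa_\Gamma\ge k$ is a statement about the whole ray $\{Rp_k:R>0\}$, and the uniform ellipticity constant $\vartheta$ is scale-invariant. The paper's proof exploits this: it fixes $a$ with $f(a\vec{\bf 1})>\sup_{\partial\Gamma}f$, tests the concavity tangent-plane inequality \eqref{concavity2} at $\lambda_{\epsilon,R}=(\epsilon,\dots,\epsilon,R,\dots,R)$ against the base point $a\vec{\bf 1}$, and uses $(k+1)$-uniform ellipticity to absorb the $-a\sum f_i$ term into $R\sum_{i>k}f_i$ once $R=a/\vartheta$; this yields $f(\lambda_{\epsilon,R})\ge f(a\vec{\bf 1})$ uniformly in $\epsilon>0$. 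Letting $\epsilon\to 0^+$ then contradicts $\sup_{\partial\Gamma}f<f(a\vec{\bf 1})$ if $(0,\dots,0,R,\dots,R)\in\partial\Gamma$. At your scale ($R\approx 1$) the coefficient of $\beta_m$ in the same inequality has the wrong sign, so no lower bound on $f(\lambda^{(m)})$ is obtained and the argument cannot reach the contradiction with $\sup_{\partial\Gamma}f$. If you replace $\lambda^{(m)}$ by $(\tfrac1m,\dots,\tfrac1m,R,\dots,R)$ with $R=a/\vartheta$ and run your concavity step from the base point $a\vec{\bf 1}$ rather than from a point on $\partial\Gamma$, you recover precisely the paper's one-page argument, with no need for continuity on $\bar\Gamma$ or for a lower bound on $\sum_j f_j$.
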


  		\begin{proof}
  			%[Proof of Theorem \ref{thm-k+1}]

  			%$f_i(\lambda_{\epsilon,R})\geqslant0$, $\forall 1\leqslant i\leqslant n$, $\sum_{i=1}^n f_i(\lambda_{\epsilon,R})>0.$
  			
  			%where $(\epsilon,\cdots,\epsilon)\in \mathbb{R}^k, \mbox{ } (R, \cdots, R)\in\mathbb{R}^{n-k}.$
  			%From \eqref{elliptic-weak}, 
  			%According to Corollary \ref{coro3.2},
  			%The formula \eqref{concavity1}, together with \eqref{elliptic-weak}, gives
  			%$\sum_{i=1}^n f_i(\lambda)>0.$
  			% $f_i(\lambda_{\epsilon,R})\geqslant0$, $\forall 1\leqslant i\leqslant n$, $\sum_{i=1}^n f_i(\lambda_{\epsilon,R})>0.$
  			%Fix $\sup_{\partial\Gamma} f<\sigma<\sup_{ \Gamma}f$,  let $a=1+c_\sigma$, 
  			%here $c_\sigma$ obeys \eqref{kappa1-sigma}. %(Note $\sup_{\partial\Gamma} f<+\infty$).
  			%Let $\vec{\bf1}$ be as in \eqref{def-vec1}.
  			%	As above we denote $\vec{\bf1}=(1,\cdots,1)$.
  		 	Let $\vartheta$ be as in \eqref{partial-uniform2}.
  			Let   
  			$c_0>0$ be some constant with
  			$f(c_0 \vec{\bf 1})>\underset{\partial\Gamma}{\sup}f$.
  			Take $a=1+c_0$  then $f(a\vec{\bf 1})>f(c_0 \vec{\bf1})$. 
  			%(Here we use $\sum_{i=1}^n f_i(\lambda)>0$).	
  			%For  $(f,\Gamma)$, 
  			%We denote $\Gamma^\sigma=  \{\lambda\in\Gamma: f(\lambda)>\sigma\}. $
  			%\begin{equation} 	\begin{aligned}  		\Gamma^\sigma=  \{\lambda\in\Gamma: f(\lambda)>\sigma\}.  \nonumber 	\end{aligned} \end{equation} 
  			For %$0<\epsilon<a$
  			$\epsilon>0$ 
  			and 
  			$R>0$, we denote
  			%\[\lambda_{\epsilon,R}=(\epsilon,\cdots,\epsilon, R, \cdots, R)\]
  			$\lambda_{\epsilon,R}=({\overbrace{\epsilon,\cdots,\epsilon}^{k}},{\overbrace{R,\cdots, R}^{n-k}}).$
  			We can deduce from %the concavity of $f$
  			\eqref{concavity2} that
  			%$\sum_{i=1}^n f_i(\lambda_{\epsilon,R})>0.$
  			% \eqref{concavity1} and  \eqref{elliptic-weak} give  
  			%$\sum_{i=1}^n f_i(\lambda)>0$, $\forall \lambda\in\Gamma$.
  			\begin{equation}
  				\begin{aligned}
  					f(\lambda_{\epsilon,R})\geq 
  					\,& f(a\vec{\bf 1})+\epsilon\sum_{i=1}^{k} f_i(\lambda_{\epsilon,R})+
  					R\sum_{i=k+1}^n  f_i(\lambda_{\epsilon,R})
  					-a\sum_{i=1}^n f_i(\lambda_{\epsilon,R}) %\mbox{ (according to \eqref{concave}})
  					\\
  					\geq \,& f(a\vec{\bf 1})+(R\vartheta -a)\sum_{i=1}^{n} f_i(\lambda_{\epsilon,R}) \mbox{ (using $(k+1)$-uniform ellipticity)}
  					\\
  					= \,& f(a\vec{\bf 1}) \mbox{ (by setting } R=\frac{a}{\vartheta}).
  					%	\\>\,&  f(c_0\vec{\bf1}).
  					\nonumber
  				\end{aligned}
  			\end{equation}
  			%Here we use $f_i(\lambda_{\epsilon,R})\geq 0$, $\forall 1\leq i\leq  n$.  
  			% So $\lambda_{\epsilon,R}\in \Gamma^{f(a\vec{\bf 1})}$.
  			Notice  $R=\frac{a}{\vartheta}$  depends not on $\epsilon$. 
  			%	Take $\epsilon\rightarrow 0^+$,  
  			%we get
  			%$\lambda_{0,R}=(0,\cdots,0,R,\cdots,R)\in  \overline{\Gamma^{f(a\vec{\bf 1})}}	\subset\Gamma^{f(c_0\vec{\bf1})}\subset\Gamma.$ 
  			Next we prove %by contradiction that
  			$\lambda_{\epsilon,R}\to\lambda_{0,R}=(0,\cdots,0,R,\cdots,R)\in  \Gamma$ as $\epsilon\rightarrow 0^+$. 
  			%By the openness of $\Gamma$, %and the definition of $\kappa_\Gamma$,
  			%Thus $\kappa_\Gamma\geq  k$. 
  			%equivalently  %and so
  			%$\Gamma^{\infty}_{\mathbb{R}^k}=\mathbb{R}^{k}$.		 
  			% (Note $\lambda_{0,R}\in\bar\Gamma$).		
  			If $\lambda_{0,R}\in\partial\Gamma$ then 		
  			%$\underset{\partial\Gamma}{\sup}f \geq \underset{\epsilon\to 0^+}{\lim} f(\lambda_{\epsilon,R})\geq f(a\vec{\bf1})$. 
  			${\sup}_{\partial\Gamma}f \geq \lim_{\epsilon\to 0^+}  f(\lambda_{\epsilon,R})\geq f(a\vec{\bf1})$. 
  			A contradiction.
  		\end{proof}

  		\begin{proof}
  			[Proof of Lemma \ref{yuan-k+1}]
  			%Together with the concavity of $f$, any $f$ satisfying \eqref{addistruc} obeys 	$$\sum f_i(\lambda)\mu_i\geq 0, \, \forall \lambda, \mu\in\Gamma.$$
  			Fix $\lambda\in\Gamma$ with $\lambda_1\leq \cdots \leq \lambda_n.$	
  			By %$(2)$ of
  			 Lemma \ref{lemma3.4}, we know
  			$f_i(\lambda)\geq 0, \, \sum f_i(\lambda)>0$, thereby confirming    $(1)$. 
  			
  			Next, we will verify $(2)$.
  			When $\Gamma=\Gamma_n$ this is trivial. 
  			%  			We shall consider the remaining case $\Gamma\neq\Gamma_n$:  	From \eqref{concavity-symmetry2} and Proposition \ref{yuanrr-2}, we can obtain  $(2)$.	
  			The remaining case $\Gamma\neq\Gamma_n$ follows immediately from   Proposition \ref{yuanrr-2} and \eqref{concavity-symmetry2}.
  			
  			Finally, we will prove that the statement of   $(\kappa_\Gamma+1)$-uniform ellipticity  is sharp. Assume by contradiction that  $f$ is of $(k+1)$-uniform ellipticity for some $k>\kappa_\Gamma$. Then  $\kappa_\Gamma\geq k$ according to Proposition \ref{thm-k+1}. This is a  contradiction.
  			
  		\end{proof}

  		\begin{proof}
  			[Proof of Lemma  \ref{lemma5.11}] 
  			Obviously $(1) \Rightarrow (2)$.  
  			Since $\Gamma$ is open,   $(2) \Rightarrow (1)$. By Lemma  \ref{yuan-k+1}, $(2) \Rightarrow (3)$.   
  		By Proposition \ref{thm-k+1},  $\kappa_\Gamma\geq n-1$. Thus
  		%$\kappa_\Gamma=n-1$ and
  		$(3) \Rightarrow (2)$.
  		%	$\kappa_\Gamma=n-1$. 
   	%(Note	$\kappa_\Gamma\leq n-1$).

  		\end{proof}

  		\begin{proof}
  			[Proof of Lemma \ref{lemma1-unbound-type2}]
  			Fix $\lambda\in\Gamma$. For $t>0$, we denote $\lambda^t=(\lambda_1,\cdots,\lambda_{n-1}\lambda_n+t).$ By Lemma \ref{lemma5.11}, $(0,\cdots,0,1)\in\Gamma$. Using  \eqref{uniform-elliptic-2}, for large $t$ 
  			\begin{align*}
  				f(\lambda^t)-f(\frac{t}{2}(0,\cdots,0,1))
  				\geq \sum_{i=1}^n f_i(\lambda^t)\lambda_i +\frac{t}{2} f_n(\lambda^t)\geq 0.
  			\end{align*}
  			Together with \eqref{addistruc}, we know that $f$ satisfies the unbounded condition	\eqref{unbounded-1}.
  		\end{proof}

  		\begin{proof}
  			[Proof of Lemma \ref{lemma23}]
  			
  		Fix $\lambda \in\Gamma$.
  			Note that   \eqref{homogeneous-1-buchong2} and  the concavity  imply 
  			$ f_i(\lambda)\geq0$. Combining   $\sup_\Gamma f=+\infty$, we get $\sum_{i=1}^n f_i(\lambda)>0$  (otherwise $f(\mu)\leq f(\lambda)$,   $\forall\mu\in\Gamma$).  
  			Fix   $R>0$. Then $t\lambda -R\vec{\bf1}\in\Gamma$ for some $t>0.$
  			By concavity and \eqref{homogeneous-1-buchong2},  we have
  			\begin{align*}
  				f(t\lambda) \geq sf(\frac{R}{s}\vec{\bf 1}) +(1-s) f(\frac{t\lambda-R\vec{\bf1}}{1-s})>sf(\frac{R}{s}\vec{\bf1}), \, \forall 0<s<1.
  			\end{align*}
  			So $f(t\lambda)\geq f(R\vec{\bf1})$ for $t\gg1$.
  		\end{proof}

 \medskip
  		\section{Proof of Lemma \ref{yuan's-quantitative-lemma}} 
  		\label{appendix1}

  		%The  following lemma, proposed in earlier work  \cite{yuan2017}\renewcommand{\thefootnote}{\fnsymbol{footnote}}\footnote{The results in \cite{yuan2017}  
  		%%followed by \cite{yuan2019PAMQ},
  		%%\cite{yuan2019,yuan2019PAMQ,yuan2021},
  		%%[arXiv: 2001.09238], [arXiv:2106.14837] and [Pure Appl. Math. Q. 16 (2020), 1585-1617; MR4221006], 
  		%were moved to \cite{yuan-regular-DP}. More precisely, the paper  \cite{yuan-regular-DP} is essentially extracted from %[arXiv:2203.03439],	\cite{yuan2017},  	and the first parts of   [arXiv:2001.09238] and   [arXiv:2106.14837].}, 
  		%followed by \cite{yuan2019PAMQ},
  		%	is a key ingredient in the proof of Proposition  	\ref{proposition-quar-yuan2}. 

  		%\noindent {\bf Proof of Lemma \ref{yuan's-quantitative-lemma}}
  		
  		%\vspace{2mm}
  		For convenience we give the proof of Lemma \ref{yuan's-quantitative-lemma} in this appendix. %building on \cite{yuan2017}.
  		We start with the case $n=2$. 
  		%In this case, we prove that  if $\mathrm{{\bf a}} \geq \frac{|a_1|^2}{ \epsilon}+ d_1$
  		%\begin{equation}
  		%\label{guanjian1-n2}
  		%\begin{aligned}
  		%\mathrm{{\bf a}} \geq \frac{|a_1|^2}{ \epsilon}+ d_1
  		%\end{aligned}
  		%\end{equation}
  		%%$$a>\frac{|a_1|^2}{ \epsilon}+d_1-\epsilon $$
  		% then $$0\leq d_1- \lambda_1=\lambda_2-\mathrm{{\bf a}} <\epsilon.$$
  		%Let's briefly  present the discussion as follows:
  		For $n=2$, the eigenvalues of $\mathrm{A}$ are
  		$$\lambda_{1}=\frac{\mathrm{{\bf a}}+d_1- \sqrt{(\mathrm{{\bf a}}-d_1)^2+4|a_1|^2}}{2} 
  		\mbox{ and } \lambda_2=\frac{\mathrm{{\bf a}}+d_1+\sqrt{(\mathrm{{\bf a}}-d_1)^2+4|a_1|^2}}{2}.$$
  		We can assume $a_1\neq 0$; otherwise we are done.
  		If $\mathrm{{\bf a}} \geq \frac{|a_1|^2}{ \epsilon}+ d_1$ then one has
  		\begin{equation}
  			\begin{aligned}
  				0\leq d_1- \lambda_1 =\lambda_2-\mathrm{{\bf a}}
  				= \frac{2|a_1|^2}{\sqrt{ (\mathrm{{\bf a}}-d_1)^2+4|a_1|^2 } +(\mathrm{{\bf a}}-d_1)}
  				< \frac{|a_1|^2}{\mathrm{{\bf a}}-d_1 } \leq \epsilon.   \nonumber
  			\end{aligned}
  		\end{equation}
  		%Here we use $a_1\neq 0$ to verify that the strictly inequality in the above formula holds.
  		%We hence obtain Lemma \ref{yuan's-quantitative-lemma}  for $n=2$.

  		The following lemma enables us  to count  the eigenvalues near the diagonal elements
  		via a deformation argument.
  		It is an essential  ingredient in the proof of  Lemma \ref{yuan's-quantitative-lemma}   for general $n$.
  		\begin{lemma}
  			[\cite{yuan2017,yuan-regular-DP}]
  			\label{refinement}
  			Let $\mathrm{A}$ be an $n\times n$  Hermitian matrix
  			\begin{equation}
  				\label{matrix2}
  				\left(
  				\begin{matrix}
  					d_1&&  &&a_{1}\\
  					&d_2&& &a_2\\
  					&&\ddots&&\vdots \\
  					&& &  d_{n-1}& a_{n-1}\\
  					\bar a_1&\bar a_2&\cdots& \bar a_{n-1}& \mathrm{{\bf a}} \nonumber
  				\end{matrix}
  				\right)
  			\end{equation}
  			with $d_1,\cdots, d_{n-1}, a_1,\cdots, a_{n-1}$ fixed, and with $\mathrm{{\bf a}}$ variable.
  			Denote
  			$\lambda_1,\cdots, \lambda_n$ by the eigenvalues of $\mathrm{A}$ with the order
  			$\lambda_1\leq \lambda_2 \leq\cdots \leq \lambda_n$.
  			Fix  $\epsilon>0$.
  			Suppose that the parameter $\mathrm{{\bf a}}$ in the matrix $\mathrm{A}$ 
  			satisfies  the following quadratic growth condition
  			%$$a>\frac{2}{\epsilon}\sum_{i=1}^{n-1} a_i^2+ (n-1)\sum_{i=1}^{n-1} |d_i|+ (n-2)\epsilon.$$
  			\begin{equation}
  				\label{guanjian2}
  				\begin{aligned}
  					\mathrm{{\bf a}} \geq \frac{1}{\epsilon}\sum_{i=1}^{n-1} |a_i|^2+\sum_{i=1}^{n-1}  [d_i+ (n-2) |d_i|]+ (n-2)\epsilon.
  				\end{aligned}
  			\end{equation}
  			Then for any $\lambda_{\alpha}$ $(1\leq \alpha\leq n-1)$ there exists $d_{i_{\alpha}}$
  			with %lower 
  			index $1\leq i_{\alpha}\leq n-1$ such that
  			\begin{equation}
  				\label{meishi}
  				\begin{aligned}
  					|\lambda_{\alpha}-d_{i_{\alpha}}|<\epsilon,
  				\end{aligned}
  			\end{equation}
  			\begin{equation}
  				\label{mei-23-shi}
  				0\leq \lambda_{n}-\mathrm{{\bf a}} <(n-1)\epsilon + |\sum_{\alpha=1}^{n-1}(d_{\alpha}-d_{i_{\alpha}})|.
  			\end{equation}
  		\end{lemma}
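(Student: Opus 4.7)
The plan is to analyse the characteristic polynomial of $A$ by expansion along the last row. Since this row has entries $\bar a_1,\dots,\bar a_{n-1},\mathrm{{\bf a}}$ and the remaining rows form a diagonal matrix plus a last column, one obtains
$$p(\lambda)=\det(\lambda I-A)=(\lambda-\mathrm{{\bf a}})\prod_{i=1}^{n-1}(\lambda-d_i)-\sum_{i=1}^{n-1}|a_i|^2\prod_{j\ne i}(\lambda-d_j).$$
Consequently, if $\lambda$ is an eigenvalue with $|\lambda-d_i|\geq\epsilon$ for every $i$, dividing by $\prod_i(\lambda-d_i)$ gives $\lambda-\mathrm{{\bf a}}=\sum_i|a_i|^2/(\lambda-d_i)$ and therefore $|\lambda-\mathrm{{\bf a}}|\leq\frac{1}{\epsilon}\sum|a_i|^2$. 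Every eigenvalue therefore either sits in an $\epsilon$-neighbourhood of some $d_i$ or in an interval of radius $\frac{1}{\epsilon}\sum|a_i|^2$ around $\mathrm{{\bf a}}$.

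To derive \eqref{meishi} for $\alpha\leq n-1$, I combine this dichotomy with the Cauchy interlacing theorem: deleting the last row and column of $A$ yields $\mathrm{diag}(d_1,\dots,d_{n-1})$, so interlacing gives $\lambda_\alpha\leq\max_id_i$ whenever $\alpha<n$. If some such $\lambda_\alpha$ failed \eqref{meishi}, then $\lambda_\alpha\geq\mathrm{{\bf a}}-\frac{1}{\epsilon}\sum|a_i|^2$, and hypothesis \eqref{guanjian2} would force $\lambda_\alpha>\max_id_i$, contradicting interlacing. The bookkeeping here reduces to checking $\sum d_i+(n-2)\sum|d_i|\geq\max_id_i$, which follows by splitting each $d_i$ into its positive and negative parts; the extra $(n-2)\epsilon$ term in \eqref{guanjian2} then supplies the strict inequality needed for $n\geq3$, while the $n=2$ case has already been handled by the explicit computation earlier in the appendix.

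For \eqref{mei-23-shi}, the Rayleigh quotient applied to $e_n$ gives $\lambda_n\geq \mathrm{{\bf a}}$, and combining this with the trace identity $\sum_\alpha\lambda_\alpha=\mathrm{{\bf a}}+\sum_id_i$ produces
$$\lambda_n-\mathrm{{\bf a}}=\sum_{\alpha=1}^{n-1}(d_\alpha-\lambda_\alpha)=\sum_{\alpha=1}^{n-1}(d_\alpha-d_{i_\alpha})+\sum_{\alpha=1}^{n-1}(d_{i_\alpha}-\lambda_\alpha).$$
The strict bound from \eqref{meishi} controls the last sum by $(n-1)\epsilon$, yielding \eqref{mei-23-shi}. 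The main obstacle will be the algebraic verification that the precise shape of \eqref{guanjian2} (with its $(n-2)|d_i|$ term) is strong enough to separate the two eigenvalue regimes for all sign patterns of the $d_i$; the rest is a clean combination of the characteristic-polynomial identity, Cauchy interlacing, and the trace identity. Note also that the assignment $\alpha\mapsto i_\alpha$ need not be a permutation of $\{1,\dots,n-1\}$, which is precisely why the correction $|\sum(d_\alpha-d_{i_\alpha})|$ appears in \eqref{mei-23-shi}.
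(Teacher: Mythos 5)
Your proposal is correct but follows a genuinely different and arguably cleaner route than the paper's. Both arguments start from the same characteristic-polynomial identity $(\lambda-\mathbf{a})\prod_i(\lambda-d_i)=\sum_i|a_i|^2\prod_{j\neq i}(\lambda-d_j)$ and the resulting dichotomy (every eigenvalue is either within $\epsilon$ of some $d_i$, or within $\tfrac{1}{\epsilon}\sum|a_i|^2$ of $\mathbf{a}$), and both finish via $\lambda_n\geq\mathbf{a}$ plus the trace identity. The difference is how you rule out a small-index eigenvalue falling into the $\mathbf{a}$-regime. The paper splits $\{1,\dots,n-1\}$ into a ``bad'' set $\mathbf{B}$ and a ``good'' set $\mathbf{G}$ and derives a contradiction by comparing the two possible evaluations of $\operatorname{tr}(A)$, with a case distinction on whether $\mathbf{G}$ is empty. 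You instead invoke Cauchy interlacing: deleting the last row and column leaves $\operatorname{diag}(d_1,\dots,d_{n-1})$, so $\lambda_\alpha\leq\max_i d_i$ for $\alpha\leq n-1$, and a direct algebraic check shows \eqref{guanjian2} forces $\mathbf{a}-\tfrac{1}{\epsilon}\sum|a_i|^2>\max_i d_i$ when $n\geq 3$. This bypasses the trace-comparison bookkeeping and the $\mathbf{G}=\emptyset$ versus $\mathbf{G}\neq\emptyset$ case split entirely, at the cost of citing the interlacing theorem rather than working purely from the trace. Your closing observations — that $\alpha\mapsto i_\alpha$ need not be a permutation, hence the correction term in \eqref{mei-23-shi}, and that the $n=2$ case is handled separately — match the paper's own remarks. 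One minor point worth spelling out if you write this up: the division by $\prod_i(\lambda-d_i)$ needs $\lambda\neq d_i$ for all $i$, which is exactly what the condition $|\lambda-d_i|\geq\epsilon$ guarantees.
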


  		\begin{proof}
  			%[Proof of Lemma \ref{refinement}]
  			Without loss of generality, we assume $\sum_{i=1}^{n-1} |a_i|^2>0$ and  $n\geq 3$  (otherwise we are done).
  			%since $\mathrm{A}$ is diagonal or $n=2$).
  			%If $\sum_{i=1}^{n-1} |a_i|^2=0$, then $A$ is diagonal, and  we are done.
  			%Suppose now $\sum_{i=1}^{n-1} |a_i|^2>0$.
  			Note that in the assumption of the lemma the eigenvalues have
  			the order $\lambda_1\leq \lambda_2\leq \cdots \leq \lambda_n$.
  			It is  well known that, for a Hermitian matrix,
  			any diagonal element is   less than or equals to   the  largest eigenvalue.
  			In particular,
  			\begin{equation}
  				\label{largest-eigen1}
  				\lambda_n \geq \mathrm{{\bf a}}.
  			\end{equation}
  			
  			It %only requires
  			suffices to prove \eqref {meishi}, since  \eqref{mei-23-shi} is a consequence of  \eqref{meishi}, \eqref{largest-eigen1}  and
  			\begin{equation}
  				\label{trace}
  				\sum_{i=1}^{n}\lambda_i=\mbox{tr}(\mathrm{A})=\sum_{\alpha=1}^{n-1} d_{\alpha}+\mathrm{{\bf a}}.
  			\end{equation}

  			Let's denote   $I=\{1,2,\cdots, n-1\}$. We divide the index set   $I$ into two subsets  by
  			$${\bf B}=\{\alpha\in I: |\lambda_{\alpha}-d_{i}|\geq \epsilon, \mbox{   }\forall i\in I\} $$
  			and $ {\bf G}=I\setminus {\bf B}=\{\alpha\in I: \mbox{There exists an $i\in I$ such that }
  			|\lambda_{\alpha}-d_{i}| <\epsilon\}.$
  			
  			To complete the proof we need to prove ${\bf G}=I$ or equivalently ${\bf B}=\emptyset$.
  			It is easy to see that  for any $\alpha\in {\bf G}$, one has
  			\begin{equation}
  				\label{yuan-lemma-proof1}
  				\begin{aligned}
  					|\lambda_\alpha|< \sum_{i=1}^{n-1}|d_i| + \epsilon.
  				\end{aligned}
  			\end{equation}
  			
  			Fix $ \alpha\in {\bf B}$,  we are going to %give the estimate for 
  			estimate $\lambda_\alpha$.
  			%Let  $\lambda$ be an eigenvalue of $A$,  then
  			%Since   $\lambda_{\alpha}$ is an eigenvalue of $A$,  one has
  			The eigenvalue $\lambda_\alpha$ satisfies
  			\begin{equation}
  				\label{characteristicpolynomial}
  				\begin{aligned}
  					(\lambda_{\alpha} -\mathrm{{\bf a}})\prod_{i=1}^{n-1} (\lambda_{\alpha}-d_i)
  					= \sum_{i=1}^{n-1} (|a_{i}|^2 \prod_{j\neq i} (\lambda_{\alpha}-d_{j})).
  				\end{aligned}
  			\end{equation}
  			By the definition of ${\bf B}$, for  $\alpha\in {\bf B}$, one then has $|\lambda_{\alpha}-d_i|\geq \epsilon$ for  $i\in I$.
  			We   derive
  			\begin{equation}
  				\begin{aligned}
  					|\lambda_{\alpha}-\mathrm{{\bf a}} |=  \left|\sum_{i=1}^{n-1} \frac{|a_i|^2}{\lambda_{\alpha}-d_{i}}\right|\leq\sum_{i=1}^{n-1} \frac{|a_i|^2}{|\lambda_{\alpha}-d_{i}|}\leq
  					\frac{1}{\epsilon}\sum_{i=1}^{n-1} |a_i|^2, \mbox{ if } \alpha\in {\bf B}.
  				\end{aligned}
  			\end{equation}
  			%Here we use $\sum_{i=1}^{n-1} |a_i|^2>0$.
  			Hence,  for $\alpha\in {\bf B}$, we obtain
  			\begin{equation}
  				\label{yuan-lemma-proof2}
  				\begin{aligned}
  					\lambda_\alpha \geq \mathrm{{\bf a}}-\frac{1}{\epsilon}\sum_{i=1}^{n-1} |a_i|^2.
  				\end{aligned}
  			\end{equation}
  			%Furthermore, $\lambda_n \geq \mathrm{{\bf a}}-\frac{1}{\epsilon}\sum_{i=1}^{n-1} |a_i|^2$,
  			%as one assumes   $\lambda_n\geq \lambda_\alpha$ in the lemma.

  			We shall use proof by contradiction to prove  ${\bf B}=\emptyset$.
  			For a set ${\bf S}$, we denote $|{\bf S}|$ the  cardinality of ${\bf S}$.
  			Assume ${\bf B}\neq \emptyset$.
  			Then $|{\bf B}|\geq 1$, and so $|{\bf G}|=n-1-|{\bf B}|\leq n-2$. % and we will prove that it is a contradiction.

  			In the case  ${\bf G}=\emptyset$, one knows that
  			\begin{equation}
  				\begin{aligned}
  					\mbox{tr}(\mathrm{A})
  					%=\,&\lambda_n+\sum_{\alpha\in  {\bf G}}\lambda_{\alpha}+\sum_{\alpha\in {\bf B}}\lambda_{\alpha}\\
  					%> \,&
  					%(|{\bf B}|+1) (\mathrm{{\bf a}}-\frac{1}{\epsilon}\sum_{i=1}^{n-1} |a_i|^2 )-|{\bf G}| (\sum_{i=1}^{n-1}|d_i|+\epsilon )\\
  					\geq
  					\mathrm{{\bf a}}+
  					(n-1) (\mathrm{{\bf a}}-\frac{1}{\epsilon}\sum_{i=1}^{n-1} |a_i|^2 )
  					>   \sum_{i=1}^{n-1}d_i +\mathrm{{\bf a}}= \mbox{tr}(\mathrm{A}).
  					% \mbox{ provided that \eqref{guanjian1} holds,}
  				\end{aligned}
  			\end{equation}
  		%	This is a contradiction.

  			In the case  ${\bf G}\neq \emptyset$, we compute the trace of the matrix $A$ as follows:
  			\begin{equation}
  				\begin{aligned}
  					\mbox{tr}(\mathrm{A})= \,&
  					\lambda_n+
  					\sum_{\alpha\in {\bf B}}\lambda_{\alpha} + \sum_{\alpha\in  {\bf G}}\lambda_{\alpha}  \\
  					\geq \,&
  					\lambda_n+
  					|{\bf B}| (\mathrm{{\bf a}}-\frac{1}{\epsilon}\sum_{i=1}^{n-1} |a_i|^2 )-|{\bf G}| (\sum_{i=1}^{n-1}|d_i|+\epsilon ) \\
  					> \,& 
  					2\mathrm{{\bf a}}-\frac{1}{\epsilon}\sum_{i=1}^{n-1} |a_i|^2 -(n-2) (\sum_{i=1}^{n-1}|d_i|+\epsilon )
  					\\
  					%\geq\,& \sum_{i=1}^{n-1} |d_i|+\mathrm{{\bf a}} \\
  					\geq \,& \sum_{i=1}^{n-1}d_i +\mathrm{{\bf a}}= \mbox{tr}(\mathrm{A}),
  					% \mbox{ provided that \eqref{guanjian1} holds,}
  				\end{aligned}
  			\end{equation}
  			where we use  \eqref{guanjian2},   \eqref{largest-eigen1}, \eqref{yuan-lemma-proof1} and \eqref{yuan-lemma-proof2}. 
  			Again, it is a contradiction.  Thus  ${\bf B}=\emptyset$ as required. 
  			
  			%We now prove ${\bf B}=\emptyset$.
  			%Therefore,   ${\bf G}=I$ and  the proof is complete.
  		\end{proof}
  		
  		We apply Lemma \ref{refinement} to prove Lemma \ref{yuan's-quantitative-lemma} via a deformation argument.

  		\begin{proof}
  			[Proof of Lemma \ref{yuan's-quantitative-lemma}]
  			%The proof is based on Lemma  \ref{refinement} and a deformation argument.
  			Without loss of generality,  we assume $n\geq 3$,  $\sum_{i=1}^{n-1} |a_i|^2>0$.
  			%	(otherwise  
  			%$n=2$ or the matrix $\mathrm{A}$ is diagonal, and then 
  			%	we are done).
  			Fix $a_1, \cdots, a_{n-1}$,
  			$d_1, \cdots, d_{n-1}$. 
  			Denote $\lambda_1(\mathrm{{\bf a}}), \cdots, \lambda_n(\mathrm{{\bf a}})$  
  			the eigenvalues of $\mathrm{A}$ 
  			with
  			%	the order  
  			$$\lambda_1(\mathrm{{\bf a}})\leq \cdots\leq \lambda_n(\mathrm{{\bf a}}).$$ 
  			Clearly,  the eigenvalues $\lambda_i(\mathrm{{\bf a}})$ are all continuous functions 
  			in $\mathrm{{\bf a}}$.
  			
  			For simplicity, we write $\lambda_i=\lambda_i(\mathrm{{\bf a}})$. 
  			%Without loss of generality, we may assume
  			% $$n\geq 3, \mbox{ } \sum_{i=1}^{n-1} |a_i|^2>0,  \mbox{ }  d_1\leq d_2\leq \cdots \leq d_{n-1}  \mbox{ and } \lambda_1\leq \lambda_2 \leq \cdots \lambda_{n-1}\leq \lambda_n.$$
  			%With the notation in the proof of Lemma \ref{refinement111}.
  			%
  			%With the same notation in the proof of Lemma \ref{refinement111}.
  			Fix $\epsilon>0$. 
  			Let $I'_\alpha=(d_\alpha-\frac{\epsilon}{2n-3}, d_\alpha+\frac{\epsilon}{2n-3})$,  
  			$$P_0'=\frac{2n-3}{\epsilon}\sum_{i=1}^{n-1} |a_i|^2+ (n-1)\sum_{i=1}^{n-1} |d_i|+ \frac{(n-2)\epsilon}{2n-3}.$$
  			In what follows we assume  $\mathrm{{\bf a}}\geq P_0'$.
  			% (i.e. \eqref{guanjian1-yuan} holds).
  			The connected components of $\bigcup_{\alpha=1}^{n-1} I_{\alpha}'$ are as in the following:
  			$$J_{1}=\bigcup_{\alpha=1}^{j_1} I_\alpha',
  			J_2=\bigcup_{\alpha=j_1+1}^{j_2} I_\alpha'  \cdots, J_i =\bigcup_{\alpha=j_{i-1}+1}^{j_i} I_\alpha', \cdots, 
  			J_{m} =\bigcup_{\alpha=j_{m-1}+1}^{n-1} I_\alpha'.$$
  			%	(Here we denote $j_0=0$ and $j_m=n-1$).
  			Moreover, $J_i\bigcap J_k=\emptyset, \mbox{ for }   1\leq i<k\leq m. $
  	%	\begin{equation}	\begin{aligned}	J_i\bigcap J_k=\emptyset, \mbox{ for }   1\leq i<k\leq m. \nonumber 	\end{aligned} 	\end{equation}
  			%It  plays formally the role of \eqref{daqin122} in the proof of Lemma \ref{refinement111}.
  			%
  			% As in the proof of Lemma \ref{refinement111},
  			% we define the function with the variable $\mathrm{{\bf a}}$ as follows:
  			%$$ \mathrm{{\bf \widetilde{Card}}}_k:=|\{1\leq i\leq n: \lambda_i \in J_k\}|:[P_0',+\infty)\rightarrow \mathbb{N}.$$
  			Let  $ \mathrm{{\bf \widetilde{Card}}}_k:[P_0',+\infty)\rightarrow \mathbb{N}$
  			be the function that counts the eigenvalues which lie in $J_k$.
  			(Note that when the eigenvalues are not distinct,  the function $\mathrm{{\bf \widetilde{Card}}}_k$ denotes  the summation of all the algebraic  multiplicities of  distinct eigenvalues which
  			lie in $J_k$).
  			This function measures the number of the  eigenvalues which lie in $J_k$.
  			The crucial ingredient is that  Lemma \ref{refinement}  yields the continuity of   $\mathrm{{\bf \widetilde{Card}}}_i(\mathrm{{\bf a}})$ for $\mathrm{{\bf a}}\geq P_0'$. More explicitly,
  			by Lemma \ref{refinement} and  $\lambda_n \geq {\bf a}$
  			%$$\lambda_n \geq {\bf a}\geq P_0'>\sum_{i=1}^{n-1}|d_i|+\frac{\epsilon}{2n-3}$$
  			we conclude that,
  			if   $\mathrm{{\bf a}}$ satisfies the quadratic growth condition \eqref{guanjian1-yuan} then
  			\begin{equation}
  				\label{yuan-lemma-proof5}
  				\begin{aligned}
  					%\,& 
  					\lambda_n \in \mathbb{R}\setminus (\bigcup_{k=1}^{n-1} \overline{I_k'})
  					=\mathbb{R}\setminus (\bigcup_{i=1}^m \overline{J_i}), 
  					%\\	\,& 
  					\mbox{ and }
  					\lambda_\alpha \in \bigcup_{i=1}^{n-1} I_{i}'=\bigcup_{i=1}^m J_{i} \mbox{ for } 1\leq\alpha\leq n-1.  \nonumber
  					%  \sum_{i=1}^m  \mathrm{{\bf \widehat{Card}}}_i(\mathrm{{\bf a}})=n-1.
  				\end{aligned}
  			\end{equation}
  			%  where we also use \eqref{largest1}.
  			Hence,  $\mathrm{{\bf \widetilde{Card}}}_i(\mathrm{{\bf a}})$ is a continuous function
  			in the variable $\mathrm{{\bf a}}$. So it is a constant.
  			Together with  the line of the proof   of \cite[Lemma 1.2]{CNS3}
  			we see
  			that $ \mathrm{{\bf \widetilde{Card}}}_i(\mathrm{{\bf a}}) =j_i-j_{i-1}$ for sufficiently large $\mathrm{{\bf a}}$.
  			Here we denote $j_0=0$ and $j_m=n-1$.
  			%It follows from    Lemma \ref{refinement3}, \eqref{yuan-lemma-proof5} and
  			%  the continuity of the function $ \mathrm{{\bf \widehat{Card}}}_i$ that
  			%  $\lim_{\mathrm{{\bf a}}\rightarrow +\infty} \mathrm{{\bf \widehat{Card}}}_i(\mathrm{{\bf a}})=j_i-j_{i-1}$, and so
  			%$\mathrm{Card}_i(a) \geq j_i-j_{i-1}$. Moreover, applying $\sum_{i=1}^m \mathrm{Card}_i(a)=n-1$  we have
  			%Combining it with  Lemma \ref{refinement3} and \eqref{yuan-lemma-proof5} we can derive
  			The constant of $ \mathrm{{\bf \widetilde{Card}}}_i$  therefore follows that
  			$$ \mathrm{{\bf \widetilde{Card}}}_i(\mathrm{{\bf a}})
  			%=\lim_{\mathrm{{\bf a}}\rightarrow +\infty} \mathrm{{\bf \widetilde{Card}}}_i(\mathrm{{\bf a}})
  			=j_i-j_{i-1}.$$
  			%Combining it with \eqref{yuan-lemma-proof5}, we
  			We thus know that the   $(j_i-j_{i-1})$ eigenvalues
  			$$\lambda_{j_{i-1}+1}, \lambda_{j_{i-1}+2}, \cdots, \lambda_{j_i}$$
  			lie in the connected component $J_{i}$.
  			Thus, for any $j_{i-1}+1\leq \gamma \leq j_i$,  we have $I_\gamma'\subset J_i$ and  $\lambda_\gamma$
  			lies in the connected component $J_{i}$.
  			Therefore,
  			$$|\lambda_\gamma-d_\gamma| < \frac{(2(j_i-j_{i-1})-1) \epsilon}{2n-3}\leq \epsilon.$$
  			Here we  use the fact that $d_\gamma$ is midpoint of  $I_\gamma'$ and 
  			every $J_i\subset \mathbb{R}$ is an open subset.
  			
  			To be brief,  if for fixed index $1\leq i\leq n-1$ the eigenvalue $\lambda_i(P_0')$ lies in $J_{\alpha}$ for some $\alpha$, 
  			then  Lemma \ref{refinement} implies that, for any ${\bf a}>P_0'$, the corresponding eigenvalue  $\lambda_i({\bf a})$ lies in the same  interval $J_{\alpha}$.
  			%Adapting the line of the proof of \cite[Lemma 1.2]{CNS3} to our context, we get the asymptotic behavior as $\mathrm{\bf a}$ goes to infinity.
  			The computation of $\mathrm{{\bf \widetilde{Card}}}_k$ can be done by setting $\mathrm{\bf a}\rightarrow+\infty$.
  			
  			%To be brief,  if for fixed index $1\leq i\leq n-1$ the eigenvalue $\lambda_i(P_0')$ lies in $J_{\alpha}$ for some $\alpha$, 
  			%then  Lemma \ref{refinement} implies that, for any ${\bf a}>P_0'$, $\lambda_i({\bf a})\in J_{\alpha}$.
  			%Adapting the line of the proof Lemma 1.2  of Caffarelli-Nirenberg-Spruck \cite{CNS3} to our context,
  			% we get the asymptotic behavior as $\mathrm{\bf a}$ goes to infinity.
  			
  		\end{proof}

  	\end{appendix}

 %	\bigskip

   \medskip
%  \bigskip 
  \noindent{\bf Acknowledgements}.
 %\subsubsection*{Acknowledgements} 
%The author was supported by  the National Natural Science Foundation of China
%Research partially supported by the National Natural Science Foundation of China, grant 11801587.
%The author is supported by  Guangdong Basic and Applied Basic Research Foundation (Grant No. 2023A1515012121) and Guangzhou Science and Technology Program  (Grant No.  202201010451).
%This work was partially supported by the National Natural Science Foundation of China under grant 11801587. 
% The author wishes to thank Professor Yi Liu for %generously   answering questions related to the proof of Lemma \ref{lemma-diff-topologuy}, and also thank Ze Zhou for helpful discussion on homogeneity lemma.
  The author is partially supported by  
%the National Natural Science Foundation of China (Grant No. 11801587), 
Guangzhou Science and Technology Program  (Grant No. 202201010451) and Guangdong Basic and Applied Basic Research Foundation (Grant No. 2023A1515012121).

%\bigskip
%    Bibliographies can be prepared with BibTeX using amsplain,
%    amsalpha, or (for "historical" overviews) natbib style.
\bigskip

%\small
%\bibliographystyle{amsplain}
%    Insert the bibliography data here.

\end{document}